\newtheorem{defn}{Definition}[section]
\newtheorem{thm}{Theorem}[section]
\newtheorem{prop}{Proposition}[section]
\newtheorem{lem}{Lemma}[section]
\newtheorem{cor}{Corollary}[section]
\newtheorem{rem}{Remark}[section]
\numberwithin{equation}{section}
\DeclareMathOperator*{\argmin}{argmin}
\newcommand{\mP}{{\mathcal{P}}}
\newcommand{\mh}{\mathcal{H}}
\newcommand{\mn}{\mathcal{N}}
\newcommand{\mum}{\mathcal{U}^m}
\newcommand{\mume}{\mathcal{U}^m_\varepsilon}
\newcommand{\diff}{\mathop{}\!\mathrm{d}}
\newcommand{\mprd}{{\mathcal{P}(\Rd)}}
\newcommand{\mptrd}{{\mathcal{P}_2(\Rd)}}
\newcommand{\mpdtard}{{\mathcal{P}_{2}^a(\Rd)}}
\newcommand{\rhot}{\rho_{\tau}}
\newcommand{\rhote}{\rho_{\tau}^{\varepsilon}}
\newcommand{\rhotes}{\rho_{\tau}^{\varepsilon,\sigma}}
\newcommand{\rhotne}{\rho_{\tau}^{\varepsilon,n}}
\newcommand{\rhotnes}{\rho_{\tau}^{\varepsilon,\sigma,n}}
\newcommand{\rhotnn}{\rho_{\tau}^{n+1}}
\newcommand{\rhotnne}{\rho_{\tau}^{\varepsilon,n+1}}
\newcommand{\rhotnnes}{\rho_{\tau}^{\varepsilon,\sigma,n+1}}
\newcommand{\R}{\mathbb{R}}
\newcommand{\Rd}{{\mathbb{R}^{d}}}
\newcommand{\Rn}{{\mathbb{R}^{n}}}
\newcommand{\Rdd}{{\mathbb{R}^{2d}}}
\newcommand{\eps}{\varepsilon}
\newcommand{\rhoe}{\rho^\varepsilon}
\newcommand{\rhoes}{\rho^{\varepsilon,\sigma}}
\newcommand{\supp}{\mathrm{supp}}
\def\XXint#1#2#3{{\setbox0=\hbox{$#1{#2#3}{\int}$}
  \vcenter{\hbox{$#2#3$}}\kern-.5\wd0}}
\title{Nonlocal particle approximation for linear and fast diffusion equations}
\date{}
\begin{document}

\author{José Antonio Carrillo \and Antonio Esposito \and Jakub Skrzeczkowski \and Jeremy Sheung-Him Wu}
\address{J. A. Carrillo, A. Esposito, J. Skrzeczkowski -- Mathematical Institute, University of Oxford, Woodstock Road, Oxford, OX2 6GG, United Kingdom.}

\email{carrillo@maths.ox.ac.uk}
\email{antonio.esposito@maths.ox.ac.uk}
\email{jakub.skrzeczkowski@maths.ox.ac.uk}

\address{J. S. H. Wu -- Mathematical Sciences Building, University of California, Los Angeles, CA 90095, United States.}

\email{jeremywu@math.ucla.edu}

\begin{abstract} We construct deterministic particle solutions for linear and fast diffusion equations using a nonlocal approximation. We exploit the $2$-Wasserstein gradient flow structure of the equations in order to obtain the nonlocal approximating PDEs by regularising the corresponding internal energy with suitably chosen mollifying kernels, either compactly or globally supported. Weak solutions are obtained by the JKO scheme. From the technical point of view, we improve known commutator estimates, fundamental in the nonlocal-to-local limit, to include globally supported kernels which, in particular cases, allow us to justify the limit without any further perturbation needed. Furthermore, we prove geodesic convexity of the nonlocal energies in order to prove convergence of the particle solutions to the nonlocal equations towards weak solutions of the local equations. We overcome the crucial difficulty of dealing with the singularity of the first variation of the free energies at the origin. As a byproduct, we provide convergence rates expressed as a scaling relationship between the number of particles and the localisation parameter. The analysis we perform leverages the fact that globally supported kernels yield a better convergence rate compared to compactly supported kernels. Our result is relevant in statistics, more precisely in sampling Gibbs and heavy-tailed distributions.
\end{abstract}

\keywords{nonlocal-to-local, deterministic particle approximation, commutator estimates, heat equation, fast diffusion, sampling, Gibbs distribution, heavy-tailed distribution}
\subjclass[2020]{35A15, 35Q70, 35D30, 35A35, 35B40}






\maketitle

\section{Introduction}
In this manuscript, we continue the development of nonlocal approximations of diffusion equations studied in~\cite{BE22,CEW_nl_to_local_24,Patacchini_blob19}, to name a few references. We focus on linear and fast diffusion PDEs, where loss of regularity near the origin (at the level of the internal energy) needs a deeper analysis. The class of equations we have in mind reads
\begin{equation}\label{eq:general_diffusion_nonlinear+advection}
\partial_t\rho=\Delta \rho^m + \mbox{div}(\rho \, \nabla (V+W\ast \rho)),
\end{equation}
where $d/(d+2)<m\le1$, $V:\R^d \to \R$ is a smooth external potential satisfying $D^2 V \geq \lambda \, \mathbb{I},\, \lambda \in \R$, and $W:\R^d \to \R$ is a smooth interaction potential. The restriction $m>d/(d+2)$ is natural from the viewpoint of solutions as curves of probability densities with bounded second moment.
Nonlocal approximations of the diffusion part allow to interpret \eqref{eq:general_diffusion_nonlinear+advection} as a continuity equation which leads directly to deterministic particle methods approximating \eqref{eq:general_diffusion_nonlinear+advection}.

\smallskip
This is important in statistics in sampling Gibbs distributions of the form $\bar \rho\propto e^{-V(x)}$ when $W=0$, or to nonlinear Gibbs measures of the form $\bar \rho\propto e^{-(V+W\ast \bar\rho)(x)}$ with $\bar \rho$ the unique steady state of \eqref{eq:general_diffusion_nonlinear+advection} under suitable assumptions on $V$ and $W$, see e.g.~\cite{CMcCV03}. Furthermore, the case $0<m<1$ leads to heavy-tailed distributions~\cite{HP85,DD02,CV03}, whose sampling is a recent research topic~\cite{MR4723893}. In the case $m=1$, a classical approach for Gibbs measures consists in simulating the so-called over-damped Langevin dynamics, \cite{MR615176},
$$
{\diff} X(t) = -\nabla V(X(t))\diff t + \sqrt{2}\,  \diff B(t)\,,
$$
where $X(t)$ is the position of a particle at time $t$ and $B(t)$ is the Brownian motion. This allows to approximate $e^{-V(x)}$ since the probability density of $X(t)$ solves
\begin{equation}\label{eq:Fokker-Planck}
\partial_t \rho = \Delta \rho + \mbox{div}(\rho \, \nabla V),
\end{equation}
whose unique stationary state corresponds to the desired probabilty distribution $\bar \rho\propto e^{-V(x)}$. The Langevin dynamics have received increasing attention from the mathematical community, cf. for instance~\cite{MR3857306, MR4003567, MR3960927}, but its inherent difficulty is reconciling computationally expensive stochastic movements. Therefore, it is reasonable to find purely deterministic particle dynamics approximating \eqref{eq:Fokker-Planck} which is the content of the current paper. Our result is an alternative to other deterministic method to sample Gibbs distributions, for instance: the so-called Stein variational gradient descent, recently introduced in~\cite{liu2016stein} and based on a nonlocal PDE, see~\cite{carrillo2023convergence, MR4582478, korba2020non} and the references therein, and gradient descent algorithms of the Kullback-Leibler divergence as in~\cite{huix_korba_durmus_moulines_24}. We point out we could eventually add to the right-hand side of~\eqref{eq:general_diffusion_nonlinear+advection} the nonlinear diffusion term, $\kappa\,\Delta \rho^{\tilde{m}}$, with $\tilde{m} > 1$ and $\kappa\geq 0$, and analyse the sampling of nonlinear unique steady states of the corresponding problem under suitable assumptions on $\tilde{m}$, $V$, and $W$. A final motivation to develop deterministic particle methods for \eqref{eq:general_diffusion_nonlinear+advection} comes from mathematical modelling of tissue growth or cell adhesion in development biology as in \cite{CarrilloMurakawaCellAdhesion, zoology, Holzinger_deriv_cross_diff, Hecht2023porous}, for instance.

\smallskip
For ease of presentation, we shall not consider the external and interaction potentials, i.e. $V=W=0$ in \eqref{eq:general_diffusion_nonlinear+advection}, as these are easily included in deterministic particle methods, see \cite{Patacchini_blob19} for instance. Hence, we will only focus on the nonlinearity. To the best of our knowledge, the first ideas on how to approximate diffusion PDEs were given in the works of Lions and MasGallic, \cite{masgallic}, as well as Oelschl{\"a}ger, \cite{Ol}. Their main idea was to introduce convolutions in the drift term which approximates the nonlinear diffusion terms as interaction potentials concentrating at zero. Considering, for example, the quadratic porous medium equation, $\partial_t \rho = \Delta \rho^2$, one can introduce its nonlocal approximation
\begin{equation}\label{eq:nonlocal_quadratic_PME}
\partial_t \rho^{\eps} = \mbox{div}(\rho^{\eps} \, \nabla \rho^{\eps} \ast V_{\eps} \ast V_{\eps}),
\end{equation}
where $V_{\eps}$ is the usual mollifying kernel approaching the Dirac Delta at zero when $\eps\to 0$. Notice that the equation above is~\eqref{eq:general_diffusion_nonlinear+advection} without the diffusion term, $V=0$ and $W=V_{\eps} \ast V_{\eps}$. The advantage of considering equation \eqref{eq:nonlocal_quadratic_PME} is that it can be solved by the following particle method: if $\rho^{\eps}_0 = \sum_{i=1}^N \delta_{X_i(0)}$, then $\rho^{\eps}_t = \sum_{i=1}^N \delta_{X_i(t)}$ solves in the sense of distributions~\eqref{eq:nonlocal_quadratic_PME} with positions $X_i(t)$ given by solutions to the ODEs
$$
\frac{\diff}{\diff t}X_i(t) = - \frac{1}{N} \sum_{j \neq i} \nabla V_{\eps}\ast V_{\eps}(X_i(t) - X_j(t)),\quad i=1,\dots, N.
$$
In the context of Wasserstein gradient flows, the nonlocal regularisation~\eqref{eq:nonlocal_quadratic_PME} is equivalent to the regularisation of the free energy 
$$
\mathcal{F}^2[\rho] = \int_{\R^d} \rho(x)^2 \, \diff x, \qquad \mathcal{F}^2_{\eps}[\rho] = \int_{\R^d} (V_{\eps}\ast\rho(x))^2 \, \diff x = \int_{\R^d} (V_{\eps} \ast V_{\eps})\ast\rho(x) \, \diff \rho(x),
$$
where we used that $V_\varepsilon$ is even. For general nonlinear diffusion problems, there are two possible regularisation of the free energy:
\begin{equation}\label{eq:functional_F}
\mathcal{F}^m_{\eps}[\rho] = \begin{cases}
\displaystyle \frac{1}{m-1}\, \int_{\R^d} |V_{\eps}\ast\rho|^{m-1} \diff \rho &\mbox{ if } m> 1,\\[3mm]
\displaystyle \int_{\R^d } \log(V_\eps\ast\rho) \diff \rho &\mbox{ if } m= 1
\end{cases}
\end{equation}
and
\begin{equation}\label{eq:functional_U}
\mathcal{U}^m_{\eps}[\rho] = \begin{cases}
\displaystyle 
\frac{1}{m-1}\,  \int_{\R^d} |V_{\eps}\ast\rho|^{m} \diff x &\mbox{ if } m> 1,\\[3mm]
\displaystyle  \int_{\R^d } (V_\eps\ast\rho)\log(V_\eps\ast\rho) \diff x &\mbox{ if } m= 1.
\end{cases}
\end{equation}
Depending on the choice of the regularisation one obtains different nonlocal equations, providing different nonlocal approximation of nonlinear diffusion equations. This procedure is also known as \textit{the blob method}, first introduced in \cite{Patacchini_blob19} for diffusion equations with the addition of local and nonlocal drifts, and used in the previous work~\cite{Craig_blob2016} to approximate nonlocal equations with singular kernels by smooth kernels.
The first regularisation \eqref{eq:functional_F} has been studied in~\cite{Patacchini_blob19}, which gave complete convergence analysis for $m=2$ and conditional results for $m>2$. One of the main difficulties relies on the fact that the dissipation of the free energy $\mathcal{F}^m_{\eps}$ does not give sufficient estimates to pass to the limit $\eps \to 0$. From this point of view, the dissipation of $\mathcal{U}^m_{\eps}$ is easier to analyse and recently, \cite{BE22,CEW_nl_to_local_24} exploited it to provide a particle scheme for $m > 1$, with $m=1$ left open. The result is provided up to requiring the initial datum has finite second order moment and log-entropy. The recent work~\cite{blob_weighted_craig} deals with a weighted quadratic porous medium equation, where the weight is a target probability measure to be approximated from specific samples drawn from it. The blob method provides a deterministic particle approximation for the weighted porous medium equation, and, as a byproduct, a strategy to quantise a target $\bar\rho$ in the long-time behaviour. In one space dimension,~\cite{Daneri_Radici_Runa_JDE} introduces a deterministic particle approximation for aggregation-diffusion equations, including the porous medium equation for the subquadratic ($1<m<2$) and superquadratic ($m>2$) cases. This approach is not variational and it is limited to one space dimension. We also mention the recent work~\cite{difrancesco2024_pme_morse} where the one-dimensional quadratic porous medium equation is obtained from interacting particles subject to the repulsive Morse potential. Furthermore, the recent work~\cite{art:rate_pme_amassad_zhou} provides the rate of convergence for the nonlocal-to-local limit for the quadratic porous media equation. Several authors have extended these ideas to systems \cite{BE22,Hecht2023porous}, higher-order PDEs \cite{elbar2023limit}, the kinetic Landau equation \cite{CHWW20,CDDW20}, and optimal control problems \cite{craig2024blob}. It is worth to observe that numerical particle methods have been proposed in the literature. In particular we mention the two simultaneous results for linear diffusion $(m=1)$ \cite{Degond_Mustieles90,Russo90}. In one dimension and for nonlinear diffusions, we refer to~\cite{GosseToscani06,CM09,CRW16}: their approach is based on the PDE satisfied by the transporting maps. The survey~\cite{CMWreview} provides further details on most of the available numerical methods for these families of equations. Among the works proposing a stochastic approach we refer to~\cite{Ol,oelschlaeger2001,morale2005interacting,FigPhi2008,Holzinger_deriv_cross_diff} and the references therein.

\smallskip
To the best of our knowledge the only manuscript that addresses a deterministic particle approximation of the linear diffusion term is the recent work~\cite{craig2023nonlocal}, which, in fact, covers a more general class of equations, including fast diffusion. The authors propose to approximate the general class of energies 
$$
\mathcal{F}^f[\rho] = \int_{\Rd} f(\rho) \diff x \approx \int_{\Rd} f_{\sigma}(V_{\eps}\ast\rho) \diff x =: \mathcal{F}^f_{\sigma,\eps}[\rho],
$$
where $f$ is convex and lower semicontinuous while $f_{\sigma}$ is the Moreau-Yosida approximation of $f$ with parameter $\sigma$. It is well-known that $f_\sigma \to f$ when $\sigma \to 0$ and $f_{\sigma}$ is strictly convex,~\cite[Chapters 12-14]{MR2798533}. Under a specific scaling between $\sigma$ and $\eps$, the authors obtain convergence of approximations and identify the limit via compactness and duality arguments. 
They base their proof on the formal equivalence between the 2-Wasserstein gradient flow of $\mathcal{F}^f[\rho]$ and the $H^{-1}$-gradient flow of $\mathcal{F}^f[\rho]$ interpretations for nonlinear diffusion equations as
$$
\partial_t\rho= \mbox{div} (\rho \nabla f'(\rho))=\Delta \left[ f^*(f'(\rho))\right],
$$
where $f^*$ is the Legendre-Fenchel transform of $f$. While~\cite{craig2023nonlocal} provides the first proof of convergence of a nonlocal approximation for the heat equation, it does not provide a rate of convergence nor is it applicable to more natural regularisations for computational purposes such as \eqref{eq:functional_F} and \eqref{eq:functional_U} --- a difficulty mentioned in~\cite{craig2023nonlocal}. We also mention the recent paper \cite{crestetto2023deterministic} concerning regularisation \eqref{eq:functional_U} (with a particular choice of $V$ being Gaussian) for the kinetic Fokker-Planck equation and studying its numerical performance. 

\smallskip
Concerning~\eqref{eq:general_diffusion_nonlinear+advection}, let us focus on the case $m=1$ first. As aforementioned, the main challenge is to approximate the linear diffusion term $\Delta \rho$, thus we consider the heat equation
\begin{equation}\label{eq:heat_eq}
\partial_t\rho=\Delta \rho, \tag{HE}
\end{equation}
which is the $2$-Wasserstein gradient flow of the (extended) energy functional
\begin{equation}\label{eq:log_entropy}
    \mh[\rho]=
    \begin{cases}
\displaystyle\int_\Rd \rho(x)\log\rho(x) \diff x \qquad &\rho\log\rho\in L^1(\Rd),\\[2mm]
    +\infty &\mbox{otherwise}.
    \end{cases}
\end{equation}
We consider the regularised version
\begin{equation}\label{eq:log_entropy_reg}
    \mh^\varepsilon[\rho]:= \int_\Rd (V_\varepsilon*\rho)(x)\log\left(V_\varepsilon*\rho(x)\right)\,\diff x,
\end{equation}
leading to the nonlocal PDE
\begin{equation}\label{eq:nonlocal_he}
    \partial_t\rho=\nabla\cdot(\rho\nabla V_\varepsilon*\log(V_\varepsilon*\rho)). \tag{NLHE}
\end{equation}
For kernels $V_{\eps}$ which are compactly supported we introduce another regularisation 
\begin{equation}\label{eq:log_entropy_reg_gaussian_intro}
\mh^\varepsilon_\sigma[\rho]:= \int_\Rd ((1-\sigma)V_\varepsilon*\rho+\sigma \mn)(x)\log\left((1-\sigma)V_\varepsilon*\rho(x)+\sigma \mn\right) \diff x
\end{equation}
which is necessary because, if both $\rho$ and $V_{\eps}$ are compactly supported, then $V_\varepsilon*\rho(x) = 0$ on a set of positive measure. The function $\mn$ is globally supported and we consider, for simplicity, the specific choices in~\eqref{eq:notation_kernel_n} (below) for this work. The corresponding PDE reads
\begin{equation}\label{eq:nonlocal_he_gauss}
    \partial_t\rho=(1-\sigma)\nabla\cdot(\rho\nabla V_\varepsilon*\log((1-\sigma)V_\varepsilon*\rho+\sigma \mn)). \tag{NLHE$_{\sigma}$}
\end{equation}
Our main results assert that there exist solutions $\rho^{\varepsilon, \sigma}$ and $\rho^{\varepsilon}$ to \eqref{eq:nonlocal_he_gauss} and \eqref{eq:nonlocal_he}, respectively. They are constructed via the JKO scheme (Theorems \ref{thm:exist_nlhe_gauss} and \ref{thm:exist_nlhe_sigma=0}). Moreover, using a new commutator estimate (Lemma \ref{lem:commutator_lemma}), we prove that when $\eps \to 0$ and $\eps, \sigma\to 0$, 
$$
\rho^{\varepsilon}(t), \rho^{\varepsilon, \sigma}(t) \to \rho(t) \mbox{ narrowly for all } t \in [0,T],
$$
where $\rho$ is the solution of the heat equation \eqref{eq:heat_eq} (see Theorems~\ref{thm:joint_limit_eps_sigma}, \ref{thm:sig_to_zero}, and~\ref{thm:eps_to_zero} for the precise statements). Finally, for the nonlocal equations \eqref{eq:nonlocal_he} and \eqref{eq:nonlocal_he_gauss} we define particle schemes approximating their solutions. To obtain quantitative rates of convergence when $N \to \infty$, $\eps \to 0$, we prove that the functionals \eqref{eq:log_entropy_reg} and \eqref{eq:log_entropy_reg_gaussian_intro} are $\lambda_{\eps,\sigma}$-geodesically convex when restricted to compactly supported densities $\rho$. Interpreting \eqref{eq:nonlocal_he} and \eqref{eq:nonlocal_he_gauss} as continuity equations, we prove that solutions remain compactly supported and this allows us to obtain quantitative rates of convergence for these schemes (Theorem \ref{thm:particle_approx}).

\smallskip
The previous results are extended to the fast diffusion equation 
\begin{equation}
\partial_t \rho = \Delta \rho^m = -\frac{m}{1-m}\,\mbox{div}(\rho \nabla \rho^{m-1})\tag{FDE}
\end{equation}
for $m \in \left(\frac{d}{d+2},1\right)$, which is the $2$-Wasserstein gradient flow of the free energy
\begin{equation}
    \mathcal{U}^m[\mu]:=
        -\frac{1}{1-m}\int_\Rd\rho(x)^m\diff x, \qquad m\in\left(\frac{d}{d+2},1\right),
\end{equation}
for $\mu$ a probability measure such that $\mu=\rho\mathcal{L}^d+\mu_s$ with $\mu_s\perp\mathcal{L}^d$, being $\mathcal{L}^d$ the $d$-dimensional Lebesgue measure. The nonlocal approximation of~\eqref{eq:fast_diff} is
\begin{equation}
\partial_t \rho^{\eps} = -\frac{m}{1-m}\, \mbox{div}(\rho^{\eps} \nabla V_{\eps}\ast (V_{\eps}\ast\rhoe)^{m-1})\tag{NLFDE},
\end{equation}
whose corresponding (regularised) energy functional is
\begin{equation}
    \mume[\mu]:=\mum[V_\varepsilon*\mu]=-\frac{1}{1-m}\int_\Rd|V_\varepsilon*\mu|^m\diff x, \qquad m\in\left(\frac{d}{d+2},1\right).
\end{equation}
The kernel we consider resembles a Barenblatt, cf.~\eqref{eq:kernel_fast_diffusion}. In short, we construct weak solutions to~\eqref{eq:fast_diff_nonlocal} via the JKO scheme in~\Cref{thm:existence_weak_sol_jko_nonlocal_fast} and prove they converge to weak solutions to~\eqref{eq:fast_diff} as $\varepsilon\to0^+$ (\Cref{thm:eps_to_0_fast}). We prove $\lambda_\varepsilon$-geodesic convexity for $\mume$ (\Cref{prop:convexity-energy_fast_nonlocal}) which leads to a particle approximation for~\eqref{eq:fast_diff} as consequence, cf.~\Cref{thm:particle_approx_fast}.

\smallskip
Our manuscript provides several novelties in the analysis of nonlocal-to-local limits related to particle methods. First, we provide quantitative rates of convergence for the particle approximation of \eqref{eq:nonlocal_he} and \eqref{eq:nonlocal_he_gauss} in Section \ref{sec:particles}. The main difficulty is that the functionals $\mh^\varepsilon[\rho]$ and $\mh^\varepsilon_\sigma[\rho]$ in \eqref{eq:log_entropy_reg} and \eqref{eq:log_entropy_reg_gaussian_intro} are not known to be $\lambda$-geodesically convex, not even with an $\eps$-dependent $\lambda$, for general densities. This is a well-known issue for nonlocal functionals. Indeed, a necessary condition in dimension $d\geq 2$ for the quadratic functional $\mathcal{F}[\rho] = \int_{\R^d} |\rho \ast V_{\eps}|^2 \diff x = \int_{\R^d} \rho  \ast V_{\eps} \ast V_{\eps} \, \diff \rho$ to be geodesically convex is that $V_{\eps}\ast V_{\eps}$ is convex which cannot be true for $V_{\eps}$ of class $C^1$ \cite[Exercise 5.28]{V1}. In~\cite{CEW_nl_to_local_24} it is proven that $\mathcal{F}[\rho]$ is $\lambda_\eps$-geodesically convex with $\lambda_\eps\to-\infty$ when $\eps\to 0$. We are able to prove that $\mh^\varepsilon[\rho]$ and $\mh^\varepsilon_\sigma[\rho]$ are $\lambda_\eps$-geodesically convex with $\lambda_\eps\to-\infty$ when $\eps\to 0$ only for compactly supported measures under certain assumptions on the kernel $V_\eps$ --- this allows to control the log-singularity near the origin and provide linear growth for the first variations of our functionals. Proving $\lambda$-convexity for general measures remains an interesting open question. Restricting to compactly supported measures does not induce issues since, for $\eps>0$, we show finite speed of propagation of solutions to \eqref{eq:nonlocal_he} and \eqref{eq:nonlocal_he_gauss} with compactly supported initial data. Combining the last two properties allows us to obtain the results on rate of convergence of the particle methods we propose under suitable assumption on the kernels for compactly supported initial data. Compared to the particle approximation given in \cite{craig2023nonlocal}, we do not need any particular scaling between the regularisation parameters $\eps$, $\sigma$, except with respect to the number of particles. Moreover our regularisation does not need a numerical approximation of a regularised Moreau-Yosida approximation of the entropy. The same observations holds true for~\eqref{eq:fast_diff},~\eqref{eq:fast_diff_nonlocal}, and $\mume$.

\smallskip
Another important improvement concerns the use of globally supported kernels $V_\eps$, for which it was previously unknown how to pass to the $\eps\downarrow0$ limit in the weak formulation of \eqref{eq:nonlocal_he}. More precisely, the difficulty comes from the fact that the convolution operator does not commute with nonlinearities. The main tool, the so-called commutator estimates, dates back to the work of DiPerna and Lions,~\cite{DiPernaLions89}, and allows to exchange of the convolution operator and nonlinearity up to a small error. For globally supported kernels, the commutator error equals 
$
(\rho^{\eps}\ast (V_{\eps}|\cdot|))/(\sqrt{\rho^{\eps}\ast V_{\eps}})
$
which is a fraction of two seemingly incomparable quantities. In~\Cref{lem:commutator_lemma}, we develop a simple trick to estimate this fraction in $L^{\infty}_t L^2_x$ using the Cauchy-Schwartz inequality and second moment of $V_{\eps}$. We also cover compactly supported kernels $V_\eps$ as they are easier to implement in practical applications in sampling, however they lead to slower convergence rates in theory compared to globally supported kernels with our approach.

\smallskip
The structure of the paper is as follows.~\Cref{sec:preliminary} contains notations and definitions of weak solutions. Then, in~\Cref{sec:nonlocal_eq} we construct weak solutions to \eqref{eq:nonlocal_he} and \eqref{eq:nonlocal_he_gauss} via the JKO scheme.~\Cref{sec:nl_lo} concerns the proofs of convergence of these solutions to the heat equation~\eqref{eq:heat_eq}. We define the particle methods approximating~\eqref{eq:nonlocal_he} and \eqref{eq:nonlocal_he_gauss} in~\Cref{sec:particles}, and we study their convergence properties towards~\eqref{eq:heat_eq}. We provide a nonlocal particle approximation of~\eqref{eq:fast_diff} in~\Cref{sec:fast_diffusion}. Our manuscript is concluded with~\Cref{sec:appendix} containing complementary technical results.

\section{Preliminaries}
\label{sec:preliminary}
We denote by $\mptrd$ the space of probability measures on $\R^d$ with finite second moment, i.e. $\mptrd$ consists of all probability measures $\rho$ such that $m_2(\rho) := \int_{\R^d} |x|^2 \, \mathrm{d}\rho(x)<+\infty$. The subset $\mpdtard\subset \mptrd$ consists of probability measures with absolutely continuous densities with respect to the Lebesgue measure on $\R^d$. For $\rho \in \mpdtard$, we will often denote the probability \textit{measure} with its probability \textit{density}, i.e. $\diff\rho(x) = \rho(x)\diff x$. We equip $\mptrd$ with the $2$-Wasserstein distance, which is, for $\mu_1,\mu_2\in \mptrd$
\begin{equation}\label{wass}
d_W^2(\mu_1,\mu_2):=\min_{\gamma\in\Gamma(\mu_1,\mu_2)}\left\{\int_{\Rdd}|x-y|^2\,\mathrm{d}\gamma(x,y)\right\},
\end{equation}
where $\Gamma(\mu_1,\mu_2)$ is the class of all transport plans between $\mu_1$ and $\mu_2$, that is the class of measures $\gamma\in\mP(\Rdd)$ such that, denoting by $\pi_i$ the projection operator on the $i$-th component of the product space, the following marginality condition is satisfied 
$$
(\pi_i)_{\#}\gamma=\mu_i \quad \mbox{for}\ i=1,2.
$$
We write marginals as the push-forward of $\gamma$ through $\pi_i$. For a measure $\rho\in\mP(\Rd)$ and a Borel map $T:\Rd\to\Rn$, $n\in\mathbb{N}$, the push-forward of $\rho$ through $T$ is defined by
$$
 \int_{\Rn}f(y)\,\diff T_{\#}\rho(y)=\int_{\Rd}f(T(x))\,\diff \rho(x) \qquad \mbox{for all Borel functions $f$ on}\ \Rn.
$$

Setting $\Gamma_0(\mu_1,\mu_2)$ as the class of optimal plans, i.e. minimizers of \eqref{wass}, the $2$-Wasserstein distance can be written as
$$
d_W^2(\mu_1,\mu_2)=\int_{\Rdd}|x-y|^2\,\diff \gamma(x,y), \qquad \gamma\in\Gamma_0(\mu_1,\mu_2).
$$

We shall also use the $1$-Wasserstein distance, denoted by $d_1$ and defined by
\begin{align}\label{eq:1wass}
    d_1(\mu_1,\mu_2):=\min_{\gamma\in\Gamma(\mu_1,\mu_2)}\left\{\int_{\Rdd}|x-y|\,\diff \gamma(x,y)\right\}.
\end{align}
We refer the reader to, e.g.~\cite{AGS,V2,S}, for further details on optimal transport theory.

We denote the Lebesgue bracket by $\langle x\rangle:= \sqrt{1+|x|^2}$, for any $x\in\R^d$, and consider the following smooth and globally supported functions
\begin{equation}\label{eq:notation_kernel_n}
\mn_p(x) := \frac{1}{Z}\exp(-\langle x\rangle^p), \quad Z = \int_{\R^d}\exp(-\langle x\rangle)^p\, \diff x, \qquad p=1,2. \tag{$\bm{\mn}$}
\end{equation}
The parameter $\sigma\in[0,1)$ distinguishes the energies~\eqref{eq:log_entropy} (corresponding to $\sigma = 0$) and~\eqref{eq:log_entropy_reg} (corresponding to $\sigma>0$) leading to~\eqref{eq:nonlocal_he} and~\eqref{eq:nonlocal_he_gauss}, respectively. For $\varepsilon>0$, we define the mollifying sequence by $V_\varepsilon(x) = \varepsilon^{-d}V_1(x/\varepsilon)$ where $V_1$ satisfies:
\begin{enumerate}[label=\textbf{(V\arabic*)}]
	\item $V_1\in C_b(\Rd)\cap C^1(\Rd)$, $V_1\ge 0$, $\|V_1\|_{L^1}=1$, $V_1(x)=V_1(-x)$ \label{ass:v1};
    \item depending on $\sigma\in[0,1)$, we assume \label{ass:v2}
    \begin{enumerate}[label=$\bm{\mathrm{(V2)_c}}$]
        \item in the case $\sigma>0$ that supp$V_1\subset B_1$ or \label{ass:v2-c}
    \end{enumerate}
    \begin{enumerate}[label=$\bm{\mathrm{(V2)_g}}$]
        \item in the case $\sigma = 0$ that $V_1 \equiv \mn_p$, for $p=1$ or $p=2$.\label{ass:v2-g}
    \end{enumerate}
\end{enumerate}

The subscripts in~\ref{ass:v2} refer to whether the kernel is \textit{compactly} or \textit{globally} supported. 
From now on, we refer to $\mn$ as either $\mn_1$ or $\mn_2$. We gain, by distinguishing between~\ref{ass:v2-c} and~\ref{ass:v2-g}, the ability to incorporate both compactly supported and globally supported kernels $V_1$ into our theory. Note that, for $p=2$, we could choose $\exp{\left(-|x|^2\right)}$ with no Lebesgue bracket, which is needed, instead, for $p=1$ to have better regularity. For consistency of notation we will stick to $\mn_2$ with the previous observation in mind.

\begin{rem}
\label{rem:general_global_kernel}
    All we need from~\ref{ass:v2-g} is that $V_1$ is radially decreasing, $|\cdot|^2 V_1 \in L^1(\Rd)$, and that there exists a constant $C>0$ such that $|\nabla V_1(x)|\le C\langle x\rangle^{-(d+3)}$ and $V_1(x) \ge Ce^{-|x|^2}$. For ease of exposition, in the rest we only discuss the cases $V_1 = \mn_p$.
\end{rem}

\begin{rem}\label{rem:gaussian}
In the particular case $V_1 = \mathcal{N}_1$ (or any kernel as in~\Cref{rem:general_global_kernel}) we do not need to consider the regularization with respect to $\sigma$, thus we can assume $\sigma=0$. Indeed, for fixed $\eps>0$, the sequence of solutions to the JKO scheme (which is well-defined for $\sigma = 0$) is tight, uniformly in time. It follows that there exists $R>0$ such that
$$
\int_{B_R} \diff \rho^{\eps}_t(x) \geq \frac{1}{2}.
$$
Hence, with $V_1 = \mn_1$, a simple estimation with~\Cref{lem:peetre} yields
$$
\rho^{\eps} \ast V_{\eps}(t,x) =\int_{\R^d} V_\eps(x-y) \diff \rho^{\eps}_t(y) \geq \int_{B_R} V_{\eps}(x-y) \diff \rho^{\eps}_t(y) \geq C\exp\left(-\sqrt{1 + \frac{R^2}{\varepsilon^2}}\right) V_\varepsilon
(x)>0.
$$
The lower bound above plays essentially the same role as regularization with $\sigma>0$. In this case, we also note $\mn_1$ enjoys further properties allowing to compensate for its global support. More precisely, it holds $\int_\Rd |x|^2\mn_1(x)\diff x<\infty,\, \mbox{and } |\nabla\mn_1(x)|\le C\langle x \rangle^{-(d+3)}$. The same can be said of $\mn_2$.
\end{rem}
\noindent We shall see (cf.~\Cref{thm:exist_nlhe_gauss}) that the 2-Wasserstein gradient flow of $\mh_\sigma^\varepsilon$ is a solution to \eqref{eq:nonlocal_he_gauss}.
\begin{defn}[Weak measure solution to~\eqref{eq:nonlocal_he_gauss}]
\label{def:weak-meas-sol-sigma}
For $\varepsilon>0$ and $\sigma \in [0,1)$, we say that an absolutely continuous curve $\rho:[0,T]\to \mptrd$ is a weak measure solution to~\eqref{eq:nonlocal_he_gauss} if, for every $\varphi \in C_c^1(\R^d)$ and any $t\in[0,T]$, it holds
\begin{equation}
    \label{eq:weak-form-sigma}
    \begin{split}
    \int_{\R^d}\varphi(x) \diff\rho_t(x) &- \int_{\R^d}\varphi(x) \diff\rho_0(x)\\ 
    &= -(1-\sigma)\int_0^t\int_{\R^d}\nabla \varphi(x)\cdot \nabla V_\varepsilon*[\log((1-\sigma)V_\varepsilon*\rho_s +\sigma \mn)](x) \diff \rho_s(x) \diff s.
    \end{split}
\end{equation}
\end{defn}
\noindent Note that the formulation~\eqref{eq:weak-form-sigma} is equivalent to the more common distributional form
\begin{equation}    \label{eq:weak-form-sigma_towork_with}
\begin{split}
    \int_{\R^d}\varphi(T,x) \diff \rho_T(x) &- \int_{\R^d}\varphi(0,x) \diff \rho_0(x) = \int_0^T \int_{\R^d} \partial_t \varphi(t,x) \diff \rho_t \diff x
    \\ &-(1-\sigma)\int_0^T\!\!\!\int_{\R^d}\nabla \varphi(t,x)\!\cdot\! \nabla V_\varepsilon*[\log((1-\sigma)V_\varepsilon*\rho_t +\sigma \mn)](x) \diff \rho_t(x)  \diff t
\end{split}
\end{equation}
required for all $\varphi \in C_c^1([0,T]\times\R^d)$ which can be obtained by multiplying 
\eqref{eq:weak-form-sigma} by $\partial_t \psi(t)$, integrating by parts in time and noticing that any function $\varphi(t,x)$ can be approximated by a linear combination of functions of the form $\psi(t)\, \varphi(x)$ in the supremum norm.

\begin{rem}
We observe that the RHS of~\eqref{eq:weak-form-sigma_towork_with} is well-defined since, for $p=1$ or $p=2$, 
\[
\left|\int_0^T\int_\Rd \nabla \varphi(x)\!\cdot\! \nabla V_\varepsilon * [\log((1\!-\!\sigma)V_\varepsilon*\rho_t\!+\! \sigma\mn)](x)\mathrm{d}\rho_t(x)\diff t\right| \!\le\! C\!\!\int_0^T\int_\Rd\! \langle x\rangle^p \mathrm{d}\rho_t(x) <\infty.
\]
This is a consequence of~\Cref{lem:log_gauss} and Peetre's inequality (\Cref{lem:peetre}) which imply
\begin{align*}
    |\nabla V_\varepsilon &* [\log ((1-\sigma)V_\varepsilon * \rho_t + \sigma \mn(\cdot))](x)|\\
    &\le \int_\Rd |\nabla V_\varepsilon(y)| |[\log ((1-\sigma)V_\varepsilon * \rho_t + \sigma \mn(\cdot))](x-y)|\, \mathrm{d}y 
    \\
    &\le C\int |\nabla V_\varepsilon(y)|\langle x-y\rangle^p \, \mathrm{d}y \le C\langle x\rangle^p \int |\nabla V_\varepsilon(y)| \langle y\rangle^p \, \mathrm{d}y \le C\langle x\rangle^p.
\end{align*}
In the case $\sigma=0$, first we notice $V_\varepsilon*\rho_s>0$, for any $s\in[0,T]$, since the solution is tight and we can argue as in~\Cref{rem:gaussian}. Furthermore, a similar estimate as above can be obtained using the growth proven in~\Cref{lem:log_conv_glob_rescaling}. We stress that~\Cref{def:weak-meas-sol-sigma} is important to state for $\sigma\in(0,1)$ for the double limit. If $\sigma=0$ the improved regularity we discuss below allows for a stronger concept of solution, which is the one we need for the nonlocal-to-local limit in~\Cref{sec:nl_lo}.  
\end{rem}
Due to the convolution by $V_\varepsilon$ and the lift by $\mn$, the integrand on the right-hand side of~\eqref{eq:weak-form-sigma_towork_with} can also be interpreted as
    \[
    \nabla \varphi(x)\cdot \left\{V_\varepsilon * \left[
    \frac{\nabla ((1-\sigma)V_\varepsilon * \rho_s + \sigma \mn)}{(1-\sigma)V_\varepsilon * \rho_s + \sigma \mn}
    \right]\right\}(x) = 2\nabla \varphi(x)\cdot \left\{V_\varepsilon * \left[\frac{\nabla \sqrt{(1-\sigma)V_\varepsilon * \rho_s + \sigma \mn}}{\sqrt{(1-\sigma)V_\varepsilon * \rho_s + \sigma \mn}}\right]\right\}(x).
    \]
Indeed, we will see later with the $L^2_tH_{x}^1$-bound for $\sqrt{(1-\sigma)V_\varepsilon * \rho + \sigma \mn}$ (cf.~\eqref{eq:deriv-dis}), that this form is highly amenable to the $\sigma \to 0$ limit and it can be obtained also in the $\sigma=0$ case. We, therefore, consider the following notion of solution to~\eqref{eq:nonlocal_he}.
\begin{defn}[Weak measure solution to~\eqref{eq:nonlocal_he}]
    \label{def:weak-meas-sol}
    For $\varepsilon>0$, we say that an absolutely continuous curve $\rho:[0,T]\to \mptrd$ is a weak measure solution to~\eqref{eq:nonlocal_he} if the following holds:
    \begin{enumerate}[label = (NLHE-\arabic*)]
        \item (Finite initial entropy) \label{a:fie}$\mh[\rho_0]<+\infty$.
        \item (Regularity) \label{a:l2} $\nabla \sqrt{V_\varepsilon*\rho} \in L_{t,x}^2$.
        \item (Weak formulation) For every $\varphi \in C_c^1(\R^d)$ and any $t\in[0,T]$, it holds
        \begin{equation}
            \label{eq:weak-form}
            \int_{\R^d}\varphi(x) \, \mathrm{d}\rho_t(x) - \int_{\R^d}\varphi(x) \, \mathrm{d}\rho_0(x) = -2 \int_0^t\int_{\R^d} { \mathds{1}_{V_{\varepsilon} \ast \rho_{s} > 0}}\, \frac{V_\varepsilon * (\nabla \varphi \rho_s)}{\sqrt{V_\varepsilon * \rho_s}} \cdot \nabla \sqrt{V_\varepsilon * \rho_s}\diff x\diff s.
        \end{equation}

    \end{enumerate}
\end{defn}
Similarly to~\eqref{eq:weak-form-sigma_towork_with}, the formulation~\eqref{eq:weak-form} is equivalent to the following:
for all $\varphi \in C^1_c([0,T]\times\R^d)$ we have
\begin{equation}    \label{eq:weak-form-just_eps_towork_with}
\begin{split}
    \int_{\R^d}\varphi(T,x) \diff \rho_T(x)\! - \!\int_{\R^d}\varphi(0,x) \diff \rho_0(x) &\!=\! \int_0^T\!\! \int_{\R^d} \partial_t \varphi(t,x) \diff \rho_t \diff x
    \\ &\quad-2\int_0^T\!\!\int_{\R^d}{ \mathds{1}_{V_{\varepsilon} \ast \rho_{t} > 0}}\, \frac{V_\varepsilon * (\nabla \varphi \rho_t)}{\sqrt{V_\varepsilon * \rho_t}} \cdot \nabla \sqrt{V_\varepsilon * \rho_t} \diff x  \diff t.
\end{split}
\end{equation}
\begin{rem}
\label{rem:sqrt-well-defined}
The right-hand side of~\eqref{eq:weak-form} is well-defined owing to the regularity assumption $\nabla \sqrt{V_\varepsilon * \rho}\in L_{t,x}^2$ and the fact that
\[
\left|\frac{V_\varepsilon*(\nabla \varphi \rho_s)}{\sqrt{V_\varepsilon * \rho_s}}\right| \le \|\nabla\varphi\|_{L^\infty}\sqrt{V_\varepsilon * \rho_s} \in L_t^\infty L_x^2.
\]
The characteristic function makes the expression well-defined pointwisely. It is rigorously proved in Theorem \ref{thm:sig_to_zero} that the $\sigma \to 0$ limit yields the formulation with the characteristic function. 
\end{rem}

\begin{defn}[Weak solution to~\eqref{eq:heat_eq}]\label{def:weak_sol_he}
A weak solution to~\eqref{eq:heat_eq} on the time interval $[0,T]$ with initial datum $\rho\in\mpdtard$ such that $\mh[\rho_0]<\infty$ is an absolutely continuous curve $\rho\in C([0,T];\mptrd)$ satisfying the following properties:
\begin{enumerate}
    \item for almost every $t\in[0,T]$ the measure $\rho_t$ is absolutely continuous with respect to the Lebesgue measure, still denoted by $\rho_t$, such that $\rho\in L^\infty([0,T];L^1(\Rd))$ and $\nabla \sqrt{\rho}\in L^2([0,T]\times\Rd)$;
    \item for any $\varphi\in C^1_c(\Rd)$ and all $t\in[0,T]$ it holds
    \[
        \int_\Rd\varphi(x)\rho_t(x)\,\diff x=\int_\Rd\varphi(x)\rho_0(x)\,\diff x - \int_0^t\int_\Rd\nabla\varphi(x)\cdot \nabla\rho_t(x)\diff x\diff t,
    \] 
    or the equivalent formulation with test functions $\varphi\in C_c^1([0,T]\times\Rd)$.
\end{enumerate}
\end{defn}
For the reader's convenience we postpone the notions of solution for~\eqref{eq:fast_diff_nonlocal} to~\Cref{sec:fast_diffusion}.

\section{Nonlocal approximating equation for the heat equation}
\label{sec:nonlocal_eq}
In this section we focus on the nonlocal PDEs and construct weak measure solutions to~\eqref{eq:nonlocal_he_gauss} and~\eqref{eq:nonlocal_he}. As for the latter, we observe most of the estimates are already provided in~\cite{CEW_nl_to_local_24}, though the consistency of JKO scheme (or existence of weak measure solutions) is not proven. For this reason, in this section we let $\varepsilon>0$ be fixed and extend the existence theory to the case $0<\sigma<1$, i.e.~\eqref{eq:nonlocal_he_gauss}, including $\sigma=0$ as well,~\eqref{eq:nonlocal_he}.
\subsection{Perturbation by vacuum exclusion}
\label{sec:nonlocal_he_gauss}
The main issue with~\eqref{eq:nonlocal_he} is the singularity of the logarithm near the origin. We deal with the loss of regularity near vacuum by adding a nice perturbation or choosing cleverly the kernel. For this reason, in the first case we consider the perturbed problem~\eqref{eq:nonlocal_he_gauss}. Let us recall the JKO scheme~\cite{JKO98} for constructing solutions to~\eqref{eq:nonlocal_he_gauss} on $[0,T]$ subject to initial condition $\rho_0\in \mptrd$ is given by the following recursive procedure. 
\begin{itemize}
    \item Fix a (sufficiently small) time step $\tau\in(0,1)$ and set $\rho_{\tau}^{\varepsilon, \sigma,0}:= \rho_0$.
    \item Define $N:= \left[\frac{T}{\tau}\right]$ and, for $n=0,1,2,\dots, N-1$ we choose
    \begin{equation}
        \label{eq:jko}
		\rhotnnes\in\argmin_{\rho\in\mptrd}\left\{\frac{d_W^2(\rhotnes,\rho)}{2\tau}+\mh_\sigma^\varepsilon[\rho]\right\}.
    \end{equation}
\end{itemize}
Owing to~\cite[Lemma A.2]{CEW_nl_to_local_24}, the above sequence is well-defined for $\tau$ sufficiently small and independent of $\varepsilon$ and $\sigma$. Let $0<\varepsilon_0<\infty$ and, for $0<\varepsilon\le\varepsilon_0$, consider the piecewise constant interpolation
\[
\rho_\tau^{\varepsilon, \sigma}(0) = \rho_0, \quad \rho_\tau^{\varepsilon, \sigma}(t) = \rhotnes, \quad t\in((n-1)\tau,n\tau], \quad n=1,\dots,N,
\]
begin $\rhotnes$ defined in~\eqref{eq:jko}. From~\cite[Proposition 3.1]{CEW_nl_to_local_24}, we obtain the following compactness and uniform bound result. Throughout this section, we fix $\mn$ to be either $\mn_1$ or $\mn_2$.
\begin{lem}
    \label{lem:en-ineq-mom-bound}
    Let $\varepsilon>0$, $\sigma \in [0,1)$. There exists an absolutely continuous curve $\rho^{\varepsilon, \sigma}:[0,T]\to \mptrd$ such that, up to a subsequence, we have
\[
\rho_\tau^{\varepsilon, \sigma}(t) \overset{\tau \to 0}{\rightharpoonup} \rho^{\varepsilon, \sigma}(t), \quad \text{uniformly in }t\in[0,T].
\]
There is a constant $C>0$ depending only on $T, \,m_2(\rho_0), \, m_2(V_1)$, and $ \mh_\sigma^\varepsilon[\rho_0]$ where $\rho_0 = \rho_\tau^{\varepsilon, \sigma}(0) = \rho^{\varepsilon, \sigma}(0)$ such that the second moment bound holds
\[
\int_{\R^d}|x|^2 \, \mathrm{d}\rho_\tau^{\varepsilon, \sigma}(t,x) \le C, \quad \forall t\in[0,T].
\]
As well, for every $t\in[0,T]$, the regularised entropies are ordered in the following way
\begin{equation}
\label{eq:sig_ent}
\mh_\sigma^\varepsilon[\rho_\tau^{\varepsilon,\sigma}(t)]\le \mh_\sigma^\varepsilon[\rho_0]\le (1-\sigma)\mh^\varepsilon[\rho_0]+\sigma C_\mn\le (1-\sigma)\mh[\rho_0] +\sigma C_\mn,
\end{equation}
for $C_\mn := \int_{\R^d} \mn(x) \log\mn(x)\,\mathrm{d}x$. All these bounds are also true with $\rho^{\varepsilon, \sigma}(t)$ replacing $\rho_\tau^{\varepsilon, \sigma}(t)$.
\end{lem}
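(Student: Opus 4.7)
The plan is to adapt the standard JKO-scheme machinery, following the template of Proposition 3.1 in~\cite{CEW_nl_to_local_24}, with the $\sigma$-perturbation handled by a clean convexity argument. I will proceed in three steps: the discrete energy-dissipation inequality, the chain of bounds on the initial regularised entropy, and the second moment bound together with compactness via a refined Ascoli--Arzelà theorem.

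\emph{Step 1 (Discrete energy dissipation).} Testing $\rho = \rho_\tau^{\varepsilon,\sigma,n}$ against the minimiser $\rho_\tau^{\varepsilon,\sigma,n+1}$ in~\eqref{eq:jko} gives
\[
\frac{d_W^2(\rho_\tau^{\varepsilon,\sigma,n},\rho_\tau^{\varepsilon,\sigma,n+1})}{2\tau} + \mh_\sigma^\varepsilon[\rho_\tau^{\varepsilon,\sigma,n+1}] \le \mh_\sigma^\varepsilon[\rho_\tau^{\varepsilon,\sigma,n}],
\]
which telescopes to
\[
\sum_{k=0}^{N-1}\frac{d_W^2(\rho_\tau^{\varepsilon,\sigma,k},\rho_\tau^{\varepsilon,\sigma,k+1})}{2\tau} + \mh_\sigma^\varepsilon[\rho_\tau^{\varepsilon,\sigma,N}] \le \mh_\sigma^\varepsilon[\rho_0]
\]
and yields the monotonicity $\mh_\sigma^\varepsilon[\rho_\tau^{\varepsilon,\sigma}(t)] \le \mh_\sigma^\varepsilon[\rho_0]$ stated in~\eqref{eq:sig_ent}.

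\emph{Step 2 (Energy ordering).} Both remaining inequalities in~\eqref{eq:sig_ent} follow from the convexity of $h(x) := x\log x$. The pointwise convex combination inequality
\[
h\bigl((1-\sigma)V_\varepsilon*\rho_0 + \sigma\mn\bigr) \le (1-\sigma)\,h(V_\varepsilon*\rho_0) + \sigma\, h(\mn),
\]
upon integrating over $\R^d$, produces $\mh_\sigma^\varepsilon[\rho_0] \le (1-\sigma)\mh^\varepsilon[\rho_0] + \sigma C_\mn$. Jensen's inequality applied to convolution against the probability kernel $V_\varepsilon$ yields pointwise $h(V_\varepsilon*\rho_0) \le V_\varepsilon*h(\rho_0)$, and integrating (using $\|V_\varepsilon\|_{L^1}=1$ and Fubini) gives $\mh^\varepsilon[\rho_0] \le \mh[\rho_0]$.

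\emph{Step 3 (Second moment and compactness).} Combining Cauchy--Schwarz with Step 1, for arbitrary $s<t$ in $[0,T]$,
\[
d_W(\rho_\tau^{\varepsilon,\sigma}(s),\rho_\tau^{\varepsilon,\sigma}(t)) \le \sqrt{|t-s|+\tau}\left(\sum_{k=0}^{N-1}\frac{d_W^2(\rho_\tau^{\varepsilon,\sigma,k},\rho_\tau^{\varepsilon,\sigma,k+1})}{\tau}\right)^{1/2} \le C\sqrt{|t-s|+\tau},
\]
provided one has a lower bound of the form $\mh_\sigma^\varepsilon[\rho] \ge -C(1+m_2(\rho))$ uniform in $\varepsilon\le\varepsilon_0$ and $\sigma\in[0,1)$ (with constant depending on $m_2(V_1)$). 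This I would obtain from the standard estimate $r\log r \ge r\log \mn(x) - e^{-1}$ applied with $r=(1-\sigma)V_\varepsilon*\rho+\sigma\mn$, coupled with the at-most-$\langle x\rangle^p$ growth of $-\log\mn$ from~\eqref{eq:notation_kernel_n}, Peetre's inequality, and the finite second moment of $V_\varepsilon$. The resulting $1/2$-Hölder continuity, combined with tightness from the uniform second moment bound, then feeds into the refined Ascoli--Arzelà theorem on $\mptrd$ endowed with the narrow topology (e.g.\ \cite[Prop.~3.3.1]{AGS}), extracting a subsequence that converges narrowly, uniformly in $t\in[0,T]$, to an absolutely continuous curve $\rho^{\varepsilon,\sigma}$. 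The only real obstacle is obtaining the entropy lower bound in a form independent of $\sigma$ and $\varepsilon$; this is precisely where the Gaussian-like tails of $\mn$ in~\ref{ass:v2} are essential, while for $\sigma=0$ with $V_1=\mn_p$ (i.e.~\ref{ass:v2-g}), one argues exactly as in~\Cref{rem:gaussian} to ensure $V_\varepsilon*\rho$ stays bounded below and the logarithm is controlled.
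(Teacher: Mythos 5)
Your proposal is correct and follows essentially the same strategy as the paper's proof: Step~1 is the paper's discrete dissipation argument verbatim, Step~2 reproduces the paper's convexity-of-$x\log x$ plus Jensen's-inequality chain for~\eqref{eq:sig_ent}, and Step~3 spells out the moment bound and Ascoli--Arzel\`a compactness that the paper simply delegates to \cite[Proposition~3.1]{CEW_nl_to_local_24}. The only small slip is in the quoted pointwise bound (it should read $r\log r \ge r\log\mathcal{N} - \mathcal{N}/e$ rather than $r\log r \ge r\log\mathcal{N} - e^{-1}$), but since $\|\mathcal{N}\|_{L^1}=1$ this is immaterial after integration, and the rest of your outline (Peetre's inequality, finite second moments of $V_\varepsilon$ and $\mathcal{N}$, uniformity in $\sigma\in[0,1)$ and $\varepsilon\le\varepsilon_0$) is exactly what is needed.
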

\begin{proof}
    The existence of the limit curve $\rho^{\varepsilon,\sigma}$ and the second moment bound is classical and we refer to~\cite[Proposition 3.1]{CEW_nl_to_local_24} for all the details. We focus the rest of this proof on the entropy estimates in~\eqref{eq:sig_ent}.

    By construction in the JKO scheme, for any $t\in (n\tau,\,(n+1)\tau]$, we get
    \[
\mh_\sigma^\varepsilon[\rho_\tau^{\varepsilon,\sigma}(t)] \le\mh_\sigma^\varepsilon[\rhotnnes] + \frac{d_W^2(\rhotnes,\,\rhotnnes)}{2\tau} \le \mh_\sigma^\varepsilon[\rhotnes] + \frac{d_W^2(\rhotnes,\,\rhotnes)}{2\tau} = \mh_\sigma^\varepsilon[\rho_\tau^{\varepsilon,\sigma}(t-\tau)].
    \]
By iterating this, we obtain the first inequality in~\eqref{eq:sig_ent}. The remaining inequalities in~\eqref{eq:sig_ent} exploit the convexity of the map $r \mapsto r\log r$. The second inequality is easily deduced by
\begin{align*}
    \mh_\sigma^\varepsilon[\rho_0] &= \int ((1-\sigma) V_\varepsilon * \rho_0 + \sigma\mn)\log ((1-\sigma) V_\varepsilon * \rho_0 + \sigma\mn)\diff x  \\
    &\le \int (1-\sigma)(V_\varepsilon*\rho_0)\log (V_\varepsilon*\rho_0) + \sigma \mn \log \mn \diff x= (1-\sigma)\mh^\varepsilon[\rho_0] +C_\mn.
\end{align*}
For the third and final inequality of~\eqref{eq:sig_ent}, we appeal to Jensen's inequality
\begin{align*}
\mh^\varepsilon[\rho_0] &= \int \left(\int \rho_0(x-y) \, \diff V_\varepsilon(y)\right) \log \left(\int \rho_0(x-y) \, \diff V_\varepsilon(y)\right) \, \diff x \\
&\le \int \left( \int \rho_0(x-y) \log (\rho_0(x-y)) \, \diff V_\varepsilon(y)\right)\ \diff x \\
&= \int V_\varepsilon * [\rho_0\log \rho_0] \diff x= \int \rho_0\log \rho_0 \diff x= \mh[\rho_0].\qedhere
\end{align*}
\end{proof}
We now verify that the limit $\rho^{\varepsilon,\sigma}$ in~\Cref{lem:en-ineq-mom-bound} is a weak solution to~\eqref{eq:nonlocal_he_gauss}. To keep notations simple, we suppress the dependence on $\sigma\ge 0$ until the end of this section. To keep track of the different assumptions~\ref{ass:v2-c} with $\sigma>0$ and~\ref{ass:v2-g} when $\sigma=0$, we begin with the case $\sigma>0$.
\begin{thm}
\label{thm:exist_nlhe_gauss}
Fix $\varepsilon>0$, $\sigma \in (0,1)$ together with $V_1$ satisfying~\ref{ass:v1} and~\ref{ass:v2-c}. Suppose $\rho_0\in \mptrd$ satisfies $\mh_\sigma^\varepsilon[\rho_0]<+\infty$. Then, there exists a weak measure solution $\rho^\varepsilon$ to~\eqref{eq:nonlocal_he_gauss} such that $\rho^\varepsilon(0) = \rho_0.$
\end{thm}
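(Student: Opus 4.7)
My plan is to derive the weak formulation~\eqref{eq:weak-form-sigma} from the Euler--Lagrange optimality of the JKO iterates~\eqref{eq:jko}, and then pass to the limit $\tau\to 0$ using the uniform-in-time narrow convergence and second-moment bound granted by~\Cref{lem:en-ineq-mom-bound}. A direct computation of the first variation of $\mh_\sigma^\varepsilon$, exploiting the symmetry of $V_\varepsilon$ and the fact that $(1-\sigma)V_\varepsilon*\rho+\sigma\mn\ge \sigma\mn>0$ keeps the logarithm pointwise finite, identifies the driving field
\[
\nabla \frac{\delta \mh_\sigma^\varepsilon}{\delta\rho}[\rho] = (1-\sigma)\, \nabla V_\varepsilon * \log\bigl((1-\sigma) V_\varepsilon * \rho + \sigma \mn\bigr),
\]
which coincides with the drift appearing in~\eqref{eq:weak-form-sigma}.

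Next I would test the JKO optimality of $\rhotnnes$ against inner variations $(\mathrm{id}+s\zeta)_\#\rhotnnes$ with $\zeta\in C_c^\infty(\R^d;\R^d)$ to obtain the first-order condition
\[
T^{n+1}(y) - y = \tau(1-\sigma)\,\nabla V_\varepsilon*\log\bigl((1-\sigma)V_\varepsilon*\rhotnnes + \sigma\mn\bigr)(y),\qquad \rhotnnes\text{-a.e. }y,
\]
where $T^{n+1}$ is the optimal transport map from $\rhotnnes$ to $\rhotnes$. For $\varphi\in C_c^1(\R^d)$, a Taylor expansion of $\varphi\circ T^{n+1}$ produces the discrete identity
\[
\int\varphi\,\diff\rhotnnes - \int\varphi\,\diff\rhotnes = -\tau(1-\sigma)\int \nabla\varphi\cdot\nabla V_\varepsilon*\log(\cdots)\,\diff\rhotnnes + R^{n+1}_\tau,
\]
with $|R^{n+1}_\tau|\le \tfrac12\|D^2\varphi\|_\infty\, d_W^2(\rhotnes,\rhotnnes)$. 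Since $\mh_\sigma^\varepsilon$ is bounded from below along the sequence (by comparison with the maximum-entropy Gaussian at the controlled second moment), the standard JKO dissipation estimate $\sum_n d_W^2(\rhotnes,\rhotnnes)\le C\tau$ forces the accumulated remainder to be $O(\tau)$. Summing and rewriting as a time integral of the piecewise-constant interpolant produces the discrete analogue of~\eqref{eq:weak-form-sigma_towork_with}.

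The main obstacle is passing to the limit $\tau\to 0$ in the nonlinear drift, which I denote $F_\tau := \nabla V_\varepsilon*\log((1-\sigma)V_\varepsilon*\rho_\tau^{\varepsilon,\sigma}+\sigma\mn)$. Under~\ref{ass:v2-c}, $V_\varepsilon$ and $\nabla V_\varepsilon$ are bounded with support in $B_\varepsilon$, so the family $\{V_\varepsilon*\rho_\tau^{\varepsilon,\sigma}(t,\cdot)\}_\tau$ is equibounded and equi-Lipschitz on $\R^d$. By Arzel\`a--Ascoli together with the uniform-in-time narrow convergence, for each $t\in[0,T]$ one has $V_\varepsilon*\rho_\tau^{\varepsilon,\sigma}(t,\cdot)\to V_\varepsilon*\rho^{\varepsilon,\sigma}(t,\cdot)$ locally uniformly. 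The strict lower bound $\sigma\mn>0$ is crucial here: it keeps the argument of the logarithm bounded away from zero on any compact set, so $\log((1-\sigma)V_\varepsilon*\rho_\tau^{\varepsilon,\sigma}+\sigma\mn)$ converges locally uniformly, and so does $F_\tau\to F$. Since $\nabla\varphi$ has compact support, splitting
\[
\int \nabla\varphi\cdot F_\tau\,\diff\rho_\tau^{\varepsilon,\sigma} - \int\nabla\varphi\cdot F\,\diff\rho^{\varepsilon,\sigma} = \int\nabla\varphi\cdot(F_\tau-F)\,\diff\rho_\tau^{\varepsilon,\sigma} + \int\nabla\varphi\cdot F\,\diff\bigl(\rho_\tau^{\varepsilon,\sigma}-\rho^{\varepsilon,\sigma}\bigr),
\]
the first term vanishes pointwise in $t$ by uniform convergence on $\supp\nabla\varphi$, the second by narrow convergence tested against the bounded continuous function $\nabla\varphi\cdot F(t,\cdot)$. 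The growth bound $|F_\tau(t,x)|\le C_\varepsilon\langle x\rangle^p$ (following from $|\log\mn(x)|\le C\langle x\rangle^p$ and the compact support of $\nabla V_\varepsilon$) combined with the compact support of $\nabla\varphi$ supplies a uniform $t$-dominator, so dominated convergence in $t$ closes the argument. The hard part is thus precisely this passage in the logarithm, which crucially relies on the strict positivity supplied by the $\sigma\mn$ perturbation — the very reason~\eqref{eq:nonlocal_he_gauss} was introduced in the compactly supported kernel regime.
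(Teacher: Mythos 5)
Your overall route matches the paper's: derive the discrete Euler--Lagrange identity from inner variations of the JKO minimisers, sum over the time steps to get the piecewise-constant weak formulation with an $O(\tau)$ remainder, and pass to the limit $\tau\to 0$ using the uniform narrow convergence from~\Cref{lem:en-ineq-mom-bound}, the compact support of $\nabla V_\varepsilon$, and --- crucially --- the strict lower bound $\sigma\mn>0$ to keep the logarithm under control on $\supp\nabla\varphi + B_\varepsilon$. The splitting of the drift term into $\int\nabla\varphi\cdot(F_\tau - F)\,\diff\rho_\tau^{\varepsilon,\sigma}$ plus $\int\nabla\varphi\cdot F\,\diff(\rho_\tau^{\varepsilon,\sigma}-\rho^{\varepsilon,\sigma})$ is precisely how the paper handles the limit, and the growth bound $|F_\tau(t,x)|\le C_\varepsilon\langle x\rangle^p$ is the dominator used there as well.

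However, there is a gap in how you phrase the first-order optimality condition. You write it as a pointwise identity $T^{n+1}(y)-y = \tau(1-\sigma)\,\nabla V_\varepsilon*\log(\cdots)(y)$ for $\rhotnnes$-a.e.\ $y$, where $T^{n+1}$ is the optimal transport map from $\rhotnnes$ to $\rhotnes$. This map need not exist. Brenier's theorem requires the source measure $\rhotnnes$ to be absolutely continuous, but nothing in the minimisation~\eqref{eq:jko} forces that: $\mh_\sigma^\varepsilon$ only sees $V_\varepsilon*\rho$, which is a nice density even when $\rho$ is purely atomic, and the initial datum here is only $\rho_0\in\mptrd$. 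Indeed, allowing singular (particle) measures is the entire motivation for~\eqref{eq:nonlocal_he_gauss}, so you cannot restrict to absolutely continuous iterates. As written, both the first-order condition and the subsequent map-based Taylor expansion of $\varphi\circ T^{n+1}$ are unjustified.

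The fix is standard and brings you to exactly what the paper does. Work with an optimal plan $\gamma^{n+1}\in\Gamma_0(\rhotnes,\rhotnnes)$ rather than a map: inner variation yields the weak identity
\[
\iint_{\R^{2d}}(x-y)\cdot\zeta(x)\,\diff\gamma^{n+1}(y,x)
= -\tau(1-\sigma)\int_{\R^d}\zeta(x)\cdot\nabla V_\varepsilon*\log\bigl((1-\sigma)V_\varepsilon*\rhotnnes+\sigma\mn\bigr)(x)\,\diff\rhotnnes(x),
\]
and the plan-based Taylor expansion
\[
\int\varphi\,\diff\rhotnnes - \int\varphi\,\diff\rhotnes
= \iint_{\R^{2d}}\nabla\varphi(x)\cdot(x-y)\,\diff\gamma^{n+1}(y,x) + R^{n+1}_\tau,
\qquad |R^{n+1}_\tau|\le\tfrac12\|D^2\varphi\|_\infty\,d_W^2(\rhotnes,\rhotnnes),
\]
combine to give the same discrete identity you claim, with no appeal to a transport map. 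This is, in effect, what the paper is doing when it computes the $\eta\to0$ limits of both difference quotients in~\eqref{eq:optimality}, citing~\cite{CEW_nl_to_local_24} for the Wasserstein term. Note also that the $\eta\to0$ limit of the energy difference quotient, which you state without justification, itself requires a dominated-convergence argument (the paper's $M_\eta^\varepsilon$ majorant using~\Cref{lem:log_gauss} and~\Cref{lem:peetre}); that step needs to be spelled out. With these two repairs your proposal is correct and essentially the paper's proof.
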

\begin{proof}
Recall $\mn$ is defined in~\eqref{eq:notation_kernel_n}. Our candidate weak measure solution is $\rhoe$ from~\Cref{lem:en-ineq-mom-bound} although we will spend most of this proof working at the level of the discrete minimisers $\rhotne$ from~\eqref{eq:jko}. Fix $n=1,2,\dots, N-1; \eta\in (0,1);$ and $\zeta \in C_c^\infty(\R^d; \, \R^d)$. Given consecutive elements of the sequence $\rhotne$ and $\rhotnne$, we will suppress the dependence on $\tau, \varepsilon$ and consider the following perturbation
    \[
    \rho^n \equiv \rhotne, \quad \rho^{n+1} \equiv \rhotnne, \quad \rho_\eta^{n+1} = P_\#^\eta \rho^{n+1}, \quad P^\eta(x) = x + \eta\zeta(x).
    \]
    Being $\rho^{n+1}$ a minimiser of~\eqref{eq:jko}, we have
    \begin{equation}
    \label{eq:optimality}
		\frac{1}{2\tau}\left[\frac{d_W^2(\rho^n,\rho_\eta^{n+1})- d_W^2(\rho^n, \rho^{n+1})}{\eta}\right]+\frac{\mh_\sigma^\varepsilon[\rho_\eta^{n+1}]-\mh_\sigma^\varepsilon[\rho^{n+1}]}{\eta}\ge0.
	\end{equation}
By sending $\eta\to 0$ in~\eqref{eq:optimality}, we will recover~\eqref{eq:weak-form-sigma_towork_with}.\\

\noindent \underline{The energy functional terms in~\eqref{eq:optimality}:} We claim that
\begin{equation}
    \label{eq:energy-diff}
    \frac{\mh_\sigma^\varepsilon[\rho_\eta^{n+1}]-\mh_\sigma^\varepsilon[\rho^{n+1}]}{\eta} \to (1-\sigma)\int_{\R^d}\zeta(x)\cdot \nabla V_\varepsilon* [\log((1-\sigma)V_\varepsilon * \rho^{n+1} + \sigma\mn)](x)\, \mathrm{d}\rho^{n+1}(x), \quad \text{as }\eta \to 0.
\end{equation}
The left-hand side of~\eqref{eq:energy-diff} can be written in the following way by Taylor expansion
\begin{align}
    &\quad \frac{\mh_\sigma^\varepsilon[\rho_\eta^{n+1}]-\mh_\sigma^\varepsilon[\rho^{n+1}]}{\eta} \notag\\
    &= \frac{1}{\eta}\int_{\R^d} ((1-\sigma)V_\varepsilon*\rho_\eta^{n+1}(x) + \sigma\mn(x))\log [(1-\sigma)V_\varepsilon*\rho_\eta^{n+1}(x) + \sigma\mn(x)]    \notag\\
    &\quad - ((1-\sigma)V_\varepsilon*\rho^{n+1}(x) + \sigma\mn(x))\log \left[(1-\sigma)V_\varepsilon*\rho^{n+1}(x) + \sigma\mn(x)\right]\, \diff x \notag \\
    &= \frac{1-\sigma}{\eta}\int_{\R^d}\{V_\varepsilon * \rho_\eta^{n+1}(x) - V_\varepsilon * \rho^{n+1}(x)\}\times \notag\\
    &\qquad\qquad\times \underbrace{\int_0^1\log[(1-\sigma)\left\{sV_\varepsilon*\rho_\eta^{n+1}(x) + (1-s)V_\varepsilon*\rho^{n+1}(x)\right\} + \sigma \mn(x)] + 1\, \mathrm{d}s}_{=:M_\eta^\varepsilon(x)}\, \diff x \notag \\
    &= \frac{1-\sigma}{\eta} \int_{\R^d}(V_\varepsilon * M_\eta^\varepsilon)(x)\, \mathrm{d}[\rho_\eta^{n+1} - \rho^{n+1}](x) \notag \\
    &= (1-\sigma)\int_{\R^d} \frac{V_\varepsilon*M_\eta^\varepsilon(P^\eta(x)) - V_\varepsilon*M_\eta^\varepsilon(x)}{\eta}\, \mathrm{d}\rho^{n+1}(x) \notag \\
    &= (1-\sigma)\int_{\R^d}
\int_{\R^d}\left(\frac{V_\varepsilon(P^\eta(x) - y) - V_\varepsilon(x-y)}{\eta}\right)M_\eta^\varepsilon(y)\, \mathrm{d}y
    \, \mathrm{d}\rho^{n+1}(x). \label{eq:interaction-terms}
\end{align}
Owing to the Dominated Convergence Theorem, we have
\begin{equation}
    \label{eq:conv_m}
    M_\eta^\varepsilon(y) \to \log ((1-\sigma)V_\varepsilon * \rho^{n+1}(y) + \sigma \mn(y)) + 1, \quad \text{a.e. }y\in\R^d \quad \text{as }\eta \to 0.
\end{equation}
Indeed, by definition of $M_\eta^\varepsilon(y)$ and for almost every $s\in[0,1]$ and $y\in\R^d$, we have
\begin{align*}
\log \left[(1-\sigma)\left\{sV_\varepsilon*\rho_\eta^{n+1}(y) + (1-s)V_\varepsilon * \rho^{n+1}(y)\right\} +  \sigma \mn(y)\right] &+ 1   \\
    \overset{\eta \to 0}{\to} \log &[(1-\sigma)V_\varepsilon * \rho^{n+1}(y) + \sigma \mn(y)] + 1,
\end{align*}
since it is easy to verify that $\rho_\eta^{n+1} \overset{\eta \to 0}{\rightharpoonup}\rho^{n+1}$. Moreover, owing to~\Cref{lem:log_gauss}, the integrand in $M_\eta^\varepsilon(y)$ is uniformly-in-$\eta$ majorised by
\[
\left|\log \left[(1-\sigma)\left\{sV_\varepsilon*\rho_\eta^{n+1}(y) + (1-s)V_\varepsilon * \rho^{n+1}(y)\right\} +  \sigma \mn(y)\right] + 1\right| \le C(\sigma, \|V_\varepsilon\|_{L^\infty})\langle y\rangle^p \in L^1(\mathrm{d}s).
\]
In particular, we also have
\begin{equation}
    \label{eq:major_m}
    |M_\eta^\varepsilon(y)|\le C\langle y\rangle^p.
\end{equation}
As for the difference quotient in~\eqref{eq:interaction-terms}, we can write it as
\[
\frac{V_\varepsilon(P^\eta(x) - y) - V_\varepsilon(x-y)}{\eta}   = \frac{1}{\eta}\int_0^1\frac{d}{ds}V_\varepsilon(x+s\eta\zeta(x) - y)\, \mathrm{d}s   = \zeta(x)\cdot \int_0^1\nabla V_\varepsilon(x + s\eta \zeta(x) - y)\, \mathrm{d}s.
\]
By the Dominated Convergence Theorem, we have
\begin{equation}
    \label{eq:conv_diff_quot}
    \frac{V_\varepsilon(P^\eta(x) - y) - V_\varepsilon(x-y)}{\eta} \to \zeta(x) \cdot \nabla V_\varepsilon(x-y), \quad \text{a.e. }x,y\in\R^d, \quad \text{as }\eta\to 0.
\end{equation}
We now exhibit a uniform-in-$\eta$ majorant for the integrand in~\eqref{eq:interaction-terms}
\begin{align*}
    \left|
\frac{V_\varepsilon(P^\eta(x) - y) - V_\varepsilon(x-y)}{\eta} M_\eta^\varepsilon(y)
    \right|\le C\langle y\rangle^p|\zeta(x)|\int_0^1|\nabla V_\varepsilon(x+s\eta \zeta(x) - y)|\,\mathrm{d}s.
\end{align*}
Recall by assumption~\ref{ass:v2-c} that $V_1$ is supported on the unit ball. Hence, for some constant $C>0$ depending on $\varepsilon$ and $V_1$, we have
\[
|\nabla V_\varepsilon(z)| \le C\langle z\rangle^{-(d+3)}, \quad \forall z\in\R^d.
\]
Therefore, we can further estimate
\begin{align*}
   \left|
\frac{V_\varepsilon(P^\eta(x) - y) - V_\varepsilon(x-y)}{\eta} M_\eta^\varepsilon(y)
    \right|&\le C\langle y\rangle^p|\zeta(x)|\int_0^1\langle x+s\eta\zeta(x) - y \rangle^{-(d+3)}\,\mathrm{d}s   \\
    &\le C\langle y\rangle^{-(d+3-p)}|\zeta(x)|\int_0^1\langle x + s\eta\zeta(x)\rangle^{d+3}\,\mathrm{d}s     \\
    &\le C\langle y\rangle^{-(d+3-p)}|\zeta(x)| \left(\langle x\rangle^{d+3} + \langle \zeta(x)\rangle^{d+3}\right) \in L^1(\mathrm{d}y \, \mathrm{d}\rho^{n+1}(x)).
\end{align*}
We used~\Cref{lem:peetre} in the second inequality where the constant $C>0$ has possibly increased line-by-line. The last expression is a majorant in $L^1(\mathrm{d}y \, \mathrm{d}\rho^{n+1}(x))$ uniformly in $\eta$ owing to the decay in $y$ (recall $p=1$ or $p=2$) as well as the fact that $\zeta(x)$ is smooth and compactly support. Moreover, since~\eqref{eq:conv_m} and~\eqref{eq:conv_diff_quot} tell us what the pointwise limit of their product is, we can pass to the limit $\eta\to 0$ from~\eqref{eq:interaction-terms} using the Dominated Convergence Theorem to obtain
\begin{align*}
    &\quad \frac{\mh_\sigma^\varepsilon[\rho_\eta^{n+1}]-\mh_\sigma^\varepsilon[\rho^{n+1}]}{\eta}  = \int_{\R^d}
\int_{\R^d}\left(\frac{V_\varepsilon(P^\eta(x) - y) - V_\varepsilon(x-y)}{\eta}\right)M_\eta^\varepsilon(y)\, \mathrm{d}y
   \, \mathrm{d}\rho^{n+1}(x)   \\
    &\overset{\eta\to 0}{\to} (1-\sigma)\int_{\R^d}\zeta(x)\cdot \int_{\R^d}\nabla V_\varepsilon(x-y)[\log ((1-\sigma)V_\varepsilon * \rho^{n+1}(y) + \sigma \mn(y)) + 1]\, \mathrm{d}y\,\mathrm{d}\rho^{n+1}(x)  \\
    &= (1-\sigma)\int_{\R^d}\zeta(x)\cdot \nabla V_\varepsilon* [\log((1-\sigma)V_\varepsilon * \rho^{n+1} + \sigma\mn) + 1](x)\, \mathrm{d}\rho^{n+1}(x).
\end{align*}
This is precisely~\eqref{eq:energy-diff} since $\nabla V_\varepsilon * 1 = 0$.\\

\noindent \underline{Recovering the weak formulation of~\eqref{eq:nonlocal_he_gauss}:} Sending $\eta\to 0$ in the 2-Wasserstein terms in~\eqref{eq:optimality} is standard and we refer, e.g., to~\cite[Theorem 2.1]{CEW_nl_to_local_24} for details. We set $\zeta = \nabla \varphi$ for $\varphi \in C_c^\infty(\R^d)$, pass to the limit $\eta\to 0$ in~\eqref{eq:optimality}, and use~\eqref{eq:energy-diff} to obtain
\begin{align*}
\int_{\R^d}\varphi(x)\, \mathrm{d}\rho^{n+1}(x) &- \int_{\R^d}\varphi(x)\, \mathrm{d}\rho^n(x) + \mathcal{O}(\tau^2)\\
&= -\tau(1-\sigma)\int_{\R^d}\nabla \varphi(x)\cdot \nabla V_\varepsilon*[\log ((1-\sigma)V_\varepsilon*\rho^{n+1} + \sigma \mn)](x) \, \mathrm{d}\rho^{n+1}(x).
\end{align*}
At the level of the piecewise constant interpolants, for any fixed $0\le s < t \le T$ this implies (after summing up over $n$)
\begin{align}
\label{eq:weak-form-tau}
\begin{split}
	\int_\Rd\varphi(x)&\,\mathrm{d}\rhote(t,x)-\int_\Rd\varphi(x)\,\mathrm{d}\rhote(s,x)+\mathcal{O}(\tau^2)=\\
	&-(1-\sigma)\int_s^t\int_{\Rd}\nabla\varphi(x)\cdot \nabla V_\varepsilon* [\log((1-\sigma)V_\varepsilon*\rhote(r,\cdot)+ \sigma\mn(\cdot))](x)\,\mathrm{d}\rhote(r,x)\,\mathrm{d}r.
\end{split}
\end{align}
We would now like to pass to the limit $\tau\to 0$. Since $\rhote$ narrowly converges uniformly in time to $\rho^\varepsilon$ from~\Cref{lem:en-ineq-mom-bound}, the left-hand side of~\eqref{eq:weak-form-tau} is easily handled. As for the right-hand side, due to the uniform second moment bound in~\Cref{lem:en-ineq-mom-bound} and the regularity of $\varphi$ and $V_\varepsilon$, we have for almost every $r\in[s,t]$
\[
\left|\int \nabla \varphi(x)\!\cdot\! \nabla V_\varepsilon * [\log((1\!-\!\sigma)V_\varepsilon*\rhote(r,\cdot)\!+\! \sigma\mn(\cdot))](x)\mathrm{d}\rhote(r,x)\right| \!\le\! C\!\!\int\! \langle x\rangle^2 \diff \rhote(r,x) \!\le\! C \in L^1([s,t]; \mathrm{d}r).
\]
Here, we are using~\Cref{lem:log_gauss} and~\Cref{lem:peetre} to estimate
\begin{align*}
    &\quad |\nabla V_\varepsilon * [\log ((1-\sigma)V_\varepsilon * \rho_\tau^\varepsilon(r,\cdot) + \sigma \mn(\cdot))](x)|    \\
    &\le \int |\nabla V_\varepsilon(y)| |[\log ((1-\sigma)V_\varepsilon * \rho_\tau^\varepsilon(r,\cdot) + \sigma \mn(\cdot))](x-y)|\, \mathrm{d}y 
    \\
    &\le C\int |\nabla V_\varepsilon(y)|\langle x-y\rangle^p \, \mathrm{d}y \le C\langle x\rangle^p \int |\nabla V_\varepsilon(y)| \langle y\rangle^p \, \mathrm{d}y \le C\langle x\rangle^p.
\end{align*}
Thus, we have exhibited a uniform-in-$\tau$ majorant for the spatial integral. It remains to prove
\begin{align}
	\label{eq:AEC}
	\begin{split}
	& \int_{\Rd}\!\!\nabla\varphi(x)\!\cdot\! \nabla V_\varepsilon* [\log((1-\sigma)V_\varepsilon*\rhote(r,\cdot) + \sigma \mn(\cdot)](x)\,\mathrm{d}\rhote(r,x) \\
	&\to \int_{\Rd}\!\!\nabla\varphi(x)\!\cdot \!\nabla V_\varepsilon* [\log((1-\sigma)V_\varepsilon*\rho^\varepsilon_r(\cdot) +\sigma \mn(\cdot)](x)\,\mathrm{d}\rho^\varepsilon_r(x), \quad \tau\downarrow 0, \, \text{a. e. }r\in[0,T].
	\end{split}
\end{align}
For fixed $r\in[0,T]$, we drop the explicit dependence on this variable and consider the difference of the two expressions in~\eqref{eq:AEC}.
\begin{align}
&\int_{\Rd}\nabla\varphi(x)\cdot \nabla V_\varepsilon* [\log((1-\sigma)V_\varepsilon * \rhote + \sigma \mn)](x)\,\mathrm{d}\rhote(x)    \notag\\
&\qquad \qquad \qquad \qquad  - \int_{\Rd}\nabla\varphi(x)\cdot \nabla V_\varepsilon* [\log((1-\sigma)V_\varepsilon * \rho^\varepsilon + \sigma \mn)](x)\,\mathrm{d}\rho^\varepsilon(x) \notag\\
&=\int_{\supp \varphi}\nabla\varphi(x)\cdot \nabla V_\varepsilon* [\log ((1-\sigma)V_\varepsilon*\rhote + \sigma \mn) - \log (V_\varepsilon * \rho^\varepsilon + \sigma \mn)](x)\,\mathrm{d}\rhote(x) 	\label{eq:diff-F'} 	\\
&\qquad \qquad \qquad \qquad  + \int_{\supp \varphi}\nabla\varphi(x)\cdot \nabla V_\varepsilon* [\log((1-\sigma)V_\varepsilon * \rho^\varepsilon + \sigma \mn)](x)\,\diff[\rho_\tau^\varepsilon -\rho^\varepsilon](x). 	\label{eq:diff-rho}
\end{align}
Fix $x\in $ supp$\varphi$. Let us note that $d_1(V_\varepsilon*\rho_\tau^\varepsilon,V_\varepsilon*\rho^\varepsilon)\le d_1(\rhote,\rhoe)\to0$, as $\tau\to0$, due to narrow convergence of $\rhote$ and uniform integrability of first order moments of $\rhote$, cf.~\cite[Proposition 7.1.5]{AGS}. In particular, since $V_{\eps}$ is Lipschitz continuous, we have uniform convergence of $V_\varepsilon*\rho_\tau^\varepsilon \to V_\varepsilon*\rho^\varepsilon$. Then, by further using the continuity of $\log$ away from the origin
\begin{align*}
&\quad \left|\nabla V_\varepsilon*  [\log ((1-\sigma)V_\varepsilon*\rhote + \sigma \mn) - \log ((1-\sigma)V_\varepsilon * \rho^\varepsilon + \sigma \mn)](x)\right|     \\
&\le \int_{\{|y|\le \varepsilon\}} |\nabla V_\varepsilon(y)| |\log ((1\!-\!\sigma)V_\varepsilon*\rhote(x\!-\!y) \!+\! \sigma \mn(x\!-\!y))\! -\! \log ((1\!-\!\sigma)V_\varepsilon * \rho^\varepsilon(x\!-\!y) \!+\! \sigma \mn(x\!-\!y))| \mathrm{d}y  \\
&\le \int_{\{|y|\le \varepsilon\}} |\nabla V_\varepsilon(y)| \, \frac{|1-\sigma|}{\sigma\, \mathcal{N}(x-y)} \, |V_{\eps}\ast (\rho_\tau^\varepsilon - \rho^\varepsilon)|(x-y) \diff y \to 0 \mbox{ uniformly for all } x \in \supp \varphi,
\end{align*}
so that the term in \eqref{eq:diff-F'} converges to 0 by duality between measures and continuous functions. Turning to the integral in~\eqref{eq:diff-rho}, the convolution by $\nabla V_\varepsilon$ ensures that
\[
\nabla \varphi(x)\cdot\nabla V_\varepsilon*[\log((1-\sigma)V_\varepsilon*\rho^\varepsilon + \sigma \mn)](x)
\]
is continuous and bounded. Again since $\rhote$ narrowly converges to $\rho^\varepsilon$, as $\tau\to 0$, the term \eqref{eq:diff-rho} converges to 0. This shows~\eqref{eq:AEC}, so that $\rho^{\eps}$ satisfies~\Cref{def:weak-meas-sol-sigma} by passing to the limit $\tau\to 0$ in~\eqref{eq:weak-form-tau}.
\end{proof}
We now prove the analogous result in the case when $\sigma=0$ provided that the mollifier $V_1$ is globally supported with exponential tails (cf.~\ref{ass:v2-g}). The proof follows very similar lines and we only highlight technical differences.
\begin{thm}
\label{thm:exist_nlhe_sigma=0}
    Fix $\varepsilon>0$ and $V_1$ satisfying~\ref{ass:v1} and~\ref{ass:v2-g}. Suppose $\rho_0\in \mptrd$ satisfies $\mh^\varepsilon[\rho_0]<+\infty$. Then, there exists a weak measure solution $\rho^\varepsilon$ to~\eqref{eq:nonlocal_he_gauss} for $\sigma=0$ such that $\rho^\varepsilon(0) = \rho_0$.
\end{thm}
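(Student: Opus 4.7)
The proof proposal follows closely the structure of Theorem \ref{thm:exist_nlhe_gauss}, but replaces the lower-bound-by-$\sigma\mn$ mechanism with the built-in positivity of the convolution $V_\varepsilon \ast \rho$ coming from the fact that $V_1 = \mn_p$ is globally supported. I would begin by running the JKO scheme~\eqref{eq:jko} with $\sigma=0$, i.e.\ minimising $\rho \mapsto \tfrac{1}{2\tau} d_W^2(\rho_\tau^{\varepsilon,n}, \rho) + \mh^\varepsilon[\rho]$ at each step, producing piecewise-constant interpolants $\rhote$. The analogue of Lemma~\ref{lem:en-ineq-mom-bound} with $\sigma=0$ delivers narrow convergence $\rhote(t) \rightharpoonup \rhoe(t)$ uniformly in $t$, a uniform second-moment bound, and the monotonicity $\mh^\varepsilon[\rhote(t)] \le \mh^\varepsilon[\rho_0] \le \mh[\rho_0] < +\infty$ from the third inequality of~\eqref{eq:sig_ent}.

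The first place where the argument genuinely diverges is in ensuring that $\log(V_\varepsilon \ast \rhote)$ is well-defined and integrable against appropriate test objects. Here I would invoke the mechanism of Remark~\ref{rem:gaussian}: the uniform second-moment bound yields a radius $R>0$ with $\int_{B_R} \mathrm{d}\rhote(t,x) \ge \tfrac12$ for every $t$, and since $V_1 = \mn_p$ with $p\in\{1,2\}$, Peetre's inequality (Lemma~\ref{lem:peetre}) gives the pointwise lower bound
\[
V_\varepsilon \ast \rhote(t,x) \;\ge\; \int_{B_R} V_\varepsilon(x-y) \, \mathrm{d}\rhote(t,y) \;\ge\; c(\varepsilon,R)\, V_\varepsilon(x) \;>\; 0,
\]
uniformly in $t$ and $\tau$. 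This plays the role of $\sigma\mn$ in the previous proof and guarantees that $\log(V_\varepsilon \ast \rhote)$ is finite and, combined with Lemma~\ref{lem:log_conv_glob_rescaling}, controlled pointwise by $C\langle x\rangle^p$.

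Next I would perform the variational perturbation $P^\eta(x) = x + \eta\zeta(x)$, $\zeta = \nabla\varphi$, $\varphi\in C_c^\infty(\R^d)$, on the minimiser $\rho^{n+1}$, starting from the optimality inequality~\eqref{eq:optimality} with $\sigma=0$. The computation of the energy difference quotient proceeds exactly as in~\eqref{eq:interaction-terms}, producing an integrand of the form
\[
\zeta(x) \cdot \int_0^1 \nabla V_\varepsilon(x+s\eta\zeta(x) - y)\, \mathrm{d}s \;\cdot\; M_\eta^\varepsilon(y),
\]
where $M_\eta^\varepsilon(y)$ now involves $\log[s V_\varepsilon \ast \rho_\eta^{n+1} + (1-s) V_\varepsilon \ast \rho^{n+1}]$. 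Using the uniform lower bound above together with Lemma~\ref{lem:log_conv_glob_rescaling}, $|M_\eta^\varepsilon(y)| \le C\langle y\rangle^p$ uniformly in $\eta$, while by Remark~\ref{rem:general_global_kernel} one has $|\nabla V_\varepsilon(z)| \le C\langle z\rangle^{-(d+3)}$. The product is then dominated by $C\langle y\rangle^{-(d+3-p)}|\zeta(x)|(\langle x\rangle^{d+3} + \langle \zeta(x)\rangle^{d+3})$, which is integrable in $\mathrm{d}y\,\mathrm{d}\rho^{n+1}(x)$ since $p\in\{1,2\}$ and $\zeta$ is compactly supported. Dominated convergence then yields, after sending $\eta\to 0$, the discrete Euler--Lagrange identity
\[
\int \varphi\, \mathrm{d}\rho^{n+1} - \int \varphi\, \mathrm{d}\rho^n + \mathcal{O}(\tau^2) = -\tau \int \nabla\varphi(x)\cdot \nabla V_\varepsilon \ast [\log(V_\varepsilon \ast \rho^{n+1})](x)\, \mathrm{d}\rho^{n+1}(x).
\]

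Finally, I would pass $\tau \to 0$ in the summed version of this identity. The main obstacle here, as in the $\sigma>0$ case, is the convergence
\[
\int \nabla\varphi \cdot \nabla V_\varepsilon \ast [\log(V_\varepsilon \ast \rhote)]\, \mathrm{d}\rhote \;\longrightarrow\; \int \nabla\varphi \cdot \nabla V_\varepsilon \ast [\log(V_\varepsilon \ast \rhoe)]\, \mathrm{d}\rhoe.
\]
The new difficulty is that we can no longer bound $|\log a - \log b|$ by $\frac{|a-b|}{\sigma \mn}$; instead, I would use the uniform lower bound $V_\varepsilon \ast \rhote \ge c(\varepsilon,R) V_\varepsilon$ (which passes to $\rhoe$ by narrow convergence and continuity of $V_\varepsilon$) together with the mean-value inequality $|\log a - \log b| \le \max(a^{-1}, b^{-1})|a-b|$. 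This, combined with the uniform convergence $V_\varepsilon \ast \rhote \to V_\varepsilon \ast \rhoe$ on compact sets (a consequence of $d_1(\rhote,\rhoe) \to 0$ via uniform second moments and the Lipschitz continuity of $V_\varepsilon$), handles the first term by duality as in~\eqref{eq:diff-F'}. The second term~\eqref{eq:diff-rho} is handled as before since the integrand is bounded and continuous. A uniform-in-$\tau$ time majorant $C\langle x\rangle^p \in L^1(\mathrm{d}\rhote \, \mathrm{d}r)$ then permits the final dominated-convergence argument to yield~\Cref{def:weak-meas-sol-sigma} at $\sigma=0$, concluding the proof.
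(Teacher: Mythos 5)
Your proposal replicates the paper's structure and is correct up through the $\eta\to 0$ step: the estimates on $M_\eta^\varepsilon$ via Lemma~\ref{lem:log_conv_glob_rescaling}, the polynomial decay $|\nabla V_\varepsilon(z)|\le C\langle z\rangle^{-(d+3)}$ from Remark~\ref{rem:general_global_kernel}, and the Peetre-based majorant are exactly what the paper does.

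The gap is in the $\tau\to 0$ step. You propose to control $\nabla V_\varepsilon*[\log(V_\varepsilon*\rhote)-\log(V_\varepsilon*\rhoe)](x)$ for $x\in\supp\varphi$ by combining the lower bound $V_\varepsilon*\rhote \ge c(\varepsilon,R)\,V_\varepsilon$ with the mean-value inequality $|\log a - \log b|\le \max(a^{-1},b^{-1})|a-b|$. But this yields the integrand
\[
|\nabla V_\varepsilon(y)|\,\frac{|V_\varepsilon*(\rhote-\rhoe)(x-y)|}{c(\varepsilon,R)\,V_\varepsilon(x-y)},
\]
and the issue is that $V_\varepsilon(x-y)^{-1}\simeq e^{\langle (x-y)/\varepsilon\rangle^p}$ grows exponentially in $|y|$ (for fixed bounded $x$), while $|\nabla V_\varepsilon(y)|$ decays essentially at the same exponential rate. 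The two exponentials cancel and leave a polynomial factor $\langle y/\varepsilon\rangle^{p-1}$ which is \emph{not} integrable over $\R^d$. Even if $|V_\varepsilon*(\rhote-\rhoe)|\to 0$ uniformly, you are multiplying a vanishing quantity by a divergent integral, which gives nothing. The mean-value inequality cannot be used uniformly in $y$.

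The paper's proof (which you can still follow very closely) avoids this by the truncation trick~\eqref{eq:int_recip}--\eqref{eq:log_diff}: split the $y$-integral into $\{|y|\le R\}$ and $\{|y|\ge R\}$. On $\{|y|\le R\}$, $x-y$ ranges over a bounded set, so both $V_\varepsilon*\rhote(x-y)$ and $V_\varepsilon*\rhoe(x-y)$ are bounded below by a fixed positive constant and the logarithms converge uniformly; this piece is $\le \|\nabla V_\varepsilon\|_{L^1}\,\delta$. On $\{|y|\ge R\}$ one does \emph{not} try to make the difference of logarithms small; instead one bounds each logarithm separately by $C_\varepsilon\langle x-y\rangle^p$ via Lemma~\ref{lem:log_conv_glob_rescaling}, and then the product with $|\nabla V_\varepsilon(y)|\le C_\varepsilon\langle y\rangle^{-(d+3)}$ is, by Peetre's inequality, controlled by $C_\varepsilon\langle x\rangle^p\int_{|y|\ge R}\langle y\rangle^{-(d+3-p)}\,\mathrm{d}y\le C_\varepsilon\langle x\rangle^p\,\delta$ for $R$ large. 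Together these give the required smallness uniformly over $x\in\supp\varphi$.
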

\begin{proof}[Sketch of the proof]
We emphasise the fact that this proof follows very closely the proof of~\Cref{thm:exist_nlhe_gauss} by replacing the assumption of $\sigma>0$ and compactly supported $V_1$.

Our candidate weak measure solution is $\rho^\varepsilon$ from~\Cref{lem:en-ineq-mom-bound} for the case $\sigma = 0$ and globally supported $V_1$. Existence of minimisers to~\eqref{eq:jko} is proven in~\cite[Proposition 3.1]{CEW_nl_to_local_24}. As in the proof of~\Cref{thm:exist_nlhe_gauss}, we consider the perturbation
\[
\rho^n \equiv \rhotne, \quad \rhotnn=\rhotnne, \quad \rho_\eta^{n+1} = P_\#^\eta\rho^{n+1}, \quad P^\eta(x) = x + \eta\, \zeta(x).
\]
Being $\rho^{n+1}$ a minimiser of~\eqref{eq:jko}, we get~\eqref{eq:optimality} for $\sigma=0$ i.e.
\[
\frac{1}{2\tau}\left[ \frac{d_W^2(\rho^n, \rho_\eta^{n+1} - d_W^2(\rho^n,\rho^{n+1})}{\eta} \right] + \frac{\mh^\varepsilon[\rho_\eta^{n+1}] - \mh^\varepsilon[\rho^{n+1}]}{\eta}\ge 0.
\]
The passage to the limit $\eta\to 0$ in the Wasserstein difference goes through exactly as in the proof of~\Cref{thm:exist_nlhe_gauss}. As for the energy functional terms, keeping in mind $V_\varepsilon*\rhotnn>0$ as explained in~\Cref{rem:gaussian}, we follow the calculation in~\eqref{eq:interaction-terms} except here we set
\[
M_\eta^\varepsilon(y) = \int_0^1\log [sV_\varepsilon * \rho_\eta^{n+1}(y) + (1-s)V_\varepsilon*\rho^{n+1}(y)]\, \mathrm{d}s.
\]
We obtain the $\sigma=0$ analogue of~\eqref{eq:conv_m} which is to say that
\[
M_\eta^\varepsilon(y) \to \log (V_\varepsilon*\rho^{n+1}(y)), \quad \text{a.e. }y\in\R^d \quad \text{as }\eta \to 0.
\]
This is justified by the fact that the same estimate as~\eqref{eq:major_m} holds in this case
\[
|M_\eta^\varepsilon(y)|\le C\langle y \rangle^p,\quad \text{for some }C>0,
\]
owing to~\Cref{lem:log_conv_glob_rescaling} and~\ref{ass:v2-g}. Thus, we can pass to the limit $\eta \to 0$ as before and set $\zeta = \nabla \varphi$ to obtain
\begin{align*}
    \int_{\R^d}\varphi(x) \, \mathrm{d}\rho_\tau^\varepsilon(t,x) &- \int_{\R^d}\varphi (x) \, \mathrm{d}\rho_\tau^\varepsilon(s,x) + \mathcal{O}(\tau^2)\\
    &=-\int_s^t\int_{\R^d}\nabla \varphi(x)\cdot \nabla V_\varepsilon* \log[V_\varepsilon*\rho_\tau^\varepsilon(r,\cdot)](x) \, \mathrm{d}\rho_\tau^\varepsilon(r,x) \, \mathrm{d}r.
\end{align*}
It remains to pass to the limit $\tau\to 0$. The two terms on the left-hand side are easily handled. As for the right-hand side, \Cref{lem:log_conv_glob_rescaling} and~\ref{ass:v2-g} assert that the logarithmic term grows at most quadratically, and the second moment bound in~\Cref{lem:en-ineq-mom-bound} yield
\[
\left|
\int \nabla \varphi(x)\cdot \nabla V_\varepsilon*\log[V_\varepsilon*\rho_\tau^\varepsilon(r,\cdot)](x)\mathrm{d}\rho_\tau^\varepsilon(r,x)
\right| \le C \int\langle x \rangle^p\mathrm{d}\rho_\tau^\varepsilon(r,x) \le C \in L^1([s,t];\mathrm{d}r).
\]
Thus, the spatial integral on the right-hand side has a uniform-in-$\tau$ majorant. It remains to prove the $\sigma=0$ analogue of~\eqref{eq:AEC} i.e.
\begin{equation*}
\begin{split}
\int_{\R^d}\nabla \varphi(x)\cdot \nabla V_\varepsilon*\log[V_\varepsilon*\rho_\tau^\varepsilon(r,\cdot)](x) &\mathrm{d}\rho_\tau^\varepsilon(r,x) \to \int_{\R^d}\nabla\varphi(x)\cdot \nabla V_\varepsilon*\log[V_\varepsilon*\rho^\varepsilon_r(\cdot)](x)\mathrm{d}\rho^\varepsilon_r(x),\\& \text{as } \tau\downarrow 0, \text{ a.e. }r\in[0,T].
\end{split}
\end{equation*}
This amounts to proving that the $\sigma=0$ analogues of~\eqref{eq:diff-F'} and~\eqref{eq:diff-rho} vanish in the limit $\tau\to 0$. The term corresponding to~\eqref{eq:diff-rho} is handled in the same way as the proof of~\Cref{thm:exist_nlhe_gauss} because
\[
\nabla \varphi(x)\cdot \nabla V_\varepsilon*\log[V_\varepsilon*\rho^\varepsilon](x)
\]
is continuous and bounded. As for~\eqref{eq:diff-F'}, we wish to show that
\begin{equation}
\label{eq:diff-F'-sigma=0}
\left|\int_{\text{supp}\varphi} \nabla \varphi(x)\cdot \nabla V_\varepsilon * [\log (V_\varepsilon*\rho_\tau^\varepsilon) - \log (V_\varepsilon*\rho^\varepsilon)]\, \mathrm{d}\rho_\tau^\varepsilon(x)\right| \overset{\tau\downarrow 0}{\to}0.
\end{equation}
Fix $\delta>0, x\in \text{supp}\varphi$, and take $R>0$ sufficiently large such that
\begin{equation}
    \label{eq:int_recip}
\int_{|y|\ge R}\langle y \rangle^{-(d+1)}\, \mathrm{d}y\le \delta.
\end{equation}
Arguing as for \eqref{eq:diff-F'}, we have uniform convergence of $V_\varepsilon*\rho_\tau^\varepsilon \to V_\varepsilon*\rho^\varepsilon$. Hence, for $|y|\le R$, take $\tau>0$ sufficiently small such that, one has
\begin{equation}
    \label{eq:log_diff}
    |\log(V_\varepsilon*\rho_\tau^\varepsilon(x-y)) - \log(V_\varepsilon*\rho^\varepsilon(x-y))| \le \delta.
\end{equation}
Now, the convolution within~\eqref{eq:diff-F'-sigma=0} can be estimated by partitioning the set into
\begin{align*}
    &\quad \left|\nabla V_\varepsilon*[\log(V_\varepsilon*\rho_\tau^\varepsilon) - \log(V_\varepsilon*\rho^\varepsilon)]\right|     \\
    &\le \left(\int_{\{|y| \le R\}}+\int_{\{|y| \ge R\}}\right)|\nabla V_\varepsilon(y)|\,|\log[V_\varepsilon*\rho_\tau^\varepsilon(x-y)] - \log [V_\varepsilon*\rho^\varepsilon(x-y)]|\, \mathrm{d}y    \\
    &\le \|\nabla V_\varepsilon\|_{L^1}\, \delta + C_\varepsilon\int_{\{|y|\ge R\}}\langle y\rangle^{-(d+3)}\langle x-y\rangle^p\, \mathrm{d}y.
\end{align*}
The first term in the inequality is treated exactly as in the proof of~\Cref{thm:exist_nlhe_gauss} using~\eqref{eq:log_diff}. The second term uses~\ref{ass:v2-g} to estimate $\nabla V_\varepsilon$ and~\Cref{lem:log_conv_glob_rescaling} to bluntly estimate the logarithmic terms. Owing to~\Cref{lem:peetre} and~\eqref{eq:int_recip}, we can continue the estimate to obtain
\begin{align*}
    &\quad |\nabla V_\varepsilon*[\log(V_\varepsilon*\rho_\tau^\varepsilon) - \log(V_\varepsilon*\rho^\varepsilon)](x)| \\
    &\le \|\nabla V_\varepsilon\|_{L^1}\, \delta + C_\varepsilon \left( \int_{\{|y|\ge R\}}\langle y\rangle^{-(d+3-p)}\,\mathrm{d}y \right)\langle x\rangle^p \le C_\varepsilon \, \delta \, \langle x\rangle^p,
\end{align*}
where we have absorbed $\|\nabla V_\varepsilon\|_{L^1}$ into the constant $C_\varepsilon$. Plugging this into the $\sigma=0$ analogue of~\eqref{eq:diff-F'} yields
\[
\left| \int_{\text{supp}\varphi} \nabla \varphi(x)\cdot \nabla V_\varepsilon*[\log(V_\varepsilon*\rho_\tau^\varepsilon) - \log(V_\varepsilon*\rho^\varepsilon)](x) \, \mathrm{d}\rho_\tau^\varepsilon(x) \right| \le C_\varepsilon\, \delta \int |\nabla \varphi(x)|\, \langle x\rangle^p \, \mathrm{d}\rho_\tau^\varepsilon(x) \le C_{\varepsilon,\varphi}\, \delta.
\]
This establishes~\eqref{eq:diff-F'-sigma=0}.
\end{proof}

\begin{rem}
In the case $\sigma\in(0,1)$ and the mollifier $V_1$ is globally supported (i.e. it satisfies~\ref{ass:v2-g}), the statement of~\Cref{thm:exist_nlhe_gauss} remains true. In this case, the estimation of~\eqref{eq:diff-F'} follows what was described in the proof just before this remark. There, one uses~\Cref{lem:log_gauss} instead of~\Cref{lem:log_conv_glob_rescaling}.
\end{rem}
In the subsequent section, we prove uniform-in-$\sigma,\varepsilon$ estimates which will be crucial to pass to the nonlocal-to-local limits in~\Cref{sec:nl_lo}.


\subsection{Uniform-in-$\sigma$,$\varepsilon$ estimates and compactness}
\label{sec:unif_sigma}
We remind the reader that each $\rho^{\varepsilon,\sigma}$ is a weak measure solution to~\eqref{eq:nonlocal_he_gauss} owing to~\Cref{thm:exist_nlhe_gauss}. The goal of this section is to prove that the sequence $\{\rhoes\}_{\varepsilon,\sigma}$ is compact with respect to the right topology (which we shall identify in the sequel) in order to pass to the limits~\eqref{eq:nonlocal_he_gauss}$\overset{\sigma \to 0}{\to}$\eqref{eq:nonlocal_he}$\overset{\varepsilon \to 0}{\to}$~\eqref{eq:heat_eq} and \eqref{eq:nonlocal_he_gauss}$\overset{\sigma,\varepsilon \to 0}{\longrightarrow}$~\eqref{eq:heat_eq}. Here and throughout the rest of the paper, we make the standing assumption that the initial condition $\rhoes(t=0) = \rho_0$ satisfies~\ref{a:fie} i.e.
$\mh[\rho_0]<+\infty$. To summarise, let us denote by $\rhoes :[0,T]\to \mptrd$ an absolutely continuous curve which is a weak-measure solution to~\eqref{eq:nonlocal_he_gauss} such that $\mh[\rhoes(0)]<+\infty$ (for fixed $\varepsilon>0$). This curve satisfies the bounds described in~\eqref{eq:sig_ent}.
Using the refined version of Ascoli-Arzel\`a in~\cite[Proposition 3.3.1]{AGS} we can prove compactness of $\{\rho^{\varepsilon,\sigma}\}_{\varepsilon,\sigma}$ in $C([0,T];\mptrd)$. 
\begin{prop}
\label{prop:limit-rho}
There exists an absolutely continuous curve $\tilde{\rho}^\varepsilon:[0,T]\to\mptrd$ such that the sequence $\{\rhoes\}_{\sigma}$ admits a subsequence $\{\rho^{\varepsilon,\sigma_k}\}_{k}$ such that $\rho^{\varepsilon, \sigma_k}(t)$ narrow converges to $\tilde{\rho}^\varepsilon(t)$ for any $t\in[0,T]$ as $k\to+\infty$. Up to passing to a subsequence in $\varepsilon$, $\tilde\rhoe(t)\rightharpoonup\tilde\rho(t)$, for any $t\in[0,T]$, as $\varepsilon\to 0$. Furthermore, the sequence $\{\rhoes\}_{\varepsilon,\sigma}$ admits a subsequence $\{\rho^{k}\}_k:=\{\rho^{\varepsilon_k,\sigma_k}\}_k$ such that 
$\rho^k(t)\rightharpoonup\tilde\rho(t)$ as $k\to\infty$, uniformly in time.
\end{prop}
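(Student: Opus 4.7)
The plan is to apply the refined Ascoli--Arzel\`a theorem from~\cite[Proposition 3.3.1]{AGS} twice, first as $\sigma\to 0$ (for fixed $\varepsilon$) and then as $\varepsilon\to 0$, and to conclude the joint statement via a standard diagonal extraction. The two ingredients we must supply, uniformly in the parameters, are: (i) pointwise relative compactness of $\{\rhoes(t)\}$ in $\mptrd$ with respect to narrow convergence for every $t\in[0,T]$; and (ii) an equicontinuity estimate of the form $d_W(\rhoes(t),\rhoes(s))\le \omega(|t-s|)$ for a modulus $\omega$ that is independent of $\sigma$ and $\varepsilon$.

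For ingredient (i), \Cref{lem:en-ineq-mom-bound} yields a uniform second moment bound $m_2(\rhoes(t))\le C$ where $C$ depends only on $T$, $m_2(\rho_0)$, $m_2(V_1)$, and $\mh_\sigma^\varepsilon[\rho_0]$. The third chain of inequalities in~\eqref{eq:sig_ent} shows that $\mh_\sigma^\varepsilon[\rho_0]\le \mh[\rho_0]+|C_\mn|$, which is finite by~\ref{a:fie} and independent of $\sigma$ and $\varepsilon$. Markov's inequality then yields tightness uniformly in all parameters, so that for each fixed $t$ the family $\{\rhoes(t)\}_{\varepsilon,\sigma}$ is narrowly relatively compact in $\mprd$, and in fact relatively compact in $\mptrd$ thanks to the uniform-in-parameters second moment bound (which guarantees uniform integrability of $|x|^2$).

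For ingredient (ii), the standard JKO energy--dissipation estimate, together with the uniform bound on $\mh_\sigma^\varepsilon[\rho_0]$ just discussed, gives
\[
d_W(\rhoes(t),\rhoes(s))\le C\sqrt{|t-s|},\qquad 0\le s\le t\le T,
\]
with $C$ independent of $\sigma$ and $\varepsilon$; this is precisely the Hölder estimate already used to construct $\rhoes$ in~\cite[Proposition 3.1]{CEW_nl_to_local_24}, and it passes from the piecewise constant interpolants $\rho_\tau^{\varepsilon,\sigma}$ to the limit curve $\rhoes$ by lower semicontinuity of $d_W$. Equicontinuity in the narrow topology is thus established uniformly in the parameters.

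With (i) and (ii) in hand, \cite[Proposition 3.3.1]{AGS} applied to the family $\{\rhoes\}_{\sigma}$ at fixed $\varepsilon$ produces a subsequence $\sigma_k\downarrow 0$ and a curve $\tilde{\rho}^\varepsilon\in C([0,T];\mptrd)$ such that $\rho^{\varepsilon,\sigma_k}(t)\rightharpoonup \tilde{\rho}^\varepsilon(t)$ for every $t\in[0,T]$; lower semicontinuity of $d_W$ preserves the Hölder bound for $\tilde{\rho}^\varepsilon$, so it is absolutely continuous, and the second-moment bound and equicontinuity survive with constants independent of $\varepsilon$. Applying the same argument a second time to $\{\tilde{\rho}^\varepsilon\}_\varepsilon$ yields a subsequence $\varepsilon_k\downarrow 0$ and a limit $\tilde{\rho}\in C([0,T];\mptrd)$ with $\tilde{\rho}^{\varepsilon_k}(t)\rightharpoonup \tilde{\rho}(t)$ for every $t$. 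For the joint statement, a standard diagonal extraction (choosing $\sigma_k$ small enough, relative to $\varepsilon_k$, that $d_W(\rho^{\varepsilon_k,\sigma_k}(t),\tilde{\rho}^{\varepsilon_k}(t))\to 0$ uniformly in $t\in[0,T]$, which is possible since the $\sigma\to 0$ convergence along the subsequence is uniform in $t$) produces $\rho^k:=\rho^{\varepsilon_k,\sigma_k}$ with $\rho^k(t)\rightharpoonup \tilde{\rho}(t)$ uniformly in $t\in[0,T]$.

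The main subtlety is ensuring the uniformity of the JKO dissipation estimate across \emph{all} parameters; the key observation is that the initial entropy $\mh_\sigma^\varepsilon[\rho_0]$ is controlled by $\mh[\rho_0]+|C_\mn|$ regardless of $\varepsilon,\sigma$ (thanks to the convexity estimate in~\eqref{eq:sig_ent}), which is exactly what makes the constant $C$ above parameter-free and allows both Ascoli--Arzel\`a applications to proceed without further preparation.
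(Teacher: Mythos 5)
Your approach is essentially the paper's: extract uniform second-moment and energy bounds from Lemma~\ref{lem:en-ineq-mom-bound} and~\eqref{eq:sig_ent}, apply the refined Ascoli--Arzel\`a theorem of~\cite[Proposition 3.3.1]{AGS} first in $\sigma$ at fixed $\varepsilon$ and then in $\varepsilon$, and close the joint statement by a diagonal extraction using uniform-in-$t$ convergence along each step. Two details should be corrected, though neither changes the route. First, a uniform second-moment bound does \emph{not} yield uniform integrability of $|x|^2$ (e.g.\ $\rho_n=(1-n^{-2})\delta_0+n^{-2}\delta_{n e_1}$ has bounded second moment while $|x|^2$ is not uniformly integrable), so your parenthetical justification for relative compactness in $\mptrd$ is wrong; it is also unnecessary, since the limit of a narrowly convergent subsequence has finite second moment by lower semicontinuity of $m_2$ under narrow convergence. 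Second, and consequently, the diagonal step should not be phrased in $d_W$: narrow convergence plus a uniform second-moment bound does \emph{not} upgrade to $d_W$-convergence, so the claim $d_W(\rho^{\varepsilon_k,\sigma_k}(t),\tilde\rho^{\varepsilon_k}(t))\to 0$ is not available. The right distance is $d_1$, which metrizes narrow convergence once first moments are uniformly integrable --- guaranteed here by the uniform second-moment bound --- and which inherits the equicontinuity via $d_1^2(\rho^{\varepsilon,\sigma}_t,\rho^{\varepsilon,\sigma}_s)\le d_W^2(\rho^{\varepsilon,\sigma}_t,\rho^{\varepsilon,\sigma}_s)\le C|t-s|$; this is exactly how the paper argues, invoking~\cite[Proposition 7.1.5]{AGS} and the triangle inequality in $d_1$. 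With $d_1$ substituted for $d_W$ in the diagonal extraction and the uniform-integrability remark dropped, your proposal coincides with the paper's proof.
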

\begin{proof}
The limits in $\sigma$ and $\varepsilon$ can be obtained following the proof in~\cite[Proposition 4.1]{BE22}, owing to~\Cref{lem:en-ineq-mom-bound} and~\eqref{eq:sig_ent} since we have the uniform bounds
    \[
\sup_{\varepsilon, \sigma}\mh_\sigma^\varepsilon[\rhoes(t)]\le (1-\sigma)\,\mh[\rho_0]+\sigma C_\mn \quad \mbox{and}\quad  \sup_{\varepsilon, \sigma}m_2(\rho^{\varepsilon,\sigma}(t))\le C,\quad \forall t\in[0,T].
\]
As for the diagonal limit, we can proceed using the 1-Wasserstein distance because of the uniform bound for the second order moments --- in this case narrow convergence is equivalent to the 1-Wassertein ones, see e.g.~\cite[Proposition~7.1.5]{AGS}. In particular, for every $k\in\mathbb{N}$ there exist $\varepsilon_k$ and $\sigma_k$ such that, for every $t\in[0,T]$, we have
\[
d_1(\rho^{\varepsilon_k,\sigma_k}_t,\tilde\rho_t)\le d_1(\rho_t^{\varepsilon_k,\sigma_k},\tilde\rho_t^{\varepsilon_k})+d_1(\tilde\rho_t^{\varepsilon_k},\tilde\rho_t)\le\frac{1}{k}.
\]
We observe for completeness that the limit is uniform in time in view of the equicontinuity
\[
d_1^2(\rhoes_t,\rhoes_s)\le d_W^2(\rho^{\varepsilon,\sigma}_t, \rho^{\varepsilon,\sigma}_s) \leq C|t-s|.
\]
\end{proof}
Since $\rhoes(t)\in\mptrd$, for any $t\in[0,T]$, we employ a further regularisation to obtain the limit as a density --- as it should be since it will be the solution of~\eqref{eq:heat_eq}. We consider $v_\tau^{\varepsilon,\sigma}:=V_\varepsilon*\rhotes$ and prove higher regularity and suitable compactness for the latter in order to pass to the limit in the weak form of the equations. More precisely, we focus on the $\sigma$-perturbation $(1-\sigma)v_\tau^{\varepsilon,\sigma}+\sigma\mathcal{N}$, since for $\sigma=0$ and $\varepsilon>0$ higher regularity was proven in~\cite[Lemma 4.1]{CEW_nl_to_local_24}. This task is performed by using the so-called flow-interchange technique, cf.~\cite{MMCS}. We use as auxiliary flow the heat semigroup, denoted by $S_\mh$, which is a $0$-flow.
\begin{defn}[$\lambda$-flow]\label{def:lambda-flow}
	A semigroup $S_{\mathcal{E}}:[0,+\infty]\times\mptrd\to\mptrd$ is a $\lambda$-flow for a functional $\mathcal{E}:\mptrd\to\R\cup\{+\infty\}$ with respect to the distance $d_W$ if, for an arbitrary $\rho\in\mptrd$, the curve $t\mapsto S_{\mathcal{E}}^t\rho$ is absolutely continuous on $[0,+\infty[$ and it satisfies the so-called Evolution Variational Inequality (\textbf{E.V.I.})
	\begin{equation}
		\frac{1}{2}\frac{d^+}{dt}d_W^2(S_{\mathcal{E}}^t\rho,\bar{\rho})+\frac{\lambda}{2}d_W^2(S_{\mathcal{E}}^t\rho,\bar{\rho})\le \mathcal{E}(\bar{\rho})-\mathcal{E}(S_{\mathcal{E}}^t\rho)
	\end{equation}
	for all $t\ge 0$, with respect to every reference measure $\bar{\rho}\in\mptrd$ such that $\mathcal{E}(\bar{\rho})<\infty$.
\end{defn}
In the following result, for any $\nu\in\mptrd$ such that $\mh(\nu)<+\infty$, we denote by $S_{\mh}^t\nu$ the solution at time $t$ of the heat equation coupled with an initial value $\nu$ at $t=0$. Moreover, for every $\rho\in\mptrd$, we define the dissipation of $\mh_\sigma^\varepsilon$ along $S_{\mh}$ by
$$
D_{\mh}\mh^\varepsilon_\sigma(\rho):=\limsup_{s\downarrow0}\left\{\frac{\mh_\sigma^\varepsilon[\rho]-\mh_\sigma^\varepsilon[S_{\mh}^s\rho]}{s}\right\}.
$$
\begin{lem}\label{lem:h1_estimate}
Let $\varepsilon>0$, $\sigma \in [0,1)$. Let $\rho_0\in\mpdtard$ be such that $\mh[\rho_0]<\infty$. There exists a constant $C=C(\mn, \rho_0, V_1, T) >0$ such that, with $v_\tau^{\varepsilon,\sigma}=V_\varepsilon*\rhotes$, we have
\begin{equation}\label{eq:h_1_bound}
\sup_{\varepsilon,\,\sigma, \,  \tau>0}\left\|
\sqrt{(1-\sigma)v_\tau^{\varepsilon,\sigma}+\sigma\mn}
\right\|_{L^2(0,T;\, H^1(\R^d))} \le C.
\end{equation}
\end{lem}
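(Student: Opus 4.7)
The plan is to apply the flow-interchange technique of Matthes--McCann--Savar\'e with the heat semigroup $S_{\mh}$ as the auxiliary flow: by the classical McCann--Otto result, $S_{\mh}$ is a $0$-flow for $\mh$ with respect to $d_W$, which is the only property of it that will be used. Starting from the JKO optimality of $\rhotnnes$ against the competitor $S_{\mh}^s\rhotnnes$ and sending $s\downarrow 0$, the EVI for $S_{\mh}$ delivers the standard chain
$$\tau\, D_{\mh}\mh^\varepsilon_\sigma[\rhotnnes] \le \mh[\rhotnes] - \mh[\rhotnnes].$$

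The core step will be to produce an explicit lower bound for $D_{\mh}\mh^\varepsilon_\sigma[\rho]$ featuring $\|\nabla\sqrt{w}\|_{L^2}^2$, where I set $w := (1-\sigma)V_\varepsilon*\rho + \sigma\mn$. Writing $u_s = S_{\mh}^s\rho$, using that the first variation of $\mh^\varepsilon_\sigma$ is (up to an additive constant) $(1-\sigma)V_\varepsilon*\log w$, transferring the convolution via symmetry of $V_\varepsilon$, integrating by parts, and using $(1-\sigma)\nabla(V_\varepsilon*\rho) = \nabla w - \sigma\nabla\mn$, one arrives at
$$D_{\mh}\mh^\varepsilon_\sigma[\rho] = \int_\Rd \frac{|\nabla w|^2}{w}\,\diff x - \sigma \int_\Rd \frac{\nabla w \cdot \nabla \mn}{w}\,\diff x = 4\int_\Rd |\nabla\sqrt{w}|^2\,\diff x - 2\sigma\int_\Rd \frac{\nabla\sqrt{w}\cdot\nabla\mn}{\sqrt{w}}\,\diff x.$$

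The main obstacle is taming the correction term introduced by the lift by $\mn$, which is absent in the $\sigma=0$ setting of~\cite{CEW_nl_to_local_24}. I will exploit the pointwise lower bound $w \ge \sigma\mn$, Cauchy--Schwarz, and the explicit form of $\mn_p$ (for $p=1,2$ direct computation with $\nabla\mn_p = -p\langle x\rangle^{p-2}x\,\mn_p$ gives $|\nabla\mn_p|^2/\mn_p \in L^1(\Rd)$) to estimate
$$2\sigma \left|\int_\Rd \frac{\nabla\sqrt{w}\cdot\nabla\mn}{\sqrt{w}}\,\diff x\right| \le 2\sqrt{\sigma}\,\Bigl(\int_\Rd \tfrac{|\nabla\mn|^2}{\mn}\,\diff x\Bigr)^{1/2}\|\nabla\sqrt{w}\|_{L^2} \le 2\|\nabla\sqrt{w}\|_{L^2}^2 + C\sigma,$$
after applying Young's inequality, for a constant $C = C(\mn)$. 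This yields the clean lower bound $D_{\mh}\mh^\varepsilon_\sigma[\rho] \ge 2\|\nabla\sqrt{w}\|_{L^2}^2 - C\sigma$.

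To conclude, I plug the dissipation lower bound into the flow-interchange chain and sum over $n=0,1,\dots,N-1$, producing the telescoping estimate
$$2\int_0^T \bigl\|\nabla\sqrt{(1-\sigma)v_\tau^{\varepsilon,\sigma}+\sigma\mn}\,\bigr\|_{L^2(\Rd)}^2\,\diff t \le \mh[\rho_0] - \mh[\rho_\tau^{\varepsilon,\sigma}(T)] + C\sigma T.$$
The right-hand side is bounded uniformly in $\varepsilon,\sigma,\tau$: the initial entropy is finite by hypothesis, $\sigma T \le T$, and the uniform second moment bound in~\Cref{lem:en-ineq-mom-bound} combined with the standard Gaussian lower bound on $\mh$ controls $-\mh[\rho_\tau^{\varepsilon,\sigma}(T)]$ from below. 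The complementary $L^2_tL^2_x$ bound on the square-root itself is immediate from $\|(1-\sigma)v_\tau^{\varepsilon,\sigma}+\sigma\mn\|_{L^1(\Rd)}=1$, which closes~\eqref{eq:h_1_bound}.
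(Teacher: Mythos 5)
Your proof follows the paper's own argument essentially line by line: flow interchange with the heat semigroup as $0$-flow, the identity $D_{\mh}\mh^\varepsilon_\sigma[\rho] = 4\|\nabla\sqrt{w}\|_{L^2}^2 - 2\sigma\int \nabla\sqrt{w}\cdot\nabla\mn/\sqrt{w}$ for $w=(1-\sigma)V_\varepsilon*\rho+\sigma\mn$, absorption of the $\sigma$-correction via $w\ge\sigma\mn$ together with Cauchy--Schwarz/Young and $|\nabla\mn|^2/\mn\in L^1$, and a telescoping sum controlled by the second-moment lower bound on $\mh$. The only thing glossed over relative to the paper is the $\limsup/\liminf$ bookkeeping and the weak lower semicontinuity of the $H^1$-seminorm needed to pass from $\nabla\sqrt{(1-\sigma)V_\varepsilon*S_\mh^{st}\rhotnnes+\sigma\mn}$ back to $\nabla\sqrt{(1-\sigma)V_\varepsilon*\rhotnnes+\sigma\mn}$, but this is a routine technical point rather than a gap in the idea.
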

\begin{proof}
We note that
\begin{align*}
	\left\| \sqrt{(1-\sigma)v_\tau^{\varepsilon,\sigma}+\sigma\mn}\right\|_{L^2(0,T;\, L^2(\R^d))}^2 = \int_0^T\int_{\R^d}(1-\sigma) V_\varepsilon*\rho_\tau^{\varepsilon,\sigma}(t,x) +\sigma\mn(x) \diff x \diff t = T,
\end{align*}
since  $\|\mn\|_{L^1}=\|V_\varepsilon\|_{L^1} = \int_{\R^d}\mathrm{d}\rho_\tau^{\varepsilon,\sigma}(t)(x) = 1$. Next, we need to derive the uniform bound for $\nabla \sqrt{(1-\sigma)v_\tau^{\varepsilon,\sigma}+\sigma\mn}$. Let $s>0$ and consider $S_\mh^s\rhotnnes$ as a competitor against $\rhotnnes$ in the minimisation problem~\eqref{eq:jko}. By definition of the scheme, it follows
\[
\frac{1}{2\tau}d_W^2(\rhotnnes,\rhotnes)+\mh^\varepsilon_\sigma[\rhotnnes]\le\frac{1}{2\tau}d_W^2(S_{\mh}^s\rhotnnes,\rhotnes)+\mh_\sigma^\varepsilon[S_{\mh}^s\rhotnnes],
\]
which, dividing by $s>0$ and passing to $\limsup_{s\downarrow 0}$, gives
\begin{equation}\label{eq:flow-interchange}
	\tau D_{\mh}\mh_\sigma^\varepsilon(\rhotnnes)\le\left.\frac{1}{2}\frac{d^+}{dt}\right|_{t=0}\Big(d_W^2(S_{\mh}^t\rhotnnes,\rhotnes)\Big)\overset{\bm{(E.V.I.)}}{\le}\mh[\rhotnes]-\mh[\rhotnnes],
\end{equation}
since $S_\mh$ is a $0$-flow. The computation below (cf.~\eqref{eq:deriv-dis}) will show that $\mh[\rhotnes]<\infty$ for all $n$. The $H^1$ bound will be clear after making the left-hand side of~\eqref{eq:flow-interchange} explicit. Note that
\begin{equation}\label{eq:integral-form-dis}
	\begin{split}
		D_{\mh}\mh_\sigma^\varepsilon(\rhotnnes)&=\limsup_{s\downarrow0}\left\{\frac{\mh_\sigma^\varepsilon[\rhotnnes]-\mh_\sigma^\varepsilon[S_{\mh}^s\rhotnnes]}{s}\right\}\\&=\limsup_{s\downarrow0}\int_0^1\left(-\frac{d}{dz}\Big|_{z=st}\mh_\sigma^\varepsilon[S_{\mh}^{z}\rhotnnes]\right)\diff t.
	\end{split}
\end{equation}
Thus, we now compute the time derivative inside the above integral, taking advantage of the $C^\infty$ regularity of the heat semigroup and that $S_{\mh}^t\rhotnnes>0$:
\begin{equation}\label{eq:deriv-dis}
	\begin{split}
		\quad \frac{d}{dt}\mh_\sigma^\varepsilon[S_{\mh}^t\rhotnnes] &= \!-\!\int_\Rd (1-\sigma)\nabla (V_\varepsilon * S_{\mh}^t\rhotnnes)\cdot \nabla \log ((1-\sigma)V_\varepsilon*S_{\mh}^t\rhotnnes + \sigma\mn)\,\diff x    \\
    &= \!-\!\int_\Rd\!\! \nabla ((1\!-\!\sigma)V_\varepsilon * S_{\mh}^t\rhotnnes + \sigma \mn) \cdot \nabla \log ((1\!-\!\sigma)V_\varepsilon*S_{\mh}^t\rhotnnes + \sigma\mn)\diff x      \\
    &\qquad \qquad +\sigma \int_\Rd \nabla \mn \cdot \nabla \log ((1-\sigma)V_\varepsilon*S_{\mh}^t\rhotnnes + \sigma\mn)\,\diff x     \\
    &= \!-4\int_\Rd \left|
\nabla \sqrt{(1-\sigma)V_\varepsilon*S_{\mh}^t\rhotnnes + \sigma \mn}
    \right|^2\,\diff x \\
    &\qquad \qquad + \sigma \int_\Rd \nabla \mn \frac{\nabla[(1-\sigma)V_\varepsilon*S_{\mh}^t\rhotnnes + \sigma\mn]}{(1-\sigma)V_\varepsilon*S_{\mh}^t\rhotnnes + \sigma\mn}\,\diff x\\
   &=  \!-4\int_\Rd \left|
\nabla \sqrt{(1-\sigma)V_\varepsilon*S_{\mh}^t\rhotnnes + \sigma \mn}
    \right|^2\,\diff x \\
    &\qquad \qquad + 2\sigma \int_\Rd \nabla \mn \frac{\nabla\sqrt{(1-\sigma)V_\varepsilon*S_{\mh}^t\rhotnnes + \sigma\mn}}{\sqrt{(1-\sigma)V_\varepsilon*S_{\mh}^t\rhotnnes + \sigma\mn}}\,\diff x\\
    &\le\!-3\int_\Rd \left|
\nabla \sqrt{(1-\sigma)V_\varepsilon*S_{\mh}^t\rhotnnes + \sigma \mn}
    \right|^2\,\diff x+4\sigma\int_\Rd\left|\nabla\sqrt{\mn}\right|^2\,\diff x,
    \end{split}
\end{equation}
where in the last inequality we used Young's inequality and the regularity of $\mn$. By substituting~\eqref{eq:deriv-dis} into~\eqref{eq:integral-form-dis}, from~\eqref{eq:flow-interchange} we obtain
\[
3\tau\liminf_{s\downarrow0}\int_0^1\int_{\Rd}\left|\nabla \sqrt{(1-\sigma)V_\varepsilon*S_{\mh}^{st}\rhotnnes+\sigma\mn}\right|^2\,\diff x\diff t\le\mh[\rhotnes]-\mh[\rhotnnes] + C(\mn).
\]
The weak $L^2$ lower semi-continuity of the $H^1$ semi-norm gives
\[
\tau\int_{\Rd}\left|\nabla \sqrt{(1-\sigma)V_\varepsilon*\rhotnnes+\sigma\mn}\right|^2\,\diff x\le\frac{1}{3}\left(\mh[\rhotnes]-\mh[\rhotnnes]\right) +C(\mn).
\]
By summing up over $n$ from $0$ to $N-1$, taking into account the entropy is bounded from below by second order moments (see for example~\cite[Remark 4.2]{CEW_nl_to_local_24}) which are uniformly bounded (see~\Cref{lem:en-ineq-mom-bound}), we get
\begin{equation}\label{eq:bound-nablaveps}
\begin{split}
	\int_0^T\int_{\Rd}\left|\nabla\sqrt{(1-\sigma)V_\varepsilon*\rhotes(t)+\sigma\mn}\right|^2\diff x\diff t&\le \frac{1}{3}\left(\mh[\rho_0]-\mh[\rhotnes]\right)+C\\
    &\le \frac{1}{3}\left(\mh[\rho_0] + C(\mn, \rho_0, \, V_1, \, T)\right).
 \end{split}
\end{equation}
Since the initial entropy is assumed to be bounded, this concludes the proof.
\end{proof}
The previous $L^2_tH^1_x$ bound is important to obtain strong compactness in $L^1([0,T]\times\Rd)$, by applying the compactness result by Rossi and Savaré, cf.~\cite[Theorem 2]{RS}, recalled in the appendix as~\Cref{thm:aulirs-meas-appendix} for the reader's convenience. Let us denote 
\[
f_\tau^{\varepsilon,\sigma}:=(1-\sigma)v_\tau^{\varepsilon,\sigma}+\sigma\mathcal{N}, \quad\text{recalling}\quad v_\tau^{\varepsilon,\sigma} = V_\varepsilon * \rho_\tau^{\varepsilon,\sigma}.
\]

\begin{prop}\label{prop:strong-convergence-v}
Consider the family $\{f_\tau^{\varepsilon,\sigma}\}_{\sigma\in(0,1),\varepsilon\in(0,\varepsilon_0), \tau>0}$. The following results hold true: 
\begin{enumerate}[label=\roman*)]
    \item There exists a subsequence $\tau_k\downarrow 0$ such that for any $\sigma \in (0,1),\varepsilon>0$, we have
\[
f_{\tau_k}^{\varepsilon,\sigma} \to f^{\varepsilon,\sigma} = (1-\sigma )V_\varepsilon*\tilde{\rho}^{\sigma,\varepsilon} + \sigma\mn, \quad \text{in }L^1([0,T]\times \R^d).
\]
\item There exists a subsequence $\sigma_k\downarrow0$ such that, for any $\varepsilon>0$, 
\[
f^{\varepsilon,\sigma_k} \to V_\varepsilon*\tilde{\rho}^{\varepsilon}, \quad \text{in }L^1([0,T]\times \R^d).
\]
\item There exists a subsequence $\varepsilon_k\downarrow 0$ and a curve $v\in C([0,T];\mptrd)\cap L^1([0,T]\times \R^d)$ such that 
\[
v^{\varepsilon} := V_\varepsilon * \rho^{\varepsilon}  \to v, \quad \text{in }L^1([0,T]\times\R^d).
\]
\item There exist subsequences $\varepsilon_k\downarrow 0$ and $\sigma_k\downarrow 0$ such that
\[
f^{\varepsilon_k,\sigma_k}\overset{k\to \infty}{\to} v, \quad \mbox{in } L^1([0,T]\times\Rd),
\]
for a curve $v\in C([0,T];\mptrd)\cap L^1([0,T]\times \R^d)$.
\end{enumerate}
\end{prop}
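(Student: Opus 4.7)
All four items follow from the Rossi--Savar\'e compactness theorem (Theorem~\ref{thm:aulirs-meas-appendix}) applied to suitable families of curves in $\mptrd$. The strategy is to verify two uniform ingredients: (a) a coercive functional whose sublevels are compact in $L^1(\Rd)$ and whose time integral is uniformly bounded; and (b) uniform equicontinuity of the curves in the $2$-Wasserstein distance. I would take
\[
\mathscr{F}[f]:=\int_\Rd |x|^2\, f(x)\,\diff x+\int_\Rd \bigl|\nabla\sqrt{f(x)}\bigr|^2\,\diff x.
\]
Sublevels of $\mathscr{F}$ are tight by the moment term and bounded in $H^1(\Rd)$ at the level of $\sqrt f$; Rellich--Kondrachov then gives $L^1_{\mathrm{loc}}(\Rd)$-relative compactness of $f=(\sqrt f)^2$, which tightness upgrades to $L^1(\Rd)$. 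The uniform $L^2_tH^1_x$-bound of Lemma~\ref{lem:h1_estimate} and the uniform second-moment bound from Lemma~\ref{lem:en-ineq-mom-bound} provide $\int_0^T\mathscr{F}[f_\tau^{\varepsilon,\sigma}(t)]\,\diff t\le C$ uniformly in $\tau,\varepsilon,\sigma$; the analogous bound for $v^\varepsilon$ and for the $\tau\to 0$ limits follows by the same estimates together with lower semicontinuity.

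For (b), the standard Cauchy--Schwarz telescoping in the JKO scheme combined with the entropy bound~\eqref{eq:sig_ent} yields $d_W(\rho_\tau^{\varepsilon,\sigma}(t),\rho_\tau^{\varepsilon,\sigma}(s))\le C\sqrt{|t-s|+\tau}$, uniformly in $\tau,\varepsilon,\sigma$. Convolution $\mu\mapsto V_\varepsilon*\mu$ is $1$-Lipschitz in $d_W$ --- use the coupling $(X+Y,T(X)+Y)$ with $T$ the optimal transport map and $Y\sim V_\varepsilon$ independent --- and convex combinations with $\sigma\mn$ do not enlarge $d_W$, so the same H\"older-$1/2$ modulus transfers to $f_\tau^{\varepsilon,\sigma}$, $v^\varepsilon$, and to their limits by lower semicontinuity. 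With (a) and (b) in hand, Rossi--Savar\'e yields: \emph{(i)} fixing $\varepsilon,\sigma$, a subsequence $\tau_k\downarrow 0$ along which $f_{\tau_k}^{\varepsilon,\sigma}\to f^{\varepsilon,\sigma}$ in $L^1([0,T]\times\Rd)$, the limit identified as $(1-\sigma)V_\varepsilon*\rho^{\varepsilon,\sigma}+\sigma\mn$ via the pointwise convergence $V_\varepsilon*\rho_{\tau_k}^{\varepsilon,\sigma}(t,x)\to V_\varepsilon*\rho^{\varepsilon,\sigma}(t,x)$ ensured by the narrow convergence of Lemma~\ref{lem:en-ineq-mom-bound} tested against $V_\varepsilon\in C_b(\Rd)$; \emph{(ii)} fixing $\varepsilon$, a subsequence $\sigma_k\downarrow 0$ with $f^{\varepsilon,\sigma_k}\to V_\varepsilon*\tilde\rho^\varepsilon$ in $L^1([0,T]\times\Rd)$, the limit identified through Proposition~\ref{prop:limit-rho} together with $\|\sigma\mn\|_{L^1([0,T]\times\Rd)}=\sigma T\to 0$; \emph{(iii)} a subsequence $\varepsilon_k\downarrow 0$ with $v^{\varepsilon_k}\to v$ in $L^1([0,T]\times\Rd)$, where $v\in C([0,T];\mptrd)$ follows from inheriting the $d_W$-equicontinuity and from Fatou for the second moment; \emph{(iv)} the analogous statement for the two-parameter family $\{f^{\varepsilon,\sigma}\}_{\varepsilon,\sigma}$, obtained by direct application of the same compactness theorem to a diagonal sequence.

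The main obstacle is the uniform passage from the Rellich-type $L^1_{\mathrm{loc}}$ relative compactness of $f$ (obtained at the level of $\sqrt f$) to genuine $L^1(\Rd)$ relative compactness of the full family, uniformly in all three parameters. This is exactly where the uniform second-moment bound of Lemma~\ref{lem:en-ineq-mom-bound} is indispensable to rule out escape of mass, and where it must cooperate cleanly with the $H^1$-control from Lemma~\ref{lem:h1_estimate}. A secondary concern is verifying that the limit $v$ in \emph{(iii)}--\emph{(iv)} is itself a \emph{probability} density (rather than a sub-probability measure with mass lost at infinity), which again follows from the uniform tightness.
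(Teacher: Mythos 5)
Your proposal is correct and rests on the same compactness backbone as the paper --- Rossi--Savar\'e (Theorem~\ref{thm:aulirs-meas-appendix}), fed by the uniform second-moment bound of Lemma~\ref{lem:en-ineq-mom-bound} and the uniform $L^2_t H^1_x$ bound of Lemma~\ref{lem:h1_estimate} --- so the essential idea matches. The paper's own proof is, however, lighter in two places. For item (ii) the paper does not re-invoke Rossi--Savar\'e: once $\rho^{\varepsilon,\sigma}(t)\rightharpoonup\tilde\rho^\varepsilon(t)$ is known from Proposition~\ref{prop:limit-rho}, one has pointwise (in $(t,x)$) convergence $V_\varepsilon*\rho^{\varepsilon,\sigma}\to V_\varepsilon*\tilde\rho^\varepsilon$ simply by testing the narrow limit against the fixed bounded continuous kernel, and the $L^1([0,T]\times\Rd)$ convergence then follows from a dominated-convergence / Scheff\'e argument using mass conservation and tightness; your route through Rossi--Savar\'e works but is heavier than needed because it verifies an equicontinuity condition that is already implicit in the fixed-$\varepsilon$ structure. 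Similarly for item (iv) the paper uses the triangle inequality together with items (ii) and (iii) and a diagonal extraction, avoiding a fresh application of the compactness theorem to a two-parameter family. Finally, the paper signals the choice of pseudodistance $g=d_1$ via the identity $d_1((1-\sigma)\mu_1+\sigma\nu,(1-\sigma)\mu_2+\sigma\nu)=(1-\sigma)d_1(\mu_1,\mu_2)$, whereas you propose $g=d_W$; both are admissible, and the non-expansiveness of convolution and of convex combination with a fixed measure hold in either metric, so this is a stylistic variant. Your closing comments correctly flag the two places where care is needed (upgrading Rellich-type local compactness to global $L^1$ compactness via the moment bound, and preserving unit mass in the limit), both of which are resolved exactly as you indicate.
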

\begin{proof}
    The proof follows the strategy applied in~\cite[Proposition 4.3]{CEW_nl_to_local_24}. It is only worth to notice that $d_1((1-\sigma) \mu_1 +\sigma\nu,(1-\sigma) \mu_2+\sigma \nu)=(1-\sigma) d_1(\mu_1,\mu_2)$, for $\mu,\nu\in\mathcal{P}_1(\Rd)$, thus the weak integral equicontinuity follows. As for the convergence in $\sigma$ we can apply the Lebesgue Dominated Convergence Theorem for $\varepsilon$ fixed and obtain $f^{\varepsilon,\sigma_k}\to V_{\varepsilon}*\tilde{\rho}^\varepsilon$ as $\sigma_k\to0$, taking into account~\Cref{prop:limit-rho}. Compactness in $\varepsilon$ is exactly as in~\cite[Proposition 4.3]{CEW_nl_to_local_24}, upon noticing 
    \[\sup_{\varepsilon>0}\left\|
\sqrt{v^{\varepsilon}}
\right\|_{L^2(0,T;\, H^1(\R^d))} \le C,
\]
for a constant independent of $\varepsilon$. This can be seen as a consequence of the strong convergence just proven, the Lebesgue Dominated Convergence Theorem, and weak $L^2$ lower semicontinuity --- a precise argument can be found in~\cite[Proposition 4.3]{CEW_nl_to_local_24}. The last property follows by the triangle inequality.
\end{proof}

\section{Nonlocal-to-local limit for the heat equation}\label{sec:nl_lo}

In this section we prove convergence of weak solutions $\rho^{\eps,\sigma}$ to~\eqref{eq:nonlocal_he_gauss} by establishing two limits: $\sigma, \eps \to 0$ jointly (Section \ref{sect:joint_limit}, Theorem \ref{thm:joint_limit_eps_sigma}) and $\sigma \to 0$ first, then $\eps \to 0$ (Section \ref{sect:separate_limits}, Theorems \ref{thm:sig_to_zero} and \ref{thm:eps_to_zero}). 

\subsection{Commutator estimates}
We shall consider kernels being either compactly or globally supported --- in the latter case assuming additionally that their second moment is finite. This is relevant when the kernel $V=\mn$ taking into account~\Cref{rem:gaussian}. Owing to the regularity of the vector field far from the vacuum, we can rewrite the term~\eqref{eq:weak-form-sigma_towork_with} (cf. the discussion after~\eqref{eq:weak-form-sigma_towork_with}) as
\begin{equation}\label{eq:weak-form-conv}
\begin{split}
&2(1-\sigma)\int_0^t\int_\Rd\nabla\varphi\cdot V_\varepsilon*\left[\frac{\nabla\sqrt{(1-\sigma)V_\varepsilon*\rho_r^{\varepsilon,\sigma}+\sigma \mn}}{\sqrt{(1-\sigma)V_\varepsilon*\rho_r^{\varepsilon,\sigma}+\sigma \mn}}\right]\diff \rho_r^{\varepsilon,\sigma}(x) \diff r\\
&\qquad=2(1-\sigma)\int_0^t\int_\Rd V_\varepsilon*(\rho_r^{\varepsilon,\sigma}\nabla\varphi)\frac{\nabla\sqrt{(1-\sigma)V_\varepsilon*\rho_r^{\varepsilon,\sigma}+\sigma \mn}}{\sqrt{(1-\sigma)V_\varepsilon*\rho_r^{\varepsilon,\sigma}+\sigma \mn}} \diff x \diff r.
\end{split}
\end{equation}
Weak solutions to~\eqref{eq:nonlocal_he}~and~\eqref{eq:heat_eq} will be obtained upon analysing the convergence of this term, involving the so-called commutator estimate. For the reader's convenience, we collect the a priori estimates from the construction of solutions to~\eqref{eq:nonlocal_he_gauss} in the previous section.

\begin{lem}[a priori estimates]\label{lem:apriori_est_sigma_eps}
Let $\{\rho^{\varepsilon, \sigma}\}_{\varepsilon,\sigma}$ be a sequence of measure solutions to~\eqref{eq:nonlocal_he_gauss}, and~\eqref{eq:nonlocal_he} for $\sigma=0$. The following properties hold uniformly in $\sigma$ and $\varepsilon$:
\begin{enumerate}[label=(\Alph*)]
    \item\label{est1} $\{\rho^{\varepsilon,\sigma}\}_{\varepsilon,\sigma}\subset C([0,T];\mptrd)$ such that $m_2(\rho^{\varepsilon,\sigma})\in L^\infty([0,T])$ and 
    \begin{equation}\label{eq:cont_in_time_Wasserstein_eps_sigma}
d_W^2(\rho^{\varepsilon, \sigma}_t, \rho^{\varepsilon, \sigma}_s) \leq C|t-s|,
    \end{equation}
    for any $t,s\in[0,T]$ and a constant $C$ independent of $\sigma$ and $\eps$.
    \item\label{est2} $\{(1-\sigma)\, \rho^{\varepsilon, \sigma}\ast V_{\varepsilon} + \sigma\, \mathcal{N}\}_{\varepsilon,\sigma}$ is bounded in $L^{\infty}_t L^1_x$ and  $L^{\infty}_t (L\text{Log}L)_x$.
    \item\label{est2.5} $\{|\cdot|^2(V_{\varepsilon}\ast\rho^{\varepsilon, \sigma})\}_{\varepsilon,\sigma}$ is bounded in $L^{\infty}_t L^1_x$.
    \item\label{est3} $\left\{\sqrt{(1-\sigma)\, \rho^{\varepsilon, \sigma}\ast V_{\varepsilon} + \sigma\, \mathcal{N}}\right\}_{\sigma,\varepsilon}$ is bounded in $L^2_t H^1_x$. 
\end{enumerate}
\end{lem}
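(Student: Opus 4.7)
The lemma is a consolidation of the a priori bounds already developed (or easily derived from items already developed) in Section \ref{sec:nonlocal_eq}. The plan is to assemble each of \ref{est1}--\ref{est3} by tracing back to the JKO construction and the results established there, with particular attention that each bound is uniform in both $\varepsilon$ and $\sigma$.

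For \ref{est1}, the continuity $\rho^{\varepsilon,\sigma}\in C([0,T];\mptrd)$ and the second-moment bound $m_2(\rho^{\varepsilon,\sigma})\in L^{\infty}([0,T])$ are the content of \Cref{lem:en-ineq-mom-bound}. The H\"older $1/2$ estimate \eqref{eq:cont_in_time_Wasserstein_eps_sigma} is a standard consequence of the JKO scheme: for piecewise constant interpolants one uses that $\sum_{n}d_W^2(\rhotnes,\rhotnnes)/(2\tau)$ is controlled by the entropy drop $\mh^\varepsilon_\sigma[\rho_0]-\mh^\varepsilon_\sigma[\rhotes(T)]$. Combined with the uniform upper bound on $\mh^\varepsilon_\sigma[\rho_0]$ from \eqref{eq:sig_ent} and the lower bound of the entropy by the second moment, this yields a constant independent of $\varepsilon$ and $\sigma$. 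Passing $\tau\to 0$ preserves the bound via lower semicontinuity of $d_W$.

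For \ref{est2}, the $L^{\infty}_t L^1_x$ bound is immediate since $V_{\varepsilon}*\rho^{\varepsilon,\sigma}_t$ and $\mn$ are probability densities, so the convex combination integrates to $1$. The $L^{\infty}_t(L\log L)_x$ bound follows from \eqref{eq:sig_ent}: the monotonicity along the JKO scheme gives
\[
\mh^\varepsilon_\sigma[\rho^{\varepsilon,\sigma}_t] \le \mh^\varepsilon_\sigma[\rho_0]\le (1-\sigma)\mh[\rho_0]+\sigma C_{\mn},
\]
and the right-hand side is bounded uniformly in $\varepsilon,\sigma$ since $\mh[\rho_0]<\infty$ by assumption \ref{a:fie}.

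For \ref{est2.5}, we write $\int_{\Rd}|x|^2(V_{\varepsilon}*\rho^{\varepsilon,\sigma})(x)\diff x = \int_{\Rd}\!\int_{\Rd}|x|^2 V_{\varepsilon}(x-y)\diff\rho^{\varepsilon,\sigma}(y)\diff x$, and after the inequality $|x|^2\le 2|x-y|^2+2|y|^2$ bound it by $2m_2(V_{\varepsilon})+2m_2(\rho^{\varepsilon,\sigma})$. Since $m_2(V_{\varepsilon})=\varepsilon^2 m_2(V_1)$ and $V_1$ has finite second moment in both admissible cases (either compactly supported per \ref{ass:v2-c} or equal to $\mn_p$ per \ref{ass:v2-g}, see \Cref{rem:general_global_kernel}), together with \ref{est1}, this gives the desired uniform bound. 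Finally, \ref{est3} is exactly \eqref{eq:h_1_bound} of \Cref{lem:h1_estimate} passed to the limit $\tau\to 0$: uniform-in-$\tau$ control of $\|\sqrt{(1-\sigma)v_\tau^{\varepsilon,\sigma}+\sigma\mn}\|_{L^2_tH^1_x}$ together with the strong $L^1$ convergence from \Cref{prop:strong-convergence-v} and weak-$L^2$ lower semicontinuity of the $H^1$ seminorm transfer the bound to the limit.

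Since each estimate is in essence already proven earlier, there is no substantive obstacle; the only point requiring care is to verify that every constant truly depends only on $\rho_0$, $V_1$, $\mn$, and $T$, and not on $\varepsilon$ or $\sigma$. The most delicate case is \ref{est3}, where one must check that both terms generated on the right-hand side of \eqref{eq:deriv-dis} (the dissipation and the $\sigma$-remainder involving $\nabla\sqrt{\mn}$) remain bounded as $\sigma\to 0$ and $\varepsilon\to 0$, which is precisely what \Cref{lem:h1_estimate} guarantees using Young's inequality and the regularity of $\mn$.
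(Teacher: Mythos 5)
Your proposal is correct and follows essentially the same route as the paper's proof: \ref{est1} and \ref{est2} are read off the JKO construction and~\eqref{eq:sig_ent}, \ref{est3} is~\Cref{lem:h1_estimate} transferred to the limit via the strong convergence in~\Cref{prop:strong-convergence-v} and weak lower semicontinuity, and for \ref{est2.5} you split $|x|^2\le 2|x-y|^2+2|y|^2$, which after convolving and integrating is exactly the paper's pointwise estimate $|x|^2(\rho^{\varepsilon,\sigma}\ast V_{\eps})\le 2(\rho^{\varepsilon,\sigma}|\cdot|^2)\ast V_{\eps}+2\rho^{\varepsilon,\sigma}\ast(V_{\eps}|\cdot|^2)$ leading to the same bound $2m_2(\rho^{\varepsilon,\sigma})+2\varepsilon^2\|V_1|\cdot|^2\|_{L^1}$.
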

\begin{proof}
Properties~\ref{est1}~and~\ref{est2}~are obtained from the construction of solutions via the JKO scheme, defined in~\eqref{eq:jko}, whereas~\ref{est3}~is proven in~\Cref{lem:h1_estimate} for $f_\tau^{\varepsilon,\sigma}$. Strong convergence in $\tau$ and the $L^2_tH^1_x$ estimate imply the estimate holds true for $f^{\varepsilon,\sigma}$, i.e.~\ref{est3}. We only need to prove \ref{est2.5}. To this aim we estimate pointwise
    $$
|x|^2(\rho^{\varepsilon, \sigma}\ast V_{\eps})(x) \leq 2\, (\rho^{\varepsilon, \sigma}|\cdot|^2)\ast V_{\eps}(x)+ 2\, \rho^{\varepsilon, \sigma} \ast (V_{\eps}|\cdot|^2)(x)  
    $$
    so that 
    \begin{align*}
    \| |\cdot|^2\rho^{\varepsilon, \sigma}\ast V_{\eps}\|_{L^{\infty}_t L^1_x} &\leq 
    2\,\| (\rho^{\varepsilon, \sigma}|\cdot|^2)\ast V_{\eps} \|_{L^{\infty}_t L^1_x} + 2\, \| \rho^{\varepsilon, \sigma} \ast (V_{\eps}|\cdot|^2) \|_{L^{\infty}_t L^1_x}\\
    &\leq 2\|m_2(\rho^{\varepsilon, \sigma})\|_{L^{\infty}_t} \, \|V\|_{L^1_x} + 2\eps^2\,\| V|\cdot|^2\|_{L^{1}_x}.
    \end{align*}
    We conclude by applying~\ref{est1}~and the assumption $V|\cdot|^2 \in L^1_x$.
\end{proof}
In order to prove both limits the following commutator estimate is crucial. Although this approach is by now well-understood, we present a technical novelty since we can treat globally supported kernels in case of log singularity or power laws with $1<m<2$, which is new to the best of our knowledge. This improves upon the results of~\cite{CEW_nl_to_local_24}.
\begin{lem}[Commutator lemma]\label{lem:commutator_lemma}
Let $\rho\in C([0,T];\mptrd)$ and suppose $V_1$ satisfies~\ref{ass:v1},~\ref{ass:v2}. There exists a constant $C$ depending on $V_1$ such that, for any $\varphi\in C^2_c$,
\begin{equation}\label{eq:commutator_general_rho}
\left\|\mathds{1}_{V_{\eps}\ast\rho>0} \, \frac{V_\varepsilon * (\nabla \varphi\,  \rho) - \nabla \varphi\, (V_{\eps}\ast\rho)}{\sqrt{V_\varepsilon * \rho}}\right\|_{L^2_{t,x}} \leq \varepsilon\, C\, \|D^2\varphi\|_{L^2_t L^\infty_x} .
\end{equation}
\end{lem}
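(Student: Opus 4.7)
\medskip

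The strategy is a direct Cauchy--Schwarz argument after expressing the commutator as a first-order Taylor remainder against the mollifier. The key observation is that although $V_\varepsilon\ast(|\cdot| \rho)$ and $\sqrt{V_\varepsilon\ast\rho}$ look incomparable, one can insert the right weight inside the convolution so that a Cauchy--Schwarz inequality with respect to the measure $d\rho$ produces exactly the denominator, leaving behind the second-moment weight of $V_\varepsilon$.

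First, I would rewrite the numerator using Taylor's theorem. For any $x\in\Rd$,
\begin{equation*}
V_\varepsilon * (\nabla\varphi\,\rho)(x) - \nabla\varphi(x)\,(V_\varepsilon\ast\rho)(x) = \int_{\Rd} V_\varepsilon(x-y)\bigl[\nabla\varphi(y) - \nabla\varphi(x)\bigr]\,d\rho(y),
\end{equation*}
and the mean value inequality gives the pointwise bound $|\nabla\varphi(y)-\nabla\varphi(x)|\leq \|D^2\varphi\|_{L^\infty_x}|x-y|$. Hence the absolute value of the commutator is controlled by $\|D^2\varphi\|_{L^\infty_x}\,[(V_\varepsilon|\cdot|)\ast\rho](x)$, and the estimate reduces to
\begin{equation*}
\left\|\mathds{1}_{V_{\eps}\ast\rho>0}\,\frac{(V_\varepsilon|\cdot|)\ast\rho}{\sqrt{V_\varepsilon\ast\rho}}\right\|_{L^2_{t,x}} \leq \varepsilon\,\sqrt{m_2(V_1)}.
\end{equation*}

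To prove this, I split the integrand as $V_\varepsilon(x-y)|x-y| = \sqrt{V_\varepsilon(x-y)}\cdot\sqrt{V_\varepsilon(x-y)}|x-y|$ and apply Cauchy--Schwarz with respect to $d\rho(y)$:
\begin{equation*}
\Bigl(\textstyle\int V_\varepsilon(x-y)|x-y|\,d\rho(y)\Bigr)^{\!2} \leq \Bigl(\textstyle\int V_\varepsilon(x-y)\,d\rho(y)\Bigr)\Bigl(\textstyle\int V_\varepsilon(x-y)|x-y|^2\,d\rho(y)\Bigr).
\end{equation*}
Dividing by $V_\varepsilon\ast\rho(x)$ on the set $\{V_\varepsilon\ast\rho>0\}$ and integrating in $x$, Fubini's theorem together with the mass normalisation $\rho_t(\Rd)=1$ gives
\begin{equation*}
\int_{\Rd}\mathds{1}_{V_{\eps}\ast\rho>0}\,\frac{\bigl((V_\varepsilon|\cdot|)\ast\rho(x)\bigr)^2}{V_\varepsilon\ast\rho(x)}\,dx \leq \int_{\Rd}|z|^2 V_\varepsilon(z)\,dz = \varepsilon^2\,m_2(V_1),
\end{equation*}
after the scaling $z = \varepsilon w$. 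This gives a pointwise-in-time $L^2_x$ bound; integrating the square in $t$ and pulling out $\|D^2\varphi(t,\cdot)\|_{L^\infty_x}^2$ yields~\eqref{eq:commutator_general_rho} with $C=\sqrt{m_2(V_1)}$.

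The only subtlety is the finiteness of $m_2(V_1)$, which requires separating~\ref{ass:v2-c} and~\ref{ass:v2-g}: the compactly supported case is immediate, while for $V_1=\mathcal{N}_p$ with $p\in\{1,2\}$ the Gaussian-type decay gives $m_2(V_1)<\infty$. I do not anticipate a real obstacle beyond this; the novelty highlighted in the introduction is precisely that the Cauchy--Schwarz trick lets one avoid a pointwise lower bound on $V_\varepsilon\ast\rho$, so that even globally supported mollifiers (which previously required a separate perturbation or a uniform positive lower bound as in~\Cref{rem:gaussian}) are handled uniformly by this single argument.
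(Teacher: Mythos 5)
Your argument is correct and is essentially the paper's proof: the paper inserts a weight $f_\varepsilon$, applies Young's inequality pointwise, and optimizes over $f_\varepsilon$, which is exactly the same as your direct Cauchy--Schwarz with respect to $V_\varepsilon(x-y)\,d\rho(y)$, yielding the identical bound $\varepsilon^2\|V_1|\cdot|^2\|_{L^1}$ after Fubini. (The paper handles the compactly supported case by a separate, slightly cruder one-line estimate, whereas your argument covers both cases uniformly — a minor streamlining, not a different route.)
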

\begin{proof}
As a preliminary observation, note that by nonnegativity of $\rho$ and $V_{\eps}$ we have the pointwise identity
\begin{multline*}
|V_\varepsilon * (\nabla \varphi\,  \rho)(x) - \nabla \varphi\, V_{\eps}\ast\rho(x)| \leq \int_{\R^d} V_{\eps}(x-y) \,  |\nabla \varphi(y) - \nabla \varphi(x)| \diff \rho(y) \leq \\ \leq \|D^2 \varphi\|_{L^\infty} \int_{\R^d} V_{\eps}(x-y)\, |x-y|\, \diff \rho(y) = \|D^2 \varphi\|_{L^\infty_x} \, (V_{\eps}|\cdot|)\ast \rho(x). 
\end{multline*}
It is then sufficient to estimate $\mathds{1}_{V_{\eps}\ast\rho>0}\, \frac{(V_{\eps}|\cdot|)\ast\rho}{\sqrt{V_{\eps}\ast\rho}}$ in $L^{\infty}_t L^2_x$. We distinguish between two cases.

\noindent \underline{\textit{Globally supported kernels $V$.}} 
We introduce a function $f_{\eps} = f_{\eps}(t,x) \geq 0$ to be specified later. Then, we estimate pointwise
\begin{equation}\label{eq:estimate_pointwise_after_commutator}
\frac{(V_{\eps}|\cdot|)\ast \rho(x)}{\sqrt{V_{\eps}\ast\rho}(x)} = \int_{\R^d} V_{\eps}(x-y)\, \frac{|x-y|\,\sqrt{f_{\eps}(t,x)}}{\sqrt{V_{\eps}\ast\rho}(x)\, \sqrt{f_{\eps}(t,x)}} \, \diff \rho(y).
\end{equation}
Using Young's inequality
$$
\frac{|x-y|\,\sqrt{f_{\eps}(t,x)}}{\sqrt{V_{\eps}\ast\rho}(x)\, \sqrt{f_{\eps}(t,x)}} \leq \frac{1}{2}\,\frac{|x-y|^2}{f_{\eps}(t,x)} + \frac{1}{2}\,\frac{f_{\eps}(t,x)}{(V_{\eps}\ast\rho)(x)}\,,
$$
we deduce from \eqref{eq:estimate_pointwise_after_commutator}
$$
\frac{(V_{\eps}|\cdot|)\ast \rho}{\sqrt{V_{\eps}\ast\rho}} \leq \frac{1}{2} \, \frac{(V_{\eps}|\cdot|^2)\ast\rho}{f_{\eps}} + \frac{1}{2}\,f_{\eps} = \sqrt{(V_{\eps}|\cdot|^2)\ast\rho}\,,
$$
where we chose $f_{\eps} = \sqrt{(V_{\eps}|\cdot|^2)\ast\rho}$. To conclude the proof, we estimate, for any $t\in[0,T]$
$$
\left\| \frac{(V_{\eps}|\cdot|)\ast \rho_t}{\sqrt{V_{\eps}\ast\rho_t}}  \right\|_{L^2_x}^2 \leq  \| (V_{\eps}|\cdot|^2)\ast\rho_t \|_{L^1_x} = \| V_{\eps}|\cdot|^2 \|_{L^1_x}= \eps^2\, \|V_1|\cdot|^2\|_{L^1_x}.
$$

\noindent \underline{\textit{The case of compactly supported kernels $V$.}} Without loss of generality we assume that $V$ is supported in the unit ball so that $V_{\eps}$ is supported in the ball of radius $\eps$. Hence,
$$
\frac{(V_{\eps}|\cdot|)\ast \rho}{\sqrt{V_{\eps}\ast\rho}} \leq \eps \, \sqrt{V_{\eps}\ast\rho}.
$$
Arguing as above, for any $t\in[0,T]$,
$$
\left\| \frac{(V_{\eps}|\cdot|)\ast \rho_t}{\sqrt{V_{\eps}\ast\rho_t}}  \right\|_{L^2_x}^2 \leq \eps^2 \, \| V_{\eps}\ast\rho_t \|_{L^1_x} \leq \eps^2\, \|V\|_{L^1_x},
$$
which concludes the proof of~\eqref{eq:commutator_general_rho}.
\end{proof}
The identification of the limiting curve $\tilde\rho$ from~\Cref{prop:limit-rho} as a weak solution of the heat equation is a consequence of the following lemma. Indeed, we see that $v^{\varepsilon,\sigma}$ and $\rhoes$ have the same limit in a distributional sense, both for $\sigma>0$ and $\sigma=0$. 
\begin{lem}\label{lem:limit-dist}
For any $t\in[0,T]$ and any $\varphi\in C_c^1(\Rd)$ it holds
$$
\lim_{\varepsilon\to0^+}\int_\Rd \varphi(x)v_t^\varepsilon(x)\,\diff x=\int_{\Rd}\varphi(x)\diff \tilde{\rho}_t(x) =
\lim_{\varepsilon,\sigma\to0^+}\int_\Rd \varphi(x)v_t^{\varepsilon,\sigma}(x)\,\diff x.
$$
\end{lem}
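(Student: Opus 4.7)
The plan is to exploit the symmetry of $V_\varepsilon$ (from~\ref{ass:v1}) to move the convolution from the density onto the test function. Since $V_\varepsilon(x)=V_\varepsilon(-x)$, one has
\[
\int_\Rd \varphi(x)\, v_t^\varepsilon(x)\,\diff x = \int_\Rd \varphi(x)\,(V_\varepsilon * \rho_t^\varepsilon)(x)\,\diff x = \int_\Rd (V_\varepsilon * \varphi)(x)\,\diff \rho_t^\varepsilon(x),
\]
and the analogous identity holds with $\rho_t^{\varepsilon,\sigma}$ in place of $\rho_t^\varepsilon$. The natural decomposition is then
\[
\int_\Rd (V_\varepsilon * \varphi)\,\diff \rho_t^\varepsilon = \int_\Rd \varphi\,\diff\rho_t^\varepsilon + \int_\Rd (V_\varepsilon * \varphi - \varphi)\,\diff \rho_t^\varepsilon,
\]
and the strategy is to pass to the limit in the first term via narrow convergence of $\rho^\varepsilon$ (respectively of the diagonal sequence $\rho^{\varepsilon,\sigma}$) from~\Cref{prop:limit-rho}, and to show that the second term is a vanishing error.

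For the error, since $\varphi\in C^1_c(\Rd)$ is globally Lipschitz, the mean value theorem gives the pointwise estimate
\[
|V_\varepsilon * \varphi(x) - \varphi(x)| \le \|\nabla \varphi\|_{L^\infty}\!\int_\Rd V_\varepsilon(y)\,|y|\,\diff y = \varepsilon\,\|\nabla\varphi\|_{L^\infty}\!\int_\Rd V_1(z)\,|z|\,\diff z.
\]
The last integral is finite in both regimes: under~\ref{ass:v2-c} it is bounded by $1$ since $\mathrm{supp}\,V_1\subset B_1$; under~\ref{ass:v2-g} we have $V_1=\mn_p$ with finite second moment (cf.~\Cref{rem:general_global_kernel}), hence finite first moment by Cauchy--Schwartz. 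Therefore $\|V_\varepsilon * \varphi - \varphi\|_{L^\infty}=O(\varepsilon)$, and since $\rho_t^\varepsilon,\,\rho_t^{\varepsilon,\sigma}$ are probability measures,
\[
\left|\int_\Rd (V_\varepsilon * \varphi - \varphi)\,\diff \rho_t^\varepsilon\right| \le \|V_\varepsilon * \varphi - \varphi\|_{L^\infty} \xrightarrow[\varepsilon\to 0]{} 0,
\]
and similarly with $\rho_t^{\varepsilon,\sigma}$.

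Combining the two points, the first summand converges to $\int_\Rd \varphi\,\diff\tilde{\rho}_t$ by~\Cref{prop:limit-rho} (along the $\varepsilon\to 0$ subsequence for the first claim and along the diagonal subsequence $\varepsilon_k,\sigma_k\to 0$ for the second), while the second summand vanishes; this yields the two limits in the statement. I do not anticipate any genuine obstacle here: the argument is a textbook mollification lemma layered on top of the narrow convergence already established in~\Cref{prop:limit-rho}. The only delicate point worth flagging is the need for $V_1$ to have finite first moment in the globally supported case, which is precisely what is ensured by the choice $V_1=\mn_p$ (or by~\Cref{rem:general_global_kernel}).
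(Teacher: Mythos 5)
Your proof is correct and follows essentially the same route as the paper's: both use the symmetry of $V_\varepsilon$ to pass the convolution onto the test function, bound $\|V_\varepsilon * \varphi - \varphi\|_{L^\infty}$ by $\varepsilon\,\|\nabla\varphi\|_{L^\infty}\!\int |z|V_1(z)\diff z$, and invoke the narrow convergence of $\rho_t^\varepsilon$ (resp.\ the diagonal sequence $\rho_t^{\varepsilon,\sigma}$) from~\Cref{prop:limit-rho}. The only cosmetic difference is that you spell out the final combination with narrow convergence and the finite-first-moment requirement on $V_1$ in the globally supported case, which the paper leaves implicit.
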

\begin{proof}
For any $t\in[0,T]$ and any $\varphi\in C_c^1(\Rd)$, by using the definition of $v_t^\varepsilon$ we obtain:
\begin{align*}
\left|\int_\Rd\varphi(x)v_t^\varepsilon(x)\,\diff x-\int_\Rd\varphi(x)\,\mathrm{d}\rhoe_t(x)\right|&=\left|\int_\Rd\varphi(x)(V_\varepsilon*\rhoe_t)(x)\,\diff x-\int_\Rd\varphi(x)\,\mathrm{d}\rhoe_t(x)\right|\\
&=\left|\int_\Rd(\varphi*V_\varepsilon)(x)\,\mathrm{d}\rhoe_t(x)-\int_\Rd\varphi(x)\,\mathrm{d}\rhoe_t(x)\right|\\
&=\left|\int_\Rd[(\varphi*V_\varepsilon)(x)-\varphi(x)]\,\mathrm{d}\rhoe_t(x)\right|\\
&\le\int_\Rd\int_\Rd|\varphi(x-y)-\varphi(x)|V_\varepsilon(y)\,\mathrm{d}y\,\mathrm{d}\rhoe_t(x)\\
&\le\|\nabla\varphi\|_\infty\int_\Rd |y|V_\varepsilon(y)\,\mathrm{d}y\\
&=\varepsilon\|\nabla\varphi\|_\infty\int_\Rd |x|V_1(x)\,\diff x,
\end{align*}
which converges to $0$ as $\varepsilon\to0^+$ since $\int_\Rd|x|V_1(x)\,\diff x<+\infty$. An analogous proof is valid for $v^{\varepsilon,\sigma}$.
\end{proof}

\subsection{Joint limit $\sigma, \eps \to 0$}\label{sect:joint_limit} Here, we prove the convergence in the joint limit $\sigma, \eps \to 0$. Note that we do not require any particular scaling between the parameters.

\begin{thm}\label{thm:joint_limit_eps_sigma}
Let $\rho_0\in\mpdtard$ such that $\mh[\rho_0]<\infty$, $V_1$ satisfy~\ref{ass:v1}~--~\ref{ass:v2}, and $\{\rho^{\eps,\sigma}\}_{\varepsilon,\sigma}$ be a sequence of solutions to \eqref{eq:nonlocal_he_gauss} from~\Cref{thm:exist_nlhe_gauss}. The sequence $\rhoes(t)$ narrowly converges to the unique weak solution of the heat equation,~\eqref{eq:heat_eq}, as $\varepsilon,\sigma\to0$, uniformly in time.
\end{thm}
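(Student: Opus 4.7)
The plan is to combine compactness of $\{\rho^{\varepsilon,\sigma}\}$ from the previous section with a careful passage to the limit in the nonlocal drift using~\Cref{lem:commutator_lemma}, and then conclude via uniqueness of weak solutions to~\eqref{eq:heat_eq}.

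First, from~\Cref{prop:limit-rho} and~\Cref{prop:strong-convergence-v}(iv), I extract a diagonal sequence $(\varepsilon_k,\sigma_k)\to(0,0)$ along which $\rho^{\varepsilon_k,\sigma_k}(t)\rightharpoonup \tilde\rho(t)$ uniformly in $t$, and $f^{\varepsilon_k,\sigma_k}\to v$ strongly in $L^1([0,T]\times\R^d)$. \Cref{lem:limit-dist} identifies $v=\tilde\rho$, so $\tilde\rho$ is absolutely continuous. The $L^2_tH^1_x$ bound~\ref{est3} combined with a.e. convergence of $\sqrt{f^{\varepsilon_k,\sigma_k}}\to\sqrt{\tilde\rho}$ yields $\nabla\sqrt{f^{\varepsilon_k,\sigma_k}}\rightharpoonup\nabla\sqrt{\tilde\rho}$ in $L^2_{t,x}$ by lower semicontinuity, producing the regularity $\nabla\sqrt{\tilde\rho}\in L^2_{t,x}$ required by~\Cref{def:weak_sol_he}.

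To identify the limit equation, I start from~\eqref{eq:weak-form-sigma_towork_with} with the drift in the form~\eqref{eq:weak-form-conv}. Setting $g^{\varepsilon,\sigma}=\sqrt{f^{\varepsilon,\sigma}}$ and $\mathrm{Comm}^{\varepsilon,\sigma}:=V_\varepsilon*(\rho^{\varepsilon,\sigma}\nabla\varphi)-\nabla\varphi\, V_\varepsilon*\rho^{\varepsilon,\sigma}$, I decompose
\begin{align*}
-2(1-\sigma)\!\int_0^T\!\!\int V_\varepsilon*(\rho^{\varepsilon,\sigma}\nabla\varphi)\tfrac{\nabla g^{\varepsilon,\sigma}}{g^{\varepsilon,\sigma}}\,dx\,dt &= -2\!\int_0^T\!\!\int\nabla\varphi\cdot\tfrac{(1-\sigma)V_\varepsilon*\rho^{\varepsilon,\sigma}}{g^{\varepsilon,\sigma}}\,\nabla g^{\varepsilon,\sigma}\,dx\,dt \\
&\quad -2(1-\sigma)\!\int_0^T\!\!\int\tfrac{\mathrm{Comm}^{\varepsilon,\sigma}}{g^{\varepsilon,\sigma}}\cdot\nabla g^{\varepsilon,\sigma}\,dx\,dt.
\end{align*}
Using $(1-\sigma)V_\varepsilon*\rho^{\varepsilon,\sigma}=(g^{\varepsilon,\sigma})^2-\sigma\mn$ and integrating by parts, the first summand equals $\int_0^T\!\int\Delta\varphi\cdot f^{\varepsilon,\sigma}\,dx\,dt + 2\sigma\int_0^T\!\int\nabla\varphi\cdot(\mn/g^{\varepsilon,\sigma})\nabla g^{\varepsilon,\sigma}\,dx\,dt$. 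The leading piece converges to $\int_0^T\!\int\Delta\varphi\cdot\tilde\rho\,dx\,dt = -\int_0^T\!\int\nabla\varphi\cdot\nabla\tilde\rho\,dx\,dt$ by the strong $L^1$ convergence of $f^{\varepsilon_k,\sigma_k}$; the $\sigma$-correction is bounded by $C\sqrt{\sigma}\,\|\sqrt{\mn}\|_{L^2}\|\nabla g^{\varepsilon,\sigma}\|_{L^2_{t,x}}\|\nabla\varphi\|_\infty$ via the pointwise estimate $\sigma\mn/g^{\varepsilon,\sigma}\leq \sqrt{\sigma}\sqrt{\mn}$, and thus vanishes. For the commutator summand, on $\{V_\varepsilon*\rho^{\varepsilon,\sigma}>0\}$ (outside which $\mathrm{Comm}^{\varepsilon,\sigma}=0$) I bound $1/g^{\varepsilon,\sigma}\leq (1-\sigma)^{-1/2}/\sqrt{V_\varepsilon*\rho^{\varepsilon,\sigma}}$ and combine~\Cref{lem:commutator_lemma} with~\ref{est3} to obtain a bound of order $\varepsilon\,\|D^2\varphi\|_{L^2_tL^\infty_x}\|\nabla g^{\varepsilon,\sigma}\|_{L^2_{t,x}}$.

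Passing to the limit in~\eqref{eq:weak-form-sigma_towork_with} shows that $\tilde\rho$ satisfies the weak formulation of~\eqref{eq:heat_eq}. Since weak solutions of~\eqref{eq:heat_eq} in the sense of~\Cref{def:weak_sol_he} are unique (with initial datum $\rho_0$), the full family $\{\rho^{\varepsilon,\sigma}\}$ converges to $\tilde\rho$, and the uniform-in-time narrow convergence is preserved by the equi-H\"older bound~\eqref{eq:cont_in_time_Wasserstein_eps_sigma} via Ascoli-Arzel\`a. The main obstacle is the nonlocal drift: the log-singularity and the absence of a uniform pointwise lower bound on $f^{\varepsilon,\sigma}$ rule out a direct passage to the limit, and the key technical device is the new commutator estimate in~\Cref{lem:commutator_lemma}, which is sharp enough to absorb globally supported kernels in $L^\infty_tL^2_x$ and hence pair against $\nabla g^{\varepsilon,\sigma}\in L^2_{t,x}$.
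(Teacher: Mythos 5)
Your proof is correct and follows essentially the same route as the paper: decompose the nonlocal drift into a leading term plus a commutator remainder, bound the commutator via Lemma~\ref{lem:commutator_lemma} and the $L^2_tH^1_x$ estimate~\ref{est3}, isolate the $\sigma\mathcal{N}$-correction and kill it by the pointwise inequality $\sigma\mathcal{N}/g^{\varepsilon,\sigma}\le\sqrt{\sigma\mathcal{N}}$, pass to the limit in the remaining term after integrating by parts, and conclude by uniqueness of weak solutions to~\eqref{eq:heat_eq}. The only superficial difference is in ordering (you split off the commutator first, then expand $(1-\sigma)V_\varepsilon*\rho^{\varepsilon,\sigma}=g^2-\sigma\mathcal{N}$; the paper first replaces $\mathcal{C}$ by $\mathcal{C}'$ and then adds/subtracts $\sigma\mathcal{N}$), which is algebraically equivalent.
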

\begin{proof}
To prove the result, we consider a smooth, compactly supported function $\varphi$ and consider the weak form of~\eqref{eq:nonlocal_he_gauss}. We need to pass to the limit in three expressions. Due to the narrow convergence, $\int_{\R^d} \varphi(T,x) \diff \rho^{\varepsilon, \sigma}_T(x) \to \int_{\R^d} \varphi(T,x) \diff \rho_T(x)$. Similarly, by the dominated convergence theorem
$$
\int_0^T \int_{\R^d} \partial_t \varphi \diff \rho^{\varepsilon, \sigma}_t(x)  \diff t \to \int_0^T \int_{\R^d} \partial_t \varphi  \diff \rho_t(x) \diff t.
$$
We now focus on the term including the velocity field given by~\eqref{eq:weak-form-conv} and notice
\begin{align*}
\mathcal{C}:&= -2(1-\sigma)\int_0^T \int_{\R^d} V_{\eps}\ast\left(\nabla \varphi \, \rho^{\varepsilon, \sigma}\right) \, \frac{\nabla \sqrt{(1-\sigma)\,V_{\eps}\ast \rho^{\varepsilon, \sigma} + \sigma\, \mathcal{N})}}{\sqrt{(1-\sigma)\,V_{\eps}\ast \rho^{\varepsilon, \sigma} + \sigma\, \mathcal{N}}} \diff x \diff t\\
&=-2(1-\sigma)\int_0^T \int_{\R^d} \mathds{1}_{V_{\eps}\ast\rhoes>0}V_{\eps}\ast\left(\nabla \varphi \, \rho^{\varepsilon, \sigma}\right) \, \frac{\nabla \sqrt{(1-\sigma)\,V_{\eps}\ast \rho^{\varepsilon, \sigma} + \sigma\, \mathcal{N})}}{\sqrt{(1-\sigma)\,V_{\eps}\ast \rho^{\varepsilon, \sigma} + \sigma\, \mathcal{N}}} \diff x \diff t,
\end{align*}
since $|V_{\eps}\ast\left(\nabla \varphi \, \rho^{\varepsilon, \sigma}\right)(x)|\le \|\nabla\varphi\|_\infty V_\varepsilon\ast\rhoes(x)$. Moreover, we observe 
$$
\left|\frac{(\nabla \varphi  \, \rho^{\varepsilon, \sigma})\ast V_{\eps} - \nabla \varphi\, (\rho^{\varepsilon, \sigma} \ast V_{\eps})}{\sqrt{(1-\sigma)\,V_{\eps}\ast \rho^{\varepsilon, \sigma} + \sigma\, \mathcal{N}}}\right| \leq \frac{\sqrt{(1-\sigma)\,V_{\eps}\ast \rho^{\varepsilon, \sigma} }}{\sqrt{(1-\sigma)\,V_{\eps}\ast \rho^{\varepsilon, \sigma} + \sigma\, \mathcal{N}}} 
\left|\frac{(\nabla \varphi \, \rho^{\varepsilon, \sigma})\ast V_{\eps} - \nabla \varphi \, (\rho^{\varepsilon, \sigma} \ast V_{\eps})}{\sqrt{(1-\sigma)\,V_{\eps}\ast \rho^{\varepsilon, \sigma}} }\right|,
$$
where the first fraction is bounded by 1 and the second can be estimated as in~\Cref{lem:commutator_lemma}. It follows that 
$$
\left\|\mathds{1}_{V_{\eps}\ast\rhoes>0}\frac{(\nabla \varphi \, \rho^{\varepsilon, \sigma})\ast V_{\eps} - \nabla \varphi \, (\rho^{\varepsilon, \sigma} \ast V_{\eps})}{\sqrt{(1-\sigma)\,V_{\eps}\ast \rho^{\varepsilon, \sigma} + \sigma\, \mathcal{N}}}\right\|_{L^2_{t,x}} \leq \frac{C\, \eps}{\sqrt{1-\sigma}} \to 0
$$
as $\eps \to 0$, whence, thanks to the bound \ref{est3}, the expression $\mathcal{C}$ is vanishingly close to
\begin{align*}
\mathcal{C}' := & -2(1-\sigma)\int_0^T \int_{\R^d}\mathds{1}_{V_{\eps}\ast\rho>0}\nabla \varphi  \, (\rho^{\varepsilon, \sigma}\ast V_{\eps}) \, \frac{\nabla \sqrt{(1-\sigma)\,V_{\eps}\ast \rho^{\varepsilon, \sigma} + \sigma\, \mathcal{N})}}{\sqrt{(1-\sigma)\,V_{\eps}\ast \rho^{\varepsilon, \sigma} + \sigma\, \mathcal{N}}} \diff x \diff t.
\end{align*}
By adding and subtracting $\sigma\mn$, we get 
\begin{align*}
    \mathcal{C}' =& -2 \int_0^T \int_{\R^d} \nabla \varphi  \,\sqrt{(1-\sigma)\,V_{\eps}\ast \rho^{\varepsilon, \sigma} + \sigma\, \mathcal{N}} 
 \nabla \sqrt{(1-\sigma)\,V_{\eps}\ast \rho^{\varepsilon, \sigma} + \sigma\, \mathcal{N}} \diff x \diff t \\
 & + 2 \int_0^T \int_{\R^d} \nabla \varphi \, \frac{\sigma\mathcal{N}}{\sqrt{(1-\sigma)\,V_{\eps}\ast \rho^{\varepsilon, \sigma} + \sigma\, \mathcal{N}}} \, \nabla \sqrt{(1-\sigma)\,V_{\eps}\ast \rho^{\varepsilon, \sigma} + \sigma\, \mathcal{N}} \diff x \diff t.
\end{align*}
In the first term, we integrate by parts and we easily pass to the limit to get $\int_0^T \int_{\R^d} \Delta \varphi \,\rho \diff x \diff t$. By~\Cref{prop:limit-rho},~\Cref{lem:h1_estimate},~\Cref{lem:limit-dist}, and~\Cref{prop:strong-convergence-v} we know there exists a subsequence $\rho^{\varepsilon_k,\sigma_k}(t)$ narrowly converging to $\tilde\rho\in L^1([0,T]\times\Rd)$ and $f^{\varepsilon_k,\sigma_k}$ such that
\begin{align*}
f^{\varepsilon_k,\sigma_k}\to\tilde\rho \quad &\mbox{in }L^1([0,T]\times\Rd),\\
\nabla \sqrt{f^{\varepsilon_k,\sigma_k}}\rightharpoonup g \quad&\mbox{in } L^2([0,T]\times\Rd).
\end{align*}
Since we can prove $\sqrt{f^{\varepsilon_k,\sigma_k}}\to \sqrt{\tilde\rho}$ in $L^2([0,T]\times\Rd)$ by a standard argument, it follows $g=\nabla \sqrt{\tilde\rho}$. In the second term, we use bound \ref{est3} and then simply estimate 
$$
\left|\frac{\sigma\,\mathcal{N}}{\sqrt{(1-\sigma)\,V_{\eps}\ast \rho^{\varepsilon, \sigma} + \sigma\, \mathcal{N}}}\right| \leq \sqrt{\sigma\, \mathcal{N}}
$$
so that the term is bounded by $C\sqrt{\sigma}$. Summing up all the information, upon a further regularisation of the test function to apply~\Cref{lem:commutator_lemma}, we obtain in the joint limit $\varepsilon,\sigma\to0$ a weak solution to~\eqref{eq:heat_eq}, in the sense of Definition~\ref{def:weak_sol_he}. We observe that the weak form is justified by the chain rule in Sobolev spaces since $\nabla \rho=\nabla\left(\sqrt{\rho}\right)^{2}=2\sqrt{\rho}\, \nabla\sqrt{\rho} \in L^1([0,T]\times\Rd)$. Since weak solutions are unique, we can infer the whole sequence $\rhoes$ narrowly converges to $\title{\rho}$.
\end{proof}

\subsection{First $\sigma \to 0$, then $\eps \to 0$.}\label{sect:separate_limits} This limit is slightly harder as one needs to identify the limit in the intermediate step, in particular to prove that we arrive at the formulation~\eqref{eq:weak-form}~or~\eqref{eq:weak-form-just_eps_towork_with} with a characteristic function. For ease of presentation we drop the tilde symbol for the limiting objects.

\begin{thm}\label{thm:sig_to_zero}
Fix $\rho_0\in\mpdtard$ and $\varepsilon>0$ such that $\mh[\rho_0]<\infty$, $V_1$ satisfy~\ref{ass:v1}~-~\ref{ass:v2}, and $\{\rho^{\eps,\sigma}\}_\sigma$ be a sequence of solutions to \eqref{eq:nonlocal_he_gauss} from~\Cref{thm:exist_nlhe_gauss}. Then, as $\sigma \to 0$, there exists a subsequence such that $\rho^{\varepsilon, \sigma}_t \rightharpoonup \rho^{\eps}_t$ for all $t\in [0,T]$, where $\rho^{\eps}$ solves \eqref{eq:nonlocal_he} in the sense of Definition \ref{def:weak-meas-sol}. Moreover, we have the following uniform bounds (with respect to $\eps$):
\begin{equation}\label{eq:uniform_bounds_eps_after_sigma_limit}
\{\rho^{\eps}\ast V_{\eps}\}_\varepsilon,\, \{|\cdot|^2\rho^{\eps}\ast V_{\eps}\}_\varepsilon \mbox{ in } L^{\infty}_t L^1_x, \quad \{\nabla \sqrt{\rho^{\eps}\ast V_{\eps}}\}_\varepsilon \mbox{ in } L^2_{t,x}, \quad
d_W^2(\rho^{\varepsilon}_t, \rho^{\varepsilon}_s) \leq C|t-s|.
\end{equation}
\end{thm}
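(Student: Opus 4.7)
My plan is to extract subsequences via the available compactness, split the weak form of~\eqref{eq:nonlocal_he_gauss} into a strong--weak $L^2$ pairing, and identify the $\sigma\to 0$ limit of the quotient carefully on the vacuum set.

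First, \Cref{prop:limit-rho} yields a subsequence $\sigma_k\downarrow 0$ with $\rho^{\varepsilon,\sigma_k}_t\rightharpoonup\rho^\varepsilon_t$ narrowly, uniformly in $t\in[0,T]$, while \Cref{prop:strong-convergence-v}(ii) gives strong $L^1([0,T]\times\Rd)$-convergence $f^{\varepsilon,\sigma_k}:=(1-\sigma_k)V_\varepsilon*\rho^{\varepsilon,\sigma_k}+\sigma_k\mn\to V_\varepsilon*\rho^\varepsilon$ (hence a.e., up to a further subsequence). The $L^2_tH^1_x$-bound on $\sqrt{f^{\varepsilon,\sigma_k}}$ from \Cref{lem:h1_estimate}, combined with the resulting strong $L^2$-convergence of $\sqrt{f^{\varepsilon,\sigma_k}}$ to $\sqrt{V_\varepsilon*\rho^\varepsilon}$, identifies the weak $L^2_{t,x}$-limit $\nabla\sqrt{f^{\varepsilon,\sigma_k}}\rightharpoonup\nabla\sqrt{V_\varepsilon*\rho^\varepsilon}$. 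Rewriting~\eqref{eq:weak-form-sigma_towork_with} as in~\eqref{eq:weak-form-conv}, the weak form at time $t$ becomes
\[
\int_\Rd\varphi\,\mathrm{d}\rho^{\varepsilon,\sigma_k}_t-\int_\Rd\varphi\,\mathrm{d}\rho_0 = -2(1-\sigma_k)\int_0^t\!\!\int_\Rd \mathcal{Q}^{\sigma_k}_s\cdot\nabla\sqrt{f^{\varepsilon,\sigma_k}_s}\,\mathrm{d}x\,\mathrm{d}s,\quad \mathcal{Q}^{\sigma_k}_s := \frac{V_\varepsilon*(\nabla\varphi\,\rho^{\varepsilon,\sigma_k}_s)}{\sqrt{f^{\varepsilon,\sigma_k}_s}}.
\]
Narrow convergence handles the left-hand side; the main task is the right-hand side.

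The hard part will be proving
\[
\mathcal{Q}^{\sigma_k}\longrightarrow \mathcal{Q}:=\mathds{1}_{V_\varepsilon*\rho^\varepsilon>0}\,\frac{V_\varepsilon*(\nabla\varphi\,\rho^\varepsilon)}{\sqrt{V_\varepsilon*\rho^\varepsilon}} \quad \text{strongly in }L^2_{t,x},
\]
so that a strong--weak pairing against $\nabla\sqrt{f^{\varepsilon,\sigma_k}}$ closes the limit. I would split the a.e.-analysis on $\{V_\varepsilon*\rho^\varepsilon>0\}$ and on the vacuum set $\{V_\varepsilon*\rho^\varepsilon=0\}$. On the former, narrow convergence of $\rho^{\varepsilon,\sigma_k}_s$ tested against the continuous bounded function $y\mapsto V_\varepsilon(x-y)\nabla\varphi(y)$ gives pointwise convergence of the numerator, and a.e.\ convergence $f^{\varepsilon,\sigma_k}\to V_\varepsilon*\rho^\varepsilon>0$ handles the denominator. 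On the vacuum set, non-negativity of $V_\varepsilon$ forces $\rho^\varepsilon$-a.e.\ vanishing of $V_\varepsilon(x-\cdot)$, hence $V_\varepsilon*(\nabla\varphi\,\rho^\varepsilon)(x)=0$ and $\mathcal{Q}=0$ there. Moreover, the pointwise bound
\[
|\mathcal{Q}^{\sigma_k}|^2\le \frac{\|\nabla\varphi\|_\infty^2}{1-\sigma_k}\,V_\varepsilon*\rho^{\varepsilon,\sigma_k},
\]
together with a.e.\ convergence of its right-hand side to $0$ on the vacuum set, yields $\mathcal{Q}^{\sigma_k}\to 0$ a.e.\ there. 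Finally, the majorant above converges in $L^1([0,T]\times\Rd)$ to $\|\nabla\varphi\|_\infty^2\,V_\varepsilon*\rho^\varepsilon$, so $\{|\mathcal{Q}^{\sigma_k}|^2\}$ is uniformly integrable and Vitali's theorem upgrades the a.e.\ to strong $L^2_{t,x}$-convergence. With $1-\sigma_k\to 1$, this identifies $\rho^\varepsilon$ as a weak measure solution of~\eqref{eq:nonlocal_he} in the sense of~\eqref{eq:weak-form}.

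For the uniform bounds in $\varepsilon$, I would pass $\sigma\to 0$ in \Cref{lem:apriori_est_sigma_eps}. The $L^\infty_tL^1_x$-bounds on $V_\varepsilon*\rho^\varepsilon$ and $|\cdot|^2V_\varepsilon*\rho^\varepsilon$ follow from mass conservation and Fatou along the a.e.\ limit; weak lower semicontinuity of the $L^2$-norm, applied to the gradient convergence established above, yields the $L^2_{t,x}$-bound on $\nabla\sqrt{V_\varepsilon*\rho^\varepsilon}$; the Wasserstein continuity~\eqref{eq:cont_in_time_Wasserstein_eps_sigma} is inherited under narrow limits of tight families. Items~\ref{a:fie}--\ref{a:l2} of \Cref{def:weak-meas-sol} hold by hypothesis and by the gradient bound just derived, respectively, completing the identification of $\rho^\varepsilon$.
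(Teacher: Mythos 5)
Your proof follows essentially the same approach as the paper's: rewrite the weak formulation as a strong--weak $L^2$ pairing, identify $\nabla\sqrt{f^{\varepsilon,\sigma_k}}\rightharpoonup\nabla\sqrt{V_\varepsilon\ast\rho^\varepsilon}$ from the uniform $L^2_tH^1_x$ bound, and then prove strong $L^2_{t,x}$ convergence of the quotient $\mathcal{Q}^{\sigma_k}$ via the pointwise bound $|\mathcal{Q}^{\sigma_k}|^2\lesssim(1-\sigma_k)^{-1}V_\varepsilon\ast\rho^{\varepsilon,\sigma_k}$, a Vitali-type argument, and a vacuum/non-vacuum case split for the a.e.\ limit.

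One refinement worth noting: the paper invokes Vitali and explicitly checks only tightness (via the second-moment bound controlling the integral over $\Rd\setminus B_R$) and a.e.\ convergence, while leaving uniform integrability on small sets somewhat implicit. Your observation that the majorant $(1-\sigma_k)^{-1}\|\nabla\varphi\|_\infty^2\,V_\varepsilon\ast\rho^{\varepsilon,\sigma_k}$ is itself a sequence convergent in $L^1([0,T]\times\Rd)$ --- which follows from $f^{\varepsilon,\sigma_k}\to V_\varepsilon\ast\rho^\varepsilon$ in $L^1$ together with $\sigma_k\mathcal{N}\to 0$ --- gives both uniform integrability and tightness at once, and is the cleaner way to close the Vitali argument (essentially applying the paper's own Corollary~\ref{cor:Vitali}). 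You also spell out where the uniform-in-$\varepsilon$ bounds in \eqref{eq:uniform_bounds_eps_after_sigma_limit} come from (Fatou, weak lower semicontinuity, stability of the Wasserstein modulus under narrow limits), which the paper leaves implicit; this is correct and useful bookkeeping.
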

\begin{proof}
To prove the assertion, arguing as in the proof of Theorem \ref{thm:joint_limit_eps_sigma}, we consider the weak form of~\eqref{eq:nonlocal_he_gauss} and we need to explain how to pass to the limit in the term~\eqref{eq:weak-form-conv}, i.e.
$$
2(1-\sigma)\int_0^T \int_{\R^d} \frac{(\nabla \varphi\, \rho^{\varepsilon, \sigma}) \ast V_{\eps}}{\sqrt{(1-\sigma)\rho^{\varepsilon, \sigma}\ast V_{\eps} + \sigma\, \mathcal{N}}} \,{\nabla \sqrt{(1-\sigma)\rho^{\varepsilon, \sigma}\ast V_{\eps} + \sigma\, \mathcal{N}}} \diff x \diff t.
$$
Thanks to the strong convergence of $(1-\sigma)\rho^{\varepsilon, \sigma}\ast V_{\eps}+\sigma\mn$ we have 
$$
{\nabla \sqrt{(1-\sigma)\rho^{\varepsilon, \sigma}\ast V_{\eps} + \sigma\, \mathcal{N}}} \rightharpoonup \nabla \sqrt{\rho^{\eps}\ast V_{\eps}} \mbox{ weakly in } L^2_{t,x},
$$
so we need to prove
\begin{equation}\label{eq:strong_L2_conv_sufficient_crazy_seq}
\frac{(\nabla \varphi\, \rho^{\varepsilon, \sigma}) \ast V_{\eps}}{\sqrt{(1-\sigma)\rho^{\varepsilon, \sigma}\ast V_{\eps} + \sigma\, \mathcal{N}}} \to \mathds{1}_{V_{\eps}\ast\rho^{\eps}>0} \, \frac{(\nabla \varphi\, \rho^{\eps}) \ast V_{\eps}}{\sqrt{\rho^{\eps}\ast V_{\eps}}} \mbox{ strongly in } L^2_{t,x}.
\end{equation}
To this end, we apply the Vitali convergence theorem which requires uniform integrability, tightness, and a.e. convergence. Note that due to the pointwise estimate and nonnegativity, we have
\begin{equation}\label{eq:pointwise_bound_crazy_sequence_eps_sig}
\frac{(\nabla \varphi\, \rho^{\varepsilon, \sigma}) \ast V_{\eps}}{\sqrt{(1-\sigma)\rho^{\varepsilon, \sigma}\ast V_{\eps} + \sigma\, \mathcal{N}}} \leq C \|\nabla \varphi\|_{L^{\infty}} \, \sqrt{\rho^{\varepsilon, \sigma} \ast V_{\eps}}. 
\end{equation}
We know that $|\cdot|^2\rho^{\varepsilon,\sigma}\ast V_\varepsilon$ is uniformly bounded in $L_t^\infty L_x^1$, hence for any $R>0$, we have
\begin{equation}\label{eq:pointwise_bound_crazy_sequence_eps_sig_unif_int}
\int_0^T \int_{\Rd\setminus B_R} \left|\frac{(\nabla \varphi\, \rho^{\varepsilon, \sigma}) \ast V_{\eps}}{\sqrt{(1-\sigma)\rho^{\varepsilon, \sigma}\ast V_{\eps} + \sigma\, \mathcal{N}}}\right|^2 \diff x \diff t \leq C\, \int_0^T \int_{\Rd\setminus B_R}  \rho^{\varepsilon, \sigma} \ast V_{\eps} \diff x \diff t \le \frac{CT}{R^2}.
\end{equation}
Similarly,~\eqref{eq:pointwise_bound_crazy_sequence_eps_sig} implies tightness of $\left\{\frac{(\nabla \varphi\, \rho^{\varepsilon, \sigma}) \ast V_{\eps}}{\sqrt{(1-\sigma)\rho^{\varepsilon, \sigma}\ast V_{\eps} + \sigma\, \mathcal{N}}}\right\}_\sigma$. Therefore, it remains to prove a.e. convergence of the sequence. Extracting another subsequence, we know that $\rho^{\varepsilon, \sigma}\ast V_{\eps} \to \rho^{\eps}\ast V_{\eps}$ on $[0,T]\times \R^d$. Let $(t,x)$ be a point such that $\rho^{\varepsilon, \sigma}_t\ast V_{\eps}(x) \to \rho^{\eps}_t\ast V_{\eps}(x) = 0$. Then, by estimate \eqref{eq:pointwise_bound_crazy_sequence_eps_sig}, we have
$$
\frac{(\nabla \varphi\, \rho^{\varepsilon, \sigma}) \ast V_{\eps}}{\sqrt{(1-\sigma)\rho^{\varepsilon, \sigma}\ast V_{\eps} + \sigma\, \mathcal{N}}} \to 0.
$$
On the other hand, if $\rho^{\varepsilon, \sigma}_t\ast V_{\eps}(x) \to \rho^{\eps}_t\ast V_{\eps}(x) > 0$, we have $\sqrt{(1-\sigma)\rho^{\varepsilon, \sigma}\ast V_{\eps} + \sigma\, \mathcal{N}} \to \sqrt{\rho^{\eps}\ast V_{\eps}}$ pointwise. Furthermore, by the narrow convergence of $\{\rho^{\varepsilon, \sigma}\}_\sigma$
$$
(\nabla \varphi \, \rho^{\varepsilon, \sigma})\ast V_{\eps} = \int_{\R^d} \nabla V_{\eps}(x-y)\nabla\varphi(t,y)  \diff \rho^{\varepsilon, \sigma}_t(y) \to \int_{\R^d}  V_{\eps}(x-y)\nabla \varphi(t,y)  \diff \rho^{\eps}_t(y) = (\nabla \varphi \, \rho^{\eps})\ast V_{\eps}.
$$
In particular, we proved 
$$
\frac{(\nabla \varphi\, \rho^{\varepsilon, \sigma}) \ast V_{\eps}}{\sqrt{(1-\sigma)\rho^{\varepsilon, \sigma}\ast V_{\eps} + \sigma\, \mathcal{N}}} \to \mathds{1}_{\rho^{\eps}\ast V_{\eps}>0}\, \frac{(\nabla \varphi \, \rho^{\eps})\ast V_{\eps}}{\sqrt{\rho^{\eps}\ast V_{\eps}}} \mbox{ on } (0,T)\times\R^d
$$
and the proof of~\eqref{eq:strong_L2_conv_sufficient_crazy_seq} is concluded.
\end{proof}
\begin{thm}\label{thm:eps_to_zero}
Let $\rho_0\in\mpdtard$ such that $\mh[\rho_0]<\infty$, $V_1$ satisfy~\ref{ass:v1}~--~\ref{ass:v2}, and $\{\rho^{\eps}\}_\varepsilon$ be the sequence of solutions to~\eqref{eq:nonlocal_he} from~\Cref{thm:sig_to_zero}. Then, for any $t\in[0,T]$, $\rho^{\eps}_t \rightharpoonup \rho_t$, where $\rho$ is the unique solution of the heat equation,~\eqref{eq:heat_eq}.
\end{thm}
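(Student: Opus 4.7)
The plan is to pass to the limit $\varepsilon \to 0$ in the weak formulation~\eqref{eq:weak-form-just_eps_towork_with} of~\eqref{eq:nonlocal_he}, following closely the strategy of~\Cref{thm:joint_limit_eps_sigma} but in the simpler setting $\sigma=0$. The uniform bounds~\eqref{eq:uniform_bounds_eps_after_sigma_limit} already at our disposal are exactly what is needed. First, the $d_W$-equicontinuity together with the uniform second-moment bound on $V_\varepsilon\ast \rho^\varepsilon$ (which by~\Cref{lem:en-ineq-mom-bound} also gives tightness of $\rho^\varepsilon$) allows a refined Ascoli--Arzel\`a argument (cf.~\cite[Proposition 3.3.1]{AGS}) to extract a subsequence $\rho^\varepsilon_t \rightharpoonup \rho_t$ narrowly, uniformly in $t\in[0,T]$.

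Next, I would upgrade this to strong compactness of the mollified densities by repeating the argument of~\Cref{prop:strong-convergence-v}(iii): the $L^2_tH^1_x$ bound on $\sqrt{V_\varepsilon\ast\rho^\varepsilon}$, paired with the Wasserstein equicontinuity in time, yields via the Rossi--Savar\'e compactness theorem a limit $v\in L^1([0,T]\times\R^d)$ with $V_\varepsilon\ast\rho^\varepsilon \to v$ strongly in $L^1_{t,x}$, and~\Cref{lem:limit-dist} identifies $v=\rho$. Weak $L^2_{t,x}$ lower semicontinuity and the pointwise convergence of the square roots then give $\nabla \sqrt{V_\varepsilon\ast \rho^\varepsilon}\rightharpoonup \nabla \sqrt{\rho}$ in $L^2([0,T]\times\R^d)$; in particular $\nabla \rho = 2\sqrt{\rho}\,\nabla \sqrt{\rho}\in L^1([0,T]\times\R^d)$, which is the regularity required in~\Cref{def:weak_sol_he}.

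The only non-trivial term in~\eqref{eq:weak-form-just_eps_towork_with} is
\[
\mathcal{I}^\varepsilon := -2\int_0^T\!\!\int_{\R^d} \mathds{1}_{V_\varepsilon\ast\rho^\varepsilon>0}\,\frac{V_\varepsilon\ast(\nabla\varphi\,\rho^\varepsilon)}{\sqrt{V_\varepsilon\ast\rho^\varepsilon}}\cdot \nabla \sqrt{V_\varepsilon\ast\rho^\varepsilon}\,\diff x\diff t.
\]
After approximating $\varphi\in C^1_c$ by a $C^2_c$ function, the commutator estimate~\Cref{lem:commutator_lemma} bounds the $L^2_{t,x}$-norm of the difference $\mathds{1}_{V_\varepsilon\ast\rho^\varepsilon>0}\bigl[V_\varepsilon\ast(\nabla\varphi\rho^\varepsilon)-\nabla\varphi(V_\varepsilon\ast\rho^\varepsilon)\bigr]/\sqrt{V_\varepsilon\ast\rho^\varepsilon}$ by $C\varepsilon\|D^2\varphi\|_{L^2_tL^\infty_x}$; multiplied against the uniformly $L^2_{t,x}$-bounded factor $\nabla\sqrt{V_\varepsilon\ast\rho^\varepsilon}$, this contributes an $O(\varepsilon)$ error. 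What remains is
\[
-2\int_0^T\!\!\int_{\R^d}\nabla\varphi\,\sqrt{V_\varepsilon\ast\rho^\varepsilon}\cdot\nabla\sqrt{V_\varepsilon\ast\rho^\varepsilon}\,\diff x\diff t \;=\; \int_0^T\!\!\int_{\R^d}\Delta\varphi\,(V_\varepsilon\ast\rho^\varepsilon)\,\diff x\diff t,
\]
which converges to $\int_0^T\!\int_{\R^d}\Delta \varphi\,\rho\,\diff x\diff t$ either by the strong $L^1$ convergence from the previous step, or directly by~\Cref{lem:limit-dist}.

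Passing to the limit in the remaining initial-datum and $\partial_t\varphi$ terms is standard from the uniform-in-time narrow convergence, yielding the weak formulation in~\Cref{def:weak_sol_he} for the limit $\rho$. Uniqueness of weak solutions to~\eqref{eq:heat_eq} with the prescribed initial datum then promotes the subsequential convergence to convergence of the full family. The main obstacle is the treatment of $\mathcal{I}^\varepsilon$, in particular justifying that the characteristic function $\mathds{1}_{V_\varepsilon\ast\rho^\varepsilon>0}$ does not cause issues in the limit; this is harmless since $\nabla\sqrt{u}$ vanishes a.e.\ on $\{u=0\}$ for any nonnegative Sobolev function, so one may freely insert or remove such indicators against $\nabla\sqrt{V_\varepsilon\ast\rho^\varepsilon}$.
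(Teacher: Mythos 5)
Your proposal is correct and follows essentially the same route as the paper: apply the commutator estimate of \Cref{lem:commutator_lemma} (after upgrading $\varphi$ to $C^2_c$) to replace $V_\varepsilon*(\nabla\varphi\,\rho^\varepsilon)$ by $\nabla\varphi\,(V_\varepsilon*\rho^\varepsilon)$ at $O(\varepsilon)$ cost in $L^2_{t,x}$, drop the indicator since $\nabla\sqrt{V_\varepsilon*\rho^\varepsilon}=0$ a.e.\ on $\{V_\varepsilon*\rho^\varepsilon=0\}$, integrate by parts, and then invoke the strong $L^1_{t,x}$ convergence of $V_\varepsilon*\rho^\varepsilon$ together with the weak $L^2_{t,x}$ convergence of $\nabla\sqrt{V_\varepsilon*\rho^\varepsilon}$ to recover the weak formulation with the required regularity, and finally use uniqueness to upgrade to full-sequence convergence. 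The paper's own proof is terser (it simply refers back to the argument of \Cref{thm:joint_limit_eps_sigma}), but every step you spell out is the one it is implicitly invoking.
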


\begin{proof}
Arguing as above, we consider~\eqref{eq:weak-form}~or~\eqref{eq:weak-form-just_eps_towork_with}~and explain the limit as $\varepsilon\to 0$ in the term 
\begin{equation}\label{eq:term_to_change}
\int_0^T\int_{\R^d} \mathds{1}_{V_{\varepsilon} \ast \rho^{\eps} > 0}\, \frac{V_\varepsilon * (\nabla \varphi\,  \rho^\eps)}{\sqrt{V_\varepsilon * \rho^\eps}} \cdot \nabla \sqrt{V_\varepsilon * \rho^{\eps}} \diff x \diff t.
\end{equation}
By Lemma~\ref{lem:commutator_lemma}, we have
$$\mathds{1}_{V_{\eps}\ast\rho^{\eps}>0} \, \frac{V_\varepsilon * (\nabla \varphi\,  \rho^\eps) - \nabla \varphi \, (V_{\eps}\ast\rho^{\eps})}{\sqrt{V_\varepsilon * \rho^\eps}} \to 0 \mbox{ in } L^2_{t,x}.
$$
By the uniform estimate on $\{\nabla \sqrt{\rho^\varepsilon
 \ast V_\varepsilon}\}_\varepsilon$, this shows that the term in~\eqref{eq:term_to_change} is vanishingly close to
\begin{multline*}
\int_0^T \int_{\R^d} \mathds{1}_{V_{\varepsilon} \ast \rho^{\eps} > 0}\, \nabla \varphi  \frac{V_\varepsilon * \rho^\eps}{\sqrt{V_\varepsilon * \rho^\eps}} \cdot \nabla \sqrt{V_\varepsilon * \rho^{\eps}} \diff x \diff t = \int_0^T\int_\Rd\nabla \varphi \,  \sqrt{V_\varepsilon * \rho^\eps} \cdot \nabla \sqrt{V_\varepsilon * \rho^{\eps}} \diff x \diff t,
\end{multline*}
where we omitted the characteristic function as $\nabla V_{\eps}\ast \rho^{\eps} = 0$ a.e. on the set $\{V_{\eps}\ast\rho^{\eps} = 0\}$. Following the same argument of the proof of~\Cref{thm:joint_limit_eps_sigma} we can show the desired convergence.
\end{proof}

\section{Particle approximation for the heat equation}
\label{sec:particles}
In this section, we provide a deterministic particle approximation for the heat equation using both the nonlocal regularisations~\eqref{eq:nonlocal_he}~and~\eqref{eq:nonlocal_he_gauss}. The argument hinges on the $\lambda_{\varepsilon,\sigma}$-convexity of the regularised function $\mh_\sigma^\varepsilon$ along geodesics connecting compactly supported probability measures. The reason for this restriction to compactly supported measures will be clear during the proof of the result, cf.~\Cref{prop:convexity-energy} below. In order to cope with this further condition we need to prove that solutions of~\eqref{eq:nonlocal_he_gauss} and~\eqref{eq:nonlocal_he} constructed in~\Cref{sec:nonlocal_eq} via the JKO scheme are compactly supported if they are initially compactly supported. A possible strategy to prove such a result is to represent the \textit{JKO solutions} as push-forward of the initial datum via the flow-map solving the characteristic equations. Let us note~\eqref{eq:nonlocal_he_gauss} is a continuity equation whose vector field $\omega^{\varepsilon,\sigma}:[0,T]\times\Rd\to\R$ defined below is locally Lipschitz in space and continuous in time, given $\rhoes\in C([0,T];\mptrd)$,
that is:
\begin{align*}
\omega_t^{\varepsilon,\sigma} = \nabla V_\varepsilon * \log[(1-\sigma)V_\varepsilon* \rho_t^{\varepsilon,\sigma} + \sigma \mn]=V_\varepsilon*\frac{\nabla [(1-\sigma)V_\varepsilon*\rho_t^{\varepsilon,\sigma}+\sigma \mn]}{(1-\sigma)V_\varepsilon*\rho_t^{\varepsilon,\sigma}+\sigma \mn},
\end{align*}
and for $\sigma=0$
\begin{align*}
\omega_t^\varepsilon &=\nabla V_\varepsilon*\log(V_\varepsilon*\rho_t^\varepsilon).
\end{align*}
Although one can prove existence and uniqueness of a flow map, the local-Lipschitzianity in space would not ensure uniqueness of solutions for the continuity equation because the equation is nonlinear, hence the push-forward solution is not necessarily the same as that constructed in~\Cref{sec:nonlocal_eq}. For this reason, we follow the probabilistic representation of solutions to~\eqref{eq:nonlocal_he_gauss}~and~\eqref{eq:nonlocal_he} in the spirit of~\cite[Section 8.2]{AGS}. This approach does not require to investigate well-posedness of the flow map, but only that we have a narrowly continuous solution for the continuity equation, guaranteed by~\Cref{sec:nonlocal_eq}. For the reader's convenience, we briefly recall some preliminary notions. Given a vector field $v:[0,T]\times\Rd\to\Rd$ and the continuity equation $\partial_t\mu_t+\nabla\cdot(v_t \mu_t)=0$, the representation formula for solutions $\mu_t^{\bm{\eta}}$ of the continuity equation is given by
\begin{equation}\label{eq:rep_formula}
\int_\Rd\varphi(x)\diff\mu_t^{\bm{\eta}}(x):=\int_{\Rd\times \Xi_T} \varphi(\gamma(t))\diff\bm{\eta}(x,\gamma), \qquad \forall\ \varphi\in C_b(\Rd), \ t\in[0,T],
\end{equation}
where $\Xi_T:=C([0,T];\Rd)$ is equipped with the supremum norm and $\bm{\eta}\in\mathcal{P}(\Rd\times\Xi_T)$. The measure $\mu_t^{\bm{\eta}}$ can also be written as
\begin{equation}\label{eq:push-forward-eta}
    \mu_t^{\bm{\eta}}=(\mathrm{e}_t)_\#\bm{\eta},
\end{equation}
by introducing the evaluation maps
\[
\mathrm{e}_t:(x,\gamma)\in\Rd\times\Xi_T\mapsto \gamma(t)\in\Rd,
\]
for $t\in[0,T]$. We will use the following result from~\cite{AGS} to represent our solutions, $\rhoes$ and $\rhoe$, as push-forward. Let us specify we only report the part needed for our purposes. Note that we use $\gamma \in AC^p(0,T;\Rd)\iff \gamma:(0,T)\to\Rd$ and $|\gamma'|\in L^p(0,T)$.
\begin{prop}[\cite{AGS}, Theorem 8.2.1]\label{prop:probabilitic_representation}
 Let $\mu_t:[0,T]\to\mprd$ be a narrowly continuous solution of the continuity equation for a suitable vector field $v(t,x)=v_t(x)$ such that $\int_0^T\int_\Rd|v_t(x)|^p\diff\mu_t(x)\diff t<\infty$, for some $p>1$. Then, there exists $\bm\eta\in\mathcal{P}(\Rd\times \Xi_T)$ such that
\begin{enumerate}[label=$(\roman*)$]
    \item $\bm\eta$ is concentrated on the set of pairs $(x,\gamma)$ such that $\gamma\in AC^p((0,T);\Rd)$ is a solution of the ODE $\dot\gamma(t)=v_t(\gamma(t))$ for $\mathcal{L}^1$-a.e. $t\in(0,T)$, with $\gamma(0)=x$;
    \item $\mu_t=\mu_t^{\bm{\eta}}$ for any $t\in[0,T]$, with $\mu_t^{\bm{\eta}}$ as in~\eqref{eq:push-forward-eta}.
\end{enumerate}
\end{prop}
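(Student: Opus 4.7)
This is the \emph{superposition principle} of Ambrosio, Gigli and Savar\'e, stated in the excerpt as \cite[Theorem~8.2.1]{AGS}. The cleanest plan is to invoke it directly, since the paper's hypothesis $\int_0^T\!\int_\Rd |v_t(x)|^p\,\diff\mu_t(x)\diff t<\infty$ is precisely what is required there. For completeness I sketch the classical mollification proof on which it relies.

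\textbf{Approximation by smooth flows.} I would fix a spatial mollifier $\chi_\kappa$ and set $v^\kappa_t:=v_t\ast\chi_\kappa$, which is smooth in $x$ and continuous in $t$ (truncating harmlessly if needed to obtain global Lipschitz bounds). The ODE $\dot\gamma(t)=v^\kappa_t(\gamma(t))$, $\gamma(0)=x$, then has a unique global flow $\Phi^\kappa:\Rd\to\Xi_T$, $x\mapsto\Phi^\kappa_\cdot(x)$. Define the approximating measure $\bm\eta^\kappa := (\mathrm{id},\Phi^\kappa)_\#\mu_0\in\mathcal{P}(\Rd\times\Xi_T)$; by construction it is concentrated on graphs $(x,\gamma)$ with $\gamma(0)=x$ and $\dot\gamma=v^\kappa_t(\gamma)$, and $(\mathrm{e}_t)_\#\bm\eta^\kappa=(\Phi^\kappa_t)_\#\mu_0=:\mu^\kappa_t$ solves the continuity equation with drift $v^\kappa_t$ and initial datum $\mu_0$.

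\textbf{Tightness, narrow limit, and identification.} The $L^p$ integrability of $v$ against $\mu$, combined with Jensen's inequality applied to the convolution defining $v^\kappa$, yields a uniform bound on $\int_{\Rd\times\Xi_T}\!\int_0^T|\dot\gamma(t)|^p\diff t\,\diff\bm\eta^\kappa(x,\gamma)$. Together with the tightness of $\mu_0$, and the fact that $AC^p$-sublevel sets are relatively compact in $\Xi_T$ by Ascoli--Arzel\`a, this gives tightness of $\{\bm\eta^\kappa\}$ in $\mathcal{P}(\Rd\times\Xi_T)$. Extract a narrow limit $\bm\eta^\kappa\rightharpoonup\bm\eta$. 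The identification $(\mathrm{e}_t)_\#\bm\eta=\mu_t$ is obtained by passing to the limit in $(\mathrm{e}_t)_\#\bm\eta^\kappa=\mu^\kappa_t$, using stability of the continuity equation under mollification of the drift to conclude $\mu^\kappa_t\rightharpoonup\mu_t$.

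\textbf{Main obstacle.} The delicate step is showing that $\bm\eta$ concentrates on solutions of the \emph{limiting} ODE $\dot\gamma=v_t(\gamma)$. This is a commutator-type argument: one tests $\bm\eta$ against functionals of $(x,\gamma)$ encoding the ODE defect and decomposes $v_t(\gamma)-v^\kappa_t(\gamma)$ into an error that vanishes by strong $L^p$-convergence $v^\kappa\to v$ along the transported measures $\mu^\kappa_t\diff t$, plus a term controlled by the uniform $AC^p$ bound ruling out concentration of the curves. This is precisely the technical core of \cite[Chapter~8]{AGS}, which I would cite rather than reproduce.
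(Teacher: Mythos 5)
The paper does not prove this proposition; it simply cites \cite[Theorem~8.2.1]{AGS}, which is exactly what you propose as your primary route, so the approaches coincide. Your supplementary sketch is close to the AGS argument but reverses the regularization: AGS mollifies the \emph{measure} $\mu_t$ (not the drift alone), so that $\mu_t^\kappa=\mu_t\ast\chi_\kappa$ converges to $\mu_t$ automatically and the associated vector field $v_t^\kappa=(v_t\mu_t)\ast\chi_\kappa/\mu_t^\kappa$ inherits the $L^p$ bound by Jensen; this sidesteps the ``stability of the continuity equation under mollified drift'' step you invoke, which is not a freestanding fact at this level of generality.
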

Before applying~\Cref{prop:probabilitic_representation}, in the next results we prove finite time expansion of the support of the solution $\gamma$ from~\Cref{prop:probabilitic_representation}, $(i)$. We remark that our subsequent theoretical results do not encompass the choice $V_1 = \mn_2$ although it is often used in numerical schemes or sampling methods --- this is due to the quadratic growth of the velocity field, cf.~\Cref{lem:log_conv_glob_rescaling}.
\begin{lem}[Compactly supported $V_1$ and $\mn_1$]
\label{lem:vel_cpct_V}
Fix $\varepsilon,\sigma>0$ with $V_1\in C_c^2(\R^d)$ satisfying~\ref{ass:v1} and $\mn_1$ as in \eqref{eq:notation_kernel_n}. Let $\rho^{\varepsilon,\sigma} : t\in[0,T]\mapsto \rho_t^{\varepsilon,\sigma} \in \mptrd$. Then, for $\omega: = \omega^{\varepsilon,\sigma}$, both $\omega$ and $\nabla \omega$ grow at most linearly with the estimate
\[
|\omega(x)|\le C_{\varepsilon,\sigma}^1\langle x\rangle, \quad |\nabla\omega(x)|\le C_{\varepsilon,\sigma}^2 \langle x\rangle,
\]
where the constants scale like
\[
C_{\varepsilon,\sigma}^j \simeq \varepsilon^{-j}(|\log \sigma| + d|\log \varepsilon| + 1), \quad j = 1,2,\quad 0 < \varepsilon,\sigma\ll 1.
\]
\end{lem}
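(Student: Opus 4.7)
The plan is to work directly from the representation
$$\omega(x) = (\nabla V_\varepsilon) * \log[(1-\sigma)V_\varepsilon * \rho_t^{\varepsilon,\sigma} + \sigma\mn_1](x),$$
and to estimate the logarithm pointwise in absolute value while exploiting the compact support of $V_1$ (hence of $\nabla V_\varepsilon$ in $B_\varepsilon$). First I would establish the two-sided control
$$\big|\log[(1-\sigma)V_\varepsilon * \rho_t^{\varepsilon,\sigma}(z) + \sigma\mn_1(z)]\big| \le C\big(|\log\sigma| + d|\log\varepsilon| + 1 + \langle z\rangle\big).$$
The upper bound comes from discarding the positive terms separately: $(1-\sigma)V_\varepsilon*\rho + \sigma\mn_1 \le \|V_\varepsilon\|_{L^\infty} + \|\mn_1\|_{L^\infty} \le C\varepsilon^{-d}$ for $\varepsilon\le 1$, contributing the $d|\log\varepsilon|$ term. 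The lower bound uses only $\sigma\mn_1(z) \ge C\sigma\, e^{-\langle z\rangle}$, giving $\log[\cdot] \ge \log\sigma - \langle z\rangle - \log Z$, and hence the $|\log\sigma|$ and $\langle z\rangle$ contributions.

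Next, I would estimate $|\omega(x)|$ by convolving this pointwise bound against $|\nabla V_\varepsilon|$. The key scaling is
$\|\nabla V_\varepsilon\|_{L^1} = \varepsilon^{-1}\|\nabla V_1\|_{L^1}$, and since $\supp \nabla V_\varepsilon \subset B_\varepsilon$, Peetre's inequality (\Cref{lem:peetre}) yields
$$\int |\nabla V_\varepsilon(y)|\, \langle x-y\rangle\, \diff y \le \langle x\rangle \int_{|y|\le\varepsilon} |\nabla V_\varepsilon(y)|\, \langle y\rangle\, \diff y \le C\,\varepsilon^{-1}\,\langle x\rangle.$$
Combining these gives the desired bound $|\omega(x)| \le C\varepsilon^{-1}(|\log\sigma| + d|\log\varepsilon| + 1)\langle x\rangle$, absorbing the constant term using $\langle x\rangle \ge 1$.

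For $\nabla\omega$, the plan is to differentiate once more under the convolution, writing $\nabla\omega(x) = (D^2 V_\varepsilon)*\log[(1-\sigma)V_\varepsilon * \rho_t^{\varepsilon,\sigma} + \sigma\mn_1](x)$, which is legitimate since $V_1 \in C_c^2(\R^d)$. Repeating the same argument with $\|D^2 V_\varepsilon\|_{L^1} = \varepsilon^{-2}\|D^2 V_1\|_{L^1}$ yields the gradient estimate with constant $C_{\varepsilon,\sigma}^2 \simeq \varepsilon^{-2}(|\log\sigma| + d|\log\varepsilon| + 1)$. I do not expect any serious obstacle; the only care is in tracking the precise scaling of constants, particularly in balancing the $|\log\sigma|$ term arising from the lower bound against the $d|\log\varepsilon|$ arising from the upper bound. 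The restriction to $\mn_1$ (rather than $\mn_2$) is essential: the Gaussian-type tail of $\mn_2$ produces a lower bound $\log \mn_2(z) \gtrsim -\langle z\rangle^2$, which would only yield quadratic (not linear) growth of $\omega$, consistent with the remark preceding the lemma.
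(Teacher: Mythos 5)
Your proposal is correct and follows essentially the same route as the paper: bound the logarithm pointwise (in the paper this is packaged as an appeal to \Cref{lem:log_gauss}, which gives $|\log((1-\sigma)V_\varepsilon*\rho^{\varepsilon,\sigma}+\sigma\mn_1)| \le C\langle z\rangle$ with $C \simeq |\log\sigma| + d|\log\varepsilon|+1$; your additive version is equivalent since $\langle z\rangle \ge 1$), then convolve against $|\nabla V_\varepsilon|$, use Peetre's inequality to peel off $\langle x\rangle$, and obtain the $\varepsilon^{-1}$ (resp.\ $\varepsilon^{-2}$) scaling from $\|\nabla V_\varepsilon\|_{L^1}$ (resp.\ $\|D^2V_\varepsilon\|_{L^1}$), with the gradient estimate obtained by differentiating once more under the convolution exactly as you propose. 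Your closing observation about why $\mn_2$ is excluded here is also precisely the point the paper makes.
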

\begin{proof}
According to~\Cref{lem:log_gauss} with $p = 1$ and~\Cref{lem:peetre}, we get
\begin{align*}
    |\omega(x)| &=\left|\int_\Rd \nabla V_\varepsilon(x-y)\log [(1-\sigma)V_\varepsilon*\rhoes(y)+\sigma \mn(y)]\diff y \right|\le C\int_\Rd|\nabla V_\varepsilon(x-y)|\langle y \rangle  \diff y\\
    &\le C \langle x\rangle \int_{\R^d} |\nabla V_\varepsilon(x-y)|\langle x-y\rangle  \diff y.
\end{align*}
Here, the constant $C$ comes from~\Cref{lem:log_gauss} which we keep in mind scales like
\[
C \simeq |\log \sigma| + \log \|V_\varepsilon * \rho^{\varepsilon,\sigma}\|_{L^\infty} \le |\log \sigma| +d| \log \varepsilon| + \log\|V_1\|_{L^\infty}, \quad 0 < \varepsilon,\sigma\ll 1.
\]
After a few changes of integration variables, we get
\[
|\omega(x)| \le \frac{C}{\varepsilon}\langle x\rangle \int |\nabla V_1(z)|\langle \varepsilon z\rangle \diff z = \frac{C}{\varepsilon}\langle x\rangle.
\]
The estimate for $\nabla \omega$ proceeds exactly the same by writing it as
\[
\nabla \omega(x) = (D^2 V_\varepsilon)*\log[(1-\sigma)V_\varepsilon*\rho^{\varepsilon,\sigma} + \sigma \mn_1].
\]
\end{proof}
We can also treat the case $\sigma = 0$ by considering $V_1 = \mn_1$ as below. 
\begin{lem}[Globally supported kernel]\label{lem:bound_velocity}
Let $\varepsilon>0$ and $V_1\equiv\mn_1$ as in~\eqref{eq:notation_kernel_n}. Assume $\rho^{\varepsilon} : t\in[0,T]\mapsto \rho_t^{\varepsilon} \in \mptrd$. Then, there is some constant $C = C(V_1)>0$ such that
\[\left\|\omega^{\varepsilon}\right\|_{L^\infty_{t,x}}\le\frac{C}{\varepsilon}, \qquad  \left\|\nabla \omega^{\varepsilon}\right\|_{L^\infty_{t,x}}\le\frac{C}{\varepsilon^2}.
\]
\end{lem}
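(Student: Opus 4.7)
The plan is to exploit the rewriting
\[
\omega^\varepsilon_t(x) = \nabla V_\varepsilon * \log(V_\varepsilon * \rho^\varepsilon_t)(x) = V_\varepsilon * \frac{\nabla V_\varepsilon * \rho^\varepsilon_t}{V_\varepsilon * \rho^\varepsilon_t}(x),
\]
which is legitimate because $V_1 \equiv \mn_1$ is globally and strictly positive, so $V_\varepsilon * \rho^\varepsilon_t(x) > 0$ for every $(t,x)$ (see~\Cref{rem:gaussian}). The heart of the argument is the pointwise identity that, for $V_1(x) = Z^{-1}\exp(-\langle x\rangle)$, one has $\nabla V_1(x) = -V_1(x)\, x/\langle x\rangle$ and therefore
\[
|\nabla V_1(x)| = V_1(x)\,\frac{|x|}{\langle x\rangle} \le V_1(x), \qquad |\nabla V_\varepsilon(x)| \le \varepsilon^{-1} V_\varepsilon(x).
\]
This is the key \emph{linear} bound enabled by the specific form of $\mn_1$; it is what allows us to avoid the logarithmic and polynomial losses of~\Cref{lem:vel_cpct_V}.

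From here the first estimate is immediate. Convolving the previous inequality against the nonnegative measure $\rho^\varepsilon_t$ yields
\[
|\nabla V_\varepsilon * \rho^\varepsilon_t(x)| \le \varepsilon^{-1}\, V_\varepsilon * \rho^\varepsilon_t(x), \quad \text{so that} \quad \left|\frac{\nabla V_\varepsilon * \rho^\varepsilon_t}{V_\varepsilon * \rho^\varepsilon_t}(x)\right| \le \varepsilon^{-1}.
\]
Inserting this into the rewriting of $\omega^\varepsilon$ and using $\|V_\varepsilon\|_{L^1} = 1$ gives $|\omega^\varepsilon_t(x)| \le \varepsilon^{-1}$ uniformly in $(t,x)$, which is the first claim with $C$ depending only on $V_1$.

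For the derivative, I would differentiate under the outer convolution (again legitimate by smoothness of $V_\varepsilon$ and positivity of the denominator),
\[
\nabla \omega^\varepsilon_t(x) = \nabla V_\varepsilon * \frac{\nabla V_\varepsilon * \rho^\varepsilon_t}{V_\varepsilon * \rho^\varepsilon_t}(x),
\]
and use the bound $\left|\nabla V_\varepsilon * \rho^\varepsilon_t/V_\varepsilon * \rho^\varepsilon_t\right| \le \varepsilon^{-1}$ together with $\|\nabla V_\varepsilon\|_{L^1} = \varepsilon^{-1}\|\nabla V_1\|_{L^1}$ to conclude $|\nabla \omega^\varepsilon_t(x)| \le C\varepsilon^{-2}$ with $C = \|\nabla V_1\|_{L^1}$.

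The only subtlety is justifying that the quotient $\nabla V_\varepsilon * \rho^\varepsilon_t/(V_\varepsilon * \rho^\varepsilon_t)$ is well-defined pointwise and that differentiation may be moved onto $V_\varepsilon$; both follow from the strict positivity of $V_\varepsilon * \rho^\varepsilon_t$ noted in~\Cref{rem:gaussian} and from the fact that $V_\varepsilon \in C^\infty$. No moment or tightness information on $\rho^\varepsilon$ is needed beyond the total-mass-one property, which is why the resulting bounds are $L^\infty_{t,x}$ rather than weighted as in the compactly supported case.
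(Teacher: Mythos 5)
Your proof is correct and follows essentially the same route as the paper's: rewrite $\omega^\varepsilon = V_\varepsilon * (\nabla V_\varepsilon * \rho^\varepsilon / V_\varepsilon * \rho^\varepsilon)$, exploit the pointwise bound $|\nabla V_\varepsilon| \le \varepsilon^{-1} V_\varepsilon$ special to $\mn_1$ to control the inner quotient, and apply Young's inequality. You merely make explicit two points the paper leaves compressed, namely the exact computation $\nabla V_1(x) = -V_1(x)\,x/\langle x\rangle$ and the precise sense of ``analogously'' for $\nabla\omega^\varepsilon$ (move the gradient onto the outer $V_\varepsilon$ and use $\|\nabla V_\varepsilon\|_{L^1}=\varepsilon^{-1}\|\nabla V_1\|_{L^1}$).
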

\begin{proof}
Arguing as in~\Cref{rem:gaussian} we observe we can differentiate the logarithm in the vector field since $V_\varepsilon*\rhoe>0$ and obtain
\[
\|\omega^{\varepsilon}\|_{L^{\infty}_{t,x}}=\left\|V_\varepsilon*\frac{\nabla V_\varepsilon*\rho_t^{\varepsilon}}{V_\varepsilon*\rho_t^{\varepsilon}}\right\|\le\|V_\varepsilon\|_{L^1_x}\left\|\frac{\nabla V_\varepsilon*\rho^{\varepsilon}}{V_\varepsilon*\rho^{\varepsilon}}\right\|_{L^\infty_{t,x}}.
\]
For $V_1=\mn_1$ we notice $|\nabla V_1(x)|\le C V_1(x)$ and $|\nabla V_\varepsilon(x)|\le \frac{C}{\varepsilon}V_\varepsilon(x)$, for any $x\in\Rd$, so that
\[
\left\|\frac{\nabla V_\varepsilon*\rho^{\varepsilon}}{V_\varepsilon*\rho^{\varepsilon}}\right\|_{L^\infty_{t,x}}\le \frac{C}{\varepsilon}\left\|\frac{V_\varepsilon*\rho^{\varepsilon}}{V_\varepsilon*\rho^{\varepsilon}}\right\|_{L^\infty_{t,x}}\le\frac{C}{\varepsilon}.
\]
The estimate on the gradient $\nabla \omega^{\varepsilon}$ follows analogously.
\end{proof}
\begin{cor}[Finite expansion]
\label{cor:fin_exp_cpct_V}
In the setting of~\Cref{lem:vel_cpct_V}, there exists a unique solution $X^{\varepsilon,\sigma}(\rho):[0,T]\to \R^d$ to the characteristic equation
\begin{equation}\label{eq:ODE_eps_sigma}
\frac{d}{dt}X^{\varepsilon,\sigma}(\rho)(t) = -\omega[\rho](X^{\varepsilon,\sigma}(t)), \quad X^{\varepsilon, \sigma}(0) = X_0 \in \R^d,
\end{equation}
satisfying the estimate
\[
\langle X^{\varepsilon,\sigma}(\rho)(t)\rangle \le \langle X_0\rangle e^{C_{\varepsilon,\sigma}^1 t}, \quad \forall t\in[0,T].
\]
In the case $\sigma=0$, under the hypotheses of~\Cref{lem:bound_velocity} we have the existence and uniqueness of solutions to the ODE in $[0,T]$. Moreover, for $V_1\equiv\mn_1$ it holds $|X^\varepsilon(\rho)(t)|\le|x_0|+C_\varepsilon t$, for any $t\in[0,T]$ with $C_\varepsilon:=C/\varepsilon$.
\end{cor}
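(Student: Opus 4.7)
The plan is to derive this corollary as a straightforward consequence of the velocity field estimates in Lemmas \ref{lem:vel_cpct_V} and \ref{lem:bound_velocity} combined with the classical Cauchy--Lipschitz theorem and Grönwall's inequality. Since the abstract existence/uniqueness theory is standard once we have the right quantitative bounds on $\omega$ and $\nabla \omega$, the work is essentially reduced to verifying the growth estimates.

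First I would verify that the velocity field $\omega_t^{\varepsilon,\sigma}$ (resp.\ $\omega_t^{\varepsilon}$) is continuous in $t$ and locally Lipschitz in $x$, uniformly in $t\in[0,T]$. Temporal continuity follows from the narrow continuity of $\rho^{\varepsilon,\sigma}$ (granted by Lemma \ref{lem:en-ineq-mom-bound}) together with the smoothness of $V_\varepsilon$ and $\mathcal{N}$: for any fixed $x$, $y \mapsto \nabla V_\varepsilon(x-y)\log[(1-\sigma)V_\varepsilon*\rho^{\varepsilon,\sigma}_t(y)+\sigma\mathcal{N}(y)]$ is bounded and continuous so that narrow convergence of $\rho^{\varepsilon,\sigma}_{t_n}$ transfers to continuity of $\omega^{\varepsilon,\sigma}_{t}(x)$ in $t$. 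Local Lipschitz continuity in $x$ is immediate from the bound $|\nabla \omega^{\varepsilon,\sigma}(x)|\le C_{\varepsilon,\sigma}^2 \langle x\rangle$ of Lemma \ref{lem:vel_cpct_V}. The classical Carathéodory theorem then provides a unique local solution to \eqref{eq:ODE_eps_sigma}.

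To upgrade this to a solution on the whole interval $[0,T]$ and obtain the exponential growth estimate, I would apply Grönwall. Differentiating $\langle X^{\varepsilon,\sigma}(t)\rangle^2 = 1+|X^{\varepsilon,\sigma}(t)|^2$ along the flow and using the linear bound $|\omega^{\varepsilon,\sigma}(x)|\le C_{\varepsilon,\sigma}^1 \langle x\rangle$ from Lemma \ref{lem:vel_cpct_V}, we get
\begin{equation*}
\frac{d}{dt}\langle X^{\varepsilon,\sigma}(t)\rangle^2 = -2 X^{\varepsilon,\sigma}(t)\cdot \omega_t^{\varepsilon,\sigma}(X^{\varepsilon,\sigma}(t))\le 2 C_{\varepsilon,\sigma}^1 |X^{\varepsilon,\sigma}(t)|\langle X^{\varepsilon,\sigma}(t)\rangle \le 2 C_{\varepsilon,\sigma}^1 \langle X^{\varepsilon,\sigma}(t)\rangle^2.
\end{equation*}
Grönwall's lemma then gives $\langle X^{\varepsilon,\sigma}(t)\rangle^2 \le \langle X_0\rangle^2 e^{2 C_{\varepsilon,\sigma}^1 t}$, which yields the claimed bound after taking square roots and, in particular, prevents blow-up in finite time, so the local solution extends to all of $[0,T]$.

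For the case $\sigma=0$ with $V_1\equiv \mathcal{N}_1$, the argument is even simpler: Lemma \ref{lem:bound_velocity} provides uniform-in-$x$ bounds $\|\omega^{\varepsilon}\|_{L^\infty_{t,x}}\le C/\varepsilon$ and $\|\nabla \omega^{\varepsilon}\|_{L^\infty_{t,x}}\le C/\varepsilon^2$, so Cauchy--Lipschitz yields a global unique solution on $[0,T]$, and integrating the ODE in time directly gives $|X^{\varepsilon}(\rho)(t)|\le |x_0| + C_\varepsilon t$ with $C_\varepsilon = C/\varepsilon$. The only mild subtlety I would want to double-check is the temporal regularity needed to invoke Cauchy--Lipschitz in the Carathéodory sense, but this is supplied by the narrow continuity of $\rho^\varepsilon$ and the smoothness of $V_\varepsilon$ exactly as above; no additional hypothesis is required.
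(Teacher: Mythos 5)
Your proof is correct and follows essentially the same route as the paper: Cauchy--Lipschitz for local existence/uniqueness, a Grönwall argument on $\langle X^{\varepsilon,\sigma}(t)\rangle^2$ using the linear growth bound from \Cref{lem:vel_cpct_V} to get the exponential estimate and global extension, and for $\sigma=0$ the $L^\infty$ bounds from \Cref{lem:bound_velocity} plus direct integration. The only addition you make is to spell out the temporal continuity of $\omega$ (via narrow continuity of $\rho^{\varepsilon,\sigma}$ and smoothness of $V_\varepsilon$, $\mathcal{N}$), which the paper leaves implicit; this is a reasonable bit of diligence but not a different approach.
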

\begin{proof}
The existence and uniqueness of local-in-time solutions follow from the Cauchy-Lipschitz theory. From~\Cref{lem:vel_cpct_V}, we obtain the following differential inequality, dropping the dependence on $\rho$ for ease of presentation,
\[
\frac{d}{dt}\langle X^{\varepsilon,\sigma}(t)\rangle^2 = \frac{d}{dt}(1 + |X^{\varepsilon,\sigma}(t)|^2) = -2 X^{\varepsilon,\sigma}\cdot \omega(X^{\varepsilon,\sigma}) \le 2\langle X^{\varepsilon,\sigma}\rangle |\omega(X^{\varepsilon,\sigma})| \le 2C_{\varepsilon,\sigma}^1 \langle X^{\varepsilon,\sigma}\rangle^2.
\]
An application of Gr\"onwall's inequality gives the desired estimate. In particular, linear growth of the velocity field implies we can extend the solution to $[0,T]$. As for the second claim, boundedness of $\omega^\varepsilon$ allow for existence of a unique solution to the ODE in $[0,T]$, by means of Cauchy--Lipschitz Theorem. The result on the support is then an easy consequence of~\Cref{lem:bound_velocity}, an integration of the ODE.
\end{proof}
\begin{prop}[Compactly supported JKO solutions]
    \label{prop:compact_support}
    Let $\rho_0\in\mptrd$ with $\supp\rho_0=B_R$, for $R>0$, and fix $\varepsilon,\sigma>0$. Assume $V_1\in C_c^2(\R^d)$ satisfying~\ref{ass:v1}. The weak solution of~\eqref{eq:nonlocal_he_gauss} from~\Cref{thm:exist_nlhe_gauss} can be represented for any $t\in[0,T]$ as $\rho_t^{\varepsilon,\sigma}=(X_t^{\varepsilon,\sigma}(\rhoes))_\#\rho_0$, for $X_t^{\varepsilon,\sigma}(\rhoes)$ solution to~\eqref{eq:ODE_eps_sigma}. It is compactly supported on the time interval $[0,T]$ and $\supp \rho^{\varepsilon,\sigma}\subseteq B_{\langle R\rangle e^{C^1_{\varepsilon,\sigma}T}}$, where $C^1_{\varepsilon,\sigma}$ is the constant in~\Cref{lem:vel_cpct_V}.

    If $\sigma=0$ and $V_1\equiv\mn_1$, under the conditions in~\Cref{lem:bound_velocity} a weak solution to~\eqref{eq:nonlocal_he} constructed in~\Cref{thm:exist_nlhe_sigma=0} is such that $\rho_t^{\varepsilon}=(X_t^\varepsilon(\rhoe))_\#\rho_0$ and $\supp\rho^\varepsilon\subseteq B_{R+C_\varepsilon T}$, for $X_t^\varepsilon(\rhoe)$ solution to
    \[
    \frac{d}{dt} X_t^\varepsilon(\rhoe)=-\omega[\rho^\varepsilon](X_t^\varepsilon(\rhoe)), \quad X_0^\varepsilon(0)=x\in\Rd.
    \]
\end{prop}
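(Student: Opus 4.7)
The plan is to apply the probabilistic (Lagrangian) representation in~\Cref{prop:probabilitic_representation} to the weak measure solution $\rhoes$ constructed in~\Cref{thm:exist_nlhe_gauss}. The curve $\rhoes$ is narrowly continuous in time (in fact $1/2$-Hölder in $d_W$, cf.~\Cref{lem:apriori_est_sigma_eps}\ref{est1}) and satisfies the continuity equation associated to the prescribed non-autonomous vector field generating~\eqref{eq:ODE_eps_sigma}. To invoke~\Cref{prop:probabilitic_representation} I first verify the integrability requirement: the linear-growth bound $|\omega^{\varepsilon,\sigma}(x)|\le C^1_{\varepsilon,\sigma}\langle x\rangle$ from~\Cref{lem:vel_cpct_V} together with the uniform second-moment control in~\Cref{lem:apriori_est_sigma_eps} yields, for $p=2$,
\[
\int_0^T\!\!\int_\Rd |\omega^{\varepsilon,\sigma}_t(x)|^2 \diff\rhoes_t(x)\diff t \le (C^1_{\varepsilon,\sigma})^2\,T\,\big(1+\sup_{t\in[0,T]}m_2(\rhoes_t)\big)<+\infty.
\]

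Applying~\Cref{prop:probabilitic_representation} then produces a measure $\bm\eta\in\mathcal{P}(\Rd\times\Xi_T)$, concentrated on pairs $(x,\gamma)$ with $\gamma(0)=x$ and $\gamma$ solving the characteristic ODE~\eqref{eq:ODE_eps_sigma} for a.e.\ $t$, and satisfying $\rhoes_t=(\mathrm{e}_t)_\#\bm\eta$. In particular $(\mathrm{e}_0)_\#\bm\eta=\rho_0$, so the first marginal of $\bm\eta$ equals $\rho_0$. The next step is to upgrade this measure-theoretic statement to a genuine flow representation. Because $V_1\in C_c^2$, the second estimate in~\Cref{lem:vel_cpct_V} shows that $\omega^{\varepsilon,\sigma}_t$ is locally Lipschitz in $x$ uniformly in time, and~\Cref{cor:fin_exp_cpct_V} provides a unique global-in-time solution $X^{\varepsilon,\sigma}_t(\rhoes)(x)$ to~\eqref{eq:ODE_eps_sigma} for every $x\in\Rd$. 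By the uniqueness of this Cauchy problem, the disintegration of $\bm\eta$ with respect to its first marginal must be $\rho_0$-a.e.\ a Dirac mass on the integral curve $t\mapsto X^{\varepsilon,\sigma}_t(\rhoes)(x)$, whence
\[
\rhoes_t=(\mathrm{e}_t)_\#\bm\eta=\big(X^{\varepsilon,\sigma}_t(\rhoes)\big)_\#\rho_0 \quad\text{for every } t\in[0,T],
\]
which is the desired push-forward representation.

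For the support estimate, for every $x\in\supp\rho_0\subseteq B_R$ the Grönwall bound in~\Cref{cor:fin_exp_cpct_V} gives $\langle X^{\varepsilon,\sigma}_t(\rhoes)(x)\rangle\le\langle R\rangle e^{C^1_{\varepsilon,\sigma}t}$, so passing to the support of the push-forward yields $\supp\rhoes_t\subseteq B_{\langle R\rangle\exp(C^1_{\varepsilon,\sigma}T)}$ uniformly in $t\in[0,T]$. The case $\sigma=0$ with $V_1\equiv\mn_1$ follows along the same lines, replacing~\Cref{lem:vel_cpct_V} by~\Cref{lem:bound_velocity} (which even supplies a globally bounded and globally Lipschitz velocity field) and using the linear-in-time bound $|X^\varepsilon_t(\rhoe)(x)|\le |x|+C_\varepsilon t$ from~\Cref{cor:fin_exp_cpct_V}. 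The main delicate point of the argument is the identification of $\bm\eta$ through ODE uniqueness: the nonlinearity of the continuity equation does not obstruct this step because, with the narrowly continuous curve $\rhoes$ already fixed by the JKO construction, the field $\omega^{\varepsilon,\sigma}$ is a prescribed, merely non-autonomous vector field to which the classical Cauchy-Lipschitz theorem applies directly.
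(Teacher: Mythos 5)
Your proof is correct and follows essentially the same route as the paper: verify the $L^2$ integrability of the velocity field against $\rho^{\varepsilon,\sigma}$ using the linear-growth bound from~\Cref{lem:vel_cpct_V} and the uniform second-moment bound, invoke the probabilistic representation~\Cref{prop:probabilitic_representation} with $p=2$, then use uniqueness of the Cauchy--Lipschitz flow to collapse the disintegration of $\bm\eta$ to Dirac masses on integral curves, yielding $\rho^{\varepsilon,\sigma}_t=(X_t^{\varepsilon,\sigma})_\#\rho_0$ and the support bound via the Gr\"onwall estimate in~\Cref{cor:fin_exp_cpct_V}. The $\sigma=0$ case is handled identically via~\Cref{lem:bound_velocity}, exactly as in the paper.
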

\begin{proof}
    Let us start proving the case $\varepsilon,\sigma>0$. In view of~\Cref{thm:exist_nlhe_gauss}, we know $\rhoes$ is a narrowly continuous curve solution to the continuity eqution~\eqref{eq:nonlocal_he_gauss}, whose vector field satisfies
    \begin{equation}
        \int_0^T\int_\Rd|\omega_t^{\varepsilon,\sigma}(x)|^2\diff\rhotes(x)\diff t\le C^1_{\varepsilon,\sigma}\int_0^T\int_\Rd\langle x\rangle^2\diff\rhotes(x)\diff t<\infty,
    \end{equation}
where we also used that second moments are bounded uniformly in time, cf.~\Cref{lem:en-ineq-mom-bound}. According to~\Cref{prop:probabilitic_representation} with $p=2$ there exists $\bm{\eta}^{\varepsilon,\sigma}\in\mathcal{P}(\Rd\times\Xi_T)$ concentrated on the set of pairs $(x,\gamma^{\varepsilon,\sigma})\in AC^2(0,T;\Rd)$, where $\gamma^{\varepsilon,\sigma}$ is a solution to the ODE $\gamma^{\varepsilon,\sigma}(t)=-\omega^{\varepsilon,\sigma}(\gamma^{\varepsilon,\sigma}(t))$ with $\gamma^{\varepsilon,\sigma}(0)=x$ and $\rho_t^{\varepsilon,\sigma}=\rho_t^{\bm{\eta}^{\varepsilon,\sigma}}=(\mathrm{e}_t)_\#\bm{\eta}^{\varepsilon,\sigma}$. Since we have existence and uniqueness of the flow map $X^{\varepsilon,\sigma}(\rhoes)$, given $\rho^{\varepsilon,\sigma}$ from~\Cref{thm:exist_nlhe_gauss}, we have $\bm{\eta}^\varepsilon=(\mathrm{id}\times X_\cdot)_\#\rho_0$, whence $\rho_t^{\varepsilon,\sigma}=\rho_t^{\bm{\eta}^\varepsilon}=(X_t^{\varepsilon,\sigma}(\rhoes))_\#\rho_0$. Therefore, the support is controlled as in the statement. In the case $\sigma=0$, we can argue similarly by noticing 
\[
\int_0^T\int_\Rd|\omega^{\varepsilon}|^2\diff\rho_t^\varepsilon(x)\diff t\le \frac{C^2}{\varepsilon^2}T\|V_\varepsilon\|_{L^1(\Rd)}^2<\infty,
\]
by means of~\Cref{lem:bound_velocity}. Furthermore, existence and uniqueness of the flow map, solution to
\[
\frac{d}{dt} X_t^\varepsilon=-\omega[\rho^\varepsilon](X_t^\varepsilon), \quad X_0^\varepsilon(0)=x\in\Rd,
\]
for $\rho^\varepsilon$ solution from~\Cref{thm:exist_nlhe_sigma=0}, implies we we have $\bm{\eta}^{\varepsilon}=(\mathrm{id}\times X_\cdot)_\#\rho_0$, hence $\rho_t^\varepsilon=\rho_t^{\bm{\eta}^\varepsilon}=(X_t^\varepsilon)_\#\rho_0$. 
\end{proof}

Let us now look at the convexity with respect to the 2-Wasserstein metric of the regularised functional
\[
\mh_\sigma^\varepsilon[\rho] = \int_{\Rd} ((1-\sigma) V_\varepsilon * \rho + \sigma \mn_1)\log((1-\sigma)V_\varepsilon*\rho + \sigma \mn_1) \diff x,
\]
which coincides with $\mh^\varepsilon$ for $\sigma=0$, cf.~\eqref{eq:log_entropy_reg}.
Fix any $\varrho_0, \varrho_1 \in \mptrd$ with $\gamma \in \Gamma(\varrho_0,\varrho_1)$. We define the geodesic interpolant connecting $\varrho_0$ to $\varrho_1$
\begin{equation}\label{eq:geodesic_interpolaton}
\varrho_\alpha := ((1-\alpha)\pi^1 + \alpha\pi^2)_\# \gamma, \quad \alpha \in[0,1],
\end{equation}
for $\pi^1$, $\pi^2$ projection operators, whereas the one connecting $\varrho_1$ to $\varrho_0$ is
\[
\varrho_{1-\alpha} := (\alpha\pi^1 + (1-\alpha)\pi^2)_\# \gamma, \quad \alpha \in[0,1],
\]
Following the strategy outlined in~\cite{CEW_nl_to_local_24} and the references therein we want to show $\mh_\sigma^\varepsilon$ satisfies the following ``above the tangent line'' inequality
\begin{equation}
\label{eq:above_tan}
\mh_\sigma^\varepsilon[\varrho_1] - \mh_\sigma^\varepsilon[\varrho_0] - \left.\frac{d}{d\alpha}\right|_{\alpha = 0}\mh_\sigma^\varepsilon[\varrho_\alpha] \ge \frac{\lambda_{\eps,\sigma}}{2}d_W^2(\varrho_0,\varrho_1),
\end{equation}
for some $\lambda_{\varepsilon,\sigma}\in\R$ we shall specify later on. This is actually a sufficient condition for $\lambda$-convexity, which will allow us to exploit the theory in~\cite{AGS} and prove~\Cref{thm:particle_approx}.

\begin{prop}[Directional derivative of $\mh_\sigma^\varepsilon$ and $\mh^\varepsilon$]\label{prop:deriv-eps-sigma}
Assume $\varrho_0,\varrho_1\in\mptrd$ with $\supp\varrho_0=B_{R_0}$ and $\supp\varrho_1=B_{R_1}$, for $R_0,R_1>0$. Fix $\varepsilon,\sigma>0$ with $V_1\in C_c^2(\R^d)$ satisfying~\ref{ass:v1}. For any geodesic interpolant $\varrho_\alpha$ it holds:
\[
\left.\frac{d}{d\alpha}\right|_{\alpha = 0}\mh_\sigma^\varepsilon[\varrho_\alpha] = (1-\sigma)\int_\Rd\log((1-\sigma)V_\varepsilon*\varrho_0 + \sigma\mn_1)(x) \left(\iint_{\R^{2d}}(y_0-y_1)\cdot \nabla V_\varepsilon(x-y_0)\diff \gamma(y_0,y_1)\right) \diff x.
\]
In case $\sigma=0$ and $V_1\equiv\mn_1$, the directional derivative is:
\[
\left.\frac{d}{d\alpha}\right|_{\alpha = 0}\mh^\varepsilon[\varrho_\alpha] = \int_\Rd\log(V_\varepsilon*\varrho_0)(x) \left(\iint_{\R^{2d}}(y_0-y_1)\cdot \nabla V_\varepsilon(x-y_0)\diff \gamma(y_0,y_1)\right) \diff x.
\]
For a geodesic interpolant $\varrho_{1-\alpha}$ and $\sigma>0$, it holds:
\[
\left.\frac{d}{d\alpha}\right|_{\alpha = 0}\!\!\!\!\!\!\!\!\!\mh_\sigma^\varepsilon[\varrho_{1-\alpha}] = -(1-\sigma)\int_\Rd\log((1-\sigma)V_\varepsilon*\varrho_1 + \sigma\mn_1)(x) \left(\iint_{\R^{2d}}(y_0-y_1)\cdot \nabla V_\varepsilon(x-y_1)\diff \gamma(y_0,y_1)\right) \diff x.
\]
and, for $\sigma=0$, we have
\[
\left.\frac{d}{d\alpha}\right|_{\alpha = 0}\mh^\varepsilon[\varrho_{1-\alpha}] = - \int_\Rd\log(V_\varepsilon*\varrho_1)(x) \left(\iint_{\R^{2d}}(y_0-y_1)\cdot \nabla V_\varepsilon(x-y_1)\diff \gamma(y_0,y_1)\right) \diff x.
\]
\end{prop}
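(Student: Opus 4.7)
The plan is to differentiate under the integral sign in the definition of $\mh_\sigma^\varepsilon[\varrho_\alpha]$ and apply the chain rule, with the key simplification coming from conservation of mass. First I would exploit the push-forward representation of $\varrho_\alpha$ in~\eqref{eq:geodesic_interpolaton} to rewrite
\[
V_\varepsilon * \varrho_\alpha(x) = \iint_{\R^{2d}} V_\varepsilon(x - (1-\alpha)y_0 - \alpha y_1)\diff \gamma(y_0,y_1),
\]
so that pointwise in $x$,
\[
\frac{d}{d\alpha}\bigg|_{\alpha = 0}\!\! V_\varepsilon*\varrho_\alpha(x) = \iint_{\R^{2d}}(y_0-y_1)\cdot \nabla V_\varepsilon(x-y_0)\diff \gamma(y_0,y_1).
\]
Since $(u \log u)' = \log u + 1$, the chain rule applied inside the $x$-integral in $\mh_\sigma^\varepsilon[\varrho_\alpha]$ yields
\[
\left.\frac{d}{d\alpha}\right|_{\alpha=0}\!\!\mh_\sigma^\varepsilon[\varrho_\alpha] = (1-\sigma)\!\int_{\R^d}\!\big[\log((1-\sigma)V_\varepsilon*\varrho_0 + \sigma\mn_1) + 1\big](x)\, \frac{d}{d\alpha}\bigg|_{\alpha = 0}\!\! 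V_\varepsilon*\varrho_\alpha(x)\diff x,
\]
and the ``$+1$'' contribution vanishes because
\[
\int_{\R^d}\iint_{\R^{2d}}(y_0-y_1)\cdot \nabla V_\varepsilon(x-y_0)\diff \gamma(y_0,y_1)\diff x = \iint_{\R^{2d}} (y_0-y_1)\cdot \int_{\R^d}\nabla V_\varepsilon(z)\diff z\,\diff \gamma(y_0,y_1) = 0,
\]
using $\int \nabla V_\varepsilon = 0$ by~\ref{ass:v1} and Fubini.

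The main obstacle will be justifying the interchange of differentiation and integration, and here the compact support hypotheses on $\varrho_0,\varrho_1$ enter decisively. Since $\gamma$ is concentrated on $B_{R_0}\times B_{R_1}$, the integrand $(y_0-y_1)\cdot\nabla V_\varepsilon(x-(1-\alpha)y_0-\alpha y_1)$ is bounded by $(R_0+R_1)\|\nabla V_\varepsilon\|_{L^\infty}$ and is supported in a fixed compact set in $x$ (uniformly for $\alpha$ in a neighbourhood of $0$), so the differentiation under the $\gamma$-integral and under $\int_{\R^d}\diff x$ are both legitimate by dominated convergence. The delicate point is bounding the factor $\log((1-\sigma)V_\varepsilon*\varrho_0 + \sigma\mn_1)$: for $\sigma>0$ the perturbation $\sigma\mn_1$ keeps the argument bounded away from zero, while upper boundedness follows from $\|V_\varepsilon*\varrho_0\|_\infty \le \|V_\varepsilon\|_\infty$ and $\|\mn_1\|_\infty<\infty$. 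Hence the logarithm is an $L^\infty_{\mathrm{loc}}$ function on the compact set where the difference quotient of $V_\varepsilon*\varrho_\alpha$ is non-zero, and the limit passes through.

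For the $\sigma=0$ case with $V_1\equiv\mn_1$, the argument is the same after noting that $V_\varepsilon*\varrho_0>0$ everywhere (cf.~\Cref{rem:gaussian}) and $\log(V_\varepsilon*\varrho_0)$ grows polynomially, as in~\Cref{lem:log_conv_glob_rescaling}, so that together with the compact support of $\nabla V_\varepsilon(x-y_0)$ in the effective $x$-variable the integrand remains dominated by an integrable function. Finally, for the geodesic interpolant $\varrho_{1-\alpha} = (\alpha\pi^1+(1-\alpha)\pi^2)_\#\gamma$ the same computation applies verbatim: writing $V_\varepsilon*\varrho_{1-\alpha}(x) = \iint V_\varepsilon(x-\alpha y_0 - (1-\alpha)y_1)\diff\gamma$ and differentiating, the derivative of the argument with respect to $\alpha$ is now $-(y_0-y_1)$ and the base point at $\alpha=0$ becomes $y_1$, producing the minus sign and the replacement $y_0 \leadsto y_1$ in the stated formulas; the vanishing of the ``$+1$'' term goes through identically.
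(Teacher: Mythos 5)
Your approach — differentiating $\alpha\mapsto\mh^\varepsilon_\sigma[\varrho_\alpha]$ directly under the two integral signs and letting mass conservation ($\int\nabla V_\varepsilon=0$) kill the ``$+1$'' coming from $(u\log u)'=\log u +1$ — is correct and is a somewhat slicker reformulation of what the paper does. The paper instead starts from the difference quotient, applies the mean value theorem to $u\mapsto u\log u$ (introducing the intermediate quantity $h^\varepsilon_\sigma(s,x)$), Taylor-expands $V_\varepsilon(x-(1-\alpha)y-\alpha z)-V_\varepsilon(x-y)$ to first order in $\alpha$ with an explicit remainder $\mathcal{R}_\alpha$, and then shows separately that the $\mathcal{R}_\alpha$ contribution vanishes as $\alpha\to 0$. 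The two routes rest on the same estimates (boundedness of $\gamma$'s support, decay of $\nabla V_\varepsilon$/$D^2V_\varepsilon$, the polynomial growth of the logarithm from \Cref{lem:log_gauss} and \Cref{lem:log_conv_glob_rescaling}, and Peetre's inequality), and both ultimately invoke dominated convergence. The difference-quotient route of the paper is a bit more pedestrian but makes the error term fully explicit; your version avoids the remainder estimate at the cost of having to justify two interchanges of differentiation and integration.

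There is, however, one concrete error in your handling of the $\sigma=0$, $V_1\equiv\mn_1$ case: you write that the dominated-convergence majorant comes ``from the compact support of $\nabla V_\varepsilon(x-y_0)$ in the effective $x$-variable,'' but when $V_1=\mn_1$ the kernel $V_\varepsilon$ (and hence $\nabla V_\varepsilon$) is \emph{globally} supported. The correct ingredient is not compact support but the decay estimate $|\nabla\mn_1(z)|\lesssim \langle z\rangle^{-(d+3)}$ (cf.\ \Cref{rem:gaussian}); combined with Peetre's inequality this gives $|\nabla V_\varepsilon(x-(1-\alpha)y_0-\alpha y_1)|\lesssim_\varepsilon\langle x\rangle^{-(d+3)}$ uniformly for $y_0,y_1$ in fixed balls and $\alpha\in[0,1]$, which absorbs the linear growth $\langle x\rangle$ of $\log(V_\varepsilon*\varrho_0)$ coming from \Cref{lem:log_conv_glob_rescaling} and produces the integrable majorant $\langle x\rangle^{-(d+2)}$. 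You partially gesture at this by mentioning the polynomial growth of the logarithm, but the argument as written would fail without replacing the compact-support claim by the decay estimate. The rest of the proposal — the identification of the pointwise derivative of $V_\varepsilon*\varrho_\alpha$, the cancellation of the ``$+1$'' term, the $\sigma>0$ bound, and the treatment of $\varrho_{1-\alpha}$ — is correct.
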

\begin{proof}
By using the mean value theorem,
\begin{align}\label{eq:diff_quotient_alpha}
    \frac{1}{\alpha}\left(\mh_\sigma^\varepsilon[\varrho_\alpha]-\mh_\sigma^\varepsilon[\varrho_0]\right)=\frac{1-\sigma}{\alpha}\int_\Rd\int_0^1h_\sigma^\varepsilon(s,x)V_\varepsilon*(\varrho_\alpha-\varrho_0)(x)\diff s\diff x,
\end{align}  
where $h_\sigma^\varepsilon(s,x)=\log(s(1-\sigma)V_\varepsilon*\varrho_\alpha(x)+(1-s)(1-\sigma)V_\varepsilon*\varrho_0(x)+\sigma\mn_1(x))$. Note that 
\begin{align*}
V_\varepsilon*(\varrho_\alpha-\varrho_0)(x)&=\iint_\Rdd (V_\varepsilon(x-(1-\alpha)y-\alpha z)-V_\varepsilon(x-y))\diff\gamma(y,z)\\
&=\alpha\iint_\Rdd\nabla V_\varepsilon(x-y)\cdot(y-z)\diff\gamma(y,z)+R_\alpha(x),
\end{align*}
being 
\[
\mathcal{R}_\alpha(x):=\alpha^2\iint_{\R^{2d}}\left\langle\left\{\int_0^1\int_0^\tau D^2V_\varepsilon(x-[(1-\alpha h)y+\alpha h z])\, \diff h \, \diff \tau\right\}(y-z),(y-z)\right\rangle\, \diff \gamma(y,z),
\]
where the contribution of this remainder goes to 0 as $\alpha \to 0$, which we prove later on. Disregarding higher order terms in $\alpha$ for the moment being,~\eqref{eq:diff_quotient_alpha} becomes
\begin{align}\label{eq:diff_quotient_alpha_2}
    \frac{1}{\alpha}\left(\mh_\sigma^\varepsilon[\varrho_\alpha]-\mh_\sigma^\varepsilon[\varrho_0]\right)=(1-\sigma)\int_\Rd\int_0^1 h_\sigma^\varepsilon(s,x)\iint_\Rdd\nabla V_\varepsilon(x-y)\cdot(y-z)\diff\gamma(y,z)\diff s\diff x.
\end{align}
The proof is concluded by applying the Lebesgue Dominated Convergence Theorem, using that $h_\sigma^\varepsilon(s,x)\to\log((1-\sigma)V_\varepsilon*\varrho_0(x)+\sigma\mn(x))$ for a.e. $x$ as $\alpha\to0$, and the majorant
\[
|h_\sigma^\varepsilon(s,x)\,(y-z)\cdot\nabla V_\varepsilon(x-y)|\le C \langle y\rangle|\nabla V_\varepsilon(x-y)|\langle x-y\rangle|y-z|,
\]
where we exploited \Cref{lem:log_gauss} and \Cref{lem:peetre} with $p=1$. The majorant, indeed, is integrable since $\gamma$ has marginals in $\mptrd$ and $V$ is compactly supported ($\sigma>0)$. The higher order terms in $\alpha$ in~\eqref{eq:diff_quotient_alpha} can be estimated as follows. Let us notice there exists $q\gg 1$ such that $|D^2V_1(z)|\le C_q\langle z\rangle^{-q}$. By means of~\Cref{lem:peetre} we know
\[
\langle x\rangle \lesssim \langle x-[(1-\alpha h)y+\alpha h z]\rangle\langle [(1-\alpha h)y+\alpha h z]\rangle.
\]
Using the previous information with~\Cref{lem:log_gauss} and a change of variable $x\mapsto x+[(1-\alpha h)y+\alpha hz]$, we have the following bound:
\begin{align*}
    \int_\Rd \int_0^1 &h_\sigma^\varepsilon(s,x)\mathcal{R}_\alpha(x)\diff s\diff x\\&\le C \alpha^2\int_\Rd\langle x\rangle\iint_{\R^{2d}}\int_0^1 \int_0^\tau \left|(y-z)^T D^2V_\varepsilon(x-[(1-\alpha h)y+\alpha h z])(y-z)\right| \diff h\,\diff \tau\diff \gamma(y,z)\diff x\\
    &\le C \alpha^2\iint_{\Rdd}|y-z|^2(1+|y|+|z|)\int_0^1\int_0^\tau\int_\Rd\langle x-[(1-\alpha h)y+\alpha h z]\rangle^{1-q}\diff x \diff h\,\diff \tau\diff\gamma(y,z)\\
    &=C \alpha^2\iint_{\Rdd}|y-z|^2(1+|y|+|z|)\int_\Rd\langle x\rangle^{1-q}\diff x\diff\gamma(y,z)\\
    &\le C \alpha^2(1+R_0+R_1)\left(m_2(\varrho_0)+m_2(\varrho_1)\right).
\end{align*}
In particular, the term involving $\mathcal{R}_\alpha$ in~\eqref{eq:diff_quotient_alpha} converges to $0$ as $\alpha\to0$.
In case $\sigma=0$ and $V_1\equiv\mn_1$ we follow the same procedure by exploiting~\Cref{lem:log_conv_glob_rescaling}~and~\Cref{lem:peetre} for $p=1$. The computation of the last two directional derivatives is similar, starting from the incremental ratio
\[
\frac{1}{\alpha}\left(\mh_\sigma^\varepsilon[\varrho_{1-\alpha}]-\mh_\sigma^\varepsilon[\varrho_1]\right).
\]
We omit the details so that we are not repetitive in our argument. We observe the lack of compact support for $V$ is not an issue due to the assumption on its decay, cf.~\Cref{rem:general_global_kernel}. 
\end{proof}

\begin{prop}\label{prop:convexity-energy}
    Assume $\varrho_0,\varrho_1\in\mptrd$ with $\supp\varrho_0=B_{R_0}$ and $\supp\varrho_1=B_{R_1}$, for $R_0,R_1>0$. Let $\varepsilon,\sigma>0$ be fixed, $V_1\in C^2_c(\Rd)$ satisfy~\ref{ass:v1} and $\mn_1$ as in~\eqref{eq:notation_kernel_n}. The functional $\mh_\sigma^\varepsilon$ satisfies~\eqref{eq:above_tan} and it is $\lambda_{\varepsilon,\sigma}$-convex along the geodesic connecting $\varrho_0$ with $\varrho_1$, being
    \begin{equation}\label{eq:lambda-eps-sigma}
        \lambda_{\varepsilon,\sigma}=-\varepsilon^{-2}C_{\varepsilon,\sigma}(1-\sigma)(1+R_0+R_1),
    \end{equation}
where the constant scaling like $C_{\varepsilon,\sigma} \simeq |\log \sigma| +d| \log \varepsilon| + \log\|V_1\|_{L^\infty}$, for $0 < \varepsilon,\sigma\ll 1$. If $\sigma=0$ and $V_1\equiv\mn_1$, the functional $\mh^\varepsilon$ satisfies~\eqref{eq:above_tan} and it is $\lambda_\varepsilon$-convex along the geodesic connecting $\varrho_0$ with $\varrho_1$, for
\begin{equation}\label{eq:lambda-eps}
\lambda_{\varepsilon}=-\varepsilon^{-3}\,C_{R}(1+R_0+R_1), 
\end{equation}
for a constant $C_{R}=C_{1,R}$ from~\Cref{lem:log_conv_glob_rescaling} applied to $\varrho_0$.
\end{prop}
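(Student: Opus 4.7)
The plan is to promote the \emph{above-the-tangent-line} inequality~\eqref{eq:above_tan} to $\lambda$-geodesic convexity by controlling the second derivative of $\phi(\alpha):=\mh_\sigma^\varepsilon[\varrho_\alpha]$ along the geodesic. If one establishes
\[
\phi''(\alpha) \ge \lambda_{\varepsilon,\sigma}\, d_W^2(\varrho_0,\varrho_1), \qquad \alpha \in [0,1],
\]
then~\eqref{eq:above_tan} follows from $\mh_\sigma^\varepsilon[\varrho_1] - \mh_\sigma^\varepsilon[\varrho_0] - \phi'(0) = \int_0^1 (1-\alpha)\phi''(\alpha)\,\diff\alpha \ge \tfrac{\lambda_{\varepsilon,\sigma}}{2}d_W^2(\varrho_0,\varrho_1)$.

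The first step is to repeat the computation of~\Cref{prop:deriv-eps-sigma} at arbitrary $\alpha\in[0,1]$ using the push-forward representation $\varrho_\alpha = (T_\alpha)_{\#}\gamma$ with $T_\alpha(y_0,y_1) := (1-\alpha)y_0 + \alpha y_1$, obtaining
\[
\phi'(\alpha) = (1-\sigma)\int_\Rd \log\bigl((1-\sigma)V_\varepsilon*\varrho_\alpha + \sigma\mn_1\bigr)(x) \iint_{\R^{2d}}(y_0-y_1)\cdot \nabla V_\varepsilon(x-T_\alpha)\,\diff\gamma\,\diff x.
\]
Differentiating once more produces two contributions. The one coming from the chain rule on the logarithm is nonnegative, since it involves $g''(r)=1/r>0$ multiplying a square. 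The other arises from $\partial_\alpha \nabla V_\varepsilon(x-T_\alpha) = -D^2V_\varepsilon(x-T_\alpha)(y_1-y_0)$ and equals
\[
(1-\sigma)\int_\Rd \log\bigl((1-\sigma)V_\varepsilon*\varrho_\alpha + \sigma\mn_1\bigr)(x)\iint D^2V_\varepsilon(x-T_\alpha)(y_0-y_1,y_0-y_1)\,\diff\gamma\,\diff x,
\]
which is the only term that can be negative and must therefore be estimated from below.

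The main estimate bounds the absolute value of this curvature term by $|\lambda_{\varepsilon,\sigma}|\,d_W^2$. Using~\Cref{lem:log_gauss} (when $\sigma>0$ and $V_1$ is compactly supported) or~\Cref{lem:log_conv_glob_rescaling} (when $\sigma=0$ and $V_1=\mn_1$), the logarithm is bounded by $C_{\varepsilon,\sigma}\langle x\rangle$ with the scaling of $C_{\varepsilon,\sigma}$ advertised in the statement. Combined with~\Cref{lem:peetre}, $\langle x\rangle\le \langle x-T_\alpha\rangle \langle T_\alpha\rangle \le (1+R_0+R_1)\langle x-T_\alpha\rangle$ on $\supp\gamma$, and the change of variables $z:=x-T_\alpha$, the estimate reduces to
\[
\int_\Rd \langle z\rangle |D^2V_\varepsilon(z)|\,\diff z = \varepsilon^{-2}\int_\Rd \langle \varepsilon z'\rangle |D^2 V_1(z')|\,\diff z'\le C\varepsilon^{-2},
\]
where we use~\ref{ass:v2-c} (compact support of $V_1$) or the decay hypotheses of~\Cref{rem:general_global_kernel} for $V_1 = \mn_1$. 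Factoring the remaining $|y_0-y_1|^2$ and recognising $\iint |y_0-y_1|^2\,\diff\gamma = d_W^2(\varrho_0,\varrho_1)$ for $\gamma\in \Gamma_0(\varrho_0,\varrho_1)$ delivers~\eqref{eq:lambda-eps-sigma}.

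For the case $\sigma=0$ with $V_1=\mn_1$, the structure of the proof is unchanged; the extra $\varepsilon^{-1}$ in $\lambda_\varepsilon = -\varepsilon^{-3}C_R(1+R_0+R_1)$ comes from the $\varepsilon$-dependence of the logarithm bound in~\Cref{lem:log_conv_glob_rescaling}, which in turn reflects that the lower bound $V_\varepsilon*\varrho_\alpha(y) \ge c\exp\bigl(-\sqrt{1+R^2/\varepsilon^2}\bigr)V_\varepsilon(y)$ of~\Cref{rem:gaussian} is exponentially small in $1/\varepsilon$. The principal technical obstacle is to justify the formal twice-differentiation of $\phi$ despite $\varrho_\alpha$ being only a push-forward (not absolutely continuous in general); this is dealt with by observing that $V_\varepsilon*\varrho_\alpha$ is smooth for every $\alpha\in[0,1]$, and by dominated-convergence arguments analogous to those in~\Cref{prop:deriv-eps-sigma}, with $L^1$ majorants supplied by~\Cref{lem:peetre} together with the decay or compact support of $D^2V_\varepsilon$.
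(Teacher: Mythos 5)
Your proposal is sound and takes a genuinely different, second-order route. Writing $\phi(\alpha):=\mh_\sigma^\varepsilon[\varrho_\alpha]$, the paper avoids $\phi''$ entirely: it lower-bounds $\mh_\sigma^\varepsilon[\varrho_1]-\mh_\sigma^\varepsilon[\varrho_0]$ by the tangent-line inequality $g(b)-g(a)\ge g'(a)(b-a)$ for $g(r)=r\log r$ applied to the convolved densities, subtracts the directional derivative from \Cref{prop:deriv-eps-sigma}, and is left with a single second-order Taylor remainder of $V_\varepsilon$ weighted by $\log((1-\sigma)V_\varepsilon*\varrho_0+\sigma\mn_1)$ frozen at the endpoint $\alpha=0$; the log estimates, Peetre's inequality, and the rescaling $\int\langle z\rangle|D^2V_\varepsilon(z)|\,\diff z\lesssim\varepsilon^{-2}$ then finish the proof. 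You instead differentiate twice, discard the nonnegative $g''(r)=1/r$ term, and bound the $D^2V_\varepsilon$ term with the logarithm now taken at the interpolant $\varrho_\alpha$. Both routes land on the same integrand and constants; in your $\sigma=0$ case the concentration radius $R$ of \Cref{lem:log_conv_glob_rescaling} must be chosen uniform in $\alpha$ (take $R=\max(R_0,R_1)$, noting $\supp\varrho_\alpha\subset B_{\max(R_0,R_1)}$), whereas the paper only needs $R$ for $\varrho_0$, but the $\varepsilon$-scaling is unaffected. What the paper's route buys is economy: it needs only the one-sided first derivative already established and no twice-differentiation under the integral. What your route buys is a pointwise-in-$\alpha$ curvature bound, strictly stronger than the above-the-tangent inequality you then integrate it into, and a transparent separation of the two contributions to $\phi''$: the one that preserves displacement convexity (the $g''>0$ weight) and the one that destroys it (the Hessian of the mollifier). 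When you fill in the differentiation under the integral, also verify finiteness of the nonnegative $g''$-term: for $\sigma>0$ the numerator $(1-\sigma)\partial_\alpha(V_\varepsilon*\varrho_\alpha)$ is compactly supported (as $\supp\nabla V_\varepsilon$ is) and $\sigma\mn_1$ bounds the denominator below on that compact set, while for $\sigma=0$, $V_1=\mn_1$, the bound $|\nabla V_\varepsilon|\lesssim V_\varepsilon/\varepsilon$ together with the lower bound of \Cref{rem:gaussian} suffices.
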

\begin{proof}
We want to prove~\eqref{eq:above_tan}. Owing to the convexity of $t\mapsto t\log t$, the first two terms on the left-hand side of~\eqref{eq:above_tan} can be written as
\begin{align*}
    \mh_\sigma^\varepsilon[\varrho_1] - \mh_\sigma^\varepsilon[\varrho_0]
    &= \int_{\R^d} ((1-\sigma) V_\varepsilon * \varrho_1 + \sigma \mn_1)\log((1-\sigma)V_\varepsilon*\varrho_1 + \sigma \mn_1) \diff x\\
    &\quad- \int_{\R^d} ((1-\sigma) V_\varepsilon * \varrho_0 + \sigma \mn_1)\log((1-\sigma)V_\varepsilon*\varrho_0 + \sigma \mn_1) \diff x \\
    &\ge (1-\sigma)\int_{\R^d}\log ((1-\sigma)V_\varepsilon*\varrho_0 + \sigma \mn_1)(x) \left(
V_\varepsilon*\varrho_1(x) - V_\varepsilon*\varrho_0(x)
    \right) \diff x \\
    &= (1\!-\!\sigma)\!\!\int_{\Rd}\!\!\!\log ((1\!-\!\sigma)V_\varepsilon*\varrho_0 \!+\! \sigma \mn_1)(x) \!\!\left(
\iint_{\R^{2d}}\!\!\!\left(V_\varepsilon(x\!-\!y_1)\!-\!V_\varepsilon(x\!-\!y_0)\right) \diff \gamma(y_0,y_1)
    \!\right) \!\diff x.
\end{align*}
These expressions allow us to estimate the left-hand side of~\eqref{eq:above_tan} with
\begin{align*}
    \mh_\sigma^\varepsilon&[\varrho_1] - \mh_\sigma^\varepsilon[\varrho_0] - \left.\frac{d}{d\alpha}\right|_{\alpha = 0}\mh_\sigma^\varepsilon[\varrho_\alpha] \\
    &\ge (1-\sigma)\int_{\R^d}\log ((1-\sigma)V_\varepsilon*\varrho_0 + \sigma \mn_1)(x) \times \dots \\
    &\quad\times \left(
\iint_{\R^{2d}} V_\varepsilon(x-y_1)-V_\varepsilon(x-y_0) - (y_0-y_1)\cdot \nabla V_\varepsilon(x-y_0)\diff \gamma(y_0,y_1)
    \right) \diff x   \\
    &= (1-\sigma)\int_{\R^d}\log ((1-\sigma)V_\varepsilon*\varrho_0 + \sigma \mn_1)(x)\times\dots \\
    &\quad\times\left(
\iint_{\R^{2d}}\!\left\langle\left\{\int_0^1\!\!\int_0^\tau D^2V_\varepsilon(x-[(1- h)y_0+ h y_1]) \diff h  \diff \tau\right\}(y_0-y_1),(y_0-y_1)\right\rangle \diff \gamma(y_0,y_1)
    \!\right) \diff x.
\end{align*}
The last equality uses the second order Taylor expansion with the function $V_\varepsilon$. Taking absolute values, we get
\begin{align*}
     \mh_\sigma^\varepsilon[\varrho_1] &- \mh_\sigma^\varepsilon[\varrho_0] - \left.\frac{d}{d\alpha}\right|_{\alpha = 0}\mh_\sigma^\varepsilon[\varrho_\alpha] \\
    &\ge -(1-\sigma)\int_{\R^d}\left|\log ((1-\sigma)V_\varepsilon*\varrho_0 + \sigma \mn_1)(x)\right|\times\dots \\
    &\quad\times\left(
\iint_{\R^{2d}}|y_0-y_1|^2 \left\{\int_0^1\int_0^\tau\left|D^2V_\varepsilon(x-[(1-h)y_0+hy_1])\right|\diff h\diff \tau\right\}\diff \gamma(y_0,y_1)
    \right) \diff x.
\end{align*}
Using~\Cref{lem:log_gauss} with $p=1$, we get
\begin{align}
\mh_\sigma^\varepsilon&[\varrho_1] - \mh_\sigma^\varepsilon[\varrho_0] - \left.\frac{d}{d\alpha}\right|_{\alpha = 0}\mh_\sigma^\varepsilon[\varrho_\alpha] \notag\\
    &\ge \!- C(1\!-\!\sigma)\iint_{\R^{2d}}\!\!|y_0-y_1|^2 \left( \int_0^1\int_0^\tau\int_{\R^d} \langle x\rangle \left|D^2V_\varepsilon(x-[(1-h)y_0+hy_1])\right| \diff x \diff h \diff \tau \right) \diff\gamma(y_0,y_1)\notag\\
    &=\!-\frac{C(1\!-\!\sigma)}{\varepsilon^2}\iint_{\R^{2d}}\!\!|y_0-y_1|^2 \left( \int_0^1\int_0^\tau\int_{\R^d} \!\!\langle \varepsilon z+[(1-h)y_0+hy_1]\rangle\left|D^2V_1(z)\right| \diff z \diff h \diff \tau \right) \diff\gamma(y_0,y_1)\label{eq:conv_est}
\end{align}
for $C \simeq |\log \sigma| + \log \|V_\varepsilon * \varrho_0\|_{L^\infty} \le |\log \sigma| +d| \log \varepsilon| + \log\|V_1\|_{L^\infty}$, $0 < \varepsilon,\sigma\ll 1$. 
By means of~\Cref{lem:peetre}, we could write
\[
\langle \varepsilon z+[(1-h)y_0+hy_1]\rangle \lesssim \langle \varepsilon z \rangle \langle  (1-h)y_0 + hy_1 \rangle.
\]
According to~\ref{ass:v1}, since $V$ is compactly supported, we can estimate
\[
|D^2V_1(z)| \le C_q \langle z \rangle^{-q}, \quad \forall q\gg 1.
\]
Hence, taking $q\gg 1$ sufficiently large, we can estimate the term in brackets in~\eqref{eq:conv_est} by
\begin{align*}
    &\quad \int_0^1\int_0^\tau\int_\Rd \langle \varepsilon z+[(1-h)y_0+hy_1]\rangle \left|D^2V_1(z)\right| \diff z \diff h\diff\tau  \\
    &\lesssim_q \int_0^1\int_0^\tau\langle  (1-h)y_0 + hy_1 \rangle\int_{\R^d}\left\langle z \right\rangle^{1-q}\diff z \diff h \diff\tau    \\
    &\lesssim_q \int_0^1\int_0^\tau \langle  (1-h)y_0 + hy_1 \rangle \diff h\diff\tau \\
    &\lesssim_q  (1 + |y_0| + |y_1|).
\end{align*}
If we plug this back into~\eqref{eq:conv_est}, we get
\begin{align*}
    \mh_\sigma^\varepsilon[\varrho_1] - \mh_\sigma^\varepsilon[\varrho_0] - \left.\frac{d}{d\alpha}\right|_{\alpha = 0}\mh_\sigma^\varepsilon[\varrho_\alpha]\gtrsim_p-\varepsilon^{-2}C(1-\sigma)(1+R_0+R_1)\iint_{\R^{2d}}|y_0-y_1|^2\diff \gamma(y_0,y_1),
\end{align*}
for $C \simeq |\log \sigma| +d| \log \varepsilon| + \log\|V_1\|_{L^\infty}$, $0<\varepsilon,\sigma\ll 1$. In the last estimate, we used~\Cref{lem:vel_cpct_V}. The result is obtained by taking the infimum over $\gamma\in\Gamma(\varrho_0,\varrho_1)$. As for the case $\sigma=0$ and $V_1=\mn_1$, we follow the same strategy exploiting~\Cref{lem:log_conv_glob_rescaling}. With a similar computation, we have
\begin{align*}
    \mh^\varepsilon&[\varrho_1] - \mh^\varepsilon[\varrho_0] - \left.\frac{d}{d\alpha}\right|_{\alpha = 0}\mh^\varepsilon[\varrho_\alpha] \\
    &\ge \int_{\R^d}\log (V_\varepsilon*\varrho_0)(x)\left(
\iint_{\R^{2d}} V_\varepsilon(x-y_1)-V_\varepsilon(x-y_0) - (y_0-y_1)\cdot \nabla V_\varepsilon(x-y_0)\diff \gamma(y_0,y_1)
    \right) \diff x   \\
    &= \int_{\R^d}\log (V_\varepsilon*\varrho_0)(x)\times\dots \\
    &\quad\times\left(
\iint_{\R^{2d}}\!\left\langle\left\{\int_0^1\!\!\int_0^\tau D^2V_\varepsilon(x-[(1- h)y_0+ h y_1]) \diff h  \diff \tau\right\}(y_0-y_1),(y_0-y_1)\right\rangle \diff \gamma(y_0,y_1)
    \!\right) \diff x.
\end{align*}
Taking absolute values and using~\Cref{lem:log_conv_glob_rescaling} with $C_R=C_{1,R}$, we get
\begin{align*}
     \mh^\varepsilon&[\varrho_1] - \mh^\varepsilon[\varrho_0] - \left.\frac{d}{d\alpha}\right|_{\alpha = 0}\mh^\varepsilon[\varrho_\alpha] \\
    &\ge -\int_{\R^d}\left|\log (V_\varepsilon*\varrho_0)(x)\right|\times\dots \\
    &\quad\times\left(
\iint_{\R^{2d}}|y_0-y_1|^2 \left\{\int_0^1\int_0^\tau\left|D^2V_\varepsilon(x-[(1-h)y_0+hy_1])\right|\diff h\diff \tau\right\}\diff \gamma(y_0,y_1)
    \right) \diff x\\
    &\ge - \frac{C_{R}}{\varepsilon}\iint_{\R^{2d}}\!\!|y_0-y_1|^2 \left( \int_0^1\int_0^\tau\int_{\R^d} \left\langle x\right\rangle \left|D^2V_\varepsilon(x-[(1-h)y_0+hy_1])\right| \diff x \diff h \diff \tau \right) \diff\gamma(y_0,y_1)\notag\\
    &=\!-\frac{C_R}{\varepsilon^3}\iint_{\R^{2d}}\!\!|y_0-y_1|^2 \left( \int_0^1\int_0^\tau\int_{\R^d} \!\!\left\langle \varepsilon z+[(1-h)y_0+hy_1]\right\rangle\left|D^2V_1(z)\right| \diff z \diff h \diff \tau \right) \diff\gamma(y_0,y_1).
\end{align*}
For the term in brackets, we can use~\Cref{lem:peetre} together with the fact that $|D^2 V_1(z)| \le C_q\langle z\rangle^{-q}$ for all large $q\gg 1$ to estimate
\begin{align*}
    \int_0^1\int_0^\tau\int_\Rd &\left\langle \varepsilon z+[(1-h)y_0+hy_1]\right\rangle \left|D^2V_1(z)\right| \diff z \diff h\diff\tau  \\
    &\lesssim_q \int_0^1\int_0^\tau\left\langle  (1-h)y_0 + hy_1 \right\rangle\int_{\R^d}\left\langle z \right\rangle^{1-q}\diff z \diff h \diff\tau    \\
    &\lesssim_q \int_0^1\int_0^\tau \left\langle (1-h)y_0 + hy_1\right \rangle \diff h\diff\tau \\
    &\lesssim_q  \left(1 + |y_0| + |y_1|\right).
    \end{align*}
So,
\[
\mh^\varepsilon[\varrho_1] - \mh^\varepsilon[\varrho_0] - \left.\frac{d}{d\alpha}\right|_{\alpha = 0}\mh^\varepsilon[\varrho_\alpha]\gtrsim_p-\varepsilon^{-3}C_R(1+R_0+R_1)\iint_{\R^{2d}}|y_0-y_1|^2\diff \gamma(y_0,y_1),
\]
and $\lambda_\varepsilon=-\varepsilon^{-3}C_R(1+R_0+R_1)$. The decay of the Hessian matrix follows from the choice of $V_1$. 
\end{proof}
\begin{rem}
    Following the same procedure in the proof of~\Cref{prop:convexity-energy} we obtain 
    \begin{equation}
\label{eq:above_tan_reverse}
\mh_\sigma^\varepsilon[\varrho_0] - \mh_\sigma^\varepsilon[\varrho_1] + \left.\frac{d}{d\alpha}\right|_{\alpha = 0}\mh_\sigma^\varepsilon[\varrho_{1-\alpha}] \ge \frac{\lambda_{\eps,\sigma}}{2}d_W^2(\varrho_0,\varrho_1),
\end{equation}
for $\lambda_{\varepsilon,\sigma}$ as in~\Cref{prop:convexity-energy} for either $\sigma>0$ or $\sigma=0$.
\end{rem}
\Cref{prop:convexity-energy} implies that $\mh_\sigma^\varepsilon$ is $\lambda$-convex on the space of compactly supported probability measures with modulus of convexity depending on the size of the support. For $\varepsilon,\sigma>0$, or $\varepsilon>0$ and $\sigma=0$, \Cref{prop:compact_support} guarantees that weak solutions to~\eqref{eq:nonlocal_he_gauss}, or to~\eqref{eq:nonlocal_he}, remain compactly supported if they were initially compactly supported. \Cref{prop:compact_support} in particular quantifies the growth of the support. The next result provides a stability estimate in $d_W$ for solutions of the nonlocal equations considered, extending~\cite[Theorem 11.1.4]{AGS} to the case of solutions whose support changes in time.

\begin{prop}\label{prop:stability_dW}
Let $\varepsilon,\sigma>0$ be fixed, $V_1\in C^2_c(\Rd)$ satisfy~\ref{ass:v1}, and $\mn_1$ as in~\eqref{eq:notation_kernel_n}. Consider $\rho_i^{\varepsilon,\sigma}:[0,T]\to\mptrd$, $i=1,2$, weak solutions to~\eqref{eq:nonlocal_he_gauss} with initial datum $\rho_{i,0}\in\mptrd$ such that $supp\rho_{i,0}=B_{R_i}$, for $R_i>0$ and $i=1,2$. It holds:
\begin{equation}\label{eq:stability_lambda-T}
d_W(\rho_{1,t}^{\varepsilon,\sigma},\rho_{2,t}^{\varepsilon,\sigma})\le e^{-\lambda_{\varepsilon,\sigma}^Tt}d_W(\rho_{1,0},\rho_{2,0}), \quad \forall\ t\in[0,T],
\end{equation}
being
    \begin{equation}\label{eq:lambda-eps-sigma-T}
        \lambda_{\varepsilon,\sigma}^T=-\varepsilon^{-2}C_{\varepsilon,\sigma}(1-\sigma)(1+\langle R_1\rangle e^{C^1_{\varepsilon,\sigma}T}+\langle R_2\rangle e^{C^1_{\varepsilon,\sigma}T}),
    \end{equation}
for $C_{\varepsilon,\sigma}$ as in~\Cref{prop:convexity-energy} and $C^1_{\varepsilon,\sigma}$ as in~\Cref{lem:vel_cpct_V}. In case $\sigma=0$, $V_1\equiv \mn_1$, and $\rho_i^{\varepsilon}:[0,T]\to\mptrd$, $i=1,2$, are weak solutions to~\eqref{eq:nonlocal_he} with initial data $\rho_i$ as above, then~\eqref{eq:stability_lambda-T} holds for
\begin{equation}\label{eq:lambda-eps-T}
        \lambda_\varepsilon^T=-\varepsilon^{-3}C_R(1+R_1+R_2+2C/\varepsilon T),
    \end{equation}
for a constant $C_{R}$, where $R>0$ is such that $\min_{t\in [0,T]}\inf_{\varepsilon>0}\min(\rho^{\varepsilon}_{1,t}(B_R),\rho^{\varepsilon}_{2,t}(B_R))\ge 1/2$, and $C=C(V_1)$.
\end{prop}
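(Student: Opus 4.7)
The plan is to adapt the classical AGS contractivity argument \cite[Theorem 11.1.4]{AGS} for gradient flows of $\lambda$-geodesically convex functionals. The difficulty is that~\Cref{prop:convexity-energy} provides $\lambda_{\varepsilon,\sigma}$-geodesic convexity of $\mh_\sigma^\varepsilon$ only on the subset of probability measures whose supports are contained in fixed balls, with the convexity constant degrading as the balls grow. The strategy is therefore to first use~\Cref{prop:compact_support} to confine both solutions into a common ball $B_{R^T}$ for every $t\in[0,T]$, and then to run the AGS argument with the uniform convexity modulus $\lambda_{\varepsilon,\sigma}^T$ that~\Cref{prop:convexity-energy} produces on this larger ball.

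Concretely, I would first invoke~\Cref{prop:compact_support} to conclude $\supp\rho_{i,t}^{\varepsilon,\sigma}\subseteq B_{R^T}$ for all $t\in[0,T]$ and $i=1,2$, with $R^T:=\max\{\langle R_1\rangle,\langle R_2\rangle\}\,e^{C_{\varepsilon,\sigma}^1 T}$. For any $t\in[0,T]$, the optimal displacement interpolation $\{\varrho_\alpha\}_{\alpha\in[0,1]}$ between $\rho_{1,t}^{\varepsilon,\sigma}$ and $\rho_{2,t}^{\varepsilon,\sigma}$ is supported in the convex hull of the two endpoint supports, hence also in $B_{R^T}$; \Cref{prop:convexity-energy} then delivers geodesic $\lambda_{\varepsilon,\sigma}^T$-convexity with $\lambda_{\varepsilon,\sigma}^T$ exactly as in~\eqref{eq:lambda-eps-sigma-T}.

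Next, I would translate this uniform convexity into a contraction in $d_W$ by returning to the discrete JKO scheme. For each $n$, comparing the two minimisers $\rho_{i,\tau}^{\varepsilon,\sigma,n+1}$ against variations obtained by pushing forward along the optimal geodesic joining them, and exploiting the two tangent-line inequalities~\eqref{eq:above_tan} and~\eqref{eq:above_tan_reverse}, I expect to derive a discrete evolution variational inequality
\begin{equation*}
\frac{d_W^2(\rho_{1,\tau}^{\varepsilon,\sigma,n+1},\rho_{2,\tau}^{\varepsilon,\sigma,n+1})-d_W^2(\rho_{1,\tau}^{\varepsilon,\sigma,n},\rho_{2,\tau}^{\varepsilon,\sigma,n})}{2\tau}+\lambda_{\varepsilon,\sigma}^T\,d_W^2(\rho_{1,\tau}^{\varepsilon,\sigma,n+1},\rho_{2,\tau}^{\varepsilon,\sigma,n+1})\le o_\tau(1).
\end{equation*}
A discrete Gr\"onwall step followed by the uniform-in-time convergence of the piecewise-constant interpolants (\Cref{lem:en-ineq-mom-bound}) will yield~\eqref{eq:stability_lambda-T}. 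The case $\sigma=0$ with $V_1\equiv\mn_1$ is structurally identical, using the linear-in-time bound $|X_t^\varepsilon(\rho)(t)|\le |x_0|+C_\varepsilon t$ from~\Cref{prop:compact_support} and the $\lambda_\varepsilon$ of~\eqref{eq:lambda-eps}; the radius $R$ in~\eqref{eq:lambda-eps-T} is produced by the $\varepsilon$-uniform tightness of $\{\rho_t^\varepsilon\}_{t\in[0,T]}$ (as already used in~\Cref{rem:gaussian}) to guarantee strict positivity of $V_\varepsilon*\rho^\varepsilon$ on $B_R$.

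The chief obstacle is the \emph{conditional} nature of~\Cref{prop:convexity-energy}: at the discrete level, one needs not only that the continuous-time solutions remain in $B_{R^T}$, but also that the iterates $\rho_{i,\tau}^{\varepsilon,\sigma,n}$ themselves stay in $B_{R^T}$ uniformly in $\tau$, so that the convexity inequality may be invoked at every step. I would handle this by adapting the probabilistic-representation argument of~\Cref{prop:compact_support} to the discrete scheme — the Euler--Lagrange relation for the JKO minimiser produces a one-step transport map whose velocity inherits the growth bounds of~\Cref{lem:vel_cpct_V} (resp.~\Cref{lem:bound_velocity}) — or, alternatively, by restricting the minimisation in~\eqref{eq:jko} to $\{\mu\in\mptrd:\supp\mu\subseteq B_{R^T}\}$ and checking that this constraint is inactive for $\tau$ small enough.
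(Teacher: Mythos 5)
Your plan shares the first move (using~\Cref{prop:compact_support} to confine both solutions in a fixed ball so that~\Cref{prop:convexity-energy} supplies a uniform $\lambda_{\varepsilon,\sigma}^T$), but then diverges: you propose returning to the discrete JKO scheme and deriving a discrete EVI, whereas the paper works entirely at the continuous level via a Lagrangian contraction argument. Concretely, the paper (writing out the $\sigma=0$ case; $\sigma>0$ is analogous) uses the representation $\rho_{i,t}^{\varepsilon}=(X_{i,t}^{\varepsilon})_\#\rho_{i,0}$ from~\Cref{prop:compact_support}, takes $\gamma_0\in\Gamma_0(\rho_{1,0},\rho_{2,0})$, forms the competitor plan $\gamma_t^{\varepsilon}=(X_{1,t}^{\varepsilon}\times X_{2,t}^{\varepsilon})_\#\gamma_0$, bounds $d_W^2(\rho_{1,t}^{\varepsilon},\rho_{2,t}^{\varepsilon})$ by the transport cost of $\gamma_t^{\varepsilon}$, and differentiates that cost in $t$. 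The key identity is that this derivative equals $2\bigl(\frac{d}{d\alpha}\big|_{\alpha=0}\mh^{\varepsilon}[\varrho_{t,\alpha}^{\varepsilon}]-\frac{d}{d\alpha}\big|_{\alpha=0}\mh^{\varepsilon}[\varrho_{t,1-\alpha}^{\varepsilon}]\bigr)$, where $\varrho_{t,\alpha}^{\varepsilon}$ is the interpolant along $\gamma_t^{\varepsilon}$; combining the two tangent-line inequalities~\eqref{eq:above_tan} and~\eqref{eq:above_tan_reverse} from~\Cref{prop:convexity-energy} bounds this by $-2\lambda_{\varepsilon}^Td_W^2(\rho_{1,t}^{\varepsilon},\rho_{2,t}^{\varepsilon})$, and Gr\"onwall concludes. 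No return to the discrete scheme is required.

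Your route hits exactly the gap you flag: the $\lambda$-convexity of~\Cref{prop:convexity-energy} only holds between compactly supported measures with controlled support, and nothing in the paper shows that the JKO iterates $\rho_{i,\tau}^{\varepsilon,\sigma,n}$ remain in a fixed ball uniformly in $\tau$. Neither of your proposed fixes closes it. The Euler--Lagrange relation at step $n$ gives the Brenier map between $\rho^{n}$ and $\rho^{n+1}$, but you would need a fresh propagation estimate to control how far this map moves the support; the machinery of~\Cref{lem:vel_cpct_V}, \Cref{cor:fin_exp_cpct_V}, and~\Cref{prop:compact_support} is built only for the continuous-time velocity field of the limiting solution and does not obviously transfer to the single-step Brenier map. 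Restricting the minimisation in~\eqref{eq:jko} to $\{\mu:\supp\mu\subseteq B_{R^T}\}$ and checking inactivity is circular, since inactivity is precisely the statement that the unconstrained minimiser stays in that ball. The paper's Lagrangian argument sidesteps all of this by only ever manipulating the already-constructed limiting solution, for which compact support is proven.
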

\begin{proof}
Let us start by considering solutions of~\eqref{eq:nonlocal_he}. Let $\gamma_0\in\Gamma_0(\rho_{1,0},\rho_{2,0})$ be the optimal transportation plan for $d_W$. For two solutions $\rho^{\varepsilon}_{1,t}=(X_{1,t}^\varepsilon)_\#\rho_{1,0}$ and $\rhoe_{2,t}=(X_{2,t}^\varepsilon)_\#\rho_{2,0}$, where we use the shorthand notation $X_{i,t}^\varepsilon:=X_{i,t}^\varepsilon(\rho_{i,t}^\varepsilon)$ for $i=1,2$, we define $\gamma_t^\varepsilon=(X_{1,t}^\varepsilon\times X_{2,t}^\varepsilon)_\#\gamma_0$ which is obviously a transportation plan between $\rho_{1,t}^\varepsilon$ and $\rho_{2,t}^\varepsilon$. By definition of $d_W$ we know
\[
d_W^2(\rho_{1,t}^\varepsilon,\rho_{2,t}^\varepsilon)\le \iint_\Rdd|x-y|^2\diff \gamma_t^\varepsilon(x,y)=\iint_\Rdd|X_{1,t}^\varepsilon(x)-X_{2,t}^\varepsilon(y)|^2\diff \gamma_0(x,y).
\]
In order to prove the result we want to show
\begin{align}\label{eq:inequality_derivative_lambda}
    \frac{d}{dt}d_W^2(\rho_{1,t}^\varepsilon,\rho_{2,t}^\varepsilon)\le -2 \lambda_\varepsilon^Td_W^2(\rho_{1,t}^\varepsilon,\rho_{2,t}^\varepsilon),
\end{align}
so that to conclude by applying Gr\"onwall's inequality. The intermediate steps to achieve the inequality above consist in proving (in order):
\begin{subequations}
\begin{align}\label{eq:exchange_derivative_integral}
    \frac{d}{dt}\iint_\Rdd|X_{1,t}^\varepsilon(x)-X_{2,t}^\varepsilon(y)|^2\diff \gamma_0(x,y)=\iint_\Rdd\frac{d}{dt}|X_{1,t}^\varepsilon(x)-X_{2,t}^\varepsilon(y)|^2\diff \gamma_0(x,y);
\end{align}
\begin{align}
    \iint_\Rdd\frac{d}{dt}|X_{1,t}^\varepsilon(x)-X_{2,t}^\varepsilon(y)|^2\diff \gamma_0(x,y)&=2\left(\left.\frac{d}{d\alpha}\right|_{\alpha = 0}\mh^\varepsilon[\varrho_{t,\alpha}^\varepsilon]-\left.\frac{d}{d\alpha}\right|_{\alpha = 0}\mh^\varepsilon[\varrho_{t,1-\alpha}^\varepsilon]\right)\label{eq:inequality_with_directional_derivative}\\
    &\le-2\lambda_\varepsilon^T d_W^2(\rho_{1,t}^\varepsilon,\rho_{2,t}^\varepsilon)\label{eq:inequality_lambda},
\end{align}
\end{subequations}
where $\varrho_{t,\alpha}^\varepsilon$ is the geodesic interpolant connecting $\rho_{1,t}^\varepsilon$ to $\rho_{2,t}^\varepsilon$ at time $t\in[0,T]$, i.e.
\[
\varrho_{t,\alpha}^\varepsilon:= ((1-\alpha)\pi^1 + \alpha\pi^2)_\# \gamma^\varepsilon_t, \quad \alpha \in[0,1].
\]
Inequality~\eqref{eq:inequality_derivative_lambda} can be proven as in~\cite[Theorem 11.1.4]{AGS} using~\cite[Lemma 4.3.4]{AGS}. Exchanging the derivative with the integral in~\eqref{eq:exchange_derivative_integral} can be justified using Dominated Convergence since the flow maps $X_{i,t}^\varepsilon$ are $C^1_t$ functions, for $i=1,2$, and the following computation hold:
\begin{align*}
\frac{d}{dt}|X_{1,t}^{\varepsilon}(x)-X_{2,t}^{\varepsilon}(y)|^2&=-2\left\langle X_{1,t}^\varepsilon(x)-X_{2,t}^\varepsilon(y), \omega_t^\varepsilon[\rhoe_{1,t}](X_{1,t}^{\varepsilon}(x))-\omega_t^\varepsilon[\rhoe_{2,t}](X_{2,t}^{\varepsilon}(y))\right\rangle\\
&=-2\left\langle X_{1,t}^\varepsilon(x)-X_{2,t}^\varepsilon(y),\nabla V_\varepsilon*\log(V_\varepsilon*\rho_{1,t}^\varepsilon(X_{1,t}^{\varepsilon}(x))\right\rangle\\
&\quad+2\left\langle X_{1,t}^\varepsilon(x)-X_{2,t}^\varepsilon(y),\nabla V_\varepsilon*
\log(V_\varepsilon*\rho_{2,t}^\varepsilon(X_{2,t}^{\varepsilon}(y))\right\rangle.
\end{align*}
In particular,
\[
\left|\frac{d}{dt}|X_{1,t}^{\varepsilon}(x)-X_{2,t}^{\varepsilon}(y)|^2\right|\le C_\varepsilon \left(|x|+|y|+C_\varepsilon T\right)\in L^1(\diff\gamma_0).
\]
As for~\eqref{eq:inequality_with_directional_derivative}, note that
\begin{equation}
\begin{split}
    -2\iint_\Rdd&\left\langle X_{1,t}^\varepsilon(x)-X_{2,t}^\varepsilon(y),\nabla V_\varepsilon*\log(V_\varepsilon*\rho_{1,t}^\varepsilon(X_{1,t}^{\varepsilon}(x))\right\rangle \diff\gamma_0(x,y)\\
    &=2\int_\Rd \log(V_\varepsilon*\rho_{1,t}^\varepsilon(z)\iint_\Rd\left(X_{1,t}^\varepsilon(x)-X_{2,t}^\varepsilon(y)\right)\cdot\nabla V_\varepsilon(z-X_{1,t}^{\varepsilon}(x))\diff\gamma_0(x,y)\diff z\\
    &=2\int_\Rd \log(V_\varepsilon*\rho_{1,t}^\varepsilon)(z)\iint_\Rd(x-y)\cdot\nabla V_\varepsilon(z-x)\diff\gamma_t^\varepsilon(x,y)\diff z\\
    &=2\left.\frac{d}{d\alpha}\right|_{\alpha = 0}\mh^\varepsilon[\varrho_{t,\alpha}^\varepsilon],
\end{split}
\end{equation}
Likewise we have
\begin{equation}
\begin{split}
    2\iint_\Rdd&\left\langle X_{1,t}^\varepsilon(x)-X_{2,t}^\varepsilon(y),\nabla V_\varepsilon*\log(V_\varepsilon*\rho_{2,t}^\varepsilon(X_{2,t}^{\varepsilon}(y))\right\rangle \diff\gamma_0(x,y)\\
    &=2\int_\Rd \log(V_\varepsilon*\rho_{2,t}^\varepsilon(z))\iint_\Rd(x-y)\cdot\nabla V_\varepsilon(z-y)\diff\gamma_t^\varepsilon(x,y)\diff z\\
    &=-2\left.\frac{d}{d\alpha}\right|_{\alpha = 0}\mh^\varepsilon[\varrho_{t,1-\alpha}^\varepsilon],
\end{split}
\end{equation}
where $\varrho_{t,1-\alpha}^\varepsilon$ is the geodesic interpolant connecting $\rho_{2,t}^\varepsilon$ to $\rho_{1,t}^\varepsilon$ at time $t\in[0,T]$. At this point we infer~\eqref{eq:inequality_lambda} from~\Cref{prop:convexity-energy}, by using~\eqref{eq:above_tan}~and~\eqref{eq:above_tan_reverse}, where $\lambda_\varepsilon^T$ is as in~\eqref{eq:lambda-eps-T}, taking into account~\Cref{prop:compact_support} for the support of the solutions. As noticed in~\Cref{prop:convexity-energy}, the constant $C_R$ comes from~\Cref{lem:log_conv_glob_rescaling}.
\end{proof}
\begin{rem}
  We observe that~\Cref{prop:stability_dW} can be applied to $\lambda$-convex gradient flows with growing-in-time support, where $\lambda$ does depend on the supports of the solutions.
\end{rem}
\begin{thm}[Particle approximation to~\eqref{eq:heat_eq}]
\label{thm:particle_approx}
Let $N\in\mathbb{N}$, $T>0$, and $\sigma > 0$. Consider $V_1\in C^2_c(\Rd)$ satisfying~\ref{ass:v1} and $\mn_1$ as in~\eqref{eq:notation_kernel_n}. Assume $\rho_0\in \mpdtard$ be such that $\supp\rho_0=B_{R_0}$ for $R_0>0$, $\mh[\rho_0]<\infty$. Let 
\begin{equation}\label{eq:particle_solution}
\rho^{\varepsilon,\sigma, N}_t := \frac{1}{N}\sum_{j=1}^N \delta_{x^j_{\varepsilon,\sigma}(t)}\qquad \mbox{and} \qquad \rho^{N}_0 := \frac{1}{N}\sum_{j=1}^N \delta_{x^j_0},
\end{equation}
where $\{x^i_{\varepsilon,\sigma}\}$ satisfy the following ODE system
$$
\dot{x}^i_{\varepsilon,\sigma}(t) = - \int_{\R^d} \nabla V_\varepsilon(x^i_{\varepsilon,\sigma}(t) - y) \log\left(
\frac{1-\sigma}{N}\sum_{j=1}^NV_\varepsilon(y - x^j_{\varepsilon,\sigma}(t))+\sigma\mn_1(y)
\right) \diff y,
$$
with initial conditions $x^i_{\varepsilon,\sigma}(0) = x^i_0$ such that $d_W(\rho_0^N,\rho_0) \leq \frac{1}{N}$. Let $N = N(\eps)$ and $\sigma = \sigma(\eps)$ be
\begin{equation}\label{eq:asymptotics_N_compactly_supp_V}
N(\eps)\simeq \exp\left(\exp\left(\varepsilon^{-1/\gamma}\right)\right) \quad \mbox{ or equivalently } \quad \varepsilon\simeq(\log (\log N))^{-\gamma}, \qquad \sigma(\eps) = \eps,
\end{equation}
for some $\gamma \in (0,1)$. Then, 
\begin{equation}\label{eq:rate_conv_particles_compact}
\rho_{t}^{\varepsilon,\sigma(\eps),N(\varepsilon)}\to \rho_t \mbox{ narrowly},
\end{equation}
where $\rho$ is the weak solution to \eqref{eq:heat_eq}. The same holds with $\sigma = 0$ with \eqref{eq:asymptotics_N_compactly_supp_V} replaced by 
\begin{equation}\label{eq:asymptotics_N_global_V}
N(\eps)\simeq \exp\left(\varepsilon^{-1/\gamma}\right) \quad \mbox{ or equivalently } \quad \varepsilon\simeq  (\log N)^{-\gamma}
\end{equation}
for $\gamma \in \left(0,\frac{1}{4}\right)$.
\end{thm}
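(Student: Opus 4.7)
The natural strategy is a triangle inequality splitting the error into a particle-to-nonlocal part and a nonlocal-to-local part. Denoting by $\rho^{\varepsilon,\sigma}$ the JKO solution from \Cref{thm:exist_nlhe_gauss} (or \Cref{thm:exist_nlhe_sigma=0} when $\sigma=0$) starting from the continuous datum $\rho_0$, and by $\rho$ the solution to \eqref{eq:heat_eq}, I would write
\[
d_1\bigl(\rho^{\varepsilon,\sigma(\varepsilon),N(\varepsilon)}_t,\,\rho_t\bigr) \leq d_W\bigl(\rho^{\varepsilon,\sigma(\varepsilon),N(\varepsilon)}_t,\,\rho^{\varepsilon,\sigma(\varepsilon)}_t\bigr) + d_1\bigl(\rho^{\varepsilon,\sigma(\varepsilon)}_t,\,\rho_t\bigr),
\]
use Theorem \ref{thm:joint_limit_eps_sigma} (or the composition of Theorems \ref{thm:sig_to_zero}--\ref{thm:eps_to_zero} if $\sigma=0$) to drive the second term to zero uniformly in $t\in[0,T]$, and argue that the first term vanishes under the stated scaling. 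Since the uniform second moment bound from \Cref{lem:apriori_est_sigma_eps} makes $d_1$ metrise narrow convergence on the sequences at hand, this yields the claim.

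For the first term I would first note that the empirical curve $\rho^{\varepsilon,\sigma,N}$ is itself a weak measure solution of the relevant nonlocal PDE with initial datum $\rho^N_0$: the ODE system defining the $x^i_{\varepsilon,\sigma}(t)$ is precisely the characteristic equation associated with the velocity field $-\omega^{\varepsilon,\sigma}[\rho^{\varepsilon,\sigma,N}]$, and the probabilistic representation (\Cref{prop:probabilitic_representation}) applied to the discrete initial law identifies the pushforward with the nonlocal flow issued from $\rho^N_0$. Assuming without loss of generality that the $x^i_0$ lie in a ball of radius $\sim R_0$ (achievable since $d_W(\rho^N_0,\rho_0)\leq 1/N$ and $\mathrm{supp}\,\rho_0=B_{R_0}$), \Cref{prop:stability_dW} applied to the two nonlocal flows emanating from $\rho^N_0$ and $\rho_0$ yields
\[
d_W\bigl(\rho^{\varepsilon,\sigma,N}_t,\,\rho^{\varepsilon,\sigma}_t\bigr) \leq e^{-\lambda^T_{\varepsilon,\sigma}\,t}\, d_W(\rho^N_0,\rho_0) \leq \frac{e^{|\lambda^T_{\varepsilon,\sigma}|\,T}}{N}.
\]

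The quantitative choice of $N(\varepsilon)$ and $\sigma(\varepsilon)$ then amounts to enforcing $e^{|\lambda^T_{\varepsilon,\sigma}|T}/N(\varepsilon)\to 0$. In the compactly supported case, inserting \Cref{lem:vel_cpct_V} into \eqref{eq:lambda-eps-sigma-T} and choosing $\sigma=\varepsilon$ gives
\[
|\lambda^T_{\varepsilon,\sigma}| \lesssim \varepsilon^{-2}|\log\varepsilon|\,\exp\!\bigl(C\,T\,\varepsilon^{-1}|\log\varepsilon|\bigr),
\]
so the constraint reduces to $\log\log N \gtrsim C\,T\,\varepsilon^{-1}|\log\varepsilon|$; with $\varepsilon\simeq (\log\log N)^{-\gamma}$ this is satisfied for any $\gamma\in(0,1)$, since the extra logarithmic factor obeys $(\log\log N)^{\gamma}\log\log\log N \ll \log\log N$. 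This gives the doubly-exponential scaling $N\simeq \exp(\exp(\varepsilon^{-1/\gamma}))$. When instead $V_1=\mn_1$ and $\sigma=0$, \Cref{lem:bound_velocity} shows that $\omega^\varepsilon$ is bounded, the support grows merely linearly (\Cref{cor:fin_exp_cpct_V}), and \eqref{eq:lambda-eps-T} reads $|\lambda^T_\varepsilon|\lesssim \varepsilon^{-4}$; balancing $\log N \gtrsim \varepsilon^{-4}$ forces $\gamma<1/4$ in $\varepsilon\simeq (\log N)^{-\gamma}$, explaining the singly-exponential scaling.

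The main obstacle is precisely this support-growth versus modulus-of-convexity tradeoff. Because $\lambda^T_{\varepsilon,\sigma}$ in \eqref{eq:lambda-eps-sigma-T} depends on the supports of the two measures, and those supports are only controlled by Gr\"onwall applied to the characteristic ODE, the at-most-linear growth of the velocity field in the compactly supported case produces an exponentially growing radius, which in turn enters the stability estimate as a double exponential. Globally supported kernels like $\mn_1$ bypass this by delivering a uniformly bounded velocity field, which is the structural reason behind the strict improvement from $(\log\log N)^{-\gamma}$ to $(\log N)^{-\gamma}$.
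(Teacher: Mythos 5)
Your proposal is correct and takes essentially the same route as the paper: both decompose the error into a particle-to-nonlocal term controlled by the stability estimate of \Cref{prop:stability_dW} and a nonlocal-to-local term controlled by \Cref{thm:joint_limit_eps_sigma} (resp. \Cref{thm:sig_to_zero}--\Cref{thm:eps_to_zero} for $\sigma=0$), then balance the Gr\"onwall factor $e^{|\lambda^T_{\varepsilon,\sigma}|T}$ against $N(\varepsilon)$ to derive the stated scalings. The only additions in your write-up --- the explicit remark that atoms of $\rho^N_0$ lie within $O(1/\sqrt{N})$ of $B_{R_0}$, and the justification that the empirical measure is a weak solution via its characteristic system --- are points the paper leaves implicit; both are correct, though for the latter the relevant direction is the elementary one (pushforward along characteristics is a distributional solution) rather than the probabilistic representation of \Cref{prop:probabilitic_representation}.
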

\begin{rem}
Theoretically, to find the discretization satisfying $d_W(\rho_0^N,\rho_0) \leq \frac{1}{N}$, for a given $\rho_0\in\mpdtard$, we consider $\rho_0^N$ obtained by dividing the area below the graph of $\rho_0$ in $N$ regions of equal masses. In practise, this procedure cannot be applied and much worse rates of the form $\frac{1}{N^{\beta}}$ are expected, see \cite{MR3383341}. Clearly, the formulation of Theorem \ref{thm:particle_approx} can be easily adapted to such rates. 
\end{rem}
\begin{proof}[Proof of Theorem \ref{thm:particle_approx}]
Let us first consider the case of compactly supported $V_1$. Applying~\Cref{prop:stability_dW} to $\rho^{\eps, \sigma, N}$ as in \eqref{eq:particle_solution} and $\rho^{\eps, \sigma}$ being the weak solution to \eqref{eq:nonlocal_he_gauss} with initial condition $\rho_0$, we get, for all $t \in [0,T]$,
\begin{equation}\label{eq:wasserstein_estimate_discrete_and_cont_proof_final}
   d_W(\rho^{\eps, \sigma, N}_t, \rho^{\eps, \sigma}_t)\le e^{-\lambda_{\varepsilon,\sigma}^Tt}d_W(\rho_0^{\eps, \sigma, N},\rho_0) \leq \frac{e^{-\lambda_{\varepsilon,\sigma}^T t}}{N},
\end{equation}
where $\lambda_{\varepsilon,\sigma}^T=-\varepsilon^{-2}C_{\varepsilon,\sigma}(1-\sigma)(1+2R_0e^{C^1_{\varepsilon,\sigma}T})$. We choose $\sigma = \sigma(\eps)$ as in \eqref{eq:asymptotics_N_compactly_supp_V} to deduce 
$$
\lambda^T_{\varepsilon,\sigma}\simeq-\varepsilon^{-2}(|\log\sigma|+|d\log\varepsilon|)e^{\varepsilon^{-1}T(|\log\sigma|+d|\log\varepsilon|)}
\simeq - e^{\eps^{-1}\,|\log(\eps)|}
$$ for $\eps$ small. Hence, if $N(\eps)$ is as in \eqref{eq:asymptotics_N_compactly_supp_V} we deduce that the expression in \eqref{eq:wasserstein_estimate_discrete_and_cont_proof_final} converges to 0. Combining with the narrow convergence $\rho^{\eps, \sigma(\eps)}_t \to \rho_t$ as $\eps, \sigma(\eps) \to 0$ from Theorem \ref{thm:joint_limit_eps_sigma} we arrive at \eqref{eq:rate_conv_particles_compact}.

\smallskip
The case $\sigma=0$ requires only an adaptation in the relation between $N$ and $\eps$. Indeed, \eqref{eq:wasserstein_estimate_discrete_and_cont_proof_final} is still valid with $\lambda^T_{\eps} = -\varepsilon^{-3}C_R(1+2R_0+2C/\varepsilon T) \simeq -\eps^{-4}$ and $C_R$ being the constant from Proposition~\ref{prop:stability_dW}. Hence, if $N = N(\eps)$ satisfies \eqref{eq:asymptotics_N_global_V}, the conclusion follows by following the argument above and Theorem~\ref{thm:eps_to_zero}.

\end{proof}

\section{Extension to Fast Diffusion equations}\label{sec:fast_diffusion}
In this section we consider the class of PDEs
\begin{equation}\label{eq:fast_diff}
\partial_t \rho = \Delta \rho^m = -\frac{m}{1-m}\,\mbox{div}(\rho \nabla \rho^{m-1})\tag{FDE}
\end{equation}
with $m \in \left(\frac{d}{d+2},1\right)$ and its nonlocal approximation
\begin{equation}\label{eq:fast_diff_nonlocal}
\partial_t \rho^{\eps} = -\frac{m}{1-m}\, \mbox{div}(\rho^{\eps} \nabla V_{\eps}\ast (V_{\eps}\ast\rhoe)^{m-1})\tag{NLFDE}.
\end{equation}
Following the strategy used for~\eqref{eq:nonlocal_he}, we prove existence of weak solutions to~\eqref{eq:fast_diff_nonlocal} and that they constitute a valuable approximation for solutions of~\eqref{eq:fast_diff}, allowing for a particle scheme. For ease of presentation, we only consider the case of globally supported kernels corresponding to the assumption \ref{ass:v2-g} in the case of heat equation. The form of the kernel we use in this section is specified in~\eqref{eq:kernel_fast_diffusion}. Note that adding a $\sigma$-perturbation in case of compactly supported kernels does not constitute further difficulties.
\subsection{Nonlocal equation for fast diffusion}
The energy functional we consider is $\mathcal{U}^m:\mptrd\to[-\infty,\infty)$, defined by, for any $\mu=\rho\mathcal{L}^d+\mu_s$ with $\mu_s\perp\mathcal{L}^d$,
\begin{equation}
    \mathcal{U}^m[\mu]:=
        -\frac{1}{1-m}\int_\Rd\rho(x)^m\diff x, \qquad m\in\left(\frac{d}{d+2},1\right).
\end{equation}
Note that the function $F(x)=-\frac{1}{1-m}x^m$ is proper, convex, lower semicontinuous, and it satisfies $\lim_{x\to\infty}F(x)/x=0$. The range for the exponent $m$ ensures that, for $\rho\in\mpdtard$,
\begin{equation}\label{eq:functional_finite_fast_diffusion}
\int_\Rd\rho(x)^m\diff x \le \left(\int_\Rd \rho(x)(1+|x|)^2\diff x\right)^m\left(\int_\Rd(1+|x|)^{\frac{-2m}{1-m}}\diff x\right)^{1-m}<+\infty.
\end{equation}
We also observe that with this definition $\mathcal{U}^m$ is lower semicontinuous in $\mptrd$ since its relaxed envelope $(\mathcal{U}^m)^*=\mathcal{U}^m$, cf.~\cite[Section 9.3]{AGS}. One can see this by re-writing $$
\mathcal{U}^m[\mu]=-1/(1-m)\int_\Rd\rho^m\diff x+L\int_\Rd\mu_s,
$$ where $L=\lim_{x\to\infty}F(x)/x=0$.\\

The nonlocal equation is obtained by a regularisation of the functional defined by
\begin{equation}
    \mume[\mu]:=\mum[V_\varepsilon*\mu]=-\frac{1}{1-m}\int_\Rd|V_\varepsilon*\mu|^m\diff x, \qquad m\in\left(\frac{d}{d+2},1\right),
\end{equation}
since $V_\varepsilon*\mu\ll\mathcal{L}^d$ and $(V_\varepsilon*\mu)_s\equiv0$. In particular it is not necessary to keep track of the singular part w.r.t. the Lebesgue measure for $\varepsilon>0$ fixed. Therefore, for consistency of notation we will use $\rho\in\mptrd$ as in the previous sections. For fast diffusion equations we propose the kernel
\begin{equation}\label{eq:kernel_fast_diffusion}
    V(x):=c(1+|x|^2)^{-\alpha} = c\langle x\rangle^{-2\alpha}, \qquad \alpha>\frac{d}{2}+\frac{1}{m},\tag{$\bm{\mathrm{V_F}}$}
\end{equation}
where $c=1/(\int_\Rd(1+|x|^2)^{-\alpha}\diff x)$. Note that $V$ satisfies~\ref{ass:v1} and the following moments bound
\[
\int_\Rd|x|^2 V(x)\diff x<\infty,\quad \int_\Rd|x|^{2/m} V(x)\diff x<\infty.
\]
For later purposes we point out that, since $m\in \left(\frac{d}{d+2},1\right)$, we have
\[
\frac{d}{2} + \frac{1}{m}> \frac{d}{2m} - \frac{1}{2m}.
\]
The JKO scheme for the class of nonlocal fast diffusion equations~\eqref{eq:fast_diff_nonlocal} is
    \begin{equation}
        \label{eq:jko_fast}
		\rho_{\tau}^{\varepsilon,n+1}\in\argmin_{\rho\in\mptrd}\left\{\frac{d_W^2(\rhotne,\rho)}{2\tau}+\mume[\rho]\right\}.
    \end{equation}
For $m\in\left(\frac{d}{d+2},1\right)$, since $-|\cdot|^m$ is convex, Jensen's inequality implies
\[
\mume[\rho_0]\le\|V_\varepsilon\|_{L^1}\mum[\rho_0]<+\infty,
\]
for $\rho_0\in\mpdtard$. The initial datum should be $\rho_0\in\mpdtard$ such that $\mum[\rho_0]<+\infty$ and by abuse of notation $\rho_0$ denotes the density w.r.t. Lebesgue as well.\\

\noindent
Let us observe that, following \eqref{eq:functional_finite_fast_diffusion}, we have
\[
\mume[\rho]\ge-C(d,m)\left(1+m_2(V_\varepsilon*\rho)\right)^m\ge-C(d,m)\left(1+\varepsilon^2m_2(V_1)+m_2(\rho)\right)^m,
\]
for $C(d,m)$ as in~\eqref{eq:functional_finite_fast_diffusion}. This bound is essentially the same obtained in~\cite[Lemma A.1]{CEW_nl_to_local_24}, so we can refer to~\cite[Lemma A.1 and Proposition 3.1]{CEW_nl_to_local_24} for the well-posedness of the scheme and the usual (by now) narrow compactness result for the interpolating sequence as a consequence of the energy and moment bounds. We summarise the result below for the reader's convenience.
\begin{prop}[Narrow compactness, energy, $\&$ moment bounds]
    Let $0<\varepsilon<\varepsilon_0<\infty$ be fixed. There exists an absolutely continuous curve $\rho^{\varepsilon}:[0,T]\to \mptrd$ such that, up to a subsequence, the piecewise constant interpolation $\rho_\tau^\varepsilon$ admits a subsequence $\rho^\varepsilon_{\tau_k}$ so that
\[
\rho_{\tau_k}^{\varepsilon}(t) \overset{k\to\infty}{\rightharpoonup} \rho^{\varepsilon}(t), \quad \text{uniformly in }t\in[0,T].
\]
Furthermore, for any $t\in[0,T]$, the following uniform bounds in $\tau$ and $\varepsilon$ hold
\[
\mume[\rhoe(t)]\le\sup_{\varepsilon>0}\mume[\rho_0]\le\mum[\rho_0], \qquad m_2(\rhoe(t))\le C,
\]
for a constant $C=C(T,m_2(\rho_0),\varepsilon_0^2m_2(V),\mum[\rho_0])$.
\end{prop}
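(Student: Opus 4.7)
The plan is to execute the standard JKO compactness program: extract energy and moment bounds from the variational inequality at each step, translate these into equi-continuity in $d_W$, and invoke a refined Ascoli--Arzel\`a for narrowly continuous curves. Concretely, using $\rhotne$ as a competitor in~\eqref{eq:jko_fast} yields, for every $n$,
\begin{equation*}
\frac{d_W^2(\rhotne,\rhotnne)}{2\tau}+\mume[\rhotnne]\le \mume[\rhotne],
\end{equation*}
so that $\mume[\rhotnne]\le\mume[\rho_0]$ is monotone. Jensen applied to the convex function $-|\cdot|^m$ (already noted in the text before the statement) gives $\mume[\rho_0]\le\mum[\rho_0]$, which is the claimed energy bound. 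Summing over $n=0,\dots,N-1$ with $N=\lfloor T/\tau\rfloor$ produces the a priori estimate
\begin{equation*}
\sum_{n=0}^{N-1}\frac{d_W^2(\rhotne,\rhotnne)}{2\tau}\le \mume[\rho_0]-\inf_n\mume[\rhotne],
\end{equation*}
provided the right-hand side can be controlled uniformly in $\tau$ and $\varepsilon$.

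The next step bounds the second moment along the scheme. Using an optimal plan $\gamma_n\in\Gamma_0(\rhotne,\rhotnne)$ and the triangle inequality in $L^2(\gamma_n)$,
\begin{equation*}
\sqrt{m_2(\rhotnne)}\le \sqrt{m_2(\rhotne)}+d_W(\rhotne,\rhotnne),
\end{equation*}
and iteration combined with Cauchy--Schwarz gives
\begin{equation*}
\sqrt{m_2(\rho_\tau^\varepsilon(t))}\le \sqrt{m_2(\rho_0)}+\sqrt{t}\Biggl(\sum_{n}\frac{d_W^2(\rhotne,\rhotnne)}{\tau}\Biggr)^{1/2}.
\end{equation*}
Plugging this into the coercivity estimate $\mume[\rho]\ge -C(d,m)(1+\varepsilon_0^2 m_2(V)+m_2(\rho))^m$ recalled just before the statement closes the loop: since the exponent $m<1$, a Young-type absorption in the quadratic dissipation sum produces a constant $C=C(T,m_2(\rho_0),\varepsilon_0^2 m_2(V),\mum[\rho_0])$ such that $m_2(\rho_\tau^\varepsilon(t))\le C$ uniformly in $\tau$ and $\varepsilon\in(0,\varepsilon_0)$, and simultaneously $\mume[\rho_\tau^\varepsilon(t)]$ is bounded from below uniformly. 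Cauchy--Schwarz on the telescoping sum also yields the classical $1/2$-H\"older equi-continuity
\begin{equation*}
d_W^2(\rho_\tau^\varepsilon(t),\rho_\tau^\varepsilon(s))\le C\,(|t-s|+\tau).
\end{equation*}

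Finally, the uniform second-moment bound gives tightness of $\{\rho_\tau^\varepsilon(t)\}_\tau$ in the narrow topology for each $t\in[0,T]$; combined with the above equi-continuity, the refined Ascoli--Arzel\`a statement~\cite[Proposition~3.3.1]{AGS} produces a subsequence $\tau_k\downarrow 0$ and a limit curve $\rho^\varepsilon\in C([0,T];\mptrd)$ with $\rho_{\tau_k}^\varepsilon(t)\rightharpoonup \rho^\varepsilon(t)$ uniformly in $t$; the limit is $d_W$-absolutely continuous by passing to the liminf in the H\"older estimate. The claimed bounds transfer to $\rho^\varepsilon(t)$ by narrow lower semicontinuity of $m_2$ and of $\mume$ (the latter because $\mume[\rho]=\mum[V_\varepsilon\ast\rho]$, $V_\varepsilon\ast\rho_{\tau_k}^\varepsilon\to V_\varepsilon\ast\rho^\varepsilon$ pointwise, and $\mum$ is lower semicontinuous on $\mptrd$ by its relaxed-envelope characterisation noted earlier).

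The main obstacle is the circular dependence between the moment bound and the lower bound on $\mume$: since $\mume$ is only bounded below in terms of $m_2$, one cannot immediately control the squared Wasserstein velocity in the first display. This is unwound exactly as in the heat-equation case (cf.~\cite[Lemma~A.1, Proposition~3.1]{CEW_nl_to_local_24}), where a discrete Gr\"onwall-type absorption exploiting $m<1$ (so that $m_2^m$ is sublinear in $m_2$) closes the estimate. Since the structure is identical up to replacing $\mh^\varepsilon$ by $\mume$ and the exponent range is precisely the one ensuring~\eqref{eq:functional_finite_fast_diffusion} and~\eqref{eq:kernel_fast_diffusion} are compatible, we omit the repeated details and refer to the cited lemmas.
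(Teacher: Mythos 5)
Your proposal is correct and follows essentially the same route as the paper: the paper establishes the coercivity bound $\mume[\rho]\ge-C(d,m)(1+\varepsilon^2 m_2(V_1)+m_2(\rho))^m$, notes $\mume[\rho_0]\le\mum[\rho_0]$ via Jensen, and then defers the well-posedness of the scheme, moment absorption, and narrow compactness wholesale to \cite[Lemma A.1, Proposition 3.1]{CEW_nl_to_local_24} — the same reference your argument invokes for the Gr\"onwall-type closure. You simply spell out the skeleton (competitor inequality, telescoping dissipation, Cauchy--Schwarz on the moments, refined Ascoli--Arzel\`a, lower semicontinuity to transfer bounds to the limit) that the paper leaves implicit.
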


The same strategy used in~\Cref{sec:nonlocal_eq} leads to the existence of a distributional solution to the nonlocal equation~\eqref{eq:fast_diff_nonlocal}, for $\varepsilon>0$ fixed. More precisely, we obtain existence of an absolutely continuous curve $\rho:[0,T]\to \mptrd$ such that, for every $\varphi \in C_c^1(\R^d)$ and any $t\in[0,T]$, it satisfies
\begin{equation}
    \label{eq:weak-form-sigma-fast}
    \begin{split}
    \int_{\R^d}\varphi(x) \diff \rho_t(x) - \int_{\R^d}\varphi(x) \diff \rho_0(x) = \frac{m}{1-m}\int_0^t\int_{\R^d}\nabla \varphi(x)\cdot \nabla V_\varepsilon*(V_\varepsilon*\rho_s)^{m-1}(x) \diff \rho_s(x) \diff s.
    \end{split}
\end{equation}
The RHS of~\eqref{eq:weak-form-sigma-fast} is well defined since tightness of $\rho_t$ and~\Cref{lem:bound_fast_polyn_growth} imply
\[
\left|\int_0^t\int_{\R^d}\nabla \varphi(x)\cdot \nabla V_\varepsilon*(V_\varepsilon*\rho_s)^{m-1}(x) \mathrm{d}\rho_s(x) \mathrm{d} s\right|\le C\int_0^t\int_\Rd|\nabla \varphi(x)|\langle x\rangle^{2\alpha(1-m)}\diff\rho_s(x)\diff s<\infty.
\]
\begin{thm}\label{thm:existence_weak_sol_jko_nonlocal_fast} Let $\varepsilon>0$ and $V$ as in~\eqref{eq:kernel_fast_diffusion}. Suppose $\rho_0\in \mptrd$ satisfies $\mum[\rho_0]<+\infty$. Then, there exists a distributional solution $\rho^\varepsilon\in C([0,T];\mptrd)$ to~\eqref{eq:fast_diff_nonlocal} as in~\eqref{eq:weak-form-sigma-fast} such that $\rho^\varepsilon(0) = \rho_0$.    
\end{thm}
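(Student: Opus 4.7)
The plan is to mirror the proof of~\Cref{thm:exist_nlhe_gauss} replacing the logarithmic first variation by the one naturally associated with $\mume$. The candidate solution is the narrowly continuous limit $\rho^{\varepsilon}$ of the piecewise constant interpolation $\rho_\tau^\varepsilon$ of the JKO minimisers~\eqref{eq:jko_fast}, whose existence, uniform second moment bound, and uniform energy bound were collected immediately before the theorem. It is on this limit that we have to verify~\eqref{eq:weak-form-sigma-fast}.

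First, we work at the discrete level. Fix $n\in\{0,\dots,N-1\}$, $\zeta\in C_c^\infty(\R^d;\R^d)$, $\eta\in(0,1)$, and consider the perturbation $\rho_\eta^{n+1}:=P^\eta_\#\rho_\tau^{\varepsilon,n+1}$ with $P^\eta(x)=x+\eta\zeta(x)$. By minimality,
\[
\frac{1}{2\tau}\left[\frac{d_W^2(\rho_\tau^{\varepsilon,n},\rho_\eta^{n+1})-d_W^2(\rho_\tau^{\varepsilon,n},\rho_\tau^{\varepsilon,n+1})}{\eta}\right]+\frac{\mume[\rho_\eta^{n+1}]-\mume[\rho_\tau^{\varepsilon,n+1}]}{\eta}\ge 0.
\]
The Wasserstein term is handled exactly as in~\Cref{thm:exist_nlhe_gauss}. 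For the energy term, I would write, via a Taylor expansion analogous to~\eqref{eq:interaction-terms},
\[
\frac{\mume[\rho_\eta^{n+1}]-\mume[\rho_\tau^{\varepsilon,n+1}]}{\eta}=-\frac{m}{1-m}\int_\Rd\int_\Rd\!\left(\frac{V_\varepsilon(P^\eta(x)-y)-V_\varepsilon(x-y)}{\eta}\right)N_\eta^\varepsilon(y)\,\diff y\,\diff\rho_\tau^{\varepsilon,n+1}(x),
\]
where $N_\eta^\varepsilon(y):=\int_0^1\!\left[sV_\varepsilon*\rho_\eta^{n+1}(y)+(1-s)V_\varepsilon*\rho_\tau^{\varepsilon,n+1}(y)\right]^{m-1}\diff s$. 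Since $V$ in~\eqref{eq:kernel_fast_diffusion} is globally supported and strictly positive, by tightness of $\rho_\tau^{\varepsilon,n+1}$ and $\rho_\eta^{n+1}$ we obtain a lower bound $V_\varepsilon*\rho(y)\ge c\langle y\rangle^{-2\alpha}$ uniformly in $\eta$ on compact sets, so $N_\eta^\varepsilon(y)\le C\langle y\rangle^{2\alpha(1-m)}$ by~\Cref{lem:bound_fast_polyn_growth}; combined with the polynomial decay of $\nabla V_\varepsilon(x-y)$, this yields a majorant in $L^1(\diff y\,\diff\rho_\tau^{\varepsilon,n+1}(x))$ provided $\alpha>d/2+1/m$ as assumed. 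Dominated convergence then yields
\[
\left.\frac{d}{d\eta}\right|_{\eta=0}\mume[\rho_\eta^{n+1}]=-\frac{m}{1-m}\int_\Rd\zeta(x)\cdot\nabla V_\varepsilon*(V_\varepsilon*\rho_\tau^{\varepsilon,n+1})^{m-1}(x)\,\diff\rho_\tau^{\varepsilon,n+1}(x).
\]
Setting $\zeta=\nabla\varphi$ for $\varphi\in C_c^\infty(\R^d)$, using the same perturbation in the opposite direction to upgrade the inequality to an equality up to $\mathcal{O}(\tau^2)$, and summing over $n$ gives
\begin{equation}\label{eq:disc_weak_fast_plan}
\int_\Rd\varphi\,\diff\rho_\tau^\varepsilon(t)-\int_\Rd\varphi\,\diff\rho_\tau^\varepsilon(s)+\mathcal{O}(\tau)=\frac{m}{1-m}\int_s^t\!\!\int_\Rd\nabla\varphi\cdot\nabla V_\varepsilon*(V_\varepsilon*\rho_\tau^\varepsilon(r))^{m-1}\,\diff\rho_\tau^\varepsilon(r)\,\diff r.
\end{equation}

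The second and main step is the passage $\tau\to 0$. The left-hand side of~\eqref{eq:disc_weak_fast_plan} converges by the uniform-in-time narrow convergence $\rho_\tau^\varepsilon(t)\rightharpoonup\rho^\varepsilon(t)$. For the right-hand side, I would decompose exactly as in~\eqref{eq:diff-F'}--\eqref{eq:diff-rho} into a nonlinearity swap and a measure swap. Since the second moments of $\rho_\tau^\varepsilon$ are uniformly bounded, $d_1(\rho_\tau^\varepsilon(r),\rho^\varepsilon(r))\to 0$ (cf.~\cite[Proposition 7.1.5]{AGS}), and hence $V_\varepsilon*\rho_\tau^\varepsilon(r)\to V_\varepsilon*\rho^\varepsilon(r)$ locally uniformly by Lipschitz continuity of $V_\varepsilon$; combined with the uniform-in-$\tau$ lower bound $V_\varepsilon*\rho_\tau^\varepsilon(r,x)\ge c\langle x\rangle^{-2\alpha}$ (again from tightness as in~\Cref{rem:gaussian}) and continuity of $t\mapsto t^{m-1}$ on $(0,\infty)$, one controls $(V_\varepsilon*\rho_\tau^\varepsilon)^{m-1}-(V_\varepsilon*\rho^\varepsilon)^{m-1}$ locally uniformly, so that the nonlinearity-swap term vanishes after convolution with $\nabla V_\varepsilon$. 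The measure-swap term is handled by noting that $x\mapsto\nabla\varphi(x)\cdot\nabla V_\varepsilon*(V_\varepsilon*\rho^\varepsilon(r))^{m-1}(x)$ is continuous and (since $\varphi$ is compactly supported) bounded, so narrow convergence of $\rho_\tau^\varepsilon(r)$ closes the argument. A uniform-in-$\tau$ majorant in $r$ comes from~\Cref{lem:bound_fast_polyn_growth}: indeed,
\[
\left|\int\nabla\varphi\cdot\nabla V_\varepsilon*(V_\varepsilon*\rho_\tau^\varepsilon(r))^{m-1}\,\diff\rho_\tau^\varepsilon(r)\right|\le C_\varphi\int\langle x\rangle^{2\alpha(1-m)}\diff\rho_\tau^\varepsilon(r,x)\le C,
\]
using the uniform second moment and (if needed) the moment bound $\int|x|^{2/m}V\diff x<\infty$ guaranteed by $\alpha>d/2+1/m$. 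Dominated convergence in $r$ then passes~\eqref{eq:disc_weak_fast_plan} to~\eqref{eq:weak-form-sigma-fast}.

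The principal obstacle is the $\tau\to 0$ limit of the nonlinear term $(V_\varepsilon*\rho_\tau^\varepsilon)^{m-1}$, whose singularity at zero is worse than the logarithmic one of~\Cref{thm:exist_nlhe_gauss}; the two ingredients that make it go through are (i) the choice of the Barenblatt-type globally supported kernel~\eqref{eq:kernel_fast_diffusion}, which forces $V_\varepsilon*\rho>0$ everywhere with a quantitative tightness-based lower bound, and (ii) the growth control afforded by $\alpha>d/2+1/m$, which keeps the polynomially growing convolution $\nabla V_\varepsilon*(V_\varepsilon*\rho)^{m-1}$ integrable against the uniformly second-moment bounded measures $\rho_\tau^\varepsilon$.
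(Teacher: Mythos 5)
Your proposal is correct and mirrors the paper's proof step for step: same push-forward perturbation $\rho_\eta^{n+1}=P^\eta_\#\rho^{n+1}$, same mean-value Taylor expansion producing the factor $N_\eta^\varepsilon(y)=\int_0^1\big(sV_\varepsilon*\rho_\eta^{n+1}(y)+(1-s)V_\varepsilon*\rho^{n+1}(y)\big)^{m-1}\,\mathrm{d}s$, the same use of the polynomial decay of $\nabla V_\varepsilon$ together with the quantitative positivity of $V_\varepsilon*\rho$ from the Barenblatt-type kernel to build the dominated-convergence majorant, and the same narrow-convergence argument (with the compactly supported test function giving a uniform-in-$\tau$ bound) to send $\tau\to 0$. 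The only small divergence is in the $\eta$-uniform majorant for $N_\eta^\varepsilon$: you invoke uniform tightness of $\{\rho_\eta^{n+1}\}_\eta$ to lower-bound the mean-value interpolant, whereas the paper simply exploits $m-1<0$ to drop the nonnegative term $sV_\varepsilon*\rho_\eta^{n+1}$ and bound $N_\eta^\varepsilon(y)\le (V_\varepsilon*\rho^{n+1}(y))^{m-1}\int_0^1(1-s)^{m-1}\,\mathrm{d}s=m^{-1}(V_\varepsilon*\rho^{n+1}(y))^{m-1}$, which needs no uniformity in $\eta$ at all; both variants are fine, but the paper's is tidier.
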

\begin{proof}
The strategy of the proof follows the one of~\Cref{thm:exist_nlhe_gauss}. We will only explain how to deal with the crucial step of the consistency. First of all, we notice $V_\varepsilon*\rho>0$ as proven in~\Cref{lem:bound_fast_polyn_growth}, so that we can divide by this term. Following the proof of~\Cref{thm:exist_nlhe_gauss}, $\sigma=0$, we consider two consecutive elements of the sequence constructed in~\eqref{eq:jko_fast}, i.e. $\rhotne$ and $\rhotnne$ (suppressing the dependence on $\tau, \varepsilon$ for simplicity) and consider the following perturbation, for $\zeta\in C_c^\infty(\Rd;\Rd)$,
\[
\rho_\eta^{n+1} = P_\#^\eta\rho^{n+1}, \quad P^\eta(x) = x + \eta\, \zeta(x).
\]
We need to prove
\begin{equation}
  \frac{\mume[\rho_\eta^{n+1}]-\mume[\rho^{n+1}]}{\eta}\longrightarrow -\frac{m}{1-m}\int_\Rd \zeta(x)\cdot\nabla V_\varepsilon *[V_\varepsilon*\rho^{n+1}]^{m-1}(x)\diff\rho^{n+1}(x), \quad \mbox{as } \eta\to0.  
\end{equation}
By using the Mean-Value Theorem and the symmetry of our kernel we infer
\begin{equation}\label{eq:recovering_rhs_fast_jko}
    \frac{\mume[\rho_\eta^{n+1}]-\mume[\rho^{n+1}]}{\eta}=-\frac{m}{1-m}\iint_{\Rdd}\left(\frac{V_\varepsilon(P^\eta(x) - y) - V_\varepsilon(x-y)}{\eta}\right)M_\eta^\varepsilon(y)\, \mathrm{d}y
    \, \mathrm{d}\rho^{n+1}(x),
\end{equation}
where
\[
M^\varepsilon_\eta(y)=\int_0^1\left(sV_\varepsilon*\rho_\eta^{n+1}(y)+(1-s)V_\varepsilon*\rho^{n+1}(y)\right)^{m-1}\diff s.
\]
Note that, by neglecting a nonnegative term, a majorant in $\eta$ for $M^\varepsilon_\eta(y)$ is 
$$
(V_\varepsilon*\rho^{n+1}(y))^{m-1}(1-s)^{m-1}\in L^1(\diff s),
$$
thus $M^\varepsilon_\eta(y)\to(V_\varepsilon*\rho^{n+1}(y))^{m-1}$ as $\eta\to0$. Our kernel, for $|x|\to\infty$, is such that $\nabla V(x)\simeq|x|^{-(2\alpha+1)}$, whence, using~\Cref{lem:bound_fast_polyn_growth} to bound $|M_\eta^\varepsilon(y)|$~and~\Cref{lem:peetre},
\begin{align*}
    \left|
\frac{V_\varepsilon(P^\eta(x) - y) - V_\varepsilon(x-y)}{\eta} M_\eta^\varepsilon(y)
    \right|&\lesssim\langle y\rangle^{2\alpha(1-m)}|\zeta(x)|\int_0^1|\nabla V_\varepsilon(x+s \, \eta \, \zeta(x) - y)|\,\mathrm{d}s\\
    &\lesssim\langle y\rangle^{2\alpha(1-m)}|\zeta(x)|\int_0^1\langle x+s\,\eta\,\zeta(x) - y \rangle^{-(2\alpha+1)}\,\mathrm{d}s   \\
    &\lesssim\langle y\rangle^{2\alpha(1-m)-(2\alpha+1)}|\zeta(x)|\int_0^1\langle x + s\,\eta\,\zeta(x)\rangle^{2\alpha+1}\,\mathrm{d}s     \\
    &\lesssim\langle y\rangle^{-(2\alpha m+1)}|\zeta(x)| \left(\langle x\rangle^{2\alpha+1} + \langle \zeta(x)\rangle^{2\alpha+1}\right).
    \end{align*}
The latter function is a majorant in $\eta$ and belongs to $L^1(\diff y\diff\rho^{n+1}(x))$ since $\alpha>\frac{d}{2}+\frac{1}{m}>\frac{d-1}{2m}$, for $m>\frac{d}{d+2}$, by direct computation, and owing to the fact that $\zeta(x)$ is smooth and compactly support. Therefore, we can pass to the limit $\eta\to 0$ in~\eqref{eq:recovering_rhs_fast_jko} using the Dominated Convergence Theorem to obtain
\begin{align*}
    &\frac{\mume[\rho_\eta^{n+1}]-\mume[\rho^{n+1}]}{\eta}=-\frac{m}{1-m}\int_{\R^d}
\int_{\R^d}\left(\frac{V_\varepsilon(P^\eta(x) - y) - V_\varepsilon(x-y)}{\eta}\right)M_\eta^\varepsilon(y)\, \mathrm{d}y
   \, \mathrm{d}\rho^{n+1}(x)   \\
    &\overset{\eta\to 0}{\to} -\frac{m}{1-m}\int_{\R^d}\zeta(x)\cdot \int_{\R^d}\nabla V_\varepsilon(x-y)(V_\varepsilon * \rho^{n+1}(y))^{m-1}\, \mathrm{d}y\,\mathrm{d}\rho^{n+1}(x)  \\
    &\quad = -\frac{m}{1-m}\int_{\R^d}\zeta(x)\cdot \nabla V_\varepsilon* (V_\varepsilon * \rho^{n+1})^{m-1}(x)\, \mathrm{d}\rho^{n+1}(x).
\end{align*}
To conclude the proof we need to send the time step $\tau \to 0$, keeping in mind we used $\rho^{n+1}\equiv\rhotnne$ for ease of presentation. This is done similarly as in the proof of ~\Cref{thm:exist_nlhe_sigma=0} by noticing that, for $\zeta(x)=\nabla\varphi(x)$,
\[
|\nabla \varphi(x)\cdot\nabla V_\varepsilon* (V_\varepsilon * \rho^{n+1})^{m-1}(x)|\lesssim|\nabla\varphi(x)|\langle x\rangle ^{2\alpha(1-m)}
\]
and
\[
\int_\Rd|\nabla\varphi(x)|\langle x\rangle ^{2\alpha(1-m)}\diff\rho^\varepsilon_\tau(r,x)\le C(\alpha,m,\varphi)\in L^1([s,t];\diff r),
\]
so that we can pass to the $\tau\to0$ limit in the expression
\begin{align*}
    \int_{\R^d}\varphi(x) \, \mathrm{d}\rho_\tau^\varepsilon(t,x) &- \int_{\R^d}\varphi (x) \, \mathrm{d}\rho_\tau^\varepsilon(s,x) + \mathcal{O}(\tau^2)\\
    &=\frac{m}{1-m}\int_s^t\int_{\R^d}\nabla \varphi(x)\cdot \nabla V_\varepsilon* (V_\varepsilon*\rho_\tau^\varepsilon(r,\cdot))^{m-1}(x) \, \mathrm{d}\rho_\tau^\varepsilon(r,x) \, \mathrm{d}r,
\end{align*}
where we remind the reader
\[
\rho_\tau^{\varepsilon}(t) = \rhotne, \quad t\in((n-1)\tau,n\tau], \quad n=1,\dots,N,
\]
is the piecewise constant interpolation and approximating solution.
\end{proof}
\subsection{Nonlocal-to-local limit}
In order to deal with the $\varepsilon\to0$ limit, we exploit higher regularity of $V_{\varepsilon}*\rhote$. Indeed, by using the heat equation as auxiliary flow we can obtain a uniform $L^2_tH^1_x$-estimate on $V_\varepsilon*\rhote$.
\begin{lem}\label{lem:h1_fast diff_nonlocal}
    Let $\rho_0\in\mpdtard$ such that $\mum[\rho_0]<+\infty$ and $\mh[\rho_0]<+\infty$. There exists a constant $C=C(\rho_0,V,T,m,d)$ such that
    \[
    \sup_{\varepsilon,\tau}\|(V_\varepsilon*\rhote)^\frac{m}{2}\|_{L^2(0,T);H^1(\Rd))}\le C.
    \]
\end{lem}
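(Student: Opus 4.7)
\bigskip

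\noindent \textbf{Proof plan for \Cref{lem:h1_fast diff_nonlocal}.} The plan is to mimic the flow-interchange argument of~\Cref{lem:h1_estimate}, using the heat semigroup $S_\mh$ as the auxiliary $0$-flow, but applied to $\mume$ in place of $\mh_\sigma^\varepsilon$. The $L^2$-part of the norm is easy: one rewrites
\begin{equation*}
\|(V_\varepsilon*\rhote)^{m/2}\|_{L^2((0,T)\times\Rd)}^2 = \int_0^T\int_\Rd (V_\varepsilon*\rhote)^m \diff x \diff t = -(1-m)\int_0^T \mume[\rhote(t)]\diff t,
\end{equation*}
and combines $\mume[\rhote(t)]\ge -C(d,m)(1+m_2(V_\varepsilon*\rhote(t)))^m$ with the uniform second moment bound from the JKO scheme to conclude $\mume[\rhote(t)]\ge -C$, hence $\|(V_\varepsilon*\rhote)^{m/2}\|_{L^2_{t,x}}^2\le CT$.

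The gradient bound is the heart of the proof. Fix two consecutive JKO minimisers $\rhotne, \rhotnne$ defined through~\eqref{eq:jko_fast}. Testing $S_\mh^s\rhotnne$ against $\rhotnne$ in the minimisation problem and dividing by $s>0$, then passing to $\limsup_{s\downarrow 0}$ with the E.V.I. (noting $S_\mh$ is a $0$-flow), yields
\begin{equation*}
\tau D_\mh \mume(\rhotnne) \le \mh[\rhotne] - \mh[\rhotnne],
\end{equation*}
where $D_\mh\mume$ is defined in analogy to~\Cref{sec:nonlocal_eq}. To compute the dissipation, set $u(t) := V_\varepsilon*S_\mh^t\rhotnne$, which satisfies $\partial_t u = \Delta u$ and is smooth and strictly positive (see~\Cref{rem:gaussian} together with~\Cref{lem:bound_fast_polyn_growth}). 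A direct computation gives
\begin{equation*}
-\frac{d}{dt}\mume[S_\mh^t \rhotnne] = \frac{m}{1-m}\int_\Rd u^{m-1}(-\Delta u) \diff x = m\int_\Rd u^{m-2}|\nabla u|^2 \diff x = \frac{4}{m}\int_\Rd |\nabla u^{m/2}|^2 \diff x,
\end{equation*}
where the integration by parts is justified by the polynomial tails of $V_\varepsilon$ (cf.~\eqref{eq:kernel_fast_diffusion}) and the smoothing of the heat semigroup. Writing $D_\mh\mume(\rhotnne)$ as an integral $\int_0^1 (-\tfrac{d}{dz}|_{z=st}\mume[S_\mh^z\rhotnne])\diff t$ (as in~\eqref{eq:integral-form-dis}) and applying weak $L^2$ lower semi-continuity of the $H^1$ semi-norm as $s\downarrow 0$ gives
\begin{equation*}
\tau \cdot \frac{4}{m} \int_\Rd |\nabla (V_\varepsilon*\rhotnne)^{m/2}|^2 \diff x \le \mh[\rhotne] - \mh[\rhotnne].
\end{equation*}

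Summing over $n=0,\dots,N-1$ telescopes the right-hand side into $\mh[\rho_0]-\mh[\rhote(T)]$. The initial entropy is bounded by assumption, and $\mh[\rhote(T)]$ is bounded below by (a constant times) the second moment of $\rhote(T)$ --- see for example~\cite[Remark 4.2]{CEW_nl_to_local_24} --- which is uniformly controlled from the JKO scheme. Combining with the $L^2$-estimate proves the claim, with a constant independent of $\varepsilon$ and $\tau$. The main technical subtlety will be ensuring the formal computation of $-\frac{d}{dt}\mume[S_\mh^t\rhotnne]$ is rigorous: since $m<1$, the factor $u^{m-2}$ diverges where $u$ is small, so one must exploit that the heat semigroup yields a strictly positive, smooth density with Gaussian decay (so that $V_\varepsilon*S_\mh^t\rhotnne$ has the same polynomial tails as $V$, making $u^{m-1}\nabla u$ integrable) to justify each step; this is analogous to, but slightly more delicate than, the corresponding step in~\Cref{lem:h1_estimate}.
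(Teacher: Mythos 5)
Your proposal is correct and follows essentially the same route as the paper: the $L^2$ bound via the $L^m$-norm/second-moment estimate, and the gradient bound via flow-interchange with the heat semigroup, giving $\frac{d}{dt}\mume[S_\mh^t\rhotnne]=-\frac{4}{m}\int|\nabla(V_\varepsilon*S_\mh^t\rhotnne)^{m/2}|^2\diff x$ and then telescoping using $\mh[\rho_0]<\infty$ and the second-moment lower bound on the entropy. (A small remark: in your displayed chain the first two equalities carry compensating sign slips — $-\frac{d}{dt}\mume[S_\mh^t\rhotnne]$ should equal $\frac{m}{1-m}\int u^{m-1}\Delta u\,\diff x$, not $\frac{m}{1-m}\int u^{m-1}(-\Delta u)\,\diff x$ — but the final identity $\frac{4}{m}\int|\nabla u^{m/2}|^2\diff x$ is correct.)
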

\begin{proof}
First of all, we notice that for $\frac{d}{d+2}<m<1$ it holds
\begin{align*}
	\left\|(V_\varepsilon*\rhote)^\frac{m}{2} \right\|_{L^2([0,T]; L^2(\R^d))}^2 &= \int_0^T\|V_\varepsilon*\rhote\|_{L^m}^m \diff t\le C T,
\end{align*}
for $C=C(d,m,m_2(V),m_2(\rhot))$. The bound on the gradient can be obtained by exploiting the flow-interchange technique with the heat flow as $2$-Wasserstein gradient flow of the log entropy $\mh[\rho]$. Therefore, we compute
\begin{equation*}\label{eq:h_1_fast_diff_nonloc}
	   \frac{d}{dt}\mume[S_{\mh}^t\rhotnne] =-\frac{4}{m}\int_{\R^d}\left|
		\nabla (V_\varepsilon* S_\mh^t\rhotnne)^\frac{m}{2}
		\right|^2 \diff x,
\end{equation*}
for $S_\mh^t\rhotnne>0$ being the solution of the heat equation at $t$ with initial value $\rhotnne$. For further details we refer the reader to~\cite[Lemma 4.1]{CEW_nl_to_local_24}. The proof is similar up to keeping track of small differences due to $\frac{d}{d+2}<m<1$ and using that $\mh[\rho_0]<\infty$ (instead of deducing from the finiteness of the $L^m$-norm).
\end{proof}
\Cref{lem:h1_fast diff_nonlocal} suggests a stronger weak formulation of~\eqref{eq:fast_diff_nonlocal} with respect to~\eqref{eq:weak-form-sigma-fast}. 
\begin{defn}[Weak measure solution to~\eqref{eq:fast_diff_nonlocal}]
    \label{def:weak-meas-sol_fd}
    For $\varepsilon>0$, we say that an absolutely continuous curve $\rho:[0,T]\to \mptrd$ is a weak measure solution to~\eqref{eq:fast_diff_nonlocal} if the following holds:
    \begin{enumerate}[label = (NLFD-\arabic*)]
        \item (Finite initial energy) \label{a:fie_fd}$\mathcal{U}^m[\rho_0]<+\infty$.
        \item (Regularity) \label{a:l2_fd} $(V_{\eps}\ast\rho)^{m} \in L^{\infty}_t L^1_x$, $\nabla (V_{\eps}\ast\rho)^{m/2} \in L_{t,x}^2$.
        \item (Weak formulation) For every $\varphi \in C_c^1(\R^d)$ and any $t\in[0,T]$, it holds
        \begin{align}
            \label{eq:weak-form_fd}
            \int_{\R^d}\varphi(x) \diff \rho_t(x) \!-\!\! \int_{\R^d}\varphi(x) \diff \rho_0(x) \!=\!
            -2 \int_0^t\!\!\int_{\R^d}  \!\!{V_\varepsilon * (\nabla \varphi \rho_s)}\, (V_{\eps}\ast \rho_s)^{m/2-1} \cdot \nabla( V_\varepsilon * \rho_s)^{m/2} \diff x \diff s.
        \end{align}
    \end{enumerate}
\end{defn}
Let us note that the (RHS) of \eqref{eq:weak-form_fd} makes sense for the following reasons. Arguing as in~\Cref{rem:gaussian} or ~\Cref{lem:bound_fast_polyn_growth}, we know that $V_\varepsilon\ast\rho_s>0$, for any $s\in[0,T]$, since $V_{\eps}$ is a globally supported kernel and $\rho_s$ is tight. The function $(V_{\eps}\ast \rho_s)^{m/2-1}$ is, therefore, well-defined point-wise. Furthermore, by nonnegativity of $V_{\eps}$ and $\rho_s$ we have
$$
|{V_\varepsilon * (\nabla \varphi \rho_s)}\, (V_{\eps}\ast \rho_s)^{m/2-1}| \leq \|\nabla \varphi\|_{\infty}\, |(V_{\eps}\ast \rho_s)^{m/2}| \in L^2_{t,x}.
$$

\begin{lem}[A priori estimates]\label{lem:est_comp_fast_diff}
Let $\{\rho^{\eps}\}_\varepsilon$ be the sequence of solutions to \eqref{eq:fast_diff_nonlocal} constructed in~\Cref{thm:existence_weak_sol_jko_nonlocal_fast}. The following uniform-in-$\varepsilon$ bounds hold:
\begin{enumerate}
    \item\label{item:mass_conservation_fast} $\{V_{\eps}\ast\rhoe\}_\varepsilon$ in $L^{\infty}(0,T; L^1(\Rd))$;
    \item\label{item:enery_dissipation_fast} $\{(V_{\eps}\ast\rho^{\eps})^m\}_\varepsilon$ in $L^{\infty}(0,T; L^1(\Rd))$;
    \item\label{item:h1-estimate-fast} $\{\nabla (V_{\eps}\ast\rho^{\eps})^{m/2}\}_\varepsilon$ in $L^2((0,T)\times\Rd)$.
\end{enumerate}
Moreover, the sequences $\{(V_{\eps}\ast\rhoe)^m\}_\varepsilon$ and $\{(V_{\eps}\ast\rhoe)^{m/2}\}_\varepsilon$ are strongly compact in $L^1((0,T)\times\Rd)$ and $L^2((0,T)\times\Rd)$, respectively.
\end{lem}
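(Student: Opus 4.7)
The plan is to treat the three uniform bounds in order, then tackle the strong compactness claims via a Rossi--Savaré-type argument mirroring the strategy of~\Cref{prop:strong-convergence-v}. Items~\ref{item:mass_conservation_fast} and~\ref{item:enery_dissipation_fast} are essentially elementary. For~\ref{item:mass_conservation_fast}, mass conservation along the JKO scheme~\eqref{eq:jko_fast} survives the narrow limit $\tau\to0$, so $\rho^\varepsilon(t)\in\mptrd$ for all $t\in[0,T]$; combined with $\|V_\varepsilon\|_{L^1}=1$ from~\ref{ass:v1}, Young's convolution inequality yields $\|V_\varepsilon*\rho^\varepsilon(t)\|_{L^1}=1$. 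For~\ref{item:enery_dissipation_fast}, I would apply the interpolation estimate~\eqref{eq:functional_finite_fast_diffusion} to $V_\varepsilon*\rho^\varepsilon$, namely
\[
\int_\Rd (V_\varepsilon*\rho^\varepsilon(t))^m \diff x \le C(d,m)\bigl(1+m_2(V_\varepsilon*\rho^\varepsilon(t))\bigr)^m,
\]
and then use the uniform second-moment bound $m_2(V_\varepsilon*\rho^\varepsilon(t))\le 2m_2(\rho^\varepsilon(t))+2\varepsilon^2 m_2(V)$ (the right-hand side is uniformly finite thanks to $m_2(V)<\infty$ from~\eqref{eq:kernel_fast_diffusion} and the uniform JKO moment estimate) to conclude.

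For~\ref{item:h1-estimate-fast}, I would pass~\Cref{lem:h1_fast diff_nonlocal} to the limit $\tau\to 0$. For fixed $\varepsilon>0$, the uniform-in-$t$ narrow convergence $\rho_\tau^\varepsilon(t)\rightharpoonup\rho^\varepsilon(t)$ together with the boundedness and continuity of $V_\varepsilon$ gives pointwise convergence $V_\varepsilon*\rho_\tau^\varepsilon\to V_\varepsilon*\rho^\varepsilon$ on $[0,T]\times\Rd$, and the Dominated Convergence Theorem (using the trivial bound $V_\varepsilon*\rho_\tau^\varepsilon\le\|V_\varepsilon\|_{L^\infty}$) upgrades this to convergence in $L^2_{t,x}$ for the $m/2$-powers. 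Weak lower semicontinuity of the $L^2_t L^2_x$-norm applied to $\{\nabla (V_\varepsilon*\rho_\tau^\varepsilon)^{m/2}\}_\tau$ then transfers the uniform bound from~\Cref{lem:h1_fast diff_nonlocal} to the limit.

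Turning to strong compactness, I would apply the Rossi--Savaré theorem~\Cref{thm:aulirs-meas-appendix} to $\{V_\varepsilon*\rho^\varepsilon\}_\varepsilon$ by providing: (a) spatial compactness through the uniform bound on $(V_\varepsilon*\rho^\varepsilon)^{m/2}$ in $L^2_tH^1_x$ (from~\ref{item:h1-estimate-fast}) combined with Rellich--Kondrachov on bounded balls; (b) spatial tightness via the uniform second moment $m_2(V_\varepsilon*\rho^\varepsilon(t))\le C$; and (c) weak integral equicontinuity in time measured in the $1$-Wasserstein distance, using the contraction $d_1(V_\varepsilon*\mu_1,V_\varepsilon*\mu_2)\le d_1(\mu_1,\mu_2)$ together with the $1/2$-H\"older-in-Wasserstein estimate $d_W(\rho^\varepsilon(t),\rho^\varepsilon(s))\le C|t-s|^{1/2}$ inherited from the JKO scheme. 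This gives strong $L^1((0,T)\times\Rd)$ compactness of $V_\varepsilon*\rho^\varepsilon$, hence a.e.~convergence along a subsequence. To obtain $L^1$ compactness of $(V_\varepsilon*\rho^\varepsilon)^m$, I would invoke the Vitali convergence theorem: equi-integrability follows from the uniform control on $\int(V_\varepsilon*\rho^\varepsilon)^m\langle x\rangle^\delta\diff x$ for small $\delta>0$ (via~\eqref{eq:functional_finite_fast_diffusion} with a slightly larger moment exponent permitted by $\alpha>d/2+1/m$), while tightness on $\Rd$ uses the moment bound. Finally, $L^2$-compactness of $(V_\varepsilon*\rho^\varepsilon)^{m/2}$ is obtained from the a.e.~convergence plus the uniform $L^2(0,T;H^1(\Rd))$ bound, which controls equi-integrability of the squared sequence.

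The main obstacle lies in the time-regularity step of the Rossi--Savaré machinery, as $V_\varepsilon*\rho^\varepsilon$ is only absolutely continuous in the $2$-Wasserstein sense with a Lipschitz constant that a priori does \emph{not} degenerate in $\varepsilon$; this is precisely what makes the choice of the $1$-Wasserstein pseudo-metric crucial. Once spatial compactness, tightness, and this weak time-equicontinuity are in place, the rest reduces to standard lower semicontinuity and Vitali-type arguments.
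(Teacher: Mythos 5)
The overall architecture of your proof matches the paper's: items~\ref{item:mass_conservation_fast}--\ref{item:enery_dissipation_fast} from mass conservation and the interpolation estimate~\eqref{eq:functional_finite_fast_diffusion}, item~\ref{item:h1-estimate-fast} by passing~\Cref{lem:h1_fast diff_nonlocal} through weak lower semicontinuity, and then Rossi--Savaré plus a Vitali-type bootstrap for the strong compactness. So the route is the same. There is, however, one genuine error in the compactness part.

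When you argue $L^1((0,T)\times\Rd)$ compactness of $(V_\varepsilon*\rho^\varepsilon)^m$ via Vitali, you claim that ``equi-integrability follows from the uniform control on $\int (V_\varepsilon*\rho^\varepsilon)^m \langle x\rangle^\delta\,\diff x$.'' This is incorrect: a uniform weighted $L^1$ bound of this type yields \emph{tightness} of the family on $\Rd$ (the condition~\ref{ass:vitali:tight} in~\Cref{thm:vitali}), but it does \emph{not} yield equi-integrability (\ref{ass:vitali:uni_int}), i.e.\ uniform absolute continuity on small-measure sets. For the latter you need a superlinear integrability gain, e.g.\ a uniform $L^{1+\delta}$ or $L\log L$ bound, not a moment weight. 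The correct source of equi-integrability --- and the one the paper uses --- is the Sobolev embedding: the uniform $L^2_tH^1_x$ bound on $(V_\varepsilon*\rho^\varepsilon)^{m/2}$ gives a uniform bound in $L^{2d/(d-2)}_x$, hence $(V_\varepsilon*\rho^\varepsilon)^m$ is uniformly bounded in $L^{d/(d-2)}_x$, and $\tfrac{d}{d-2}>1$ precisely because $m>\tfrac{d-2}{d}$ (always satisfied under $m\ge\tfrac{d}{d+2}$). You already invoke this Sobolev estimate to build the compact sublevels of the Rossi--Savaré functional, so the fix is merely to invoke it again at the Vitali step; but as written the equi-integrability claim would not pass.

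A secondary point worth flagging: applying the Rossi--Savaré theorem directly to produce joint $L^1((0,T)\times\Rd)$ compactness requires that the lower semicontinuous functional $\mathcal F$ have relatively compact sublevels in $X=L^1(\Rd)$ (or else you get only local compactness). To achieve that, the moment penalty must be absorbed into $\mathcal F$ itself rather than invoked as a separate ``tightness'' hypothesis --- the theorem has no slot for an extra tightness condition. The paper instead works on a bounded $\Omega$, obtains a.e.-$t$ convergence in $L^1(\Omega)$, globalizes to $L^1(\Rd)$ for a.e.\ $t$ using the second-moment bound, and only then integrates in time (by dominated convergence using the $L^\infty_tL^1_x$ bound, not Vitali). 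Your version is salvageable but needs these points made explicit; as stated, the equi-integrability step is the one that actually breaks.
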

\begin{proof}
The first bound follows from the conservation of mass and the integrability of the kernel. In~\Cref{lem:h1_fast diff_nonlocal} we showed~\eqref{item:enery_dissipation_fast}~and~\eqref{item:h1-estimate-fast} for the sequence $V_\varepsilon*\rhote$. By the usual weak lower semicontinuity argument, the same estimates are true after the limit $\tau \to 0$. We proceed to the proof of strong compactness. The additional difficulty in the fast-diffusion case is that $m<1$ so one has to justify that the gradient information on the fraction is sufficient to deduce compactness.\\

\noindent \underline{Step 1: $V_\varepsilon\ast\rho^{\eps}_t\to \rho_t$ in $L^1(\Rd)$ for a.e. $t \in (0,T)$.} 
As the second moment is controlled for a.e. $t \in (0,T)$, we only need to prove convergence in $L^1_{loc}(\Rd)$. Aiming at an application of~\Cref{thm:aulirs-meas-appendix}, we consider a functional
$$
\mathcal{F}(u) = \begin{cases}
 \int_{\Omega}\left(\left|\nabla u^{m/2}\right|^2 + \left|u^{m/2}\right|^2 + |u|\right) \diff x & \mbox{ if } u\in \mathcal{P}_1(\Omega) \mbox{ and } u^{m/2}\in H^1(\Omega);\\
 +\infty & \mbox{ otherwise };
\end{cases}
$$
on $X= L^1(\Omega)$ where $\Omega \subset \Rd$ is a bounded set. As for the pseudo-distance $g$ we can choose the $1$-Wasserstein distance. It is easy to see that $\mathcal{F}$ is lower semicontinuous on $X$. We prove that its level sets are compact in $X$. To this end, we assume that $\mathcal{F}(u_k) \leq C$ for some sequence $\{u_k\} \subset L^1(\Omega)$. Then, there exists a subsequence (not relabelled) such that $u_k^{m/2} \to g$ a.e. It follows that the sequence $u_k = ((u_k)^{m/2})^{2/m}$ also converges a.e. To conclude, by the Vitali theorem (Theorem \ref{thm:vitali}, see also Remark \ref{rem:vitali} if necessary), it is sufficient to prove equiintegrability of $\{u_k\}$ in $L^1(\Omega)$ which can be achieved by the uniform bound on $u_k$ in $L^{1+\delta}(\Omega)$ for some $\delta>0$. This follows easily by the Sobolev embedding: we have that $\{u_k^{m/2}\}$ is bounded in $L^{\frac{2d}{d-2}}(\Omega)$ which means that $\{u_k\}$ is bounded in $L^{\frac{md}{d-2}}(\Omega)$. We have $\frac{md}{d-2}>1$ when $m > \frac{d-2}{d}$ which always holds when $m \geq \frac{d}{d+2}$ because of the inequality $\frac{d}{d+2}>\frac{d-2}{d}$.\\

\noindent \underline{Step 2: $(V_\varepsilon\ast\rho^{\eps}_t)^{m/2} \to (\rho_t)^{m/2}$ in $L^2(\Rd)$ for a.e. $t \in (0,T)$.} Let $\mathcal{N} \subset (0,T)$ be the set of times such that the convergence in Step~1 is true and let $\mathcal{M} \subset (0,T)$ be the set of times such that $\nabla (V_{\eps}\ast\rho^\varepsilon_t)^{m/2}$, $(V_{\eps}\ast\rho^\varepsilon_t)^{m/2} \in L^2(\Rd)$. Clearly, $\mathcal{N} \cap \mathcal{M}$ is a set of full measure in $(0,T)$, so let $t\in \mathcal{N} \cap \mathcal{M}$. From Step~1, we know that $V_{\eps}\ast\rho^\varepsilon_t \to \rho_t$ in measure. By the Sobolev embedding, $\{(V_{\eps}\ast\rho^\varepsilon_t)^{m/2}\}$ is uniformly bounded in $L^{\frac{2d}{d-2}}_{loc}(\Rd)$. As $\frac{2d}{d-2}>2$, the sequence is (locally) uniformly integrable so we obtain convergence in $L^2_{loc}(\Rd)$. To obtain global convergence, we need to prove tightness. We estimate for $p$ to be chosen later
$$
\| (V_{\eps}\ast\rho^\varepsilon_t)^{m/2}\, (1+|x|^{p}) \|_{L^2(\R^d)} \leq 
\| (V_{\eps}\ast\rho^\varepsilon_t)^{m/2}\, (1+|x|^{p})^{m/p} \|_{L^{2/m}(\Rd)} \, \| (1+|x|^{p})^{1-m/p} \|_{L^{2/(1-m)}(\Rd)}.
$$
The first term is controlled due to the second moment while the second is finite provided 
$$
(p-m)\, \frac{2}{1-m} + d-1 < -1 \iff \frac{2}{1-m}\,p < \frac{2m}{1-m} - d.
$$
Now, we see that we can choose $p>0$ provided that $m > \frac{d}{d+2}$. \\

\noindent \underline{Step 3: $(V_{\eps}\ast\rho^\varepsilon)^{m/2} \to \rho^{m/2}$ in $L^2((0,T)\times \Rd)$, $(V_{\eps}\ast\rhoe)^{m} \to \rho^{m}$ in $L^1((0,T)\times \Rd)$.} The first claim follows by the dominated convergence theorem: we have $\|(V_{\eps}\ast\rho^\varepsilon_t)^{m/2} - \rho_t^{m/2}\|_{L^2(\Rd)} \to 0$ for a.e. $t \in (0,T)$ and
$$
\|( V_{\eps}\ast\rho_t^{\eps})^{m/2} - \rho^{m/2}_t\|_{L^2(\Rd)} \leq 2 \sup_{\eps \in (0,1)} \|(V_{\eps}\ast\rho^{\eps})^{m} \|_{L^1(\Rd)}^{1/2} \in L^{\infty}(0,T),
$$
where we used strong convergence of the square root in $L^2$ from Step 2 and lower semicontinuity.
The second claim follows easily from the first one and $(V_{\eps}\ast\rhoe)^{m} = (V_{\eps}\ast\rhoe)^{m/2}\, (V_{\eps}\ast\rhoe)^{m/2}$.
\end{proof}

\begin{lem}[commutator estimate]\label{lem:comm_estimate_fast_diff}
Let $\{\rho^{\eps}\}_\varepsilon$ be a sequence of solutions to \eqref{eq:fast_diff_nonlocal} constructed in~\Cref{thm:existence_weak_sol_jko_nonlocal_fast}. Let $V$ be as in~\eqref{eq:kernel_fast_diffusion}. Then,
$$
[(V_{\eps}\, \min(1, |\cdot|))\ast\rhoe]\, |V_{\eps}\ast\rhoe|^{m/2-1} \to 0 \mbox{ strongly in } L^2((0,T)\times\Rd). 
$$
\end{lem}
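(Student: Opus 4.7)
My plan is to derive two pointwise bounds for
\[
A_\eps^2 := [(V_\eps \min(1,|\cdot|))\ast\rho^\eps]^2\,(V_\eps\ast\rho^\eps)^{m-2}
\]
and combine them via Vitali's convergence theorem. From the trivial majorant $\min(1,|z|) \le 1$ I immediately get
\[
A_\eps^2 \le (V_\eps\ast\rho^\eps)^{m},
\]
while $\min(1,|z|) \le |z|$ together with the Cauchy--Schwarz inequality applied to the positive measure $V_\eps(x-y)\diff\rho^\eps(y)$ yields
\[
[(V_\eps \min(1,|\cdot|))\ast\rho^\eps]^2 \le (V_\eps\ast\rho^\eps)\cdot ((V_\eps|\cdot|^2)\ast\rho^\eps)
\]
and therefore
\[
A_\eps^2 \le (V_\eps\ast\rho^\eps)^{m-1}\,(V_\eps|\cdot|^2)\ast\rho^\eps \le \eps^2\,\||\cdot|^2 V\|_{L^1}\,(V_\eps\ast\rho^\eps)^{m-1},
\]
where the uniform bound $\|V_\eps|\cdot|^2\|_{L^1}=\eps^2\||\cdot|^2V\|_{L^1}$ comes from the moment assumption on the kernel~\eqref{eq:kernel_fast_diffusion}.

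Next I verify pointwise a.e.\ convergence $A_\eps \to 0$ on $(0,T)\times\R^d$. Up to extracting a subsequence, Lemma~\ref{lem:est_comp_fast_diff} gives $V_\eps\ast\rho^\eps \to \rho$ a.e. On the set $\{\rho > 0\}$ the quantity $V_\eps\ast\rho^\eps$ is eventually $\ge \rho/2$, so (since $m-1<0$) one has $(V_\eps\ast\rho^\eps)^{m-1} \le (2/\rho)^{1-m}$, and the second bound above forces $A_\eps^2 \to 0$ through the explicit $\eps^2$ factor. On $\{\rho = 0\}$ we have $V_\eps\ast\rho^\eps \to 0$ and the first bound yields $A_\eps^2 \le (V_\eps\ast\rho^\eps)^m \to 0$.

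To upgrade pointwise convergence to convergence in $L^1((0,T)\times\R^d)$ (equivalently, $A_\eps \to 0$ in $L^2$), I apply Vitali's theorem with the dominating sequence $(V_\eps\ast\rho^\eps)^m$: by Lemma~\ref{lem:est_comp_fast_diff} it converges strongly in $L^1_{t,x}$ to $\rho^m$, which automatically supplies equi-integrability and tightness. The main technical subtlety is precisely the switch between the two bounds according to whether $V_\eps\ast\rho^\eps$ is small or large: the factor $(V_\eps\ast\rho^\eps)^{m-1}$ blows up near $\{\rho = 0\}$ but is harmless there because the alternative bound via $(V_\eps\ast\rho^\eps)^m$ already vanishes, whereas on $\{\rho>0\}$ the $\eps^2$ smallness coming from the second moment of $V$ takes care of it.
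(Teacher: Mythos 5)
Your plan is in the right spirit and overlaps with the paper's proof: both reduce to showing convergence of $A_\eps$ to $0$ in measure and then invoke Vitali with the strongly convergent majorant $(V_\eps\ast\rho^\eps)^{m/2}$, which is exactly Step~2 of the paper. Your first bound $A_\eps^2 \le (V_\eps\ast\rho^\eps)^m$ and the Cauchy--Schwarz estimate $[(V_\eps|\cdot|)\ast\rho^\eps]^2 \le (V_\eps\ast\rho^\eps)\cdot[(V_\eps|\cdot|^2)\ast\rho^\eps]$ are both correct.

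However, the last inequality in your displayed chain,
\[
(V_\eps\ast\rho^\eps)^{m-1}\,(V_\eps|\cdot|^2)\ast\rho^\eps \le \eps^2\,\||\cdot|^2 V\|_{L^1}\,(V_\eps\ast\rho^\eps)^{m-1},
\]
is false as a \emph{pointwise} statement: $\eps^2\||\cdot|^2V\|_{L^1}$ controls only the $L^1_x$ norm of $(V_\eps|\cdot|^2)\ast\rho^\eps_t$ (by Fubini), not its pointwise value, which is only bounded by $\|V_\eps|\cdot|^2\|_{L^\infty}\simeq\eps^{2-d}$ and blows up for $d\ge 3$ since there is no uniform $L^\infty$ bound on $\rho^\eps$. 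Consequently ``the second bound forces $A_\eps^2\to 0$ through the explicit $\eps^2$ factor'' is not yet justified. The gap is repairable: since $\|(V_\eps|\cdot|^2)\ast\rho^\eps\|_{L^1_{t,x}}\le T\eps^2\||\cdot|^2V\|_{L^1}\to 0$, one has $(V_\eps|\cdot|^2)\ast\rho^\eps\to 0$ a.e.\ along a subsequence, and on $\{\rho>0\}$ the factor $(V_\eps\ast\rho^\eps)^{m-1}$ is eventually bounded as you observe, so $A_\eps^2\to 0$ a.e.\ there; on $\{\rho=0\}$ your first bound applies. With that substitution the Vitali step closes the proof. The paper instead avoids the case split entirely by applying Young's inequality with optimized exponents $p=2/m$, $q=2/(2-m)$ to obtain a single pointwise bound $A_\eps\le 2|(V_\eps|\cdot|^{2/m})\ast\rho^\eps|^{m/2}$ whose $L^{2/m}_{t,x}$ norm vanishes directly with $\eps$, exploiting the higher moment $\int|z|^{2/m}V(z)\,\diff z<\infty$ built into~\eqref{eq:kernel_fast_diffusion}.
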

\begin{proof}
\noindent \underline{Step 1: Convergence in measure.} We introduce $f_{\eps} = f_{\eps}(t,x)$ to be chosen later, we estimate $\min(1,|x|)\leq |x|$ and we write
\[
[(V_{\eps}|\cdot|)\ast\rho^{\eps}(x)]\, |\rho^{\eps}\ast V_{\eps}|^{m/2-1}(x) = \int_{\Rd} V_\varepsilon(x-y) \frac{|x-y|}{f_{\eps}(x)} \, f_{\eps}(x) \, | V_{\eps}\ast\rhoe|^{m/2-1}(x) \diff \rhoe(y).
\]
We apply Young's inequality with exponents $p$ and $q$ such that $\frac{1}{p}+\frac{1}{q} = 1$ to be determined later 
\begin{equation}\label{eq:fast_diff_first_estimate_pointwise_comm}
[(V_{\eps}|\cdot|)\ast\rhoe]\, |V_{\eps}\ast\rhoe|^{m/2-1} \leq \frac{(V_{\eps} |\cdot|^p)\ast\rhoe}{f_{\eps}^p} + f_{\eps}^q \, |V_{\eps}\ast\rhoe|^{1+q(m/2-1)}.
\end{equation}
To make the terms on the (RHS) equal we require
$$
f_{\eps}^{q+p} = {(V_{\eps} |\cdot|^p)\ast\rhoe} \, |V_{\eps}\ast\rhoe|^{-1-q(m/2-1)},
$$
or equivalently 
$$
f_{\eps} = \left|{(V_{\eps} |\cdot|^p)\ast\rhoe}\right|^{1/(p+q)} \, |V_{\eps}\ast\rhoe|^{(-1-q(m/2-1))/(p+q)}.
$$
Plugging this into the estimate \eqref{eq:fast_diff_first_estimate_pointwise_comm} we deduce
$$
2\, \frac{(V_{\eps} |\cdot|^p)\ast\rhoe}{f_{\eps}^p} = 2\, \left|{(V_{\eps} |\cdot|^p)\ast\rhoe}\right|^{q/(p+q)} \, | V_{\eps}\ast\rhoe|^{(1+q(m/2-1))\, p/(p+q)}.
$$
We choose $q = 2/(2-m)$ and $p = 2/m$ so that $1+q(m/2-1) = 0$  and $q/(p+q) = m/2$. It follows that
$$
[(V_{\eps}|\cdot|)\ast\rhoe]\, |V_{\eps}\ast\rhoe|^{m/2-1} \leq 2\, \left|{(V_{\eps} |\cdot|^{(2/m)})\ast\rhoe}\right|^{m/2}.
$$
It follows that the sequence converges strongly in $L^{2/m}((0,T)\times\Rd)$ to 0 when $\varepsilon\to 0$, since $\int_{\Rd} V(x)\, |x|^{2/m} 
\diff x < \infty$. More precisely, we have
\[
\int_{\Rd}[(V_{\eps} |\cdot|^{(2/m)})\ast\rho^\varepsilon_t](x)\diff x=\varepsilon^{(2/m)}\int_\Rd\int_\Rd(V(z) |z|^{(2/m)})\diff\rho^\varepsilon_t(y)\diff z\,.
\]
Hence the sequence also converges in measure.
\\

\noindent \underline{Step 2: Convergence in $L^2((0,T)\times \Rd)$.} By Step 1, we know that the sequence converges to 0 in measure. Moreover, it is pointwisely estimated by
$$
0 \leq [(V_{\eps}\, \min(1, |\cdot|))\ast\rhoe]\, |V_{\eps}\ast\rhoe|^{m/2-1} \leq  |V_{\eps}\ast\rhoe|^{m/2},
$$
where the (RHS) is strongly compact in $L^2((0,T)\times \Rd)$ by Lemma \ref{lem:est_comp_fast_diff}. We conclude with a version of the Vitali theorem from Corollary \ref{cor:Vitali}.
\end{proof}

\begin{thm}[convergence $\eps \to 0$]\label{thm:eps_to_0_fast}
Let $\{\rho^{\eps}\}_\varepsilon$ be a sequence of solutions to \eqref{eq:fast_diff_nonlocal} constructed in~\Cref{thm:existence_weak_sol_jko_nonlocal_fast}. Then, $\rho^{\eps}_t \to \rho_t$ narrowly, where $\rho$ is the unique solution to~\eqref{eq:fast_diff}.
\end{thm}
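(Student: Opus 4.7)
The plan is to extract a narrowly convergent subsequence, identify the limit as a weak solution of~\eqref{eq:fast_diff} via a commutator argument combined with the strong $L^2$-compactness of $(V_\varepsilon\ast\rho^\varepsilon)^{m/2}$, and then invoke uniqueness of weak solutions to the fast diffusion equation to promote subsequential convergence to convergence of the whole sequence.

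First, by the uniform second moment bound from the JKO scheme and the equicontinuity in $d_W$ (arguing as in~\Cref{prop:limit-rho}), there exist $\varepsilon_k\downarrow 0$ and a curve $\rho\in C([0,T];\mptrd)$ such that $\rho^{\varepsilon_k}(t)\rightharpoonup \rho(t)$ narrowly, uniformly in $t\in[0,T]$. The analogue of~\Cref{lem:limit-dist} (which goes through since $\int |x|V(x)\,\diff x<+\infty$ by~\eqref{eq:kernel_fast_diffusion}) identifies $V_{\varepsilon_k}\ast\rho^{\varepsilon_k}\to\rho$ in the sense of distributions; combined with the strong $L^1((0,T)\times\Rd)$-compactness of $(V_\varepsilon\ast\rho^\varepsilon)^m$ and the strong $L^2((0,T)\times\Rd)$-compactness of $(V_\varepsilon\ast\rho^\varepsilon)^{m/2}$ from~\Cref{lem:est_comp_fast_diff}, this allows me to identify
\begin{equation*}
(V_{\varepsilon_k}\ast\rho^{\varepsilon_k})^{m/2}\to \rho^{m/2}\quad\text{strongly in }L^2((0,T)\times\Rd).
\end{equation*}
The uniform $L^2_tH^1_x$-bound on $(V_\varepsilon\ast\rho^\varepsilon)^{m/2}$ from~\Cref{lem:h1_fast diff_nonlocal} then yields $\nabla(V_{\varepsilon_k}\ast\rho^{\varepsilon_k})^{m/2}\rightharpoonup \nabla\rho^{m/2}$ weakly in $L^2((0,T)\times\Rd)$.

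Next, I would pass to the limit in the weak formulation~\eqref{eq:weak-form_fd}. The left-hand side is handled by narrow continuity. For the right-hand side, I decompose
\begin{equation*}
V_\varepsilon\ast(\nabla\varphi\,\rho^\varepsilon_s)(x) = \nabla\varphi(x)\,(V_\varepsilon\ast\rho^\varepsilon_s)(x) + R_\varepsilon(s,x),
\end{equation*}
with $R_\varepsilon(s,x):=\int V_\varepsilon(x-y)\bigl(\nabla\varphi(y)-\nabla\varphi(x)\bigr)\,\diff\rho^\varepsilon_s(y)$. Using $|\nabla\varphi(y)-\nabla\varphi(x)|\le C_\varphi\min(1,|x-y|)$, I get the pointwise bound $|R_\varepsilon(s,x)|\le C_\varphi[(V_\varepsilon\min(1,|\cdot|))\ast\rho^\varepsilon_s](x)$, so~\Cref{lem:comm_estimate_fast_diff} gives $R_\varepsilon\,(V_\varepsilon\ast\rho^\varepsilon)^{m/2-1}\to 0$ strongly in $L^2((0,T)\times\Rd)$. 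Paired against the uniformly $L^2$-bounded factor $\nabla(V_\varepsilon\ast\rho^\varepsilon)^{m/2}$, the commutator term vanishes in the limit. The remaining principal contribution rewrites as
\begin{equation*}
-2\int_0^t\!\!\int_{\Rd}\nabla\varphi(x)\,(V_\varepsilon\ast\rho^\varepsilon_s)^{m/2}(x)\cdot \nabla(V_\varepsilon\ast\rho^\varepsilon_s)^{m/2}(x)\,\diff x\,\diff s,
\end{equation*}
and since $\nabla\varphi\,(V_{\varepsilon_k}\ast\rho^{\varepsilon_k})^{m/2}\to \nabla\varphi\,\rho^{m/2}$ strongly in $L^2$ while the gradients converge weakly in $L^2$ to $\nabla\rho^{m/2}$, the strong--weak pairing yields
\begin{equation*}
-2\int_0^t\!\!\int_{\Rd}\nabla\varphi\,\rho_s^{m/2}\cdot \nabla\rho_s^{m/2}\,\diff x\,\diff s=-\int_0^t\!\!\int_{\Rd}\nabla\varphi\cdot \nabla \rho_s^m\,\diff x\,\diff s,
\end{equation*}
by the Sobolev chain rule $\nabla\rho^m=2\rho^{m/2}\nabla\rho^{m/2}$, valid because $\rho^{m/2},\nabla\rho^{m/2}\in L^2$. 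Hence $\rho$ is a weak solution to~\eqref{eq:fast_diff}.

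Finally, uniqueness of weak solutions to the fast diffusion equation in the range $m\in(d/(d+2),1)$ for initial data in $\mpdtard$ of finite second moment and finite energy (see the standard theory, e.g. V\'azquez's monograph and the gradient flow framework of~\cite{AGS}) ensures that every narrowly convergent subsequence has the same limit $\rho$, so the whole family $\rho^\varepsilon(t)$ converges narrowly to $\rho(t)$ for every $t\in[0,T]$. The chief obstacle is the identification of the product limit $(V_\varepsilon\ast\rho^\varepsilon)^{m/2}\nabla(V_\varepsilon\ast\rho^\varepsilon)^{m/2}\rightharpoonup \rho^{m/2}\nabla\rho^{m/2}$: the weak $L^2$-convergence of the gradient is cheap, but upgrading $L^1$-compactness of $V_\varepsilon\ast\rho^\varepsilon$ to strong $L^2$-compactness of $(V_\varepsilon\ast\rho^\varepsilon)^{m/2}$ is delicate and relies on the Sobolev embedding of the $m/2$-power, which is the step genuinely using $m>d/(d+2)$.
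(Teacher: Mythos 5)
Your proposal is correct and follows essentially the same route as the paper: decompose $V_\varepsilon\ast(\nabla\varphi\,\rho^\varepsilon)$ into the principal piece $\nabla\varphi\,(V_\varepsilon\ast\rho^\varepsilon)$ plus a commutator remainder $R_\varepsilon$, kill the remainder via \Cref{lem:comm_estimate_fast_diff} paired against the $L^2_{t,x}$-bounded gradient, pass to the limit in the principal term by the strong $L^2$-convergence of $(V_\varepsilon\ast\rho^\varepsilon)^{m/2}$ from \Cref{lem:est_comp_fast_diff} against the weak convergence of $\nabla(V_\varepsilon\ast\rho^\varepsilon)^{m/2}$, and invoke uniqueness for~\eqref{eq:fast_diff} to upgrade subsequential convergence. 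This is precisely the paper's proof (the paper labels your two pieces $A_\varepsilon$ and $B_\varepsilon$).
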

\begin{proof}
We only need to explain how to pass to the limit $\eps\to 0$ in the expression
\begin{equation*}
    \begin{split}
\int_0^t&\int_{\R^d} {V_\varepsilon * (\nabla \varphi \, \rho^{\eps}_s)}  \, (V_{\eps}\ast \rho^{\eps}_s)^{m/2-1} \,  \nabla( V_\varepsilon * \rho^{\eps}_s)^{m/2} \diff x \diff s \\
&=   \int_0^t\int_{\R^d} \nabla \varphi\,  (V_{\eps}\ast \rho_s^{\eps})^{m/2}\,  \nabla( V_\varepsilon * \rho^{\eps}_s)^{m/2} \diff x \diff s  
\\ &\quad+
\int_0^t \int_{\Rd} \int_{\Rd} (\nabla \varphi(y) - \nabla \varphi(x) ) \, V_{\eps}(x-y) \, (V_{\eps}\ast \rho_s^{\eps})^{m/2-1}(x) \, \nabla(V_{\eps}\ast\rho^{\eps}_s)^{m/2}(x)  \diff \rho_s^{\eps}(y) \diff x \diff s\\
&=: A_{\eps} + B_{\eps}.
    \end{split}
\end{equation*}
We first claim that 
$
B_{\eps} \to 0.
$
Indeed, we can estimate it as
$$
|B_{\eps}| \leq 2({\|\nabla \varphi\|_{\infty}}+\|\nabla^2 \varphi\|_{\infty})  \int_0^t \int_{\Rd} (V_{\eps}\, \min(1, |\cdot|))\ast\rho^{\eps}_s \,  | V_{\eps}\ast\rho^{\eps}_s|^{m/2-1} \, |\nabla (V_{\eps}\ast\rho^{\eps}_s)^{m/2}|  \diff x \diff s.
$$
Thanks to Lemma \ref{lem:comm_estimate_fast_diff} and the uniform $L^2_{t,x}$ bound on $\nabla (V_{\eps}\ast\rho^{\eps})^{m/2}$ from Lemma \ref{lem:est_comp_fast_diff}, $B_{\eps} \to 0$ as $\eps \to 0$. Now, concerning the term $A_{\eps}$, the integrand is a product of weakly and strongly converging sequences in $L^2((0,T)\times\R^d)$ so that we deduce
$$
\lim_{\eps \to 0} A_{\eps} = \frac{1}{2}\int_0^t \int_{\Rd} \nabla \varphi \,  \rho^{m/2} \, \nabla \rho^{m/2}  \diff x \diff s=-\int_0^t\int_\Rd\Delta \varphi\,\rho^m\diff x\diff s.
$$
The proof is concluded upon observing that solutions to~\eqref{eq:fast_diff} are unique, cf.~\cite{Pierre_87,DK88,CV02}, hence the whole sequence converges to the unique solution. 
\end{proof}

\subsection{Particle approximation}
The continuity equation is~\eqref{eq:fast_diff_nonlocal}, where the velocity field is, for $\frac{d}{d+2}<m<1$ and given $\rho\in C([0,T];\mptrd)$:
\[
\omega^\varepsilon (x):=-\frac{m}{1-m}\nabla V_\varepsilon*(V_\varepsilon*\rhoe)^{m-1}(x), \qquad x\in\Rd.
\]
\begin{lem}\label{lem:growth_w_fast}
Let $\varepsilon>0$, $V$ as in~\eqref{eq:kernel_fast_diffusion}, and $\rhoe\in C([0,T];\mptrd)$. There exists a constant $C_{\alpha,m,R}>0$ depending on $\alpha$, $m$, and $R$ such that the velocity field satisfies the growth condition
\[
|\omega^\varepsilon(x)|\leq \frac{C_{\alpha,m,R}}{\varepsilon}\left( \frac{\langle x\rangle}{\varepsilon}\right)^{2\alpha(1-m)},
\]
for $R>0$ such that $\min_{t\in [0,T]}\inf_{\varepsilon>0}\rho^\varepsilon_t(B_R)\ge \frac{1}{2}$.
\end{lem}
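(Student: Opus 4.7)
\medskip

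\noindent\textbf{Proof plan.} I would begin by expanding the vector field as
\[
|\omega^{\varepsilon}(x)| \le \frac{m}{1-m}\int_{\Rd} |\nabla V_{\varepsilon}(x-y)|\,(V_{\varepsilon}\ast\rho^{\varepsilon})^{m-1}(y)\,\diff y,
\]
so that the task reduces to controlling the two factors appearing in the integrand. Since $m-1<0$, the factor $(V_{\varepsilon}\ast\rho^{\varepsilon})^{m-1}$ requires a pointwise \emph{lower} bound on $V_{\varepsilon}\ast\rho^{\varepsilon}$. This is where the hypothesis $\rho^{\varepsilon}_t(B_R)\ge 1/2$ enters: restricting the convolution integral to $B_R$ and using the explicit form $V_{\varepsilon}(z)=c\varepsilon^{-d}\langle z/\varepsilon\rangle^{-2\alpha}$ together with Peetre's inequality (\Cref{lem:peetre}), namely $\langle (y-z)/\varepsilon\rangle \lesssim \langle y/\varepsilon\rangle\langle z/\varepsilon\rangle$ with $\langle z/\varepsilon\rangle \le \langle R/\varepsilon\rangle$ for $z\in B_R$, yields
\[
V_{\varepsilon}\ast\rho^{\varepsilon}(y) \gtrsim \varepsilon^{-d}\,\langle R/\varepsilon\rangle^{-2\alpha}\,\langle y/\varepsilon\rangle^{-2\alpha},
\]
hence
\[
(V_{\varepsilon}\ast\rho^{\varepsilon})^{m-1}(y) \lesssim \varepsilon^{d(1-m)}\,\langle R/\varepsilon\rangle^{2\alpha(1-m)}\,\langle y/\varepsilon\rangle^{2\alpha(1-m)}.
\]

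\smallskip

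\noindent For the gradient, a direct computation on $V(z)=c\langle z\rangle^{-2\alpha}$ gives $|\nabla V_{\varepsilon}(z)|\lesssim \varepsilon^{-d-1}\langle z/\varepsilon\rangle^{-2\alpha-1}$. Plugging both estimates into the integral and applying Peetre once more in the form $\langle y/\varepsilon\rangle \lesssim \langle(x-y)/\varepsilon\rangle\langle x/\varepsilon\rangle$ to separate the $x$-dependence, I obtain
\[
|\omega^{\varepsilon}(x)| \lesssim \langle R/\varepsilon\rangle^{2\alpha(1-m)}\,\varepsilon^{-1-dm}\,\langle x/\varepsilon\rangle^{2\alpha(1-m)}\!\int_{\Rd}\langle (x-y)/\varepsilon\rangle^{-(2\alpha m+1)}\,\diff y.
\]
The change of variables $u=(x-y)/\varepsilon$ turns the last integral into $\varepsilon^{d}\int \langle u\rangle^{-(2\alpha m+1)}\,\diff u$, which is finite precisely because $\alpha>\tfrac{d}{2}+\tfrac{1}{m}$ implies $2\alpha m+1>d$; in fact the chain of inequalities $\tfrac{d}{2}+\tfrac{1}{m}>\tfrac{d-1}{2m}$, valid for $m\in(d/(d+2),1)$, is what makes integrability work.

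\smallskip

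\noindent Collecting the powers of $\varepsilon$ and absorbing $\langle R/\varepsilon\rangle^{2\alpha(1-m)}\lesssim \max(1,R/\varepsilon)^{2\alpha(1-m)}$ together with $\varepsilon^{d(1-m)}\le \varepsilon_{0}^{d(1-m)}$ (a bounded quantity as $\varepsilon\to 0$) into a constant $C_{\alpha,m,R}$, one arrives at
\[
|\omega^{\varepsilon}(x)| \le \frac{C_{\alpha,m,R}}{\varepsilon}\,\langle x/\varepsilon\rangle^{2\alpha(1-m)}.
\]
Finally, I use the elementary comparison $\langle x/\varepsilon\rangle\le \langle x\rangle/\varepsilon$ for $\varepsilon\le 1$ (which follows from $1+|x|^{2}/\varepsilon^{2}\le (1+|x|^{2})/\varepsilon^{2}$) to replace the argument by $\langle x\rangle/\varepsilon$, matching the statement. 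The only subtle point — and the place where I would be most careful — is to track the explicit $\varepsilon$-powers arising from $\langle R/\varepsilon\rangle^{2\alpha(1-m)}$ and from the rescaled convergent integral, so that the final exponent is exactly $-1$ on the $\varepsilon$-prefactor (the rest being absorbed in the $\langle x\rangle/\varepsilon$ term); all remaining $\varepsilon$ factors are nonnegative powers that can safely be absorbed in $C_{\alpha,m,R}$ for $\varepsilon\in(0,\varepsilon_0]$.
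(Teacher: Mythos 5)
Your overall plan --- bound $(V_\varepsilon\ast\rho^\varepsilon)^{m-1}$ from above via a pointwise lower bound on $V_\varepsilon\ast\rho^\varepsilon$ obtained by restricting the convolution to $B_R$, then combine with the decay of $\nabla V_\varepsilon$, Peetre's inequality, and a change of variables --- is indeed the paper's approach, where the lower bound is encapsulated in \Cref{lem:bound_fast_polyn_growth}. However, the way you derive that lower bound introduces a spurious $\varepsilon$-dependence that cannot be removed. Applying Peetre to the rescaled brackets, $\langle(y-z)/\varepsilon\rangle\lesssim\langle y/\varepsilon\rangle\langle z/\varepsilon\rangle$ followed by $\langle z/\varepsilon\rangle\le\langle R/\varepsilon\rangle$, leaves the factor $\langle R/\varepsilon\rangle^{-2\alpha}\sim(\varepsilon/R)^{2\alpha}$ in the lower bound, so your upper bound on $(V_\varepsilon\ast\rho^\varepsilon)^{m-1}$ carries the extra factor $\varepsilon^{d(1-m)}\langle R/\varepsilon\rangle^{2\alpha(1-m)}\sim R^{2\alpha(1-m)}\varepsilon^{(d-2\alpha)(1-m)}$. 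Since $\alpha>\tfrac{d}{2}$, the exponent $(d-2\alpha)(1-m)$ is strictly negative, so this factor diverges as $\varepsilon\to 0$ and cannot be absorbed into an $\varepsilon$-independent constant $C_{\alpha,m,R}$. Your concluding remark that ``all remaining $\varepsilon$ factors are nonnegative powers'' is exactly the claim that fails; absorbing $\max(1,R/\varepsilon)^{2\alpha(1-m)}$ into the constant is not legitimate.

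The fix is to avoid rescaling the bracket on the $B_R$-variable and instead exploit $\langle y\rangle\ge 1\ge\varepsilon$. For $z\in B_R$ and $0<\varepsilon\le1$,
\[
\langle(y-z)/\varepsilon\rangle^2=1+\frac{|y-z|^2}{\varepsilon^2}\le 1+\frac{2|y|^2}{\varepsilon^2}+\frac{2R^2}{\varepsilon^2}\le(3+2R^2)\,\frac{\langle y\rangle^2}{\varepsilon^2},
\]
where the point is that $\langle y\rangle^2/\varepsilon^2\ge1$ absorbs both the constant and the $R^2/\varepsilon^2$ term with an $\varepsilon$-independent prefactor. This yields $V_\varepsilon\ast\rho^\varepsilon(y)\ge\tfrac12 c\,(3+2R^2)^{-\alpha}\,\varepsilon^{-d}(\langle y\rangle/\varepsilon)^{-2\alpha}$, i.e. the content of \Cref{lem:bound_fast_polyn_growth}. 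Starting from that lower bound, your remaining Peetre step ($\langle y\rangle\lesssim\langle x-y\rangle\langle x\rangle$) and the rescaled convergent integral go through verbatim and give the claimed growth.
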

\begin{proof}
Using~\Cref{lem:bound_fast_polyn_growth}~and~\Cref{lem:peetre} we obtain
    \begin{equation}
        \begin{split}
            |\omega^\varepsilon(x)|&=\frac{m}{1-m}\left|\int_{\R^d}\nabla V_\varepsilon(x-y)(V_\varepsilon * \rhoe(y))^{m-1}\mathrm{d}y\right|\\
            &\le C_{\alpha,m,R}\int_\Rd |\nabla V_\varepsilon(x-y)|\left(\frac{\langle y\rangle}{\varepsilon}\right)^{2\alpha(1-m)}\diff y \\
            &\le C_{\alpha,m,R} \left(\frac{\langle x\rangle}{\varepsilon}\right)^{2\alpha(1-m)}\int_\Rd |\nabla V_\varepsilon(x-y)|\langle x-y\rangle^{2\alpha(1-m)}\diff y\\
            &\lesssim \frac{C_{\alpha,m,R}}{\varepsilon}\left(\frac{\langle x\rangle}{\varepsilon}\right)^{2\alpha(1-m)}\int_\Rd\langle z\rangle^{-(2\alpha m+1)}\diff z \le \frac{C_{\alpha,m,R}}{\varepsilon}\left(\frac{\langle x\rangle}{\varepsilon}\right)^{2\alpha(1-m)},
        \end{split}    
    \end{equation}
where we also used $\nabla V(x)\simeq|x|^{-(2\alpha+1)}$.
\end{proof}

Following the discussion in~\Cref{sec:particles}, we observe~\Cref{lem:growth_w_fast} suggests to choose $\alpha=\frac{1}{2(1-m)}$ in order to have linear growth and being able to have a control on the expansion of the support for the corresponding characteristic. We observe that for this exponent $\alpha$, the regularising kernel $V$ is exactly the Barenblatt profile for fast diffusion equations. Note that this choice of $\alpha$ satisfies the inequality
    \begin{equation}\label{eq:restriction_m_d_large}
    \frac{1}{2(1-m)}>\frac{d}{2}+\frac{1}{m} \iff dm^2+(3-d)m -2 >0.
    \end{equation}
    The quadratic inequality~\eqref{eq:restriction_m_d_large} is more restrictive than $m\in \left(\frac{d}{d+2},1\right)$. Therefore, throughout this section we shall assume $\alpha=\frac{1}{2(1-m)}$ for admissible $m<1$ such that~\eqref{eq:restriction_m_d_large} holds. 
    \begin{rem}\label{rem:growth_sigma_fast}
    In the case $V$ is a smooth compactly supported kernel, we can consider $\omega^{\varepsilon,\sigma}(x)=-m/(1-m)\nabla V_\varepsilon*((1-\sigma)V_\varepsilon*\rhoe+\sigma \mn)$, for $\mn$ as in~\eqref{eq:kernel_fast_diffusion}. Following the estimate in~\Cref{lem:bound_fast_polyn_growth} we obtain $|\omega^{\varepsilon,\sigma}(x)|\lesssim \langle x \rangle/\varepsilon$.
\end{rem}
   \noindent In this setting, the following results hold true. We omit the proof as it will follow the same strategy adopted in~\Cref{sec:particles}.
\begin{prop}
\label{prop:expand_fast}
Let $\varepsilon>0$, $V$ as in~\eqref{eq:kernel_fast_diffusion} for $\alpha=1/2(1-m)$ and $m<1$ satisfying~\eqref{eq:restriction_m_d_large}. Fix $\rho\in C([0,T];\mptrd)$. There exists a unique solution $X^{\varepsilon}(\rho):[0,T]\to \R^d$ to the characteristic equation
\begin{equation}\label{eq:ODE_eps_fast}
\frac{d}{dt}X^{\varepsilon}(\rho)(t) = -\omega^\varepsilon[\rho](X^{\varepsilon}(t)), \quad X^{\varepsilon}(0) = X_0 \in \R^d,
\end{equation}
satisfying the estimate
\[
\left\langle X^{\varepsilon}(\rho)(t)\right\rangle \le \left\langle X_0\right\rangle e^{C_{\varepsilon} t}, \quad \forall t\in[0,T],
\]
for $C_\varepsilon=(C_{m,R}/\varepsilon^2)$, where $C_{m,R}$ is the constant from~\Cref{lem:growth_w_fast}.
\end{prop}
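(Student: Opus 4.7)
The plan is to follow the same two-step strategy used for Corollary~\ref{cor:fin_exp_cpct_V}: first invoke Cauchy--Lipschitz to obtain local-in-time existence and uniqueness of the characteristic, then upgrade to a global solution on $[0,T]$ by exploiting the linear growth of $\omega^\varepsilon$ to derive a Grönwall-type differential inequality for $\langle X^\varepsilon\rangle^2$.

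First I would verify that the choice $\alpha = \frac{1}{2(1-m)}$ makes the exponent $2\alpha(1-m)=1$, so that the bound from~\Cref{lem:growth_w_fast} specialises to
\[
|\omega^\varepsilon(x)| \le \frac{C_{m,R}}{\varepsilon^2}\,\langle x\rangle,
\]
which is exactly linear growth. This is the crucial estimate for both global existence and the announced Grönwall bound. To guarantee local Lipschitzianity of $\omega^\varepsilon$ in the space variable, I would estimate $\nabla\omega^\varepsilon = -\tfrac{m}{1-m}\, D^2 V_\varepsilon \ast (V_\varepsilon\ast\rho)^{m-1}$ by the same computation used in the proof of~\Cref{lem:growth_w_fast}, namely: apply~\Cref{lem:bound_fast_polyn_growth} to control $(V_\varepsilon\ast\rho)^{m-1}$ pointwise by $C_{\alpha,m,R}\,(\langle\cdot\rangle/\varepsilon)^{2\alpha(1-m)}$, use~\Cref{lem:peetre} to move the bracket to the $x$-variable, and then control the remaining integral using the decay $|D^2 V(z)|\lesssim \langle z\rangle^{-(2\alpha+2)}$ together with the fact that $2\alpha m + 2 > d$ for the chosen~$\alpha$. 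This yields $|\nabla \omega^\varepsilon(x)| \le C_\varepsilon\,\langle x\rangle$, which is more than enough to apply Cauchy--Lipschitz on every ball, giving local existence and uniqueness of $X^\varepsilon(\rho)$.

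To conclude, I would differentiate $\langle X^\varepsilon(t)\rangle^2 = 1 + |X^\varepsilon(t)|^2$ along the ODE, use Cauchy--Schwarz and the linear bound on $|\omega^\varepsilon|$ to obtain
\[
\frac{d}{dt}\langle X^\varepsilon(t)\rangle^2 \;\le\; 2|X^\varepsilon(t)|\,|\omega^\varepsilon(X^\varepsilon(t))| \;\le\; \frac{2\,C_{m,R}}{\varepsilon^2}\,\langle X^\varepsilon(t)\rangle^2,
\]
and apply Grönwall's lemma to get $\langle X^\varepsilon(t)\rangle \le \langle X_0\rangle e^{C_\varepsilon t}$. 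Since this precludes finite-time blow-up, the local solution extends to all of $[0,T]$.

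The main obstacle is not the Grönwall step itself, which is routine, but rather justifying the pointwise lower bound on $V_\varepsilon\ast\rho$ that underlies~\Cref{lem:growth_w_fast} and the Lipschitz estimate. Because $m-1<0$, the function $t\mapsto t^{m-1}$ is singular at the origin, and the whole argument relies on the fact that $V$ is globally supported (cf.~\eqref{eq:kernel_fast_diffusion}) together with the uniform tightness of $\rho$, which together provide a uniform positive lower bound on $V_\varepsilon\ast\rho$ in the spirit of~\Cref{rem:gaussian}. This is precisely what determines the constant $R>0$ appearing in the statement, and the reason why the global-in-time existence on $[0,T]$ hinges on the compatibility between the chosen Barenblatt-type kernel and the range of admissible exponents $m$ dictated by~\eqref{eq:restriction_m_d_large}.
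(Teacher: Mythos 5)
Your proposal is correct and takes essentially the same approach as the paper intends: the paper states Proposition~\ref{prop:expand_fast} without proof, indicating that it follows the strategy of Section~\ref{sec:particles} (Corollary~\ref{cor:fin_exp_cpct_V}), which is exactly the route you lay out — Cauchy--Lipschitz for local existence and uniqueness, the linear growth bound $|\omega^\varepsilon(x)|\le (C_{m,R}/\varepsilon^2)\langle x\rangle$ from Lemma~\ref{lem:growth_w_fast} specialised to $\alpha = \tfrac{1}{2(1-m)}$ (so $2\alpha(1-m)=1$), and Grönwall on $\langle X^\varepsilon\rangle^2$ to obtain the estimate and rule out finite-time blow-up. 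The only difference is that you explicitly supply the local Lipschitz bound on $\nabla\omega^\varepsilon = -\tfrac{m}{1-m}D^2V_\varepsilon\ast(V_\varepsilon\ast\rho)^{m-1}$ and the discussion of the pointwise lower bound from Lemma~\ref{lem:bound_fast_polyn_growth}, details the paper leaves implicit.
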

\begin{prop}[Compactly supported JKO solutions]
    \label{prop:compact_support_fast_diffusion}
    Let $\rho_0\in\mptrd$ with $\supp\rho_0=B_R$, for $R>0$, and fix $\varepsilon>0$. Let $V$ be as in~\eqref{eq:kernel_fast_diffusion} for $\alpha=1/2(1-m)$ and $m<1$ satisfying~\eqref{eq:restriction_m_d_large}. The weak solution of~\eqref{eq:fast_diff_nonlocal} from~\Cref{thm:existence_weak_sol_jko_nonlocal_fast} can be represented for any $t\in[0,T]$ as $\rho_t^{\varepsilon}=(X_t^{\varepsilon}(\rhoe))_\#\rho_0$, for $X_t^{\varepsilon}(\rhoe)$ solution to~\eqref{eq:ODE_eps_fast}. It is compactly supported on the time interval $[0,T]$ and $\supp \rho^{\varepsilon}\subseteq B_{\langle R\rangle e^{C_{\varepsilon}T}}$, where $C_{\varepsilon}$ is the constant in~\Cref{prop:expand_fast}.
\end{prop}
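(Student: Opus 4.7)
The plan is to mimic the probabilistic representation argument already used in~\Cref{prop:compact_support}. Concretely, I will (i) verify that the weak solution $\rho^\varepsilon$ from~\Cref{thm:existence_weak_sol_jko_nonlocal_fast} is a narrowly continuous solution of the continuity equation with velocity field $\omega^\varepsilon$ satisfying the integrability hypothesis of~\Cref{prop:probabilitic_representation}; (ii) invoke the superposition principle to represent $\rho^\varepsilon$ via a plan $\bm\eta^\varepsilon\in\mP(\R^d\times\Xi_T)$ concentrated on ODE solutions; (iii) use uniqueness of the flow map for the given curve $\rho^\varepsilon$ to conclude $\bm\eta^\varepsilon=(\mathrm{id}\times X^\varepsilon_\cdot(\rho^\varepsilon))_\#\rho_0$, and finally read off the support bound from~\Cref{prop:expand_fast}.

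For step (i), once $\rho^\varepsilon\in C([0,T];\mptrd)$ is fixed, the vector field $\omega^\varepsilon$ is smooth in $x$ because $V_\varepsilon\in C^\infty$ and $V_\varepsilon*\rho^\varepsilon_t>0$ pointwise (the kernel~\eqref{eq:kernel_fast_diffusion} is globally positive and $\rho^\varepsilon_t$ is a probability measure), so the composition with $(\cdot)^{m-1}$ and the convolution with $\nabla V_\varepsilon$ yield a locally Lipschitz field which is continuous in $t$ through narrow continuity of $\rho^\varepsilon$. With the choice $\alpha=1/(2(1-m))$ the exponent $2\alpha(1-m)=1$, so~\Cref{lem:growth_w_fast} gives the linear growth $|\omega^\varepsilon(x)|\le (C_{m,R}/\varepsilon^{2})\langle x\rangle$. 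Combined with the uniform-in-time second moment bound $\sup_{t\in[0,T]}m_2(\rho^\varepsilon_t)<\infty$ furnished by the JKO construction, this yields
\[
\int_0^T\!\!\int_{\R^d}|\omega^\varepsilon_t(x)|^2\,\diff\rho^\varepsilon_t(x)\diff t\;\le\;\frac{C_{m,R}^2}{\varepsilon^4}\int_0^T\!\!\int_{\R^d}\langle x\rangle^2\,\diff\rho^\varepsilon_t(x)\diff t\;<\;\infty,
\]
i.e. the hypothesis of~\Cref{prop:probabilitic_representation} with $p=2$.

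For step (ii), the superposition principle produces $\bm\eta^\varepsilon\in\mP(\R^d\times\Xi_T)$ concentrated on pairs $(x,\gamma)$ with $\gamma\in AC^2((0,T);\R^d)$ solving $\dot\gamma(t)=-\omega^\varepsilon_t(\gamma(t))$ and $\gamma(0)=x$, and such that $\rho^\varepsilon_t=(\mathrm{e}_t)_\#\bm\eta^\varepsilon$. For step (iii), the locally Lipschitz and linearly growing field $\omega^\varepsilon[\rho^\varepsilon]$ admits, by~\Cref{prop:expand_fast}, a unique global flow map $X^\varepsilon_t(\rho^\varepsilon)$ on $[0,T]$. Consequently $\bm\eta^\varepsilon$ must coincide with the deterministic lift $(\mathrm{id}\times X^\varepsilon_\cdot(\rho^\varepsilon))_\#\rho_0$, and therefore $\rho^\varepsilon_t=(X^\varepsilon_t(\rho^\varepsilon))_\#\rho_0$ for every $t\in[0,T]$. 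The support inclusion then follows immediately from the Gr\"onwall estimate $\langle X^\varepsilon_t(\rho^\varepsilon)(x)\rangle\le\langle x\rangle e^{C_\varepsilon t}$ in~\Cref{prop:expand_fast}, since $\supp\rho_0=B_R$.

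The only genuinely delicate point I foresee is ensuring the linear growth of $\omega^\varepsilon$, which is precisely why the restriction~\eqref{eq:restriction_m_d_large} on $m$ is imposed: it is this restriction which makes the Barenblatt choice $\alpha=1/(2(1-m))$ compatible with the tail assumption $\alpha>d/2+1/m$ required for the existence theory in~\Cref{thm:existence_weak_sol_jko_nonlocal_fast}. All other steps are verbatim analogues of the heat equation case treated in~\Cref{prop:compact_support}.
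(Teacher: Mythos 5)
Your proposal is correct and is precisely the intended argument: the paper itself gives no explicit proof for this proposition, instead remarking that it ``follows the strategy detailed in Section 4,'' and what you have written is a faithful execution of that strategy (superposition principle with $p=2$, the $L^2$-integrability of $\omega^\varepsilon$ against $\rho^\varepsilon$ via the linear-growth bound from~\Cref{lem:growth_w_fast} at $\alpha=1/(2(1-m))$, uniqueness of the non-autonomous flow from~\Cref{prop:expand_fast} to identify $\bm\eta^\varepsilon$ with the deterministic lift, and the Gr\"onwall support bound). Your closing remark correctly identifies why the restriction~\eqref{eq:restriction_m_d_large} is needed, matching the discussion preceding the proposition in the paper.
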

We now focus on the computation of the directional derivatives, denoting by $\varrho_\beta$, $\beta\in[0,1]$ the geodesic interpolant between $\varrho_0$ and $\varrho_1$ as in \eqref{eq:geodesic_interpolaton}. This is important to establish $\lambda$-converxity of $\mume$ as we will show it holds:
\begin{equation}\label{eq:above_tangent_fast}
    \mume[\varrho_1] - \mume[\varrho_0] - \left.\frac{d}{d\beta}\right|_{\beta = 0}\mume[\varrho_\beta] \ge \frac{\lambda_{\eps}}{2}d_W^2(\varrho_0,\varrho_1),
\end{equation}
for $\lambda_\varepsilon$ computed in~\Cref{prop:convexity-energy_fast_nonlocal} below. 
\begin{prop}[Directional derivative of $\mume$]\label{prop:directional_deriv_fast}
Assume $\varrho_0,\varrho_1\in\mptrd$ with $\supp\varrho_0=B_{R_0}$ and $\supp\varrho_1=B_{R_1}$, for $R_0,R_1>0$. Fix $\varepsilon>0$ and $V$ as in~\eqref{eq:kernel_fast_diffusion} for $\alpha=1/2(1-m)$ and $m<1$ satisfying~\eqref{eq:restriction_m_d_large}. For any geodesic interpolant $\varrho_\beta$ the directional derivative is:
\[
\left.\frac{d}{d\beta}\right|_{\beta = 0}\mume[\varrho_\beta] = -\frac{m}{1-m}\int_\Rd(V_\varepsilon*\varrho_0)^{m-1}(x) \left(\iint_{\R^{2d}}(y_0-y_1)\cdot \nabla V_\varepsilon(x-y_0)\diff \gamma(y_0,y_1)\right) \diff x.
\]
For a geodesic interpolant $\varrho_{1-\beta}$ it holds:
\[
\left.\frac{d}{d\beta}\right|_{\beta = 0}\mume[\varrho_{1-\beta}] = \frac{m}{1-m} \int_\Rd(V_\varepsilon*\varrho_1)^{m-1}(x) \left(\iint_{\R^{2d}}(y_0-y_1)\cdot \nabla V_\varepsilon(x-y_1)\diff \gamma(y_0,y_1)\right) \diff x.
\]
\end{prop}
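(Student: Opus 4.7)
The plan is to mimic the argument of~\Cref{prop:deriv-eps-sigma}, replacing the logarithmic nonlinearity by $|\cdot|^{m}/(m-1)$. Using the identity $a^{m} - b^{m} = m(a-b)\int_{0}^{1}(sa + (1-s)b)^{m-1}\, \diff s$ with $a = V_{\varepsilon}\ast\varrho_{\beta}(x)$ and $b = V_{\varepsilon}\ast\varrho_{0}(x)$, I would first rewrite the incremental ratio as
\begin{equation*}
\frac{\mume[\varrho_{\beta}] - \mume[\varrho_{0}]}{\beta} = -\frac{m}{1-m}\int_{\Rd}\frac{V_{\varepsilon}\ast\varrho_{\beta}(x) - V_{\varepsilon}\ast\varrho_{0}(x)}{\beta}\, M^{\varepsilon}_{\beta}(x)\, \diff x,
\end{equation*}
where $M^{\varepsilon}_{\beta}(x) := \int_{0}^{1}\bigl(sV_{\varepsilon}\ast\varrho_{\beta}(x) + (1-s)V_{\varepsilon}\ast\varrho_{0}(x)\bigr)^{m-1}\, \diff s$. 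A second-order Taylor expansion of $y\mapsto V_{\varepsilon}(x - y)$ at $y = y_{0}$ then splits the difference quotient into the leading term $\iint_{\Rdd}(y_{0} - y_{1})\cdot\nabla V_{\varepsilon}(x - y_{0})\, \diff\gamma(y_{0}, y_{1})$ plus a remainder which, after division by $\beta$, is of order $\mathcal{O}(\beta)$ and involves $D^{2}V_{\varepsilon}$, in complete analogy with~\eqref{eq:diff_quotient_alpha_2} and the surrounding computation.

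Next I would pass to the limit $\beta\downarrow 0$ via dominated convergence. Pointwise, $M^{\varepsilon}_{\beta}(x) \to (V_{\varepsilon}\ast\varrho_{0})^{m-1}(x)$ by narrow convergence $\varrho_{\beta}\rightharpoonup\varrho_{0}$, Lipschitz regularity of $V_{\varepsilon}$, and continuity of $r\mapsto r^{m-1}$ on $(0,\infty)$. The main technical point is to produce an integrable majorant for $M^{\varepsilon}_{\beta}$, whose potential singularity comes from $m-1<0$. Since every $\varrho_{\beta}$ is supported in $B_{\max(R_{0},R_{1})}$, so is $s\varrho_{\beta} + (1-s)\varrho_{0}$ for all $s,\beta\in[0,1]$, and~\Cref{lem:bound_fast_polyn_growth} provides the uniform lower bound
\begin{equation*}
sV_{\varepsilon}\ast\varrho_{\beta}(x) + (1-s)V_{\varepsilon}\ast\varrho_{0}(x)\ge c_{R}\langle x\rangle^{-2\alpha},
\end{equation*}
yielding $|M^{\varepsilon}_{\beta}(x)|\le C_{R}\langle x\rangle^{2\alpha(1-m)}$. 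Combining with $|\nabla V_{\varepsilon}(z)|\lesssim\langle z\rangle^{-(2\alpha + 1)}$ and~\Cref{lem:peetre}, the integrand in $\diff x\, \diff\gamma$ is dominated by $C\,\langle y_{0}\rangle^{2\alpha(1-m)}\, \langle x - y_{0}\rangle^{-(2\alpha m + 1)}\, |y_{0} - y_{1}|$, which lies in $L^{1}(\diff x\, \diff\gamma)$ because $\gamma$ has compactly supported marginals and $2\alpha m + 1 > d$ under the standing assumption $\alpha > d/2 + 1/m$. An analogous estimate using $|D^{2}V_{\varepsilon}(z)|\lesssim\langle z\rangle^{-(2\alpha + 2)}$ shows the Taylor remainder contributes an $\mathcal{O}(\beta)$ term that vanishes in the limit, completing the first identity.

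The formula for $\varrho_{1-\beta}$ is obtained by repeating the argument after swapping $\varrho_{0}\leftrightarrow\varrho_{1}$ and introducing the reversed plan $\tilde\gamma := (\pi^{2}, \pi^{1})_{\#}\gamma$; a change of variables $(y_{0}, y_{1})\leftrightarrow(y_{1}, y_{0})$ brings the result into the stated form, with the sign flip reflecting that $\varrho_{1-\beta}|_{\beta=0} = \varrho_{1}$. The main obstacle is therefore not the Taylor expansion but producing the uniform-in-$\beta$ lower bound on the convex combination of convolutions; this is tailor-made for the Barenblatt-type kernel~\eqref{eq:kernel_fast_diffusion} and exactly explains the compatibility between the decay of $\nabla V_{\varepsilon}$ and the blow-up of $(\cdot)^{m-1}$ required to close the integrability estimate.
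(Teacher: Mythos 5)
Your proposal is correct and follows essentially the same route as the paper's proof: mean value theorem for the nonlinearity, second-order Taylor expansion of $V_\varepsilon$ along the transport, and dominated convergence aided by the Barenblatt-type decay of $V_\varepsilon$ together with \Cref{lem:bound_fast_polyn_growth} and \Cref{lem:peetre}. The only technical variation is in how you control $M_\beta^\varepsilon$ uniformly in $\beta$: you apply \Cref{lem:bound_fast_polyn_growth} directly to the convex combination $s\varrho_\beta + (1-s)\varrho_0$ (which is legitimate, since convex combinations of probability measures supported in $B_{\max(R_0,R_1)}$ are again such measures), whereas the paper first exploits that $r\mapsto r^{m-1}$ is decreasing to bound $(sV_\varepsilon*\varrho_\beta + (1-s)V_\varepsilon*\varrho_0)^{m-1} \le (1-s)^{m-1}(V_\varepsilon*\varrho_0)^{m-1}$, integrates $(1-s)^{m-1}$ over $s\in[0,1]$ (finite since $m>0$), and only then invokes the lemma for $\varrho_0$ alone; both yield the same majorant $C_R\langle x\rangle^{2\alpha(1-m)}$ and close the argument identically.
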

\begin{proof}
The proof follows closely the one of~\Cref{prop:deriv-eps-sigma} applied to the functional $\mume$. The technical difference compared to the heat equation is essentially keeping track of the behaviour at infinity of $|D^2 V|(x)\simeq|x|^{-2(\alpha+1)}$. After applying the mean value theorem, we need to bound the reminder term. Let us set
\[
\mathcal{R}_\beta(x):=\beta^2\iint_{\R^{2d}}\left\langle\left\{\int_0^1\int_0^\tau D^2V_\varepsilon(x-[(1-\beta h)y+\beta h z]) \diff h  \diff \tau\right\}(y-z),(y-z)\right\rangle\, \diff \gamma(y,z).
\]
By means of~\Cref{lem:bound_fast_polyn_growth} applied to $V_{\eps}\ast \varrho_0$ with $\alpha=1/2(1-m)$ and the change of variable $x\mapsto x+[(1-\beta h)y+\beta hz]$, we have
\begin{align*}
    \left|\int_\Rd \int_0^1\right.&\left| (sV_\varepsilon*\varrho_\beta+(1-s)V_\varepsilon*\varrho_0)^{m-1}\mathcal{R}_\beta(x)\diff s\diff x\right|\\
    &\le \int_0^1 (1-s)^{m-1} \diff s\, \int_\Rd  (V_\varepsilon*\varrho_0)^{m-1}\left|\mathcal{R}_\beta(x)\right|\diff x\\
    &\le \frac{C_{m,R}}{\varepsilon} \int_\Rd\left\langle x\right\rangle|\mathcal{R}_\beta(x)|\diff x\\
    &\le \frac{C_{m,R}}{\varepsilon} \beta^2\!\!\iint_{\Rdd}|y-z|^2\int_0^1\int_0^\tau\int_\Rd\left\langle x\right\rangle \left|D^2V_\varepsilon(x-[(1-\beta h)y+\beta h z]\right|\diff x \diff h\diff \tau\diff\gamma\\
    &=\frac{C_{m,R}}{\varepsilon} \beta^2\!\!\iint_{\Rdd}|y-z|^2\int_0^1\int_0^\tau\int_\Rd\left\langle x+[(1-\beta h)y+\beta h z]\right\rangle \left|D^2V_\varepsilon(x)\right|\diff x \diff h\diff \tau\diff\gamma\\
    &=\frac{C_{m,R}}{\varepsilon^3} \beta^2\!\!\iint_{\Rdd}|y-z|^2\int_0^1\int_0^\tau\int_\Rd\left\langle \varepsilon x+[(1-\beta h)y+\beta h z]\right\rangle \left|D^2V(x)\right|\diff x \diff h\diff \tau\diff\gamma\\
    &\le \frac{C_{m,R}}{\varepsilon^3} \beta^2\!\!\iint_{\Rdd}|y-z|^2\int_0^1\int_0^\tau\int_\Rd\left\langle \varepsilon x+[(1-\beta h)y+\beta h z]\right\rangle \langle x 
    \rangle^{-2(\alpha+1)}\diff x \diff h\diff \tau\diff\gamma,
\end{align*}
with $C_{m,R}$ and $R$ specified in~\Cref{lem:bound_fast_polyn_growth}. 
\Cref{lem:peetre} implies
\[
\langle \varepsilon x+[(1-h)y+hz]\rangle \lesssim \langle \varepsilon x \rangle \langle  (1-h)y + hz \rangle,
\]
so that
\begin{align*}
    \int_0^1\int_0^\tau\int_\Rd &\left\langle \varepsilon x+[(1-h)y+hz]\right\rangle \langle x\rangle^{-2(\alpha+1)} \diff x \diff h\diff\tau\\
    &\lesssim \int_0^1\int_0^\tau\left\langle  (1-h)y+ hz \right\rangle\int_{\R^d}\left\langle x \right\rangle^{1-2(\alpha+1)}\diff x \diff h \diff\tau\\
    &\lesssim(1+|y|+|z|)\int_{\R^d}\left\langle x \right\rangle^{-(2\alpha+1)}\diff x \lesssim(1+|y|+|z|),
\end{align*}
since $\alpha=\frac{1}{2(1-m)}>\frac{d-1}{2}$ as $m>\frac{d}{d+2}>\frac{d-2}{d}$. Summarising, the remainder term can be bounded as follows
\begin{align*}
    \left|\int_\Rd\right.&\left| \int_0^1 (sV_\varepsilon*\varrho_\beta+(1-s)V_\varepsilon*\varrho_0)^{m-1}\mathcal{R}_\beta(x)\diff s\diff x\right|\\
    &\qquad \qquad\lesssim \frac{C_{m,R}}{\varepsilon^3} \beta^2\!\!\iint_{\Rdd}|y-z|^2\int_0^1\int_0^\tau\int_\Rd\left\langle \varepsilon x+[(1-\beta h)y+\beta h z]\right\rangle \langle x 
    \rangle^{-2(\alpha+1)}\diff x \diff h\diff \tau\diff\gamma\\
    &\qquad \qquad\lesssim\frac{C_{m,R}}{\varepsilon^3} \beta^2(1+|R_0|+|R_1|)\iint_{\Rdd}|y-z|^2\diff \gamma(y,z).
\end{align*}
This information is enough to conclude the proof by following the same argument in~\Cref{prop:deriv-eps-sigma}.
\end{proof}
\begin{prop}\label{prop:convexity-energy_fast_nonlocal}
    Assume $\varrho_0,\varrho_1\in\mptrd$ with $\supp\varrho_0=B_{R_0}$ and $\supp\varrho_1=B_{R_1}$, for $R_0,R_1>0$. Let $\varepsilon>0$ and $V$ as in~\eqref{eq:kernel_fast_diffusion} for $\alpha=1/2(1-m)$ and $m<1$ satisfying~\eqref{eq:restriction_m_d_large}. The functional $\mume$ is $\lambda_\varepsilon$-convex along the geodesic connecting $\varrho_0$ with $\varrho_1$, for
\begin{equation}\label{eq:lambda-eps_fast}
\lambda_\varepsilon=-\varepsilon^{-3}C_{m,R}(1+R_0+R_1),
\end{equation}
for a constant $C_{m,R}=C(m,R)$ and $0<\varepsilon\ll 1$ where $C_{m,R}$ is the constant from~\Cref{lem:growth_w_fast} applied to $\varrho_0$.
\end{prop}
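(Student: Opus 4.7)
The plan mirrors the strategy used for the heat equation in Proposition~\ref{prop:convexity-energy}: we establish the ``above the tangent line'' inequality~\eqref{eq:above_tangent_fast} for the pair $(\varrho_0,\varrho_1)$ along the displacement interpolant $\varrho_\beta$ determined by the optimal plan $\gamma\in\Gamma_0(\varrho_0,\varrho_1)$. Since the density $F(x)=-x^m/(1-m)$ is convex for $m\in(0,1)$, we have the pointwise inequality
\[
-\frac{b^m}{1-m}+\frac{a^m}{1-m}\geq -\frac{m}{1-m}\,a^{m-1}(b-a), \qquad a,b>0,
\]
which, applied to $a=V_\varepsilon*\varrho_0$ and $b=V_\varepsilon*\varrho_1$ and combined with the formula from \Cref{prop:directional_deriv_fast} and the marginal representation $V_\varepsilon*\varrho_i(x)=\int V_\varepsilon(x-y_i)\,\diff\gamma(y_0,y_1)$, gives
\begin{equation*}
\mume[\varrho_1]-\mume[\varrho_0]-\left.\tfrac{d}{d\beta}\right|_{\beta=0}\!\mume[\varrho_\beta]\geq -\tfrac{m}{1-m}\!\int_{\R^d}\!(V_\varepsilon*\varrho_0)^{m-1}(x)\!\iint_{\Rdd}\!E_\gamma(x,y_0,y_1)\diff\gamma\,\diff x,
\end{equation*}
where $E_\gamma(x,y_0,y_1):=V_\varepsilon(x-y_1)-V_\varepsilon(x-y_0)-(y_0-y_1)\cdot\nabla V_\varepsilon(x-y_0)$.

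A second-order Taylor expansion of $s\mapsto V_\varepsilon(x-y_0-s(y_1-y_0))$ on $[0,1]$ yields
\[
E_\gamma(x,y_0,y_1)=\int_0^1\!\!\int_0^\tau\!\big\langle D^2 V_\varepsilon(x-y_0-s(y_1-y_0))(y_1-y_0),(y_1-y_0)\big\rangle\,\diff s\,\diff\tau,
\]
reducing the estimate to controlling
\[
\mathcal{I}:=\iint_{\Rdd}\!|y_0-y_1|^2\!\left\{\int_0^1\!\!\int_0^\tau\!\!\!\int_{\R^d}\!(V_\varepsilon*\varrho_0)^{m-1}(x)\,|D^2 V_\varepsilon(x-y_0-s(y_1-y_0))|\,\diff x\,\diff s\,\diff\tau\right\}\diff\gamma.
\]
For the pre-factor we apply \Cref{lem:bound_fast_polyn_growth} with the sharp choice $\alpha=\tfrac{1}{2(1-m)}$, which yields $(V_\varepsilon*\varrho_0)^{m-1}(x)\leq C_{m,R}\,\langle x\rangle/\varepsilon$; for the Hessian we use the scaling $D^2 V_\varepsilon(z)=\varepsilon^{-d-2}D^2V(z/\varepsilon)$ together with the pointwise decay $|D^2V(w)|\lesssim\langle w\rangle^{-2(\alpha+1)}$.

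Performing the change of variables $x=y_0+s(y_1-y_0)+\varepsilon w$ and applying \Cref{lem:peetre} gives $\langle x\rangle\lesssim\langle w\rangle\,\langle y_0+s(y_1-y_0)\rangle$, hence the inner $x$-integral is bounded by
\[
\frac{C_{m,R}}{\varepsilon^{3}}\,\langle y_0+s(y_1-y_0)\rangle\int_{\R^d}\langle w\rangle^{1-2(\alpha+1)}\diff w,
\]
the last integral being finite precisely when $\alpha>(d-1)/2$; this is exactly assumption~\eqref{eq:restriction_m_d_large} for the admissible choice $\alpha=\tfrac{1}{2(1-m)}$. Since $\supp\varrho_i\subseteq B_{R_i}$, we then have $\langle y_0+s(y_1-y_0)\rangle\lesssim 1+R_0+R_1$, and plugging this into $\mathcal{I}$ and optimising over $\gamma\in\Gamma_0(\varrho_0,\varrho_1)$ produces
\[
\mume[\varrho_1]-\mume[\varrho_0]-\left.\tfrac{d}{d\beta}\right|_{\beta=0}\!\mume[\varrho_\beta]\geq -\varepsilon^{-3}C_{m,R}(1+R_0+R_1)\,d_W^2(\varrho_0,\varrho_1),
\]
which is~\eqref{eq:above_tangent_fast} with the claimed $\lambda_\varepsilon$. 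The main technical obstacle is that $(V_\varepsilon*\varrho_0)^{m-1}$ blows up at infinity because $m-1<0$; this is precisely why condition~\eqref{eq:restriction_m_d_large} is imposed, allowing enough decay in the Hessian of the Barenblatt-type kernel~\eqref{eq:kernel_fast_diffusion} to absorb both the growth of $(V_\varepsilon*\varrho_0)^{m-1}$ (via \Cref{lem:bound_fast_polyn_growth}) and the weight $\langle x\rangle$ needed by the Peetre splitting. The symmetric bound, where one exchanges the roles of $\varrho_0$ and $\varrho_1$ (using the companion directional derivative $\left.\tfrac{d}{d\beta}\right|_{\beta=0}\mume[\varrho_{1-\beta}]$), follows verbatim and completes the proof of $\lambda_\varepsilon$-convexity.
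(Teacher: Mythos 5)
Your argument is correct and follows essentially the same route as the paper's proof: the pointwise convexity bound for $F(x)=-x^m/(1-m)$, the directional derivative from \Cref{prop:directional_deriv_fast}, a second-order Taylor expansion of $V_\varepsilon$, the growth estimate from \Cref{lem:bound_fast_polyn_growth} with the sharp exponent $\alpha=1/2(1-m)$, the Hessian scaling plus Peetre's inequality, and the compact supports to extract $(1+R_0+R_1)$. (One small imprecision: the finiteness of $\int\langle w\rangle^{1-2(\alpha+1)}\,\diff w$ needs only $\alpha>(d-1)/2$, which is \emph{implied by} — not identical to — the stronger condition~\eqref{eq:restriction_m_d_large}, so the phrase ``exactly assumption~\eqref{eq:restriction_m_d_large}'' should read ``guaranteed by assumption~\eqref{eq:restriction_m_d_large}''.)
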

\begin{proof}
    Convexity of $-|\cdot|^m$,~\Cref{prop:directional_deriv_fast}, a Taylor expansion of $V_\varepsilon$, and~\Cref{lem:bound_fast_polyn_growth} applied to $\varrho_0$ with $\alpha=1/2(1-m)$ imply that 
    \begin{align*}
     &\mume[\varrho_1] - \mume[\varrho_0] - \left.\frac{d}{d\beta}\right|_{\beta = 0}\mume[\varrho_\beta] \\
    &\ge -\frac{m}{1-m}\int_{\R^d}\left|V_\varepsilon*\varrho_0(x)\right|^{m-1}\times\dots \\
    &\quad\times\left(
\iint_{\R^{2d}}|y_0-y_1|^2 \left\{\int_0^1\int_0^\tau\left|D^2V_\varepsilon(x-[(1-h)y_0+hy_1])\right|\diff h\diff \tau\right\}\diff \gamma(y_0,y_1)
    \right) \diff x\\
    &\gtrsim -\frac{m}{1-m}\frac{C_{m,R}}{\varepsilon}\int_\Rd\iint_{\R^{2d}}|y_0-y_1|^2\int_0^1 \int_0^\tau \left\langle x\right\rangle\left|D^2V_\varepsilon(x-[(1-\beta h)y_0+\beta h y_1])\right| \diff h\,\diff \tau\diff \gamma\diff x.
    \end{align*}
Arguing as in~\Cref{prop:directional_deriv_fast}, we can further bound from below as follows:
\begin{align*}
    \mume[\varrho_1]& - \mume[\varrho_0] - \left.\frac{d}{d\beta}\right|_{\beta = 0}\mume[\varrho_\beta]\\
    &\gtrsim -\frac{m}{(1-m)}\frac{C_{m,R}}{\varepsilon^3}\iint_{\Rdd}|y_0-y_1|^2(1+|y_0|+|y_1|)\int_\Rd\left\langle x\right\rangle^{-(2\alpha+1)}\diff x\diff\gamma(y_0,y_1)\\
    &\gtrsim-\frac{m}{(1-m)}\frac{C_{m,R}}{\varepsilon^3} (1+R_0+R_1)\iint_{\Rdd}|y_0-y_1|^2\diff\gamma(y_0,y_1)
\end{align*}
whence we infer $\lambda_\varepsilon=-C_{m,R}\varepsilon^{-3}(1+R_0+R_1)$, for a constant $C_{m,R}=C(m,R)$.
\end{proof}
A stability estimate in $d_W$ as in~\Cref{prop:stability_dW} can be proven for solutions to~\eqref{eq:fast_diff_nonlocal}. This is crucial for our particle approximation to~\eqref{eq:fast_diff}. We state the results below with no proof as they follow the strategy detailed in~\Cref{sec:particles}.
\begin{prop}\label{prop:stability_dW_fast}
Let $\varepsilon>0$ and $V$ as in~\eqref{eq:kernel_fast_diffusion} for $\alpha=1/2(1-m)$ and $m<1$ satisfying~\eqref{eq:restriction_m_d_large}. Consider $\rho_i^{\varepsilon}:[0,T]\to\mptrd$, $i=1,2$, weak solutions to~\eqref{eq:fast_diff_nonlocal} with initial datum $\rho_{i,0}\in\mptrd$ such that $supp\rho_{i,0}=B_{R_i}$, for $R_i>0$ and $i=1,2$. It holds:
\begin{equation}\label{eq:stability_lambda-T_fast}
    d_W(\rho_{1,t}^{\varepsilon},\rho_{2,t}^{\varepsilon})\le e^{-\lambda_{\varepsilon}^Tt}d_W(\rho_{1,0},\rho_{2,0}), \quad \forall\ t\in[0,T],
\end{equation}
being
    \begin{equation}\label{eq:lambda-eps-T-fast}
        \lambda_{\varepsilon}^T=-\varepsilon^{-3}C_{m,R}(1+\langle R_1\rangle e^{C_{\varepsilon}T}+\langle R_2\rangle e^{C_{\varepsilon}T}),
    \end{equation}
for $C_\varepsilon=(C_{m,R}/\varepsilon^2)$ and $C_{m,R}=C(m,R)$ the maximum of the constants in~\Cref{lem:growth_w_fast} applied to $\rho_{1,t}^{\varepsilon}$ and $\rho_{2,t}^{\varepsilon}$.    
\end{prop}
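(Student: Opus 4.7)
The plan is to mirror the proof of Proposition~\ref{prop:stability_dW} for the heat equation, adapting the arguments to the functional $\mume$ and its singular first variation $(V_\varepsilon*\rho)^{m-1}$. Recall from~\Cref{prop:compact_support_fast_diffusion} that, for $i=1,2$, the solutions admit the representation $\rho_{i,t}^\varepsilon=(X_{i,t}^\varepsilon)_\#\rho_{i,0}$ with $X_{i,t}^\varepsilon$ the unique flow of the ODE $\dot X_{i,t}^\varepsilon=-\omega^\varepsilon[\rho_i^\varepsilon](X_{i,t}^\varepsilon)$, and $\mathrm{supp}\,\rho_{i,t}^\varepsilon\subseteq B_{\langle R_i\rangle e^{C_\varepsilon T}}$. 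Fix an optimal plan $\gamma_0\in\Gamma_0(\rho_{1,0},\rho_{2,0})$ for $d_W$ and set
\[
\gamma_t^\varepsilon=(X_{1,t}^\varepsilon\times X_{2,t}^\varepsilon)_\#\gamma_0 \in \Gamma(\rho_{1,t}^\varepsilon,\rho_{2,t}^\varepsilon),
\]
so that $d_W^2(\rho_{1,t}^\varepsilon,\rho_{2,t}^\varepsilon)\le\iint|X_{1,t}^\varepsilon(x)-X_{2,t}^\varepsilon(y)|^2\,\diff\gamma_0(x,y)$.

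The first step is to differentiate this upper bound in time. Since both flow maps are $C^1$ in $t$ and, by~\Cref{lem:growth_w_fast} together with the support bound from~\Cref{prop:compact_support_fast_diffusion}, $|\omega^\varepsilon[\rho_i^\varepsilon](X_{i,t}^\varepsilon)|\le C_{\alpha,m,R}\varepsilon^{-1}\langle\langle R_i\rangle e^{C_\varepsilon T}/\varepsilon\rangle^{2\alpha(1-m)}$ is uniformly bounded on $[0,T]\times\mathrm{supp}\,\gamma_0$, the bound $|\frac{d}{dt}|X_{1,t}^\varepsilon-X_{2,t}^\varepsilon|^2|\in L^\infty([0,T];L^1(d\gamma_0))$ allows to exchange derivative and integral, giving
\[
\frac{d}{dt}\iint|X_{1,t}^\varepsilon(x)-X_{2,t}^\varepsilon(y)|^2\,\diff\gamma_0=-2\iint\bigl\langle X_{1,t}^\varepsilon(x)-X_{2,t}^\varepsilon(y),\,\omega^\varepsilon[\rho_{1,t}^\varepsilon](X_{1,t}^\varepsilon(x))-\omega^\varepsilon[\rho_{2,t}^\varepsilon](X_{2,t}^\varepsilon(y))\bigr\rangle\,\diff\gamma_0.
\]
By Fubini, using the explicit form $\omega^\varepsilon[\rho]=-\frac{m}{1-m}\nabla V_\varepsilon*(V_\varepsilon*\rho)^{m-1}$ and pushing forward from $\gamma_0$ to $\gamma_t^\varepsilon$, the right-hand side coincides precisely with $2\bigl(\frac{d}{d\beta}|_{\beta=0}\mume[\varrho_{t,\beta}^\varepsilon]-\frac{d}{d\beta}|_{\beta=0}\mume[\varrho_{t,1-\beta}^\varepsilon]\bigr)$, where $\varrho_{t,\beta}^\varepsilon=((1-\beta)\pi^1+\beta\pi^2)_\#\gamma_t^\varepsilon$ is the (displacement) interpolant between $\rho_{1,t}^\varepsilon$ and $\rho_{2,t}^\varepsilon$; this is exactly the identity used in the heat equation case, applied now to the formulas in~\Cref{prop:directional_deriv_fast}.

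Then, combining the ``above the tangent line'' inequality~\eqref{eq:above_tangent_fast} derived from~\Cref{prop:convexity-energy_fast_nonlocal} with its reverse version (obtained swapping the roles of $\rho_{1,t}^\varepsilon$ and $\rho_{2,t}^\varepsilon$), and plugging in the support bounds $R_i\leadsto\langle R_i\rangle e^{C_\varepsilon T}$ from~\Cref{prop:compact_support_fast_diffusion}, yields
\[
\frac{d}{d\beta}\Big|_{\beta=0}\mume[\varrho_{t,\beta}^\varepsilon]-\frac{d}{d\beta}\Big|_{\beta=0}\mume[\varrho_{t,1-\beta}^\varepsilon]\le -\lambda_\varepsilon^T\iint|x-y|^2\,\diff\gamma_t^\varepsilon(x,y)= -\lambda_\varepsilon^T\iint|X_{1,t}^\varepsilon(x)-X_{2,t}^\varepsilon(y)|^2\,\diff\gamma_0,
\]
with $\lambda_\varepsilon^T$ as in~\eqref{eq:lambda-eps-T-fast}. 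Gr\"onwall's inequality applied to the resulting differential inequality $\frac{d}{dt}\iint|X_{1,t}^\varepsilon-X_{2,t}^\varepsilon|^2\,\diff\gamma_0\le -2\lambda_\varepsilon^T\iint|X_{1,t}^\varepsilon-X_{2,t}^\varepsilon|^2\,\diff\gamma_0$, combined with $d_W^2(\rho_{1,0},\rho_{2,0})=\iint|x-y|^2\,\diff\gamma_0$ (optimality of $\gamma_0$), gives~\eqref{eq:stability_lambda-T_fast}.

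The main obstacle is not the scheme but the justification of the identification of $\iint\langle X_{1,t}^\varepsilon-X_{2,t}^\varepsilon,\nabla V_\varepsilon*(V_\varepsilon*\rho_{i,t}^\varepsilon)^{m-1}(X_{i,t}^\varepsilon)\rangle\,\diff\gamma_0$ with the directional derivatives of $\mume$: here one must invoke the positive lower bound $V_\varepsilon*\rho_{i,t}^\varepsilon\ge c_{\varepsilon,R}\langle\cdot\rangle^{-2\alpha}$ from~\Cref{lem:bound_fast_polyn_growth} (ensured by the uniform tightness of $\rho_{i,t}^\varepsilon$) to make $(V_\varepsilon*\rho_{i,t}^\varepsilon)^{m-1}$ a well-defined polynomially growing function on which the arguments of~\Cref{prop:directional_deriv_fast} can be applied, and to control the remainder terms in the Taylor expansion with an $L^1(\diff x\,\diff\gamma_t^\varepsilon)$ majorant, exactly as in the derivation carried out there.
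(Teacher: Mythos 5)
The paper gives no proof of this result, stating only that it follows the strategy of Section~\ref{sec:particles} (i.e.\ Proposition~\ref{prop:stability_dW}), which is exactly what your proposal does: you use the Lagrangian representation from Proposition~\ref{prop:compact_support_fast_diffusion}, differentiate the $d_W^2$ upper bound along the paired flows, identify the result with the directional derivatives of Proposition~\ref{prop:directional_deriv_fast}, invoke the $\lambda_\varepsilon$-convexity of Proposition~\ref{prop:convexity-energy_fast_nonlocal} with the time-expanded supports, and conclude by Gr\"onwall. Your proof is correct and is precisely the argument the authors intend.
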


\begin{thm}[Particle approximation to~\eqref{eq:fast_diff}]
\label{thm:particle_approx_fast}
Let $N\in\mathbb{N}$, $T>0$. Let $V$ as in~\eqref{eq:kernel_fast_diffusion} for $\alpha=1/2(1-m)$ and $m<1$ satisfying~\eqref{eq:restriction_m_d_large}. Let $\rho_0\in \mpdtard$ such that $\supp\rho_0=B_{R_0}$, for $R_0>0$, $\mum[\rho_0]<\infty$, $\mh[\rho_0]<\infty$. Let 
\begin{equation*}
\rho^{\varepsilon, N}_t := \frac{1}{N}\sum_{j=1}^N \delta_{x^j_{\varepsilon}(t)}\qquad \mbox{and} \qquad \rho^{N}_0 := \frac{1}{N}\sum_{j=1}^N \delta_{x^j_0},
\end{equation*}
where $\{x^i_{\varepsilon,\sigma}\}$ satisfy the following ODE system
\[
\dot{x}^i_{\varepsilon}(t) = -\frac{m}{1-m} \int_{\R^d} \nabla V_\varepsilon(x^i_{\varepsilon}(t) - y) \left(
\frac{1}{N}\sum_{j=1}^NV_\varepsilon(y - x^j_{\varepsilon}(t))\right)^{m-1} \diff y,
\]
with initial conditions $x^i_{\varepsilon}(0) = x^i_0$ such that $d_W(\rho_0^N, \rho_0) \leq \frac{1}{N}$. Let $N = N(\eps)$ satisfy
\begin{equation}\label{eq:dependence_N_eps_fast_diff}
N(\eps)\simeq \exp\left(\exp\left(\varepsilon^{-1/\gamma}\right)\right) \quad \mbox{ or equivalently } \quad \varepsilon\simeq(\log (\log N))^{-\gamma}
\end{equation}
for $\gamma \in \left(0, \frac{1}{2}\right)$. Then $\rho^{\varepsilon, N(\eps)}_t$ converges narrowly to $\rho_t$, the weak solution of~\eqref{eq:fast_diff}.
\end{thm}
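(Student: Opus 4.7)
The strategy mirrors that of Theorem \ref{thm:particle_approx}: decompose
\[
d_W(\rho^{\varepsilon,N}_t,\rho_t)\le d_W(\rho^{\varepsilon,N}_t,\rho^{\varepsilon}_t)+d_W(\rho^{\varepsilon}_t,\rho_t),
\]
where $\rho^{\varepsilon}$ denotes the weak solution of~\eqref{eq:fast_diff_nonlocal} from~\Cref{thm:existence_weak_sol_jko_nonlocal_fast} with initial datum $\rho_0$. The second term vanishes as $\varepsilon\to0$ thanks to~\Cref{thm:eps_to_0_fast}, which only uses that $\mathcal{U}^m[\rho_0]<\infty$. For the first term we want to apply the $\lambda_{\varepsilon}^T$-stability estimate~\eqref{eq:stability_lambda-T_fast} from~\Cref{prop:stability_dW_fast} with $\rho^{\varepsilon}_{1,t}=\rho^{\varepsilon,N}_t$ and $\rho^{\varepsilon}_{2,t}=\rho^{\varepsilon}_t$, so that
\[
d_W(\rho^{\varepsilon,N}_t,\rho^{\varepsilon}_t)\le e^{-\lambda_\varepsilon^T t}\, d_W(\rho^N_0,\rho_0)\le\frac{e^{-\lambda_\varepsilon^T T}}{N}.
\]

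The first task is to verify the applicability of~\Cref{prop:stability_dW_fast} to the particle measure. Since $V$ in~\eqref{eq:kernel_fast_diffusion} is globally supported and positive, $V_\varepsilon*\rho^{\varepsilon,N}_t>0$ pointwise, so the vector field $\omega^\varepsilon[\rho^{\varepsilon,N}](x)=-\tfrac{m}{1-m}\nabla V_\varepsilon*(V_\varepsilon*\rho^{\varepsilon,N})^{m-1}(x)$ is well defined and smooth. The ODE system for $\{x^i_\varepsilon\}$ is exactly the characteristic system for~\eqref{eq:fast_diff_nonlocal} with initial data concentrated on $\rho^N_0$, which is compactly supported in $B_{R_0+1}$ for $N$ large (by $d_W(\rho^N_0,\rho_0)\le1/N$ and tightness). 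Hence $\rho^{\varepsilon,N}_t=(X^\varepsilon_t(\rho^{\varepsilon,N}))_{\#}\rho^N_0$ is a weak measure solution of~\eqref{eq:fast_diff_nonlocal} with support controlled as in~\Cref{prop:compact_support_fast_diffusion}. The same applies to $\rho^\varepsilon_t$ (with support $B_{\langle R_0\rangle e^{C_\varepsilon T}}$). The $\lambda_\varepsilon$-convexity of $\mathcal{U}^m_\varepsilon$ restricted to compactly supported measures (\Cref{prop:convexity-energy_fast_nonlocal}) together with the probabilistic representation~\eqref{eq:rep_formula} yields~\eqref{eq:stability_lambda-T_fast}, exactly as in~\Cref{prop:stability_dW}.

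It remains to verify the scaling. From~\eqref{eq:lambda-eps-T-fast} with $C_\varepsilon=C_{m,R}/\varepsilon^2$ we have, for $\varepsilon$ small,
\[
-\lambda_\varepsilon^T\;\simeq\;\varepsilon^{-3}\,\langle R_0\rangle\, e^{C_{m,R} T/\varepsilon^2},
\]
so the convergence $e^{-\lambda_\varepsilon^T T}/N\to 0$ requires
\[
\frac{C_{m,R} T}{\varepsilon^2}+O(\log(1/\varepsilon))<\log\log N.
\]
With the choice $\varepsilon\simeq(\log\log N)^{-\gamma}$ this becomes $C_{m,R} T\,(\log\log N)^{2\gamma}<\log\log N$, which holds for $N$ large whenever $2\gamma<1$, i.e. $\gamma\in(0,1/2)$. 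Combining the two bounds in the triangle inequality yields $d_W(\rho^{\varepsilon(N),N}_t,\rho_t)\to0$ uniformly in $t\in[0,T]$; since narrow convergence is implied by $d_W$-convergence on sequences with uniformly bounded second moments, we conclude.

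The principal obstacle is the doubly-exponential dependence on $N$, which originates from two compounded sources: first, the modulus of convexity $\lambda_\varepsilon\sim-\varepsilon^{-3}$ in~\Cref{prop:convexity-energy_fast_nonlocal}, degraded by a factor $1+R$ accounting for the support radius; second, the exponential expansion of this support in time, itself driven by the linear growth of $\omega^\varepsilon$ with constant $\varepsilon^{-2}$ from~\Cref{lem:growth_w_fast}. The restriction $\gamma<1/2$ (versus $\gamma<1$ in the heat equation case) directly reflects this worsened $\varepsilon^{-2}$ scaling, which stems from the need to control $(V_\varepsilon*\rho)^{m-1}$ uniformly away from the origin, quantified through the polynomial lower bound in~\Cref{lem:bound_fast_polyn_growth}.
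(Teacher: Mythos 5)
Your proof is correct and follows essentially the same route as the paper: triangle-inequality decomposition into a discrete-to-continuum error controlled by the $\lambda_\varepsilon^T$-stability estimate of \Cref{prop:stability_dW_fast} and a nonlocal-to-local error handled by \Cref{thm:eps_to_0_fast}, followed by the scaling computation showing that $\gamma<1/2$ makes $e^{-\lambda_\varepsilon^T T}/N\to 0$. You supply considerably more detail than the paper (whose proof is only a few lines) --- in particular, the verification that $d_W(\rho_0^N,\rho_0)\le 1/N$ forces $\supp\rho_0^N\subset B_{R_0+1}$ for large $N$, so \Cref{prop:stability_dW_fast} is applicable to the particle measure --- and you implicitly correct a sign typo in the paper's display ($e^{-\varepsilon^{-2}T}$ should read $e^{\varepsilon^{-2}T}$, as your $-\lambda_\varepsilon^T\simeq\varepsilon^{-3}\langle R_0\rangle e^{C_{m,R}T/\varepsilon^2}$ makes clear).
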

\begin{proof}
Comparing to the proof of Theorem \ref{thm:particle_approx}, we notice that this time
$$
\lambda_\varepsilon^T \simeq -\varepsilon^{-3}\left(1+e^{-\varepsilon^{-2}T}\right) \simeq - e^{-\varepsilon^{-2}T}.
$$
Hence, choosing $N(\eps)$ as in \eqref{eq:dependence_N_eps_fast_diff} we deduce
$
\frac{1}{N(\eps)} \, e^{-\lambda_\varepsilon^T\, T} \to 0
$
and the proof is concluded.
\end{proof}
\begin{rem}
In the case of a $\sigma$ perturbation, we would need $\gamma<1$ in \eqref{eq:dependence_N_eps_fast_diff} because of the constant in the linear growth of the velocity field, cf.~\Cref{rem:growth_sigma_fast}. We also point out that for the fast diffusion case the rate we obtain in~\eqref{eq:dependence_N_eps_fast_diff} is worse than the rate for the heat equation in~\eqref{eq:asymptotics_N_global_V}, since we do not have boundedness of the velocity field as in~\Cref{lem:bound_velocity} when computing $\nabla (V_\varepsilon\ast\rho)^{m-1}$.
\end{rem}

\begin{appendices}
    \section{Auxiliary results}\label{sec:appendix}
\subsection{The Peetre's inequality}
The lemma below allows to control quantities related to the bracket $\langle \cdot \rangle$, cf.~\cite{BN73} for further details.
   \begin{lem}[Peetre's inequality]
    \label{lem:peetre}
        For any $p\in \R$ and vectors $x,y\in\R^d$, we have
        \[
        \frac{\langle x\rangle^p}{\langle y\rangle^p}\le 2^{\frac{|p|}{2}}\langle x - y\rangle^{|p|}.
        \]
    \end{lem}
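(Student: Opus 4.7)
The plan is to first establish the basic multiplicative inequality
$$
\langle x\rangle \le \sqrt{2}\,\langle y\rangle\,\langle x-y\rangle \qquad \forall x,y\in\R^d,
$$
and then derive the full statement by raising to the power $p$ when $p\ge 0$ and swapping the roles of $x$ and $y$ when $p<0$.

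For the basic inequality, I would write $x = y + (x-y)$ and use the elementary bound $|a+b|^2 \le 2|a|^2+2|b|^2$ to get
$$
\langle x\rangle^2 = 1+|x|^2 \le 1 + 2|y|^2 + 2|x-y|^2.
$$
Then I would compare this to
$$
2\,\langle y\rangle^2\langle x-y\rangle^2 = 2(1+|y|^2)(1+|x-y|^2) = 2 + 2|y|^2 + 2|x-y|^2 + 2|y|^2|x-y|^2,
$$
and observe that the difference is $1 + 2|y|^2|x-y|^2 \ge 0$, so $\langle x\rangle^2 \le 2\langle y\rangle^2\langle x-y\rangle^2$. Taking square roots yields the claimed multiplicative inequality.

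Now I would split into two cases. If $p\ge 0$, raising to the $p$-th power and dividing gives directly
$$
\frac{\langle x\rangle^p}{\langle y\rangle^p} \le 2^{p/2}\,\langle x-y\rangle^p = 2^{|p|/2}\langle x-y\rangle^{|p|}.
$$
If $p<0$, I would set $q=-p=|p|>0$ and apply the inequality just proved with $x$ and $y$ interchanged (using $\langle x-y\rangle = \langle y-x\rangle$):
$$
\frac{\langle x\rangle^p}{\langle y\rangle^p} = \frac{\langle y\rangle^q}{\langle x\rangle^q} \le 2^{q/2}\,\langle y-x\rangle^q = 2^{|p|/2}\langle x-y\rangle^{|p|}.
$$

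There is no real obstacle here — the whole argument reduces to one application of $|a+b|^2\le 2|a|^2+2|b|^2$ plus a case split on the sign of $p$, and the statement follows in a few lines.
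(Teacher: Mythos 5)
Your proof is correct and follows essentially the same route as the paper's: both establish the elementary bound $\langle x\rangle^2\le 2\langle y\rangle^2\langle x-y\rangle^2$ via $|a+b|^2\le 2|a|^2+2|b|^2$ and then raise to the power $|p|$, handling $p<0$ by exchanging the roles of $x$ and $y$. No gaps.
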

    Of course, since $\langle y\rangle = \sqrt{1+|y|^2} = \sqrt{1 + |-y|^2} = \langle -y\rangle$, Peetre's inequality can also be expressed with $\langle x+y\rangle$ on the right-hand side.
    \begin{proof}
        We reproduce the proof from~\cite[Lemma 44]{CDDW20} for completeness. We fix vectors $a,b\in\R^d$ (which will be precised later on in terms of $x,y$) and begin with the case $p=2$. By Young's inequality, we get
        \begin{align*}
            1 + |a-b|^2 \le 1 + 2|a|^2 + 2|b|^2 \le 2 + 2|a|^2 + 2|a|^2|b|^2 + 2|b|^2 = 2(1+|a|^2)(1+|b|^2).
        \end{align*}
        Dividing both sides by $\langle b\rangle^2 = 1+|b|^2$ gives
        \[
        \frac{\langle a - b\rangle^2}{\langle b\rangle^2}\le 2\langle a\rangle^2.
        \]
        Setting $a = x-y$ and $b = -y$ gives
        \[
        \frac{\langle x\rangle^2}{\langle y\rangle^2}\le 2\langle x-y\rangle^2.
        \]
        Raising both sides of this inequality by non-negative powers proves Peetre's inequality for general $p\ge 0$. On the other hand, with the choice of $a = x-y$ and $b = x$, we get
        \[
        \frac{\langle y\rangle^2}{\langle x\rangle^2}\le 2\langle x-y\rangle^2.
        \]
        Raising both sides of this inequality by non-negative powers proves Peetre's inequality for general $p\le 0$.
    \end{proof}

\subsection{Growth control on nonlocal quantities} Here we state estimates that allow to control the growth of several quantities arising in the velocity fields of our nonlocal PDEs~\eqref{eq:nonlocal_he_gauss},~\eqref{eq:nonlocal_he}, and~\eqref{eq:fast_diff_nonlocal}.

    \begin{lem}
        \label{lem:log_gauss}
        Fix $\sigma>0, \rho \in L^\infty(\R^d)$ such that $\rho(x) \ge 0$, and $\mn(x) = \exp(-\langle x\rangle^p)$ for some $p\ge 1$. Then, there exists a constant $C=C(\sigma, \|\rho\|_{L^\infty})>0$ (explicitly computable as below) such that
        \[
        |\log ((1-\sigma)\rho(x) + \sigma\mn(x))|\le C\langle x\rangle^p, \quad \text{for a.e. }x\in\R^d.
        \]
        In particular, when $0<\sigma \ll 1$, the constant $C$ behaves like
        \[
        C\simeq |\log \sigma| + |\log \|\rho\|_{L^\infty}|.
        \]
    \end{lem}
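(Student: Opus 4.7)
The plan is to split the estimate into an upper and a lower bound on $u(x) := (1-\sigma)\rho(x) + \sigma\mathcal{N}(x)$ and then combine them using the elementary fact that $\langle x\rangle^p \ge 1$ for all $x\in\R^d$.

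For the upper bound on $\log u(x)$, I would simply note that $\mathcal{N}(x) = e^{-\langle x\rangle^p} \le e^{-1} \le 1$ and that $\rho(x)\le \|\rho\|_{L^\infty}$, so
\[
u(x) \le (1-\sigma)\|\rho\|_{L^\infty} + \sigma \le \|\rho\|_{L^\infty} + 1,
\]
whence $\log u(x) \le \log(1+\|\rho\|_{L^\infty})$. For the lower bound, the point is that the lift by $\sigma \mathcal{N}$ stays strictly positive with an explicit tail. Since $\rho \ge 0$, we have $u(x) \ge \sigma\mathcal{N}(x) = \sigma e^{-\langle x\rangle^p}$, which yields
\[
-\log u(x) \le -\log\sigma + \langle x\rangle^p = |\log \sigma| + \langle x\rangle^p,
\]
using $\sigma \in (0,1)$.

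Combining these two one-sided bounds gives
\[
|\log u(x)| \le \max\!\bigl(\log(1+\|\rho\|_{L^\infty}),\; |\log\sigma| + \langle x\rangle^p\bigr) \le \bigl(\log(1+\|\rho\|_{L^\infty}) + |\log\sigma| + 1\bigr)\langle x\rangle^p,
\]
where in the last inequality I use $\langle x\rangle^p \ge 1$ to absorb the purely additive constants. This establishes the claim with an explicit constant $C = \log(1+\|\rho\|_{L^\infty}) + |\log\sigma| + 1$, whose asymptotic behavior as $\sigma \downarrow 0$ is $C \simeq |\log\sigma| + |\log\|\rho\|_{L^\infty}|$, matching the stated scaling. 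There is no substantial obstacle here: the lemma is essentially a direct computation and the only subtle point is correctly handling the sign of $\log u$ in the two regimes $u \ge 1$ versus $u \le 1$, which is precisely what the maximum above encodes.
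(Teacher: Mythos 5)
Your proof is correct and follows essentially the same approach as the paper's: bound $\log u$ from above via $\rho\le\|\rho\|_{L^\infty}$, from below via $u\ge\sigma\mathcal{N}$, and combine using $\langle x\rangle^p\ge 1$. The only (cosmetic) improvement is that your upper bound $\log(1+\|\rho\|_{L^\infty})$ is nonnegative by construction, which avoids the paper's extra case analysis on the sign of $C_1=\log((1-\sigma)\|\rho\|_{L^\infty}+\sigma)$ and actually yields a slightly smaller constant when $\|\rho\|_{L^\infty}<1$.
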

    \begin{proof}
        Since $\log$ is increasing, we immediately get the following upper bound for almost every $x\in\R^d$
        \[
        \log ((1-\sigma)\rho(x) + \sigma\mn(x)) \le \log ((1-\sigma)\|\rho\|_{L^\infty} + \sigma)
        =:C_1.
        \]
        As for the lower bound, again by the monotonicity of $\log$, we get
        \[
        \log((1-\sigma)\rho(x) + \sigma\mn(x)) \ge \log (\sigma\mn(x)) =\log \sigma  - \langle x \rangle^p.
        \]
        More precisely, since $\langle x\rangle \ge 1$, we can further lower bound this by
        \[
        \log((1-\sigma)\rho + \sigma \mn) \ge - (|\log\sigma| + \langle x\rangle^p) \ge - (1 + |\log \sigma|)\langle x\rangle^p.
        \]
        Therefore, taking
        \[
        C := \max \{|C_1|, \, 1 + |\log \sigma|\}
        \]
        gives
        \[
        |\log((1-\sigma)\rho + \sigma \mn_1)| \le C\langle x\rangle^p.
        \]
        To finish the proof, notice that for $\|\rho\|_{L^\infty}\leq 1$ the convexity of $|\log(x)|$ implies $|C_1|\leq |\log (\|\rho\|_{L^\infty})|$. If $\|\rho\|_{L^\infty}\geq 1$ we can make use of the subaditivity of $\log(x+1)$ for $x\geq 0$ to obtain
$$
C_1\leq \log (\|\rho\|_{L^\infty}+\sigma)\leq  \log (\|\rho\|_{L^\infty})+ \log (\sigma+1).
$$        
        These estimates imply the behaviour of $C$ for $\sigma $ near zero as claimed.
    \end{proof}
\begin{lem}
    \label{lem:log_conv_glob_rescaling}
    Let $\mn(x) = \exp(-\langle x\rangle^p)$, for $p\in[1,\infty)$, $0<\varepsilon\leq 1$, and $\rho \in \mptrd$. Let $R>0$ be such that $\rho(B_R)\ge 1/2$ and consider $\mn_\varepsilon(x) = \varepsilon^{-d}\mn\left(\frac{x}{\varepsilon}\right)$. There exists a constant $C_{p,R}>0$ depending on $p$ and $R$ such that
\begin{equation}
    \label{eq:log_conv_glob_rescaling}
    |\log (\mn_\varepsilon * \rho(x))| \leq C_{p,R} \left(\frac{\langle x\rangle}{\varepsilon}\right)^p.
\end{equation}
\end{lem}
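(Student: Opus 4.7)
The plan is to prove the bound in two parts, corresponding to controlling the positive and the negative part of $\log(\mn_\varepsilon \ast \rho(x))$ separately. For the upper bound, I would observe that $\mn_\varepsilon \ast \rho(x) \leq \|\mn_\varepsilon\|_{L^\infty_x} = \varepsilon^{-d}\mn(0) = \varepsilon^{-d}e^{-1}$, so taking logarithms yields $\log(\mn_\varepsilon \ast \rho(x)) \leq d|\log \varepsilon| + 1$. Since $\langle x\rangle \geq 1$ and $\varepsilon\leq 1$, we have $(\langle x\rangle/\varepsilon)^p \geq \varepsilon^{-p}$, and $|\log \varepsilon| \leq C_p \varepsilon^{-p}$ on $(0,1]$ (because $\varepsilon^p|\log\varepsilon|$ is bounded there); this dominates the upper bound by $C_p(\langle x\rangle/\varepsilon)^p$.

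For the lower bound, the strategy is to restrict the convolution to the ball $B_R$ where $\rho$ carries half the mass. Writing
\[
\mn_\varepsilon \ast \rho(x) \geq \int_{B_R} \mn_\varepsilon(x-y)\,\diff \rho(y) \geq \tfrac{1}{2}\,\varepsilon^{-d}\inf_{y\in B_R}\mn\!\left(\tfrac{x-y}{\varepsilon}\right),
\]
it suffices to estimate $\langle (x-y)/\varepsilon\rangle^p$ uniformly for $|y|\leq R$. I would split cases on whether $|x|\geq R$ or $|x|<R$: via $|x-y|\leq |x|+R$ and $\varepsilon\leq 1$ one has $\langle (x-y)/\varepsilon\rangle \leq 2\max(\langle x\rangle, \langle R\rangle)/\varepsilon$, which in either case is bounded by $2\langle R\rangle(\langle x\rangle/\varepsilon)$ after using that $\langle x\rangle \geq 1$ absorbs the constant term. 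Raising to the $p$ and exponentiating,
\[
\mn_\varepsilon \ast \rho(x) \geq \tfrac{1}{2}\,\varepsilon^{-d}\exp\!\left(-2^p \langle R\rangle^p (\langle x\rangle/\varepsilon)^p\right),
\]
and taking logarithms gives $-\log(\mn_\varepsilon \ast \rho(x)) \leq \log 2 + d|\log\varepsilon| + 2^p\langle R\rangle^p(\langle x\rangle/\varepsilon)^p$, which is controlled by $C_{p,R}(\langle x\rangle/\varepsilon)^p$ thanks to the same comparison $|\log \varepsilon|\lesssim_p \varepsilon^{-p}\leq (\langle x\rangle/\varepsilon)^p$.

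The main technical subtlety, which I would treat carefully in the write-up, is the comparison between $\langle (x-y)/\varepsilon\rangle$ and $(\langle x\rangle/\varepsilon)$ rather than $\langle x/\varepsilon\rangle$: a naive application of Peetre's inequality (Lemma~\ref{lem:peetre}) in the form $\langle (x-y)/\varepsilon\rangle \leq \sqrt{2}\langle x/\varepsilon\rangle\langle y/\varepsilon\rangle$ produces an extra factor $\langle R/\varepsilon\rangle^p \sim \varepsilon^{-p}$ which is not absorbable into $(\langle x\rangle/\varepsilon)^p$. The case-split argument above — using the elementary inequality $\langle a\rangle/\varepsilon \geq \max(1/\varepsilon,\langle a\rangle)$ for $\varepsilon\leq 1$ to fold the $\langle R\rangle^p$ contribution into the prefactor instead of the exponent — is what makes the constant depend only on $p$ and $R$ (and not on $\varepsilon$). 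With these estimates combined, one obtains the claimed bound with $C_{p,R}$ of the order of $2^p\langle R\rangle^p$ plus universal corrections from $\log 2$ and $d|\log\varepsilon|$.
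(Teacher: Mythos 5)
Your proof is correct and follows essentially the same route as the paper's: upper bound from $\|\mn_\varepsilon\|_{L^\infty}\le\varepsilon^{-d}$, lower bound by restricting the convolution to $B_R$ and estimating $\langle(x-y)/\varepsilon\rangle^p$ by $C_{p,R}(\langle x\rangle/\varepsilon)^p$ via the triangle inequality $|x-y|\le|x|+R$. Your remark about why a naive application of Peetre's inequality would produce an unabsorbable $\varepsilon^{-p}$ factor makes explicit a subtlety the paper handles implicitly by the same triangle-inequality workaround.
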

\begin{proof}
For the upper bound, since $\|\mn_\varepsilon\|_{L^\infty} \le \varepsilon^{-d}$, we have
    \[
    \log(\mn_\varepsilon*\rho(x)) = \log \left(\int_{\R^d} \mn_\varepsilon(x-y) \, \mathrm{d}\rho(y)\right) \le \log \int_{\R^d} \varepsilon^{-d}\,\mathrm{d}\rho(y) = -d\log \varepsilon.
    \]
For the lower bound, we use the triangle inequality to estimate
$$
\mn_\varepsilon*\rho(x) = \int_{\R^d} \mn_{\varepsilon}(x-y) \diff \rho(y) \geq  \int_{B_R} \mn_{\varepsilon}(x-y) \diff \rho(y) \geq \frac{1}{2}\, \exp\left(-C_p\,\left\langle \frac{x}{\eps} \right\rangle^p - C_p\,\frac{R^p}{\eps^p}\right)\,.
$$
Applying the logarithm on both sides, we obtain
$$
\log(\mn_\varepsilon*\rho(x)) \geq - \log2 - C_p\,\left\langle \frac{x}{\eps} \right\rangle^p  - C_p\,\frac{R^p}{\eps^p}. 
$$
Collecting both the upper and lower bounds leads to the desired claim.
\end{proof}
\begin{lem}\label{lem:bound_fast_polyn_growth}
    Let $V$ be as in~\eqref{eq:kernel_fast_diffusion}, $0<\varepsilon\le 1$, and $\rho\in\mptrd$. Let $R>0$ be such that $\rho(B_R)\ge 1/2$ and consider $V_\varepsilon(x)=\varepsilon^{-d}V\left(\frac{x}{\varepsilon}\right)$. There exists a constant $C_{\alpha,m,R}>0$ depending on $\alpha$, $m$, and $R$ such that, for any $x\in\Rd$, $V_\varepsilon*\rho(x)>0$ and
    \[
    |V_\varepsilon*\rho(x)|^{m-1}\le C_{\alpha,m,R}\left(\frac{\langle x \rangle}{\varepsilon}\right)^{2\alpha(1-m)}.
    \]
\end{lem}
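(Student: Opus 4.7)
The plan is to obtain the claim by producing a pointwise lower bound on $V_\varepsilon * \rho(x)$ of the form $C_{\alpha,R}\,\varepsilon^{2\alpha-d}\langle x\rangle^{-2\alpha}$ and then raising to the negative exponent $m-1$. Using the explicit form $V(x) = c\langle x\rangle^{-2\alpha}$, one rewrites
\[
V_\varepsilon(z) = c\,\varepsilon^{-d}\left(1 + \tfrac{|z|^2}{\varepsilon^2}\right)^{-\alpha} = \frac{c\,\varepsilon^{2\alpha-d}}{(\varepsilon^2 + |z|^2)^{\alpha}}.
\]
Since $0 < \varepsilon \le 1$ gives $\varepsilon^2 + |z|^2 \le 1 + |z|^2 = \langle z\rangle^2$, one immediately obtains the pointwise lower bound
\[
V_\varepsilon(x-y) \;\ge\; c\,\varepsilon^{2\alpha-d}\,\langle x-y\rangle^{-2\alpha}, \qquad \forall\, x,y\in\Rd.
\]

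Next I would localize to the ball $B_R$ guaranteed by hypothesis. For $y\in B_R$, Peetre's inequality (Lemma~\ref{lem:peetre}) yields $\langle x-y\rangle \le \sqrt{2}\,\langle x\rangle\,\langle y\rangle \le \sqrt{2}\,\langle R\rangle\,\langle x\rangle$, so $\langle x-y\rangle^{-2\alpha} \ge 2^{-\alpha}\,\langle R\rangle^{-2\alpha}\,\langle x\rangle^{-2\alpha}$. Combining with the previous display and the assumption $\rho(B_R)\ge 1/2$,
\[
V_\varepsilon*\rho(x) \;\ge\; \int_{B_R} V_\varepsilon(x-y)\,\diff\rho(y) \;\ge\; \tfrac{c}{2}\cdot 2^{-\alpha}\langle R\rangle^{-2\alpha}\,\varepsilon^{2\alpha-d}\,\langle x\rangle^{-2\alpha} \;=:\; A_{\alpha,R}\,\varepsilon^{2\alpha-d}\,\langle x\rangle^{-2\alpha} > 0,
\]
which in particular gives the strict positivity claimed in the statement.

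Finally, raising this lower bound to the negative power $m-1 < 0$ flips the inequality and yields
\[
|V_\varepsilon*\rho(x)|^{m-1} \;\le\; A_{\alpha,R}^{\,m-1}\,\varepsilon^{-(2\alpha-d)(1-m)}\,\langle x\rangle^{2\alpha(1-m)} \;=\; A_{\alpha,R}^{\,m-1}\,\varepsilon^{d(1-m)}\left(\tfrac{\langle x\rangle}{\varepsilon}\right)^{2\alpha(1-m)}.
\]
Since $d(1-m)>0$ and $\varepsilon\le 1$, the factor $\varepsilon^{d(1-m)}$ is bounded by $1$, and setting $C_{\alpha,m,R}:=A_{\alpha,R}^{m-1}$ closes the argument. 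The only real accounting to do is keeping the powers of $\varepsilon$ consistent: the lower bound has $\varepsilon^{2\alpha-d}$ while the target scales like $\varepsilon^{2\alpha}$, and the bookkeeping above shows these differ exactly by the harmless factor $\varepsilon^{d(1-m)}\le 1$. There is no genuine obstacle beyond this: the hypothesis $\alpha > d/2 + 1/m$ is not even needed for this particular lemma; what is used is purely the explicit polynomial tail of $V$ and the tightness quantified by $\rho(B_R)\ge 1/2$.
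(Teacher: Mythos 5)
Your proof is correct and follows essentially the same route as the paper: localize to $B_R$ using $\rho(B_R)\ge 1/2$ to obtain the lower bound $V_\varepsilon*\rho(x)\gtrsim \varepsilon^{2\alpha-d}\langle x\rangle^{-2\alpha}$, then raise to the negative power $m-1<0$. The only cosmetic differences are that the paper derives the intermediate bound via a power-mean estimate on the rescaled bracket $\langle (x-y)/\varepsilon\rangle^{2\alpha}$ rather than Peetre on the unrescaled one, and that it leaves the harmless factor $\varepsilon^{d(1-m)}\le 1$ implicit, which you correctly spell out.
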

\begin{proof}
    Using the triangle inequality, $\alpha>1$, and fixing $|y|\le R$, we have
    \begin{align*}
        \left\langle \frac{x-y}{\varepsilon} \right\rangle^{2\alpha} \le \left(2\left\langle \frac{x}{\varepsilon}\right\rangle^2+2\left|\frac{y}{\varepsilon}\right|^2\right)^\alpha\le C_\alpha\left(\left\langle \frac{x}{\varepsilon}\right\rangle^{2\alpha}+\left|\frac{R}{\varepsilon}\right|^{2\alpha}\right),
    \end{align*}
    whence
    \[
    V_\varepsilon*\rho(x)\ge \varepsilon^{-d}C_\alpha \left(\frac{\langle x\rangle^{2\alpha}}{\varepsilon^{2\alpha}}+\left|\frac{R}{\varepsilon}\right|^{2\alpha}\right)^{-1}\ge \varepsilon^{-d}C_{\alpha,R}\frac{\left\langle x\right\rangle^{-2\alpha}}{\varepsilon^{-2\alpha}}>0.
    \]
Since $0<m<1$, we obtain the result
\[
|V_\varepsilon*\rho|^{m-1}\le C_{\alpha,m,R}\frac{\left\langle x\right\rangle^{2\alpha(1-m)}}{\varepsilon^{2\alpha(1-m)}}.
\]
\end{proof}
\subsection{Compactness results}
We state now an extension of the Dominated Convergence Theorem~\cite[Chapter 4, Theorem 17]{R88}. This is quoted with less generality for our purposes.

\begin{thm}[Extended Dominated Convergence Theorem]
\label{thm:EDCT}
Let $\{f^\sigma\}_{\sigma>0}$ and $\{g^\sigma\}_{\sigma>0}$ be sequences of measurable functions on a (Lebesgue) measurable set $X\subset\R^n$ such that $g^\sigma\ge 0$ and suppose that there exist measurable functions $f, \, g$ satisfying the following assumptions.
\begin{enumerate}
	\item $|f^\sigma(y)|\le g^\sigma(y)$ for all $\sigma>0$ and pointwise almost every $y\in X$.
	\item $f^\sigma(y) \to f(y)$ and $g^\sigma(y) \to g(y)$ pointwise almost every $y\in X$ as $\sigma\to 0$.
	\item $\int_{X}g^\sigma(y) \mathrm{d}y \to \int_{X} g(y) \mathrm{d}y$ as $\sigma\to 0$.
\end{enumerate}
Then, we have $\int_{X}f^\sigma(y) \mathrm{d}y \to \int_{X} f(y)\mathrm{d}y$ as $\sigma\to 0$.
\end{thm}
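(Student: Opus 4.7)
The plan is to invoke Fatou's lemma twice, applied to the two sequences of non-negative functions $g^\sigma + f^\sigma$ and $g^\sigma - f^\sigma$, which by assumption (1) are both pointwise non-negative almost everywhere on $X$. By assumption (2), these two sequences converge pointwise almost everywhere to $g + f$ and $g - f$, respectively; in particular, passing the pointwise bound $|f^\sigma|\le g^\sigma$ to the limit yields $|f|\le g$ a.e., so $f$ is integrable whenever $g$ is (which we may freely assume, since assumption (3) already forces $g\in L^1(X)$ by taking limits).

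Next, I would apply Fatou's lemma to obtain the two one-sided inequalities
\begin{equation*}
\int_X (g+f)\,\diff y \le \liminf_{\sigma\to 0}\int_X (g^\sigma + f^\sigma)\,\diff y,\qquad \int_X (g-f)\,\diff y \le \liminf_{\sigma\to 0}\int_X (g^\sigma - f^\sigma)\,\diff y.
\end{equation*}
Using assumption (3), namely $\int_X g^\sigma\,\diff y \to \int_X g\,\diff y$, the first inequality rearranges to $\int_X f\,\diff y \le \liminf_{\sigma\to 0}\int_X f^\sigma\,\diff y$, and the second (after bringing the $\liminf(-f^\sigma)=-\limsup f^\sigma$ to the other side) rearranges to $\limsup_{\sigma\to 0}\int_X f^\sigma\,\diff y \le \int_X f\,\diff y$. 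Combining the two yields
\begin{equation*}
\limsup_{\sigma\to 0}\int_X f^\sigma\,\diff y \le \int_X f\,\diff y \le \liminf_{\sigma\to 0}\int_X f^\sigma\,\diff y,
\end{equation*}
so $\int_X f^\sigma\,\diff y \to \int_X f\,\diff y$ as $\sigma \to 0$, which is the claim.

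There is no real obstacle here: the only subtlety is that assumption (3) is used crucially to subtract $\int_X g^\sigma$ from the Fatou inequality and pass to a limit (rather than just a $\liminf$), which is exactly the role of the generalised dominating hypothesis. Since the statement is elementary and classical, the proof is a short and direct two-line application of Fatou's lemma, with no measure-theoretic complications beyond checking that the non-negativity and pointwise convergence hypotheses of Fatou's lemma are met on a set of full measure (achieved by intersecting the three almost-everywhere sets coming from (1) and (2)).
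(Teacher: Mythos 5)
Your proof is the standard textbook argument for Royden's generalised dominated convergence theorem, and the paper simply cites this result from~\cite{R88} without giving a proof, so there is nothing to compare against: your two-sided Fatou argument applied to $g^\sigma \pm f^\sigma$ is exactly the canonical approach.

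One imprecision is worth flagging: you assert that hypothesis (3) \emph{forces} $g\in L^1(X)$, but it does not. Condition (3) only says $\int_X g^\sigma \to \int_X g$, and this limit can perfectly well be $+\infty$ (for instance $g^\sigma \equiv 1$ on $X=\R$), in which case the subtraction step in your argument --- cancelling $\int_X g$ from both sides of the Fatou inequalities --- is illegitimate, and indeed the conclusion fails (take $f^\sigma = \mathds{1}_{[1/\sigma,\,1/\sigma+1]}$, $f=0$: all hypotheses hold yet $\int f^\sigma =1\not\to 0$). The integrability of $g$ is thus a genuinely additional hypothesis, left implicit in the paper's statement but present in Royden's original formulation; your proof is complete and correct once that hypothesis is made explicit rather than claimed to follow from (3).
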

\noindent We also recall the Vitali convergence theorem.
\begin{thm}\label{thm:vitali}
Let $(X,\mathcal{F},\mu)$ be a measure space (with possibly $\mu(X) = \infty$). Let $p \in [1,\infty)$, $\{f_n\}_{n \in \mathbb{N}} \subset L^p(X,\mathcal{F},\mu)$ and $f$ be an $\mathcal{F}$-measurable function. Then, $f_n \to f$ in $L^p(X,\mathcal{F},\mu)$ if and only if:
\begin{enumerate}[label=(W\arabic*)]
\item $f_n \to f$ in measure,
    \item\label{ass:vitali:uni_int} $\{f_n\}_{n \in \mathbb{N}}$ is uniformly integrable, i.e.
$$
\forall{\varepsilon>0} \, \exists{\delta>0}: \, \forall{A \in \mathcal{F}} \, \qquad \mu(A) < \delta \implies \sup_{n \in \mathbb{N}} \int_{A} |f_n|^p \diff \mu < \varepsilon,
$$
    \item\label{ass:vitali:tight} $\{f_n\}_{n \in \mathbb{N}}$ is tight in $L^p(X,\mathcal{F},\mu)$, i.e. 
$$
\forall{\varepsilon>0} \, \exists{\text{compact set }K_{\eps} \subset X } \, \qquad   \sup_{n \in \mathbb{N}} \int_{X\setminus K_{\eps}} |f_n|^p \diff \mu < \varepsilon.
$$   
\end{enumerate} 
\end{thm}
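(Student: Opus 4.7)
The plan is to prove the two implications separately. The forward direction is standard; the challenging direction is sufficiency of (W1)--(W3), which I will handle by a three-set decomposition of $X$ that isolates the behaviour on a large compact set versus the tail at infinity, and on a ``thick'' bad set where $f_n$ is far from $f$ versus a ``good'' set where it is close.

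For the forward direction, assume $f_n \to f$ in $L^p$. Convergence in measure (W1) is immediate from Markov's inequality, $\mu(\{|f_n-f|>\varepsilon\}) \le \varepsilon^{-p}\|f_n-f\|_{L^p}^p$. For uniform integrability (W2), I would split $\int_A|f_n|^p \diff\mu \le 2^{p-1}\|f_n-f\|_{L^p}^p + 2^{p-1}\int_A|f|^p\diff\mu$ and, after choosing $N$ large so the first term is small for $n \ge N$, use absolute continuity of the finitely many integrals $\int|f|^p\diff\mu, \int|f_1|^p\diff\mu, \dots, \int|f_{N-1}|^p\diff\mu$ to obtain a common $\delta$. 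Tightness (W3) follows analogously, replacing the set $A$ of small measure by the complement of a sufficiently large compact set that captures most of the mass of each of the finitely many functions involved.

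For the backward direction, assume (W1)--(W3). First I would extract a subsequence converging $\mu$-a.e. to $f$ via (W1) and combine Fatou's lemma with a uniform $L^p$ bound $\sup_n\|f_n\|_{L^p}<\infty$ to conclude $f\in L^p$. The uniform bound is obtained by taking one compact $K$ from (W3) so that $\int_{X\setminus K}|f_n|^p\diff\mu<1$ for all $n$, noting that compactness forces $\mu(K)<\infty$ in the relevant topological setting, and then partitioning $K$ into finitely many pieces of measure less than the $\delta$ associated to $\varepsilon_0=1$ in (W2), so $\int_K|f_n|^p\diff\mu\le (\mu(K)/\delta+1)$. Having established $f\in L^p$, fix $\eta>0$ and select, in order: a compact $K$ with $\sup_n\int_{X\setminus K}|f_n|^p\diff\mu,\ \int_{X\setminus K}|f|^p\diff\mu<\eta$; a threshold $\theta>0$ with $\theta^p\mu(K)<\eta$; and the $\delta$ from (W2). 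By (W1) the sets $B_n:=\{x\in K:|f_n-f|>\theta\}$ satisfy $\mu(B_n)\to 0$, so eventually $\mu(B_n)<\delta$. Decomposing $X=(X\setminus K)\cup(K\setminus B_n)\cup B_n$ and using $|f_n-f|^p\le 2^{p-1}(|f_n|^p+|f|^p)$ on $X\setminus K$ and on $B_n$, together with the pointwise bound $|f_n-f|\le\theta$ on $K\setminus B_n$, yields $\|f_n-f\|_{L^p}^p\le C\eta$ for $n$ large.

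The main obstacle is the $f\in L^p$ step, which requires extracting a uniform $L^p$ bound from (W2) and (W3) alone, with no dominating function available; this is precisely what the interplay between the two hypotheses is designed to provide. The rest is bookkeeping, provided the parameters $K$, $\theta$, $\delta$, $N$ are chosen in the correct order dictated by their dependencies.
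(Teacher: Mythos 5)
The paper states this theorem in its appendix as a \emph{recalled} standard fact and gives no proof at all (only \Cref{cor:Vitali} following it is proved, and that proof simply cites the theorem), so there is no authorial argument to compare against; your proposal is the standard textbook proof of Vitali's convergence theorem on a possibly infinite measure space, and its structure and parameter bookkeeping are correct. Two points deserve to be made explicit rather than left implicit. First, the step where you pass from compactness of $K$ to $\mu(K)<\infty$ does not hold in an arbitrary topological measure space; it requires some local finiteness of $\mu$ (a Radon measure, or Lebesgue measure on $\R^d$ as in the paper's actual use of the theorem). In the generality in which the statement is phrased, this is a hidden hypothesis, and your phrase ``in the relevant topological setting'' papers over it; a reader should be told that this is where the assumption enters. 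Second, in the final three-set decomposition you also need the companion estimates $\int_{X\setminus K}|f|^p\,\diff\mu<\eta$ and $\int_{B_n}|f|^p\,\diff\mu\to 0$: the first follows from Fatou along the a.e.\ convergent subsequence combined with (W3), the second from absolute continuity of the integral of the fixed function $|f|^p$ once $f\in L^p$ is established. You use both silently, but they are precisely where the preliminary step $f\in L^p$ earns its keep, so they should be written out. With those additions the argument is complete.
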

\begin{rem}\label{rem:vitali}
\begin{enumerate}[label = \textbf{(\Alph*)}]
    \item The condition \ref{ass:vitali:uni_int} is usually verified by proving higher moment estimates, for example, by obtaining uniform bounds on $\{f_n\}$ in $L^q(X,\mathcal{F},\mu)$ with $q>p$.
    \item The condition \ref{ass:vitali:tight} is usually verified by proving tail estimates, for example, by obtaining uniform bounds on $\{f_n |x|^r\}$ in $L^p(X,\mathcal{F},\mu)$ for some $r>0$.
    \item If $\mu(X) <\infty$, the condition \ref{ass:vitali:tight} is immediately satisfied.
\end{enumerate}
\end{rem}
\begin{cor}\label{cor:Vitali}
Under the setting of Theorem \ref{thm:vitali}, suppose that $f_n \to f$ in measure, $|f_n| \leq g_n$ where $\{g_n\}$ is a sequence convergent in $L^p(X,\mathcal{F},\mu)$. Then $f_n \to f$ in $L^p(X,\mathcal{F},\mu)$.
\end{cor}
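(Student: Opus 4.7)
\textbf{Proof plan for Corollary \ref{cor:Vitali}.}

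The plan is to apply Vitali's theorem (Theorem \ref{thm:vitali}) directly to the sequence $\{f_n\}$. Convergence in measure holds by assumption, so the only task is to verify uniform integrability \ref{ass:vitali:uni_int} and tightness \ref{ass:vitali:tight} of $\{|f_n|^p\}_{n \in \mathbb{N}}$. Since $|f_n|^p \le g_n^p$ pointwise $\mu$-almost everywhere, it suffices to establish these two properties for the sequence $\{g_n^p\}$.

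The key input is the elementary convexity estimate $(a+b)^p \le 2^{p-1}(a^p + b^p)$ for $a,b\ge 0$, $p\ge 1$, which for any $A\in\mathcal{F}$ yields
\[
\int_A g_n^p \diff \mu \le 2^{p-1}\int_A g^p \diff \mu + 2^{p-1}\|g_n - g\|_{L^p}^p.
\]
Fixing $\eps>0$, the $L^p$-convergence $g_n \to g$ provides some $N=N(\eps)$ such that the second term is at most $\eps/2$ for all $n\ge N$. Since $g^p \in L^1$, the absolute continuity of the integral $\int \cdot\, g^p\diff\mu$ gives a $\delta>0$ such that the first term is at most $\eps/2$ whenever $\mu(A)<\delta$. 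The remaining finite collection $\{g_1^p,\dots,g_{N-1}^p\}$ is trivially uniformly integrable, so shrinking $\delta$ further if necessary yields uniform integrability of the whole family $\{g_n^p\}$, hence of $\{|f_n|^p\}$.

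Tightness is derived by the same mechanism applied to the complement $A = X\setminus K$ of a set $K$ chosen so that $\int_{X\setminus K} g^p \diff\mu$ is suitably small (using that the single function $g^p\in L^1$ is tight); the first $N-1$ functions are then tightened individually and the resulting sets unioned. With \ref{ass:vitali:uni_int} and \ref{ass:vitali:tight} established for $\{|f_n|^p\}$, Vitali's theorem delivers $f_n \to f$ in $L^p(X,\mathcal{F},\mu)$. No step poses a serious obstacle: the argument is a routine adaptation of the classical dominated convergence principle, replacing a fixed majorant by an $L^p$-convergent one.
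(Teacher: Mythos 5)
Your proposal is correct and shares the central observation with the paper's proof, namely that the pointwise domination $|f_n| \le g_n$ transfers uniform integrability and tightness from $\{g_n\}$ to $\{f_n\}$, after which the forward direction of Theorem~\ref{thm:vitali} closes the argument. Where you diverge is in establishing those two conditions for $\{g_n\}$: you reprove them by hand via the splitting $g_n \le |g_n - g| + g$ and the convexity inequality $(a+b)^p \le 2^{p-1}(a^p + b^p)$, handling the tail $n \ge N(\eps)$ uniformly and the finitely many initial terms individually. The paper instead exploits the fact that Theorem~\ref{thm:vitali} is stated as an equivalence: since $g_n \to g$ in $L^p$ by hypothesis, the converse (necessity) direction immediately yields \ref{ass:vitali:uni_int} and \ref{ass:vitali:tight} for $\{g_n\}$ with no further work. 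Your route buys self-containedness at the cost of reproving one half of the theorem you are already citing; it also implicitly relies on the tightness of the single $L^1$ function $g^p$, which in a general measure space is precisely the kind of fact that the necessity direction of Theorem~\ref{thm:vitali} packages for you. The one-line invocation of the iff is the cleaner path, but your argument is sound.
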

\begin{proof}
By Theorem \ref{thm:vitali}, the sequence $\{g_n\}$ satisfies conditions \ref{ass:vitali:uni_int} and \ref{ass:vitali:tight} in Theorem \ref{thm:vitali}. By the pointwise inequality $|f_n| \leq g_n$, $f_n$ also satisfies these conditions so by Theorem \ref{thm:vitali} $f_n \to f$ in $L^p(X,\mathcal{F},\mu)$.
\end{proof}

\noindent Finally, we also recall a refined version of Aubin-Lions due to Rossi and Savaré suitable for equi-integrability in probability spaces.
\begin{thm}\cite[Theorem 2]{RS}\label{thm:aulirs-meas-appendix}
Let $X$ be a separable Banach space. Consider
\begin{itemize}
\item a lower semicontinuous functional $\mathscr{F}:X\to[0,+\infty]$ with relatively compact sublevels in $X$;
\item a pseudo-distance $g:X\times X\to[0,+\infty]$, i.e., $g$ is lower semicontinuous and such that $g(\rho,\eta)=0$ for any $\rho,\eta\in X$ with $\mathscr{F}(\rho)<\infty$, $\mathscr{F}(\eta)<\infty$ implies $\rho=\eta$.
\end{itemize}
Let $U$ be a set of measurable functions $u:(0,T)\to X$, with a fixed $T>0$. Assume further that
\begin{equation}\label{hprossav}
\sup_{u\in U}\int_{0}^T\mathscr{F}(u(t))\,dt<\infty\quad \text{and}\quad \lim_{h\downarrow0}\sup_{u\in U}\int_{0}^{T-h}g(u(t+h),u(t))\,dt=0\,.
\end{equation}
Then $U$ contains an infinite sequence $(u_n)_{n\in\mathbb{N}}$ that converges in measure, with respect to $t\in(0,T)$, to a measurable $\tilde{u}:(0,T)\to X$, i.e.
\[
\lim_{n\to\infty}|\{t\in(0,T):\|u_n(t)-u(t)\|_X\ge\sigma\}|=0, \quad \forall \sigma>0.
\]
\end{thm}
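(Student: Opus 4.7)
\medskip
\noindent\textbf{Plan of proof (Rossi--Savaré compactness).} The strategy combines a pointwise-in-time tightness coming from $\mathscr{F}$ with an integral equicontinuity coming from $g$, in the spirit of the classical Aubin--Lions--Simon lemma but adapted to the pseudo-distance setting. The pseudo-distance property of $g$ will serve to \emph{identify} the limit on a set of full measure, replacing the usual role of a genuine metric.

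\medskip
\noindent\emph{Step 1 — Pointwise compactness from $\mathscr{F}$.} Given any sequence $\{u_n\}\subset U$, I would first use Markov's inequality together with the uniform $L^1$-bound $\sup_n\int_0^T \mathscr{F}(u_n(t))\,dt \le M$ to produce, for every $\eta>0$, a threshold $R_\eta = M/\eta$ so that
\[
\bigl|\{t\in(0,T):\mathscr{F}(u_n(t))>R_\eta\}\bigr|\le\eta \qquad \text{uniformly in }n.
\]
Because the sublevel $K_\eta:=\overline{\{x\in X:\mathscr{F}(x)\le R_\eta\}}$ is compact, this means $u_n(t)\in K_\eta$ on a ``large'' set of times $A_n^\eta\subset(0,T)$ with $|A_n^\eta|\ge T-\eta$. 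This is the $X$-side compactness input.

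\medskip
\noindent\emph{Step 2 — Diagonal extraction.} Using Step 1, I would extract a subsequence (still denoted $u_n$) such that, for a countable dense set $D\subset(0,T)$ constructed to exclude the ``bad'' sets, $u_n(t)$ is eventually contained in $K_{\eta_k}$ for a sequence $\eta_k\downarrow 0$. A Cantor diagonal extraction at each $t\in D$ then produces a subsequence such that $u_n(t)$ converges in $X$ to some $\tilde u(t)$ for every $t\in D$. The integral equicontinuity assumption $\lim_{h\to 0}\sup_n\int_0^{T-h} g(u_n(t+h),u_n(t))\,dt=0$ provides the crucial time-regularity input that will let us extend $\tilde u$ from $D$ to a full-measure set of times.

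\medskip
\noindent\emph{Step 3 — Using the pseudo-distance to identify the limit.} Here is where the pseudo-distance property is used in a genuinely new way compared to the Banach-space Aubin--Lions setting. Suppose for contradiction that for some $\sigma>0$ the set $\{t:\|u_n(t)-\tilde u(t)\|_X\ge \sigma\}$ does not have vanishing measure. By passing to a further subsequence and using Step 1, one may assume the bad times lie inside a sublevel $K_\eta$, so a subsubsequence produces a second limit $\tilde u'(t)\ne\tilde u(t)$ on a set of positive measure. The integral equicontinuity of $g$, combined with lower semicontinuity of $g$ and the fact that both $\tilde u(t)$ and $\tilde u'(t)$ have finite $\mathscr{F}$-value a.e. (by Fatou), will force $g(\tilde u(t),\tilde u'(t))=0$ a.e., and the pseudo-distance axiom then yields $\tilde u(t)=\tilde u'(t)$, a contradiction. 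The measurability of $\tilde u$ follows from the fact that it is the pointwise limit on $D$ (and hence a.e.) of measurable maps.

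\medskip
\noindent\emph{Main obstacle.} The delicate point is Step~3: promoting the dense-set convergence of Step~2 to convergence in measure, with the pseudo-distance $g$ playing the role of a ``coercive measurement'' only on finite-$\mathscr{F}$ pairs. One must carefully interleave the Markov bound (to stay on sublevels of $\mathscr{F}$) with the vanishing-in-$h$ control on $\int g(u_n(\cdot+h),u_n(\cdot))\,dt$, typically via a double limit $n\to\infty$ then $h\to 0$, in such a way that the lower semicontinuity of $g$ can be applied to pass to the limit inside the integral. This is the technical heart of the theorem and where the proof departs most sharply from the standard Banach-space compactness arguments.
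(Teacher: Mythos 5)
First, note that the paper does not prove this statement at all: it is quoted verbatim from Rossi--Savar\'e \cite[Theorem 2]{RS} and placed in the appendix for convenience, so the comparison below is with the original argument of that reference rather than with anything in this manuscript.

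Your plan has a genuine gap at Step 2. The Markov bound of Step 1 only controls, for each fixed $n$, the \emph{measure} of the set of times where $\mathscr{F}(u_n(t))$ is large; these bad sets move with $n$, and their union over $n$ can cover any prescribed countable set. Concretely, one can build $u_n$ with $\int_0^T\mathscr{F}(u_n(t))\,dt$ uniformly small while $\mathscr{F}(u_n(t_0))\to\infty$ (and $u_n(t_0)$ escapes every compact set) for every $t_0$ in your chosen dense set $D$, e.g.\ by placing thin tall spikes at the first $n$ points of an enumeration of $D$. So there is no fixed countable set of times at which $\{u_n(t)\}_n$ is relatively compact, and the Cantor diagonal extraction of Step 2 cannot be launched. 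Density of $D$ would in any case be useless here, since the $u_n$ are merely measurable in $t$ and the equicontinuity hypothesis is only integral. A second, related omission is the mechanism by which the pseudo-distance is actually exploited: what is needed is a quantitative comparison lemma stating that on a compact sublevel $K$ of $\mathscr{F}$, for every $\sigma>0$ there is $\delta>0$ such that $x,y\in K$ and $g(x,y)\le\delta$ imply $\|x-y\|_X\le\sigma$ (proved by contradiction from compactness, lower semicontinuity of $g$ and $\mathscr{F}$, and the separation axiom). Your Step 3 uses the separation axiom only in the qualitative form $g=0\Rightarrow$ equality, which is not enough to convert the integral smallness of $g(u_n(\cdot+h),u_n(\cdot))$ into norm-convergence in measure.

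The actual proof in \cite{RS} avoids fixed-time sampling altogether: for a mesh size $h$ one compares $u$ with the piecewise constant function $t\mapsto u(s+ih)$ on $(ih,(i+1)h)$, and the shift $s\in(0,h)$ is chosen by a Fubini/averaging argument so that \emph{simultaneously} the sampled values lie (mostly) in a fixed compact sublevel of $\mathscr{F}$ and the time-averaged $g$-distance to $u$ is controlled by the equicontinuity modulus. Combining this with the comparison lemma above and the metrizability/completeness of the space of measurable functions with convergence in measure yields total boundedness, hence the desired convergent subsequence. If you want to complete your write-up, you should replace Step 2 by this averaged sampling construction and insert the comparison lemma before Step 3.
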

\end{appendices}

\subsection*{Acknowledgements}
The authors would like to thank Anna Korba (ENSAE Paris) for her suggestions and comments on the contents of this manuscript. They were supported by the Advanced Grant Nonlocal-CPD (Nonlocal PDEs for Complex Particle dynamics: Phase Transitions, Patterns and Synchronization) of the European Research Council Executive Agency (ERC) under the European Union’s Horizon 2020 research and innovation programme (grant agreement No. 883363). JAC was also partially supported by the “Maria de Maeztu” Excellence Unit IMAG, reference CEX2020-001105-M, funded by MCIN/AEI/10.13039/501100011033/. JAC and AE were also partially supported by the EPSRC grant numbers EP/T022132/1 and EP/V051121/1. JSHW was supported by the Mathematical Institute Award at the University of Oxford. 

\bibliography{references}
\bibliographystyle{abbrv}

\end{document}